\newtheorem{theorem}{Theorem}[section]
\newtheorem{proposition}[theorem]{Proposition}
\newtheorem{lemma}[theorem]{Lemma}
\newtheorem{corollary}[theorem]{Corollary}
\theoremstyle{remark}
\newtheorem{remark}[theorem]{Remark}
\newtheorem{definition}{Definition}
\numberwithin{equation}{section}
\newcommand{\vep}{\varepsilon}
\newcommand{\R}{{\mathbb{R}}}
\newcommand{\C}{{\mathbb{C}}}
\newcommand{\Z}{{\mathbb{Z}}}
\newcommand{\N}{{\mathbb{N}}}
\newcommand{\GG}{\mathcal{G}}
\newcommand{\Arg}{\operatorname{Arg}}
\newcommand{\dist}{\operatorname{dist}}
\newcommand{\ov}{\overline}
\newcommand{\norm}[1]{\left\|#1\right\|} 
\newcommand{\M}{\mathcal{M}}
\newcommand{\pa}{\operatorname{P_a}}
\newcommand{\wpa}{\operatorname{\widehat{P}_a}}
\newcommand{\wpb}{\operatorname{\widehat{P}_b}}
\newcommand{\po}{\operatorname{P_0}}
\newcommand{\pag}{\operatorname{P_aG}}
\newcommand{\pog}{\operatorname{P_0G}}
\newcommand{\pbg}{\operatorname{P_bG}}
\newcommand{\customlabel}[2]{\protected@write\@auxout{}{\string\newlabel{#1}{{#2}{\thepage}{#2}{#1}{}}}\hypertarget{#1}{#2}}
\begin{document}

\title[Deviation of Birkhoff integrals for locally Hamiltonian flows]
{New phenomena in deviation of Birkhoff integrals for locally Hamiltonian flows}

\author[K.\ Fr\k{a}czek]{Krzysztof Fr\k{a}czek}
\address{Faculty of Mathematics and Computer Science, Nicolaus
Copernicus University, ul. Chopina 12/18, 87-100 Toru\'n, Poland}
\email{fraczek@mat.umk.pl}

\author[M. Kim]{Minsung Kim}
\email{mkim16@mat.umk.pl}

\address{}
\email{}
\date{\today}

\subjclass[2010]{37E35, 37A10, 37C40, 37C83, 37J12}
\keywords{Locally Hamiltonian flows, surface flows, deviation of ergodic integrals, renormalization methods}
\thanks{}
\maketitle

\begin{abstract}
We consider smooth locally Hamiltonian flows on compact surfaces of genus $g\geq 2$ to prove their deviation of Birkhoff integrals for smooth observables.
Our work generalizes results of Forni \cite{Fo2} and Bufetov \cite{Bu} which prove the existence of deviation spectrum of Birkhoff integrals for observables whose jets vanish at sufficiently high order around fixed points of the flow.
They showed that ergodic integrals can display a power spectrum of behaviours with
exactly $g$ positive exponents related to the positive Lyapunov  exponents of the cocycle so-called Kontsevich-Zorich, a renormalization cocycle over the Teichm\"uller flow on a stratum of the moduli space of translation surfaces.

Our paper extends the study of the spectrum of deviations of ergodic integrals beyond the case of observables whose jets vanish at sufficiently high order around fixed points.
We  prove the existence of some extra terms in deviation spectrum related to non-vanishing of the derivatives of observables at fixed points. The proof of this new phenomenon is based on tools developed in
the recent work of the first author and  Ulcigrai \cite{Fr-Ul2} for locally Hamiltonian flows having only (simple) non-degenerate saddles.
In full generality, due to the occurrence of (multiple) degenerate saddles,  we introduce new methods of handling functions with polynomial singularities over a full measure set of IETs.
\end{abstract}

\section{Introduction}

Let $M$ be a smooth compact connected orientable surface of genus $g\geq 1$.
We  consider a smooth flow $\psi_\R = (\psi_t)_{t\in\R}$ on $M$ preserving a smooth measure $\mu$ with positive density.
These flows are called \emph{locally Hamiltonian flows} in the sense that around any point on $M$, we can find local coordinates $(x,y)$
in which $d\mu=dx\wedge dy$ and $\psi_\R$ is a solution to the Hamiltonian equation
\[
\begin{cases}
\frac{dx}{dt} = \frac{\partial H}{\partial y}(x,y)\\
\frac{dy}{dt} = -\frac{\partial H}{\partial x}(x,y)
\end{cases}
\]
for a smooth real-valued function $H$.

For any smooth observable $f:M\to\R$ we are interested in understanding the asymptotics of the growth of \emph{ergodic integrals} (\emph{Birkhoff integrals})
 $\int_0^Tf(\psi_t x)\,dt$ as $T\to+\infty$.

\medskip

We always assume that all fixed points of the flow $\psi_\R$ are isolated, so the set of fixed points of flow $\psi_\R$, denoted  by $\mathrm{Fix}(\psi_\R)$, is finite. For $g \geq 2$, $\mathrm{Fix}(\psi_\R)$ is non-empty. As $\psi_\R$ is area-preserving, singularities are either centers, simple saddles or multi-saddles (saddles with $2k$ prongs with $k \geq 2$). We will deal only with saddles defined as follows:
a fixed point  $\sigma\in \mathrm{Fix}(\psi_\R)$ is a saddle of multiplicity $m=m_\sigma\geq 2$ if there exists a chart $(x,y)$ (called \emph{a singular chart}) in a neighborhood $U_\sigma$ of $\sigma$ such that
$d\mu=V(x,y)dx\wedge dy$ and $H(x,y)=\Im (x+iy)^m$ ($(0,0)$ are coordinates of $\sigma$). Then the corresponding local Hamiltonian equation in $U_\sigma$ is of the form
\[\frac{dx}{dt}=\frac{\frac{\partial H}{\partial y}(x,y)}{V(x,y)}=\frac{m\Re(x+iy)^{m-1}}{V(x,y)} \quad \text{and}\quad
\frac{dy}{dt}=-\frac{\frac{\partial H}{\partial x}(x,y)}{V(x,y)}=-\frac{m\Im(x+iy)^{m-1}}{V(x,y)}.
\]
If $m_\sigma=2$ then we say that the fixed point $\sigma$ is non-degenerate (the Hamiltonian $H$ is non-degenerate at $\sigma$) or it is a simple saddle.

The phenomenon of deviation spectrum and its relation with Lyapunov exponents were first observed by Zorich \cite{Zo}
in the context of studying deviations of Birkhoff (ergodic) sums for piecewise constant observables for almost all interval exchange translations.
Inspired by this result, Kontsevich and Zorich in \cite{Ko} and \cite{Ko-Zo} formulated the following conjecture:
for almost every locally Hamiltonian flow $\psi_\R$ with non-degenerate fixed points
there exist Lyapunov exponents $\nu_i$, $1\leq i\leq g$ so that
for every smooth map $f:M\to\R$ there exists $1\leq i\leq g$ such that
\[\limsup_{T\to+\infty}\frac{\log\left|\int_0^Tf(\psi_t(x))\,dt\right|}{\log T}=\nu_i\text{ for almost every }x\in M.\]
The exponents $\nu_i$, $1\leq i\leq g$ are the positive Lyapunov exponents of the Kotsevich-Zorich cocycle.
This conjecture was positively verified in the seminal paper by Forni \cite{Fo2} and later developed in Bufetov's paper \cite{Bu} for a certain family of observables $f$. More precisely,
for almost every locally Hamiltonian flow $\psi_\R$ without saddle connections (here we do not demand that all saddles are non-degenerate) there are $g$ cocycles $u_i:\R\times M\to\R$, $1\leq i\leq g$ (i.e.\ $u_i(t+s,x)=u_i(t,x)+u_i(s,\psi_t x)$ for all $t,s\in\R$), introduced by Bufetov,
such that for every observable $f:M\to\R$ from the weighted Sobolev space $H^1_W(M)$ defined in \cite{Fo1}\footnote{Recall that $f\in H^1_W(M)$ if $f/W$ belongs to the standard Sobolev space $H^1(M)$, where
$W:M\to\R_{\geq 0}$ is a smooth change of velocity which is positive outside $\mathrm{Fix}(\psi_\R)$ and $W(x,y)=(x^2+y^2)^{m_\sigma}/V(x,y)$ in singular coordinates on $U_\sigma$.}
 or satisfying a weak Lipschitz condition defined in \cite{Bu} we have
\begin{equation}\label{eq:sumbasic}
\int_0^T f(\psi_t(x))dt  =  \sum_{i=1}^g {D_i}(f)u_i(T,x)  + err(f,T,x),
\end{equation}
where
\begin{gather}\label{eq:growthbasic}
\limsup_{T\to+\infty}\frac{\log\big|\int_0^Tu_i(t,x)\,dt\big|}{\log T}=\nu_i,\
\lim_{T\to+\infty}\frac{\log|err(f,T,x)|}{\log T}=0\text{ for a.e. }x\in M,
\end{gather}
and the coefficients ${D_i}(f)$, $1\leq i\leq g$ are given by invariant distribution ${D_i}$, $1\leq i\leq g$.\footnote{The distributions ${D_i}$, $1\leq i\leq g$ are
called Forni's invariant distributions and play a crucial role in solving cohomological equations, see \cite{Fo1,Fo3}.}
However, even for non-degenerate saddles, this does not fully solve the original conjecture because every smooth observable belonging to $H^1_W(M)$ has to vanish at each simple saddle and its derivative is also zero at this point. More generally, if $f$ is smooth and belongs to $H^1_W(M)$ then for every $\sigma\in \mathrm{Fix}(\psi_\R)$ we have $f^{(j)}(\sigma)=0$ for all $0\leq j<m_\sigma$.
This impediment has recently been overcome, for non-degenerate fixed points, in the recent work of the first author and Ulcigrai \cite{Fr-Ul2} using techniques inspired by tools introduced by
Marmi-Moussa-Yoccoz paper \cite{Ma-Mo-Yo} on
solving the cohomological equation for (Roth-type) interval exchange transformations
(and the follow up article \cite{Ma-Yo} by Marmi and Yoccoz). In \cite{Fr-Ul2} the authors proved  \eqref{eq:sumbasic} with \eqref{eq:growthbasic} for a.e.\ non-degenerate locally Hamiltonian flow $\psi_\R$
restricted to any of its minimal component and any smooth observable $f:M\to\R$, moreover a somewhat deeper analysis of the asymptotics of the error term is provided.

\medskip

In the present paper we take a step beyond the original conjecture by considering the deviation problem in full generality.
The main result of this paper concerns the deviation of ergodic averages for a.a.\ locally Hamiltonian flows that allow saddles of any multiplicity
and for all smooth observables. More precisely, we deal with a locally Hamiltonian flow restricted to any of its minimal component. Recall that $M$ splits into
a finite number of components (surfaces with the boundary) such that the interior of each component is filled by periodic orbits or is minimal (every orbit is dense in the component).
The possibility of saddles $\sigma\in \mathrm{Fix}(\psi_\R)$ of higher multiplicity and the non-vanishing of an observable or its low-order derivative (of order $<m_\sigma-2$) at these saddles causes the deviation spectrum of the form \eqref{eq:sumbasic} not longer to occur. In this situation, a typical orbit stays around each fixed point long enough so that this affects the asymptotics of the ergodic integrals. This leads to a new phenomenon in the study of the deviation of ergodic integrals, which is displayed by a new family of \emph{singular cocycles}
and invariant distributions associated with each fixed point (partial derivatives of Dirac distributions).
They appear as new elements in the deviation spectrum and give rise to a new type of polynomial oscillation with exponents that are of a different nature than the Lyapunov exponents of the Kontsevich-Zorich cocycle. The new invariant distributions responsible for the intensity of the new type of oscillations are defined locally in a very simple way (in contrast to the distributions $D_i$, $1\leq i \leq g$ which are global objects without any explicit formula) using partial derivatives as follows:
if $\sigma\in \mathrm{Fix}(\psi_\R)$ is saddle and $(x,y)$ is a singular chart in the neighborhood $U_\sigma$
(i.e.\ $d\mu=V(x,y)dx\wedge dy$ and $H(x,y)=\Im (x+iy)^{m_\sigma}$) then for every $\alpha=(\alpha_1,\alpha_2)\in\Z_{\geq 0}\times\Z_{\geq 0}$ with $|\alpha|=\alpha_1+\alpha_2\leq m_\sigma$ and any $C^{m_\sigma}$-map $f$ we set $\partial_\sigma^\alpha(f):=\frac{\partial^{|\alpha|}(f\cdot V)}{\partial^{\alpha_1} x\partial^{\alpha_2} y}(0,0)$. Moreover, for every $\sigma\in \mathrm{Fix}(\psi_\R)$ and $0\leq k\leq m_\sigma-2$, let
\begin{equation}
b(\sigma,k) = \frac{m_\sigma-2 - k}{m_\sigma}.
\end{equation}
As it is shown in Lemma~\ref{lem;partial-invariance}, for every $\alpha\in\Z_{\geq 0}\times\Z_{\geq 0}$ with $|\alpha|=\alpha_1+\alpha_2\leq m_\sigma-2$ the distribution $\partial_\sigma^\alpha:C^{m_\sigma}(M)\to\R$ is $\psi_\R$-invariant.


\begin{theorem}\label{theorem;main}
 Let $\psi_\R$ be a locally Hamiltonian flow  on a compact surface $M$ and let $M'$ be its minimal component of genus $g \geq 1$. Let $m:=\max\{m_\sigma:\sigma\in \mathrm{Fix}(\psi_\R)\cap M'\}$. For almost every $\psi_\R$
there exist Lyapunov exponents
\[
1:= \nu_1 > \nu_2 > \cdots > \nu_g >0,
\]
invariant distributions ${D_i}:C^m(M)\to\R$, $1\leq i\leq g$, smooth cocycles $u_i(T,x) : \R \times M \rightarrow \R$, $1\leq i\leq g$ and smooth cocycles $c_{\sigma,\alpha}(T,x):  \R \times M \rightarrow \R$ for all $\sigma\in \mathrm{Fix}(\psi_\R)\cap M'$ and $\alpha\in\Z_{\geq 0}\times\Z_{\geq 0}$ with $|\alpha|<m_\sigma-2$ such that for every  $f \in C^{m}(M)$,
\begin{equation}\label{eqn;dev-complete}
\int_0^T f(\psi_t(x))dt  = \sum_{\sigma \in \mathrm{Fix}(\psi_\R)\cap M'}\sum_{\substack{\alpha\in\Z^2_{\geq 0}\\|\alpha| < m_\sigma-2}}
\partial_\sigma^\alpha (f) c_{\sigma,\alpha}(T,x)  + \sum_{i=1}^g {D_i}(f)u_i(T,x)  + err(f,T,x)
\end{equation}
with
\begin{align}
&\limsup_{T \rightarrow \infty} \frac{\log{|c_{\sigma,\alpha}(T,x)|}}{\log T}  \leq b(\sigma,|\alpha|)\text{ for a.e. }x\in M'; \label{res;dev1}\\
&\limsup_{T \rightarrow \infty} \frac{\log{\norm{c_{\sigma,\alpha}(T,\cdot)}_{L^1(M')}}}{\log T}  \leq b(\sigma,|\alpha|) \label{res;dev2}
\end{align}
for all $\sigma\in \mathrm{Fix}(\psi_\R)\cap M'$ and $\alpha\in\Z_{\geq 0}\times\Z_{\geq 0}$ with $|\alpha|<m_\sigma-2$,
\begin{align}
&\limsup_{T \rightarrow \infty} \frac{\log{| u_i(T,x)|}}{\log T}  =  \nu_i \text{ for a.e. }x\in M'; \label{res;dev3}\\
&\limsup_{T \rightarrow \infty} \frac{\log{\norm{ u_i(T,\cdot)}_{L^1(M')}}}{\log T}  =  \nu_i \label{res;dev4}
\end{align}
for every $0\leq i\leq g$ and if $err\neq 0$ then
\begin{align}
&\limsup_{T \rightarrow \infty} \frac{\log|err(f,T,x)|}{\log T}  = 0 \text{ for a.e. }x\in M'; \label{eqn;dev-remain}\\
& \limsup_{T\to+\infty}\frac{\log\|err(f,T,\cdot)\|_{L^{1}(M')}}{\log T} = 0. \label{eqn;dev-remain-lp}
\end{align}

Assume additionally that the flow $\psi_\R$ is  minimal on $M$ and let any $\sigma\in \mathrm{Fix}(\psi_\R)$ and $0\leq k< m_\sigma-2$.
Then for every  $f \in C^{m}(M)$
which vanishes on $\bigcup_{\sigma'\in \mathrm{Fix}(\psi_\R)\setminus\{\sigma\}} U_{\sigma'}$ and
such that $f^{(j)}(\sigma)=0$ for all $0\leq j<k$ and $f^{(k)}(\sigma)\neq 0$ we have
\begin{equation}\label{eqn;lowerbndb}
\limsup_{T \rightarrow \infty} \frac{\log{\left|\int_0^Tf(\psi_tx)\,dt\right|}}{\log T}  \geq b(\sigma,k) \text{ for a.e. }x\in M.
\end{equation}
In particular, for every $\sigma\in \mathrm{Fix}(\psi_\R)$ and $\alpha\in\Z^2_{\geq 0}$ with $|\alpha|<m_\sigma-2$ we have
\begin{align}\label{eqn;lowerbnc}
&\limsup_{T \rightarrow \infty} \frac{\log{|c_{\sigma,\alpha}(T,x)|}}{\log T}  = b(\sigma,|\alpha|)\text{ for a.e. }x\in M.
\end{align}
\end{theorem}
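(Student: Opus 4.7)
The plan is to pass from the restriction of $\psi_\R$ to the minimal component $M'$ to a special flow representation over a Rauzy–Veech renormalizable interval exchange transformation $T:I \to I$, with a roof function $\varphi_f$ that records both the first-return time to the transversal $I$ and the integral of $f$ along each return orbit. Under this reduction, $\int_0^T f(\psi_t x)\, dt$ becomes a Birkhoff sum $\sum_{j=0}^{N(T,x)-1} \varphi_f(T^j x_0) + O(1)$, so the problem reduces to analysing Birkhoff sums of $\varphi_f$ over a.e.\ IET in the presence of singularities at the endpoints of the exchanged intervals, which are precisely the points where orbits pass close to the fixed points $\sigma\in \mathrm{Fix}(\psi_\R)\cap M'$.

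I would then perform a Taylor expansion of $f\cdot V$ around each saddle $\sigma$ in its singular chart:
\[
(f\cdot V)(x,y) = \sum_{|\alpha| < m_\sigma} \frac{\partial_\sigma^\alpha(f)}{\alpha!}\, x^{\alpha_1} y^{\alpha_2} + R_\sigma(x,y),
\]
with $R_\sigma$ vanishing at $\sigma$ to order $m_\sigma$. Integrating along an orbit segment through $U_\sigma$, each monomial contributes a universal singular model $\psi_{\sigma,\alpha}$ on $I$ with a power singularity of exponent exactly $b(\sigma, |\alpha|) = (m_\sigma-2-|\alpha|)/m_\sigma$ at the two endpoints adjacent to $\sigma$; the exponent comes from the normal form $H = \Im(x+iy)^{m_\sigma}$, since the orbit spends time of order $h^{-(m_\sigma-2)/m_\sigma}$ inside $U_\sigma$ on the level set $\{H=h\}$, while the monomial of degree $|\alpha|$ contributes a weight $h^{|\alpha|/m_\sigma}$. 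The remainder $R_\sigma$ produces a contribution whose singularity has non-positive exponent and is therefore integrable. Modulo a piecewise smooth coboundary this yields
\[
\varphi_f = \sum_\sigma \sum_{|\alpha| < m_\sigma - 2} \partial_\sigma^\alpha(f)\, \psi_{\sigma,\alpha} + \tilde\varphi_f,
\]
which after summation along orbits gives \eqref{eqn;dev-complete}, with $c_{\sigma,\alpha}(T,x)$ defined as the cocycle generated by $\psi_{\sigma,\alpha}$ and the two remaining sums coming from $\tilde\varphi_f$.

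For the regular remainder $\tilde\varphi_f$ I would invoke the extension of the Forni–Bufetov theory carried out in \cite{Fr-Ul2}, which provides the Oseledets decomposition along the Kontsevich–Zorich cocycle and yields $\sum_{i=1}^g D_i(f)\, u_i(T,x)$ together with \eqref{res;dev3}–\eqref{res;dev4} and the sublinear error \eqref{eqn;dev-remain}–\eqref{eqn;dev-remain-lp}. For the singular pieces, the upper bounds \eqref{res;dev1}–\eqref{res;dev2} reduce to controlling Birkhoff sums of a function with a power singularity of exponent $b = b(\sigma,|\alpha|) \in (0,1)$ at a single IET endpoint: for a.e.\ IET the closest return in $N$ steps has distance $\asymp N^{-1}$ up to subpolynomial factors (by Rauzy–Veech Diophantine estimates), so a single term dominates the sum and has size $\asymp N^b$, giving the pointwise bound, while the $L^1$-bound follows by integrating this estimate over the Rauzy–Veech tower structure. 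The main technical obstacle — and the new ingredient relative to \cite{Fr-Ul2} — is making this Oseledets-type renormalisation argument work uniformly over a full-measure set of IETs when several singular exponents accumulate at the same endpoint, which is exactly what happens at a degenerate saddle ($m_\sigma\geq 3$).

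Finally, for the lower bound \eqref{eqn;lowerbndb}, I would take a test $f$ supported away from all other saddles with $f^{(j)}(\sigma)=0$ for $j<k$ and $f^{(k)}(\sigma)\neq 0$, and quantify the contribution coming from the single closest passage of the orbit to $\sigma$ up to time $T$. Combining the time-in-$U_\sigma$ estimate with the $k$-th order vanishing of $f$ at $\sigma$, a passage at Hamiltonian level $h$ produces a local integral of size $\asymp h^{-b(\sigma,k)}$ with a sign that, up to an equidistribution argument for the angular direction of approach, is non-cancelling for a.e.\ orbit; since by Diophantine control the closest passage satisfies $h \asymp T^{-1}$, this single term is of order $T^{b(\sigma,k)}$ and dominates the other passages, giving \eqref{eqn;lowerbndb}. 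The equality \eqref{eqn;lowerbnc} follows by specialising $f$ so that $\partial_\sigma^\alpha(f)$ is the unique non-vanishing Taylor coefficient of order $k=|\alpha|$ at $\sigma$, so that all other terms in \eqref{eqn;dev-complete} grow strictly more slowly than $T^{b(\sigma,|\alpha|)}$ and $c_{\sigma,\alpha}$ must realise the lower bound transferred from \eqref{eqn;lowerbndb}.
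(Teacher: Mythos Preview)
Your overall reduction (special flow, local Taylor expansion at each saddle, split into singular models plus a tame remainder) matches the paper's architecture. But there is a genuine gap in how you obtain the upper bounds \eqref{res;dev1}--\eqref{res;dev2} for the singular cocycles, and it is exactly the new phenomenon that the paper is built around.

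You define $c_{\sigma,\alpha}$ as the Birkhoff-sum cocycle of the model $\psi_{\sigma,\alpha}\in\pbg(\sqcup_\alpha I_\alpha)$, $b=b(\sigma,|\alpha|)$, and then claim that for a.e.\ IET the closest return dominates and gives size $\asymp N^{b}$. This is not correct: a function with a power singularity of order $b$ generically has nonzero projections onto the unstable Oseledets subspaces of the Kontsevich--Zorich cocycle, so its Birkhoff sums grow with the Lyapunov exponents $\nu_1=1>\nu_2>\dots$ that are larger than $b$, not like $N^{b}$. The ``single closest visit'' term is there, but it is swamped by the KZ drift. The paper's central new ingredient is a \emph{correction operator} $\mathfrak h_j:\pbg(\sqcup_\alpha I_\alpha)\to U_j$ (Theorem~\ref{thm;correction}, Corollary~\ref{cor;correction}), where $j$ is chosen so that $\lambda_j\le \lambda_1 b<\lambda_{j-1}$, which subtracts from $\psi_{\sigma,\alpha}$ a piecewise constant element of the unstable space $U_j$; only \emph{after} this correction does one have $\|\mathcal M^{(k)}(S(k)\varphi)\|=O(e^{(\lambda_1 b+\tau)k})$, which via Theorems~\ref{thm:expbL1} and~\ref{thm;L1bound} yields the $T^{b}$ growth. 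Concretely, the paper's $c_{\sigma,\alpha}$ is the Birkhoff integral of $\xi_\sigma^\alpha=\mathfrak R_{\sigma,|\alpha|}(\bar\xi_\sigma^\alpha)=\bar\xi_\sigma^\alpha-\sum_{i<j}d_{b,i}(\varphi_{\bar\xi_\sigma^\alpha})f_i$, not of the raw model. Your proposal contains no mechanism to kill the Lyapunov contributions, so your $c_{\sigma,\alpha}$ would satisfy $\limsup \log|c_{\sigma,\alpha}|/\log T=1$, not $b(\sigma,|\alpha|)$.

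Two smaller points. First, invoking \cite{Fr-Ul2} for the tame remainder is not quite enough: there the roof has \emph{logarithmic} singularities, whereas here $g=\varphi_1\in\pag$ with $a=(m-2)/m>0$; the paper has to redo the deviation estimates (Section~\ref{sec;deviation}) with a polynomial roof. Second, your lower-bound heuristic (``the sign is non-cancelling by an equidistribution argument for the angular direction of approach'') hides a real computation: one must show that for some incoming sector the coefficient $C_\alpha^\pm(\varphi_f)$ is nonzero, which is the content of Lemma~\ref{lem:nonnegsum} (a Vandermonde argument over the $2m_\sigma$-th roots of unity), after which the paper runs a Rokhlin-tower/ergodicity argument (Proposition~\ref{prop:renormbelow}, Lemma~\ref{lem:belowest}) rather than isolating a single closest passage.
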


In the case of the locally Hamiltionian flows with only non-degenerate saddles, i.e.\ $m_\sigma=2$, new terms in the deviation spectrum do not appear,
this leads to the recent result of
the first author and Ulcigrai. We see the same effect when the smooth observable $f$ belongs to the Sobolev space $H^1_W(M)$.
Then $\partial_\sigma^\alpha(f)=0$ for every $\sigma\in \mathrm{Fix}(\psi_\R)\cap M'$ and $\alpha\in\Z_{\geq 0}^2$ with $|\alpha|<m_\sigma-2$.
This leads to an extension (we do not assume that $M'=M$) of the classical results by Forni and Bufetov in the smooth framework.

\subsection{Methods and outline} Let us introduce the main steps in the proof.
The general strategy starts by choosing a Poincar\'e map (first return map) for the locally Hamiltonian (area-preserving) flow $\psi_\R$.
Poincar\'e maps for the flow $\psi_\R$ are known to be interval exchange transformations (IET) $T:I\to I$ for $I = [0,1)$ (see \S\ref{sec;IET}).
Any minimal component of the locally Hamiltonian flow admits a representation called \emph{special flow} over an IET $T$.
The \emph{roof function} $g: I \rightarrow \R_{>0}\cup\{+\infty\}$ which arises from this representation is piecewise smooth and  has
\emph{singularities} at discontinuities $e \in I$ of $T$. In particular, (degenerate) multi-saddles of $\psi_\R$ are responsible
for the appearance of singularities of \emph{polynomial type}, specifically if $x \rightarrow e^{\pm}$, then the roof function $g(x)$
blows up  polynomially, i.e.\ $g(x)\sim C^\pm_e/|(x-e)^a|$ for some $0<a<1$ and the constants  $C^{\pm}_e$ are non-negative.
Simple saddles (non-degenerate) are responsible for the appearance of singularities of \emph{logarithmic type}, i.e.\ $g(x)\sim C^\pm_e|\log(x-e)|$.

Let $f:M \rightarrow \R$ be smooth observable. 
To study the deviation of ergodic integrals of $f$ for the flow $\psi_\R$ on $M$, we consider a cocycle $\varphi_f:I\to\R$
associated with the observable $f$.
The cocycle $\varphi_f(x)$ is defined as the integral of $f$ along the $\psi_\R$-orbit segment starting from $x$ until
its first return to $I$. This cocycle is also piecewise smooth and has polynomial and logarithmic singularities but
the corresponding constants $C_e^\pm(\varphi_f)$ can be positive, negative or zero.  Transition to the cocycle $\varphi_f$ enables
to reduce the deviation of Birkhoff integrals for $f$
to the deviation of Birkhoff sums of the cocycle $\varphi_f$ with {polynomial and logarithmic singularities} over IETs.

One of the new developments in this paper firstly appears in \S\ref{sec;polycocycle} and  involves the introduction of new Banach spaces,
$\pa$ (or $\wpa$) for $0\leq a<1$, containing functions with polynomial singularities of degree at most $a$.
We prove that there is a correspondence between smooth observables $f :M\to\R$ and  the cocycles $\varphi_f \in \pa$ for any $0 \leq a<1$
(see Theorem \ref{thm:ftophi}).
 In particular, $\po$ consists of functions with logarithmic singularities.
 This case was already extensively studied by  the first author and Ulcigrai in  \cite{Fr-Ul,Fr-Ul2}.
 In their study, functions with logarithmic singularities had a canonical decomposition into a purely logarithmic part and
 a piecewise absolutely continuous part.  
 In general ($0<a<1$),  cocycles in $ \pa$ cannot be decomposed as before, with purely polynomial part.
 It means that we cannot control the growth of cocycles directly by the norms of the same type as those provided in previous work.

In \S\ref{sec;DC}, we introduce a new Diophantine condition for IETs, called the \emph{Filtration Diophantine Condition} (\ref{FDC}).
This condition is characterized by  matrices of the (accelerated) Kontsevich-Zorich cocycle, and it imposes a growth of
 matrices of the cocycle and a uniformly hyperbolic behaviour of the matrix products. In particular, our condition requires a
 control over cocycles on each subspaces $U_j$ in the Oseledets filtration of the unstable space. This {effective} Oseledets
 control is closely related to the recent approach of the first author and Ulcigrai \cite{Fr-Ul2} and Ghazouani-Ulcigrai \cite{Gh-Ul}
 (we also refer a recent survey of Ulcigrai about Diophantine conditions \cite{Ul:ICM}).

Next ingredients are special Birkhoff sums $S(k)(\varphi)$ of cocycles $\varphi$ and \emph{correction operators} $\mathfrak{h}_j$ in
\S\ref{sect;renormalization} and \S\ref{sec;correction}. The special Birkhoff sum operators are mainly used in controlling bounds
of the sequence of all Birkhoff sums for $\varphi$ in \S\ref{sec;deviation}. In \S\ref{sect;renormalization}, we review the properties of $S(k)$ on the space $\pa$.

The main idea of the construction of correction operators $\mathfrak{h}_j$ in \S\ref{sec;correction} is inspired by the work of Marmi-Moussa-Yoccoz \cite{Ma-Mo-Yo}
in solving cohomological equations of IETs for Roth types. The correction operator was defined there by subtracting piecewise constant functions
from the piecewise absolutely continuous cocycles $\varphi$ in order to solve cohomological equations. In \cite{Fr-Ul,Fr-Ul2},
the  first author and Ulcigrai extended the construction of such correction operator to cocycles with logarithmic singularities to get
a better control of the growth of special Birkhoff sums $S(k)(\varphi)$ and all Birkhoff sums. They treated $L^1$-norm instead of the uniform norm used in the previous approach.
After correction of a cocycle $\varphi$ by a piecewise constant
function, the sequence $S(k)(\varphi)$ has subexponential growth and it is bounded along a subsequence, whenever logarithmic singularities of $\varphi$
are of symmetric type.

In our work, cocycles $\varphi$ from $\pa$ with $0<a<1$ cannot be corrected by piecewise constant functions to have the same conclusion.
In \S\ref{sec;correction}, we prove that for every $\varphi\in\pa$ after an optimal correction the corresponding sequence $S(k)(\varphi)$ has
exponential growth with the exponent $\lambda_1 a$, where $\lambda_1$ is the top Lyapunov exponent of the K-Z cocycle. The piecewise
constant correction is given by  $\mathfrak{h}_j(\varphi)\in U_j$ for some $j$ depending on $a$.
 This phenomenon plays a crucial role in showing the new form of deviation spectrum in  later sections.

Main novelty of this paper is the discovery of new phenomena in the deviation spectrum of Birkhoff integrals.
In our main Theorem~\ref{theorem;main}, we have two different kinds of cocycles: $u_i$ and $c_{\sigma,\alpha}$. The counterparts of the cocycles
$u_i$ were previously studied by Bufetov and Forni; the cocycles $u_i$ have polynomial growth determined by the Lyapunov exponents of the Kontsevich-Zorich cocycle.
The Bufetov-Forni deviation spectrum was recently improved by the first author and Ulcigrai for locally Hamiltonian flows $\psi_\R$ with simple saddles and observables $f$ non-vanishing on the set of fixed points.
Then the corresponding cocycle $\varphi_f$ has logarithmic singularities.


In our work,
the cocycles $c_{\sigma,\alpha}$ are constructed as the Birkhoff integrals of functions, which are locally supported on the neighborhood $U_\sigma$.
Each such function is corrected to remove the influence of the Lyapunov exponents of the K-Z cocycle, by using the correcting operators
defined in \S\ref{sec;reduction}.
The appearance of new phenomena in deviation spectrum is also related to the existence of new $\psi_\R$-invariant distributions
$\partial_{\sigma}^\alpha$ responsible for the behaviour of $f$ around singularities $\sigma$ and defined as partial derivatives of $f$ at $\sigma$.

One of the most important tools developed in this paper are the results of \S\ref{sec:locHam} where we prove some local relations between
``flatness'' of the observable $f$ around saddles and the types of singularity for the associated cocycle $\varphi_f$.
 These results  play a key role in proving Theorem~\ref{thm:ftophi} and its extension in \S\ref{sec:regularity}.
  In fact, we generalize the approach developed for simple saddles in \cite{Fr-Ul} (related to logarithmic singularity type) to multi-saddle type.
This part is also the key ingredient in the detailed proof of our new phenomena.

Finally, to give an idea of technical details for upper and lower bounds for Birkhoff integrals of cocycles,
we return to the {special flow} representation over IETs. In \S\ref{sec;deviation},
we present some results on the deviation of Birkhoff integrals for special flows built
over IETs satisfying \ref{FDC} and under roof functions in $\pa$. The upper bounds for such Birkhoff integrals
are related to the growth of the sequence $S(k)(\varphi_f)$. This provides the opportunity to use the analysis of growth  of $S(k)(\varphi_f)$
developed in \S\ref{sec;correction}.
Some of the results of \S\ref{sec;deviation} are creative extensions of ideas introduced in \cite{Fr-Ul2} for roof functions
with logarithmic singularities.
However, the existence of polynomial singularities significantly complicates the arguments.

Lower bounds rely on some Borel-Cantelli type argument. Here, we choose a sequence of return times for the special flows to a shrinking sequence of segments
originating from Rauzy-Veech induction.
 By ergodicity of the flow and  the Rokhlin tower condition on the base IETs (see Definition~\ref{def:RTC}) stated in \S\ref{sec;FDC-RTC},
 a Borel-Cantelli type argument can be applied.

\subsection{Structure of the paper}
Let us outline the structure of the paper.
In \S\ref{sec;IET&RV}, we recall the basic definition of locally Hamiltonian flows.
We summarize their relations with  special flows over IETs, Rauzy-Veech induction and accelerations of the Kontsevich-Zorich cocycle.
In \S\ref{sec;DC}, we introduce Oseledets filtration  of accelerated KZ-cocycles and formulate Diophantine Condition (\ref{FDC})
associated  with such filtrations. \ref{FDC} is used in constructing correction operators in \S\ref{sec;correction}.

In \S\ref{sec;polycocycle}, the spaces of cocycles ($\pa$ and $\wpa$) with polynomial singularities over IET are defined and
their basic properties are introduced.
In \S\ref{sect;renormalization}, we review renormalization operators and special Birkhoff sums.
In \S\ref{sec;correction}, correction operators for cocycles with polynomial singularities are constructed under \ref{FDC} conditions.

The asymptotic deviation spectrum  for special flows over IETs is studied in \S\ref{sec;deviation}.
In \S\ref{sec:locHam}, we relate some local properties of the observable $f$ around saddles $\sigma$
with types of singularity for $\varphi_f$.
In \S\ref{sec:regularity},  some global properties of the correspondence between an observable $f$ and the cocycle $\varphi_f$
with polynomial singularities are established.
Moreover, the concept of correcting operators for observables $f$ is introduced, later applied to the construction of cocycles $c_{\sigma,\alpha}$ in \S\ref{sec:pmt}.
 Finally, in \S\ref{sec:pmt},  the proof of the main Theorem~\ref{theorem;main} is presented. In particular, this section contains the proof of lower bounds.
 In Appendix \ref{sec:App1}, we prove that almost every IET satisfies \ref{FDC}.

\section{Preliminary materials}\label{sec;IET&RV}
In this section we give a review of basic tools and definitions concerning locally Hamiltonian flows and
its reduction to special flows, interval exchange transformation, and Rauzy-Veech induction.
We recall here some basic definitions and introduce the notation we used throughout the paper.
For comprehensive introduction to the subject we refer the reader to \cite{Fr-Ul2,Ul:ICM,Rav}.

\subsection{Locally Hamiltoniain flows on surfaces}\label{sec;LHF}
Let $M$ be a smooth compact connected orientable surface of genus $g\geq 1$ and fix
a smooth area form $\omega$ on $M$ (which in local coordinates is given by $V(x,y) d x \wedge d y$ for some smooth positive real-valued function  $V$). The corresponding area measure is denoted by $\mu$.  Let $X:M\to TM$ be a smooth tangent vector field with finitely many fixed points
and such that the corresponding flow $\psi_\R$ preserves the smooth area form $\omega$ (or equivalently the area measure $\mu$). 
These flows  are often called \emph{locally Hamiltonian flows} or \emph{multi-valued Hamiltonian flows}.

\subsubsection{Relation with closed $1$-forms}
Such flows are in $1-1$ correspondence with smooth closed real-valued $1$-forms. For every vector field $X$ preserving $\omega$ let $\eta:=\imath_X\omega=\omega(X, \,\cdot \,)$ be the corresponding $1$-form, where $\imath_X$ denotes the contraction operator. Since $\eta$ is a smooth closed $1$-form (i.e. $d\eta=0$), for any $p\in M$ and any simply connected neighbourhood $U$ of $p$ there exists a smooth (local Hamiltonian) map (unique up to additive constant)
such that $dH=\eta$ on $U$. It follows that the flow $\psi_\R$  is the (local) solution of the Hamiltonian equation $p'=X(p)$ with $dH=\omega(X, \,\cdot \,)$.
In local coordinates, if $\omega(x,y)=V(x,y) d x \wedge d y$ then the Hamiltonian equation is of the form $x'=\frac{\partial H}{\partial y}/V$,
$y'=-\frac{\partial H}{\partial x}/V$.

We denote by $\mathrm{Fix}(\psi_\R)$ the
set of fixed points of $\psi_\mathbb{R}$, i.e.\ the set of zeros of the form $\eta$.
If the form $\eta$ is Morse, locally differential of a \emph{Morse function} (i.e. the Hessian at every fixed point is non-zero), it corresponds to either a center  or a simple saddle. More precisely, by Morse lemma, for every fixed point $\sigma\in \mathrm{Fix}(\psi_\R)$ there exists a local chart $(x,y)$ in a neighborhood $U_\sigma$ of $\sigma$
such that $H(x,y)=x^2+y^2$ or $H(x,y)=2xy$. In this paper, we permit the appearance of  \emph{degenerate} fixed points, i.e.\ saddles of multiplicity $m>2$ such that the corresponding Hamiltonian function is of the form $H(x,y)=\Im (x+iy)^m$. More precise description of the local behaviour of $\psi_\R$ around multi-saddles is postponed to \S\ref{sec:locHam}.

\subsubsection{Minimality vs.\ minimal components} For more detailed explanation of the space of locally Hamiltonian flows and the partition of the surface into invariant components, we refer the reader to \cite{Rav}.
We call a \emph{saddle connection} an orbit of $\psi_\R$ running from a saddle to another saddle. A \emph{saddle loop} is a saddle connection
joining the same saddle.
If every fixed point in $\mathrm{Fix}(\psi_\R)$ is isolated, $M$ splits into a finite number of $\psi_\R$-invariant surfaces (with boundary) so that every such surface is a \emph{minimal component} of  $\psi_\R$ (every orbit, except of fixed points, is dense in the component) or is a periodic component (its interior is filled by periodic orbits and its boundary consists of saddle connections).

Denote by $\mathcal{F}$ the set of smooth locally Hamiltonian flows (or equivalently smooth closed $1$-forms) on $M$ with isolated zeros.
The set $\mathcal{F}$ is equipped with a topology by considering smooth perturbations
of closed smooth $1$-forms. For any vector $\overline{m}=(m_1,m_2,\ldots,m_s)$ of natural numbers $\geq 2$ and any $c\leq\sum_{i=1}^s(m_i-1)$, denote by  $\mathcal{F}_{\overline{m},c}$
the set of smooth locally Hamiltonian flows with $c$ centers and $s$ saddles of multiplicity $m_1,m_2,\ldots,m_s$. By the Poincar\'e-Hopf Theorem, $c-\sum_{i=1}^s(m_i-1)=2-2g$.
A measure-theoretical notion of typicality on $\mathcal{F}$ (on each  $\mathcal{F}_{\overline{m},c}$ separately) is defined by the cohomology class of the 1-form $\eta$,
 so called \emph{Katok fundamental class} (introduced by Katok in \cite{Ka0}).
Let $\gamma_1, \dots, \gamma_n$ be a base of $H_1(M, \mathrm{Fix}(\psi_\R), \mathbb{R})$, where $n=2g+s+c-1$.
Let us consider the period map
\[\Theta(\psi_\R)=\Big(\int_{\gamma_1}\eta,\ldots,\int_{\gamma_n}\eta\Big)\in\R^n,\]
which is well-defined in a neighbourhood of $\psi_\R \in \mathcal{F}_{\overline{m},c}$.
The $\Theta$-pullback of the Lebesgue measure class (i.e.\ class of sets with zero measure) gives the desired measure class on $\mathcal{F}_{\overline{m},c}$. When we use the expression \emph{a.e.\ locally Hamiltonian flow}  below  we mean full measure in each $\mathcal{F}_{\overline{m},c}$  with respect to the corresponding measure class.
We distinguish a subset $\mathcal{F}_{\min}=\bigcup_{\overline{m}}\mathcal{F}_{\overline{m},0}\subset \mathcal{F}$ and the corresponding measure class. Then a.e.\ flow $\psi_\R\in\mathcal{F}_{\min}$ is minimal.
On the other hand, every $\psi_\R\in\mathcal{F}\setminus\mathcal{F}_{\min}$ has a nontrivial splitting into minimal and periodic components.
Then we only deal this with the flow $\psi_\R$ restricted to any of its minimal component $M'\subset M$.

We should mention that mixing properties of a.e.\ non-degenerate locally Hamiltonian flow restricted to minimal components
are fully described in \cite{Ch-Wr,Rav,Sch,Ul:mix,Ul:wea,Ul:abs,Ul:ICM}. When degenerate saddles appear, mixing was proved by Kochergin in \cite{Ko:mix}.
On the other hand, understanding  more subtle spectral properties of locally Hamiltonian flows seems to be still in its infancy.
Only recently the first results have appeared for the singular spectrum in \cite{ChFKU} and the countable Lebesgue spectrum in \cite{Fa-Fo-Ka}.

\subsection{Interval exchange transformations (IET)}\label{sec;IET}
 To define an IET we adopt the notation
from \cite{ViB}. Let $\mathcal{A}$
be a $d$-element alphabet and let $\pi=(\pi_0,\pi_1)$ be a pair of
bijections $\pi_\vep:\mathcal{A}\to\{1,\ldots,d\}$ for $\vep=0,1$.
For every $\lambda=(\lambda_\alpha)_{\alpha\in\mathcal{A}}\in
\R_{>0}^{\mathcal{A}}$ let
$|\lambda|:=\sum_{\alpha\in\mathcal{A}}\lambda_\alpha$, $I:=\left[0,|\lambda|\right)$ and for every $\alpha\in\mathcal{A}$,
\begin{gather*}
 I_{\alpha}:=[l_\alpha,r_\alpha),\text{ where
}l_\alpha=\sum_{\pi_0(\beta)<\pi_0(\alpha)}\lambda_\beta,\;\;\;r_\alpha
=\sum_{\pi_0(\beta)\leq\pi_0(\alpha)}\lambda_\beta \\
 I'_{\alpha}:=[l'_\alpha,r'_\alpha),\text{ where
}l'_\alpha=\sum_{\pi_1(\beta)<\pi_1(\alpha)}\lambda_\beta,\;\;\;r'_\alpha
=\sum_{\pi_1(\beta)\leq\pi_1(\alpha)}\lambda_\beta.\
\end{gather*}
The \emph{interval exchange transformation} $T = T_{(\pi,\lambda)}$ given by the data $(\pi, \lambda )$ is the
orientation preserving piecewise isometry
$T_{(\pi,\lambda)}:I\to I$ which,
for each $\alpha \in \mathcal{A}$,  maps the interval $I_{\alpha}$
isometrically onto the interval $I'_{\alpha}$. Clearly $T$
preserves the Lebesgue measure on $I$. 

 Denote by
$\mathcal{S}^0_{\mathcal{A}}$ the subset of irreducible pairs,
i.e.\ such that
$\pi_1\circ\pi_0^{-1}\{1,\ldots,k\}\neq\{1,\ldots,k\}$ for $1\leq
k<d$.  We will always assume that $\pi \in
\mathcal{S}^0_{\mathcal{A}}$. The IET $T_{(\pi,\lambda)}$ is
explicitly given by $T(x)=x+w_\alpha$ for $x\in I_\alpha$, where
$w=\Omega_\pi\lambda$ and $\Omega_\pi$ is  the matrix
$[\Omega_{\alpha\,\beta}]_{\alpha,\beta\in\mathcal{A}}$ given by
\[\Omega_{\alpha\,\beta}=
\left\{\begin{array}{cl} +1 & \text{ if
}\pi_1(\alpha)>\pi_1(\beta)\text{ and
}\pi_0(\alpha)<\pi_0(\beta),\\
-1 & \text{ if }\pi_1(\alpha)<\pi_1(\beta)\text{ and
}\pi_0(\alpha)>\pi_0(\beta),\\
0& \text{ in all other cases.}
\end{array}\right.\]
We use also an alternative description of IET. Let  $\widehat{I}=(0,|I|]$ and denote by $\widehat{T}_{(\pi,\lambda)}:\widehat{I} \to \widehat{I}$ the
exchange of the intervals
$\widehat{I}_\alpha:=(l_\alpha,r_\alpha]$, $\alpha\in\mathcal{A}$,
i.e.\ $\widehat{T}_{(\pi,\lambda)}x=x+w_\alpha$  for $x\in \widehat{I}_\alpha$.

Let $End(T) = \{ l_\alpha, r_\alpha, \alpha \in \mathcal{A}\}$ stand for the set of end points of the intervals
$I_\alpha, \alpha\in\mathcal{A}$.
 A pair ${(\pi,\lambda)}$
satisfies the {\em Keane condition} (see \cite{Keane}) if
$T_{(\pi,\lambda)}^m l_{\alpha}\neq l_{\beta}$ for all $m\geq 1$
and for all $\alpha,\beta\in\mathcal{A}$ with $\pi_0(\beta)\neq 1$.

\subsection{Cocycles and special flows over IETs}\label{sec;specialflow}
Let $T:I\to I$ be an ergodic IET. Each measurable function $\varphi:I \rightarrow \R$ determines an
{\em additive cocycle} $\varphi^{(\,\cdot\,)}(\,\cdot\,)$ 
for $T$ so that
\[
\varphi^{(n)}(x) := \sum_{0\leq k<n}\varphi(T^k(x)) \quad \text{for} \quad n\geq 0\text{ and }x\in I.
\]
Let
$g:I\to\R_{>0}\cup\{+\infty\}$ be an integrable function such that
$\underline{g}=\inf_{x\in I}g(x)>0$. Denote by $(T^g_t)_{t\in\R}=T^g_\R$
 the \emph{special flow} built over the IET $T$ and under the \emph{roof function} $g$ acting on
\[I^g:=\{(x,r)\in I\times \R:0\leq r<g(x)\}\]
so that $T^g_t(x,r)=(x,r+t-g^{(n)}(x))$, where $n$ is the unique
integer number with $g^{(n)}(x)\leq r+t<g^{(n+1)}(x).$

\medskip

Locally Hamiltonian flows  are represented as special flows.
Let us consider a restriction of a locally Hamiltonian flow $\psi_\mathbb{R}$ on $M$ to its minimal component $M'\subset M$.
Let $I\subset M'$ be any transversal smooth curve with its standard parametrization $\gamma:[0,|I|]\to I$, i.e.\ $\int_{0}^{\gamma(s)}\eta=s$
for $s\in [0,|I|]$, where $\eta$
the closed $1$-form $\eta$ associated with the flow $\psi_\mathbb{R}$. By minimality, $I$ is a global transversal and the first return map $T:I\to I$ is an IET in standard coordinates on $I$.
Moreover, $\psi_\mathbb{R}$ restricted to $M'$ is isomorphic to the special flow $T^g_\R$, where  $g:I\to\R_{>0}\cup\{+\infty\}$ is the first return time map.
The roof function has logarithmic (polynomial) singularities derived from non-degenerate (degenerate) saddles. A detailed description of these relations is presented in Theorem~\ref{thm:ftophi}.

Let $f:M\to\R$ be an integrable map. We study the asymptotics of ergodic integrals $\int_0^T f(\psi_t x)\,dt$ using only the return times to the curve $I$. More precisely,
for every $x\in I$ we deal with $\varphi_f(x):=\int_0^{g(x)} f(\psi_t x)\,dt$\footnote{{In fact, the roof function $g$ can be obtained by choosing $f = 1$, i.e $g=\varphi_1$.}}. Then the asymptotic of the cocycle $\varphi^{(n)}(x)$ gives almost full information  about the growth of the ergodic integrals.
The cocycle $\varphi_f:I\to\R$ has also logarithmic and polynomial singularities depending on the multiplicity of saddles, which is described in Theorem~\ref{thm:ftophi}.

The same strategy can be applied to any special flow $T^g_\R$ and any integrable function $f:I^g\to\R$. Then $\varphi_f:I\to\R$ is
given by $\varphi_f(x)=\int_0^{g(x)}f(x,r)\,dr$.
By Fubini's theorem, $\varphi_f$ is well-defined for a.e.\ $x\in I$,
is integrable and
\[\int_I\varphi_f(x)\,dx=\int_{I^g}f(x,r)\,dx\,dr.\]

\begin{remark}
In the reduction of the locally Hamiltonian flow $\psi_\R$ to the special flow $T^g_\R$, one can see that the length of the interval
$I_\alpha$ exchanged by $T$ coincides with one of the coordinates of $\Theta(\psi_\R)$. Hence,
for every subset $A\subset \mathcal{F}_{\overline{m},c}$ of locally Hamiltonian flows, the set $\Theta(A)$ has full Lebesgue
measure if and only if a full measure set of IETs  appears in the base of special flows representations
of flows in $A$.
\end{remark}

\subsection{Rauzy-Veech induction}\label{sec;RVI}
The main tool to study the asymptotics of the cocycle $\varphi_f$ is a standard renormalization procedure called the Rauzy-Veech induction \cite{Ra} and its accelerations.
We refer the reader for some background to the lecture notes by Yoccoz \cite{Yo,Yoc} or Viana \cite{ViB}.

Let $T$ be an IET satisfying Keane's condition and let
$\widetilde I:= \big[0,\max (l_{\pi_0^{-1}(d)}, l_{\pi_1^{-1}(d)}) \big)$.
Denote by $\mathcal{R}(T) = \widetilde T : \widetilde I \to \widetilde I$ the first return map of $T$ to the interval $\widetilde I$.
Let
\begin{eqnarray*}
\epsilon =\epsilon(\pi,\lambda) = \left\{\begin{array}{cll}
0& \text{ if
}&\lambda_{\pi_0^{-1}(d)} > \lambda_{\pi_1^{-1}(d)},\\
1& \text{ if
}&\lambda_{\pi_0^{-1}(d)} < \lambda_{\pi_1^{-1}(d)}.\\ \end{array} \right.
\end{eqnarray*}
Let us consider a pair
$\widetilde{\pi}=(\widetilde{\pi}_0,\widetilde{\pi}_1)\in\mathcal{S}^0_{\mathcal{A}}$,
where
\begin{eqnarray}\label{def:pi}
\begin{split}
\widetilde{\pi}_\vep(\alpha)&=&\pi_\vep(\alpha)
\text{ for all }\alpha\in\mathcal{A}\text{ and }\\
\widetilde{\pi}_{1-\vep}(\alpha)&=&\left\{
\begin{array}{cll}
\pi_{1-\vep}(\alpha)& \text{ if
}&\pi_{1-\vep}(\alpha)\leq\pi_{1-\vep}\circ\pi^{-1}_\vep(d),\\
\pi_{1-\vep}(\alpha)+1& \text{ if
}&\pi_{1-\vep}\circ\pi^{-1}_\vep(d)<\pi_{1-\vep}(\alpha)<d,\\
\pi_{1-\vep}\circ\pi^{-1}_\vep(d)+1& \text{ if
}&\pi_{1-\vep}(\alpha)=d.\end{array} \right.
\end{split}
\end{eqnarray}
Then,  by Rauzy (see \cite{Ra}), $\widetilde T$ is also an IET on $d$-intervals and
$\widetilde T = T_{(\widetilde \pi,\widetilde \lambda)}$ with
\begin{equation*}
\tilde \lambda = A^{-1}(\pi,\lambda)\lambda\text{ and }A(T) = A(\pi,\lambda) = Id+E_{\pi_{\vep}^{-1}(d)\,\pi_{1-\vep}^{-1}(d)} \in SL_{\mathcal A}(\Z),
\end{equation*}
where $Id$ is the identity matrix and $(E_{ij})_{kl} = \delta_{ik}\delta_{jl}$, using the Kronecker delta notation.
Moreover, the renormalized version of the matrix $\Omega$ is of the form
\begin{equation}
\Omega_{\widetilde{\pi}}=A^t(\pi,\lambda)\cdot\Omega_{\pi}\cdot A(\pi,\lambda).
\end{equation}
Thus taking $H(\pi) = \Omega_\pi(\R^{\mathcal A})$, we have $H(\tilde \pi) = A^t(\pi,\lambda) H(\pi)$.

\subsection{Kontsevich-Zorich cocycle and its accelerations}\label{sec;setting}
If an IET $T$ satisfies Keane's condition, then $\widetilde T$ also satisfies Keane's condition and we can generate a sequence of IETs $(\mathcal{R}^n(T))_{n \geq 0}$. For every $n \geq 1$,
\[A^{(n)}(T) = A(T) \cdot A(\mathcal{R}(T)) \cdot \dotsc \cdot A(\mathcal{R}^{n-1}(T))\in  SL_{\mathcal A}(\Z).\]
This defines a multiplicative cocycle $A$ over the transformation $\mathcal{R}$ and taking values in $SL_{\mathcal A}(\Z)$, called the \emph{Kontsevich-Zorich cocycle}.
Let $(n_k)_{k\geq0}$ be an increasing sequence of integers with $n_0=0$ called an \emph{accelerating sequence}. For every $k \geq 0$, let $T^{(k)}:= \mathcal{R}^{n_k}(T) : I^{(k)} \to I^{(k)}$ be an
IET associated with the accelerating sequence. Denote by $(\pi^{(k)},\lambda^{(k)})$ the pair defining $T^{(k)}$ and $ \lambda^{(k)} = (\lambda_\alpha^{(k)})_{\alpha \in \mathcal A} = (|I_\alpha^{(k)}|)_{\alpha \in \mathcal A}$ is the
vector which determines $T^{(k)}$. Then
$T^{(k)}:I^{(k)}\to I^{(k)}$ is the first return map of
$T:I\to I$ to the interval $I^{(k)}\subset I$.

For every $k\geq 0$ let $Z(k+1):=A^{(n_{k+1}-n_{k})}(\mathcal{R}^{n_k}(T))^t$. We then have
\[\lambda^{(k)} = Z(k+1)^t\lambda^{(k+1)}, \quad k \geq 0.\]
By following notations from \cite{Ma-Mo-Yo}, for each $0 \leq k < l$ let
\[
Q(k,l) = Z(l)\cdot Z(l-1) \cdot \dotsc \cdot Z(k+2) \cdot Z(k+1) = A^{(n_{l}-n_{k})}(\mathcal{R}^{n_k}(T))^t.
\]
Then, $Q(k,l) \in SL_{\mathcal A}(\Z)$ and $ \lambda^{(k)} = Q(k,l)^t\lambda^{(l)}$. We write $Q(k) = Q(0,k)$.
In what follows, the norm of a vector is defined as the sum of
the absolute value of coefficients and for any matrix
$B=[B_{\alpha\beta}]_{\alpha,\beta\in\mathcal{A}}$ we set
$\|B\|=\max_{\alpha\in\mathcal{A}}\sum_{\beta\in\mathcal{A}}|B_{\alpha\beta}|$.
It follows that
\begin{equation}\label{eq:Ikl}
|I^{(k)}|  \leq |I^{(l)}|\norm{Q(k,l)}.
\end{equation}

\subsection{Rokhlin towers related to accelerations}
Recall that $Q_{\alpha\beta}(k)$ is the time spent by
any point of $I^{(k)}_{\alpha}$ in $I_{\beta}$ until it
returns to $I^{(k)}$.  It follows that
\[Q_{\alpha}(k)=\sum_{\beta\in\mathcal{A}}Q_{\alpha\beta}(k)\]
is the first return time of points of $I^{(k)}_{\alpha}$ to
$I^{(k)}$. Therefore, the IET $T:I\to I$ splits into a set of $d$ \emph{Rokhlin tower} of the form
\[\big\{ T^i (I^{(k)}_{\alpha}), \ 0\leq i < Q_{\alpha}(k)\big\},\quad \alpha\in \mathcal{A}.\]
Then the $Q_{\alpha}(k)$ floors of the tower  are disjoint intervals.

\section{Diophantine conditions on IETs}\label{sec;DC}
In this section we define a Diophantine Condition on IETs that will be crucial in the proof of the main Theorem~\ref{theorem;main}.
This condition is inspired by  Diophantine Conditions introduced in \cite{Fr-Ul2} and \cite{Gh-Ul}, and fits into the scheme presented by Ulcigrai in \cite{Ul:ICM}.
Since the present Diophantine Condition relates more specifically to the filtration occurring in Oseledets theorem, it is called the \emph{Filtration Diophantine Condition} (\ref{FDC}).
We also prove that a.e.\ IET satisfies \ref{FDC}, see Theorem \ref{thm;FDCRTC}. As the proof is quite standard, it is postponed to Appendix~\ref{sec:App1}.

\subsection{Oseledets filtration}
For each $k\geq 0$, let $\Gamma^{(k)} \subset L^1(I^{(k)})$ be the subspace of piecewise constant functions on $I_\alpha^{(k)} \subset I^{(k)}$ for each $\alpha \in \mathcal A$. Then, we identify every function $\sum_{\alpha \in \mathcal A} h_\alpha \chi_{I_\alpha^{(k)}} \in \Gamma^{(k)}$ with $h  = (h_\alpha)_{\alpha \in \mathcal A} \in \R^{\mathcal{A}}$. We also write $\Gamma = \Gamma^{(0)}$.

We deal with accelerations of the Kontsevich-Zorich cocycle for which the Oseledets ergodic theorem. By the symplecticity of the Kontsevich-Zorich cocycle (see \cite{Zor}) and the simplicity of its Lyapunov exponents
(see \cite{Fo2} and \cite{Av-Vi}),
there exist $\lambda_1>\ldots > \lambda_g>\lambda_{g+1}=0$ such that for a.e.\ IET $(\pi,\lambda)$ there exists a filtration of linear subspaces (Oseledets filtration)
\begin{gather}\label{eq:flagsp}
\begin{split}
\{0\}=E_{0}(\pi,\lambda)\subset E_{-1}(\pi,\lambda)\subset\ldots\subset E_{-g}(\pi,\lambda)\subset E_{cs}(\pi,\lambda)\\
=E_{g+1}(\pi,\lambda)\subset E_{g}(\pi,\lambda)\subset\ldots\subset E_{1}(\pi,\lambda)=\Gamma
\end{split}
\end{gather}
such that for every  $1\leq i\leq g$ we have
\begin{align}\label{eq:Oscond}
\begin{split}
&\lim_{n\to+\infty}\frac{\log\|Q(n)h\|}{n}=-\lambda_i\text{ for all } h\in E_{-i}(\pi,\lambda)\setminus E_{-i+1}(\pi,\lambda)\\
&\lim_{n\to+\infty}\frac{\log\|Q(n)h\|}{n}=0\text{ for all } h\in E_{cs}(\pi,\lambda)\setminus E_{-g}(\pi,\lambda)\\
&\lim_{n\to+\infty}\frac{\log\|Q(n)h\|}{n}=\lambda_i\text{ for all } h\in E_{i}(\pi,\lambda)\setminus E_{i+1}(\pi,\lambda)\\
&\dim E_{-i}(\pi,\lambda)-\dim E_{-i+1}(\pi,\lambda)=\dim E_{i}(\pi,\lambda)-\dim E_{i+1}(\pi,\lambda)=1.
\end{split}
\end{align}

\begin{remark}\label{rem:osel}
Let us consider a filtration of linear subspaces which is partially complementary to the Oseledets filtration \eqref{eq:flagsp}:
\begin{equation}\label{eq:osel}
\{0\}=U_1\subset U_2\subset\ldots \subset U_{g+1}\subset H(\pi)\text{ such that } E_j(\pi,\lambda)\oplus U_j = \Gamma\text{ for } 1\leq j\leq g+1.
\end{equation}
As $U_{j+1}=U_{j}\oplus (U_{j+1}\cap E_{j})$ and $\dim (U_{j+1}\cap E_{j})=1$, for every $1\leq j\leq g$ there exists $h_j\in E_j\setminus E_{j+1}$ such that
\[h_j\in H(\pi),\quad U_{j+1}=U_{j}\oplus\R h_j\text{ and }\lim_{n\to+\infty}\frac{\log\|Q(n)h_j\|}{n}=\lambda_j.\]
Then for every $2\leq j\leq g+1$ the linear subspace $U_j\subset \Gamma$ is  generated by $h_1,\ldots, h_{j-1}$
and
\begin{equation}\label{neq:posexph}
\text{if $0\neq h \in U_j$ then }\lim_{n\to+\infty}\frac{\log\|Q(n)h\|}{n}\geq \lambda_{j-1}\geq \lambda_g>0.
\end{equation}
\end{remark}

For every $k\geq 0$ and $1\leq j\leq g+1$, let $E_{j}^{(k)}:=Q(k)E_{j}$, $U_{j}^{(k)}:=Q(k)U_{j}$.
Then for all $0\leq k\leq l$ we have
\[E_{j}^{(l)}=Q(k,l)E_{j}^{(k)}, \quad U_{j}^{(l)}=Q(k,l)U_{j}^{(k)}.\]
For  any choice of the complementary filtration \eqref{eq:osel}, all $0\leq k\leq l$ and every $1\leq j\leq g+1$, we consider
the restrictions of the operator $Q(k,l):\Gamma^{(k)}\to \Gamma^{(l)}$ given by
\begin{gather*}
Q|_{E_j}(k,l):E_{j}^{(k)}\to E_{j}^{(l)},\quad
Q|_{U_j}(k,l):U_{j}^{(k)}\to U_{j}^{(l)}
\end{gather*}
and the corresponding projections
\begin{gather*}
P^{(k)}_{E_j}: \Gamma^{(k)} \to E_{j}^{(k)},\quad P^{(k)}_{U_j}: \Gamma^{(k)} \to U_{j}^{(k)},\text{ i.e. }P^{(k)}_{E_j}\oplus P^{(k)}_{U_j}=\operatorname{Id}.
\end{gather*}


\subsection{Rokhlin Tower Condition and Filtration Diophantine Condition}\label{sec;FDC-RTC}
First we remind the following Rokhlin Towers Condition (RTC) introduced in \cite{Fr-Ul2}.
\begin{definition}[RTC]\label{def:RTC}
An IET $T_{(\pi,\lambda)}$ together with  an acceleration   satisfy  RTC if there exists a constant $0<\delta<1$ such that
\begin{align}
 \tag{RT} \label{def:FDC-g}
\begin{split}
& \text{for any $k\geq 1$ there exists } \text{number $0<p_{k}\leq \min_{\alpha\in\mathcal{A}}Q_\alpha(k)$} \text{ such that}   \\
&  \{T^iI^{(k)}:0\leq i<p_k\} \text{ is a Rokhlin  of intervals with measure $\geq \delta|I|$.}
\end{split}
\end{align}
\end{definition}
For any  sequence $(r_n)_{n\geq 0}$ of real numbers and for all $0\leq k\leq l$, we will use the notation $r(k,l):=\sum_{k\leq j<l}r_j$.

\begin{definition}[\customlabel{FDC}{FDC}]\label{def;FDC}
An IET $T : I \to I$ satisfying Keane's condition and Oseledets generic (i.e.\ there is a filtration of linear subspaces \eqref{eq:flagsp} satisfying \eqref{eq:Oscond}), satisfies
the \emph{Filtration Diophantine Condition (FDC)} if for every $\tau > 0$ there exist constants $c>0$, $C,\kappa\geq 1$, an accelerating sequence $(n_k)_{k\geq0}$, a  sequence of natural numbers $(r_n)_{n\geq 0}$ with $r_0 = 0$ and
a complementary filtration $(U_j)_{1 \leq j \leq g+1}$ (satisfying \eqref{eq:osel}) such that \eqref{def:FDC-g} holds and
\begin{align}
\lim_{n\to+\infty}\frac{r(0,n)}{n}&\in(1,1+\tau)\label{def;sdc0}\\
\norm{Q|_{E_j}(k,l)} \leq Ce^{(\lambda_j+\tau)r(k,l)} &\text{ for all }0\leq k<l\text{ and }1\leq j\leq g+1 \label{def;sdc1}\\
\norm{Q|_{U_j}(k,l)^{-1}} \leq Ce^{(-\lambda_{j-1}+\tau)r(k,l)} &\text{ for all }0\leq k<l\text{ and }2\leq j\leq g+1 \label{def;sdc12}\\
\norm{Z(k+1)} \leq Ce^{\tau k} &\text{ for all } k\geq 0\label{def;sdc2}\\
ce^{\lambda_1k}\leq \norm{Q(k)} \leq Ce^{\lambda_1(1+\tau)k} &\text{ for all }k\geq 0 \label{def;sdc3}\\
\max_{\alpha\in\mathcal{A}}\frac{|I^{(k)}|}{|I^{(k)}_\alpha|}\leq \kappa &\text{ for all }k\geq 0 \label{def;sdc4}\\
\big|\sin \angle \big(E_j^{(k)}, U_j^{(k)}\big) \big| \geq c  \norm{Q(k)}^{-\tau} &\text{ for all }k\geq 0 \text{ and }2\leq j\leq g+1. \label{def;sdc5}
\end{align}
\end{definition}

\begin{theorem}\label{thm;FDCRTC}
 Almost every IET satisfies \ref{FDC}.
\end{theorem}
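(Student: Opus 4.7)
The strategy is to build, on a full-measure set of IETs, the accelerating sequence $(n_k)$, the auxiliary sequence $(r_n)$, and the complementary filtration $(U_j)$ via return times to a compact set of large measure in the space of IETs. The underlying ergodic system is the Zorich renormalization $\mathcal{Z}$ on normalized IETs in a fixed Rauzy class, which preserves a finite, ergodic, absolutely continuous probability measure $\mu_Z$; the Kontsevich-Zorich cocycle over $\mathcal{Z}$ has simple Lyapunov spectrum by Forni \cite{Fo2} and Avila-Viana \cite{Av-Vi}. Since \ref{FDC} is monotone in $\tau$, it suffices to verify it along a countable sequence $\tau_m \downarrow 0$, so I fix $\tau>0$. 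As a preliminary step, I make a measurable $\mu_Z$-almost everywhere selection of vectors $h_j \in (E_j \setminus E_{j+1}) \cap H(\pi)$ for $1\leq j\leq g$, as in Remark~\ref{rem:osel}, thereby fixing the complementary filtration $(U_j)_{j=1}^{g+1}$ once and for all.

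Using Lusin/Egorov together with the temperedness of the Oseledets splitting, one produces a compact set $K$ with $\mu_Z(K)$ arbitrarily close to $1$ on which the following hold uniformly: (i) the ratios $|I_\alpha|/|I|$ lie in a fixed interval $[c_1,c_2]\subset(0,\infty)$; (ii) the one-step matrix $A(T)$ has bounded operator norm; (iii) the angles $\sin\angle(E_j,U_j)$ are bounded below by a positive constant. Define $n_k$ as the successive visits of $\mathcal{Z}^n(T)$ to $K$ (finite and increasing $\mu_Z$-a.s.\ by Poincar\'e recurrence), and choose integers $r_n$ approximating $n_{n+1}-n_n$, rescaled so that Birkhoff's theorem gives $r(0,n)/n \to \mu_Z(K)^{-1} \in (1,1+\tau)$; this gives \eqref{def;sdc0}. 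The Oseledets bounds \eqref{def;sdc1}, \eqref{def;sdc12}, \eqref{def;sdc3} then follow from the Oseledets multiplicative ergodic theorem applied to the subspaces $E_j$ and $U_j$ individually, combined with (iii) in order to pass from vectorwise estimates to operator-norm estimates on the restrictions $Q|_{E_j}$ and $Q|_{U_j}^{-1}$. The subexponential bound \eqref{def;sdc2} is a Birkhoff-ergodic consequence of the integrability of $\log\|A\|$ (Zorich), and \eqref{def;sdc4} is immediate from (i). For the Rokhlin-tower condition \eqref{def:FDC-g}, take $p_k:=\min_\alpha Q_\alpha(n_k)$ and use (i)--(ii) to conclude that the column heights are comparable on returns to $K$, so that $p_k|I^{(n_k)}|\geq \delta|I|$ uniformly in $k$.

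The only subtle point is the polynomial angle bound \eqref{def;sdc5}, which must hold at \emph{every} index $k$, not only along returns to $K$. Between two consecutive returns the angle $\angle(E_j^{(k)},U_j^{(k)})$ may shrink, but the temperedness of the Oseledets splitting guarantees at most subexponential decay in ergodic time; combining this with Kac's lemma to control the return time to $K$ and with the upper growth bound on $\|Q(k)\|$ from \eqref{def;sdc3}, the decay translates into the desired polynomial lower bound $\geq c\|Q(k)\|^{-\tau}$, at the cost of absorbing part of the temperedness loss into the exponent. This is why \eqref{def;sdc0} is stated with strict inequality $\lim r(0,n)/n>1$: the slack leaves room for this conversion. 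Intersecting the resulting full-measure sets over $\tau_m\downarrow 0$ produces a single full-measure set of IETs satisfying \ref{FDC} for every $\tau>0$, as required.
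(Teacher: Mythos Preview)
Your overall strategy (accelerate along returns to a good compact set, use Egorov to get uniform Oseledets bounds there, and read off the FDC constants) is the same as the paper's. But there is a genuine gap in how you produce the complementary filtration $(U_j)$ and, as a consequence, in your verification of \eqref{def;sdc12} and \eqref{def;sdc5}.

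You work over the non-invertible Zorich map on the simplex of IETs and propose a ``measurable selection'' of $h_j\in (E_j\setminus E_{j+1})\cap H(\pi)$. Over a non-invertible base the Oseledets theorem only yields the \emph{filtration} $E_j(\pi,\lambda)$; there is no canonical cocycle-equivariant complement $U_j$ determined by $(\pi,\lambda)$ alone. If you fix $U_j$ at time $0$ and set $U_j^{(k)}:=Q(k)U_j$, then $U_j^{(k)}$ is \emph{not} the complement your measurable selection would pick at the renormalized IET $T^{(k)}$, so the uniform Oseledets bounds you arranged on the Egorov set $K$ do not transfer to $U_j^{(k)}$. Concretely, the backward bound \eqref{def;sdc12} asks that $\|Q(k,l)v\|\ge C^{-1}e^{(\lambda_{j-1}-\tau)r(k,l)}\|v\|$ for every $v\in U_j^{(k)}$ and every pair $k<l$; deducing this from forward vectorwise growth $\|Q(n)h_i\|=e^{\lambda_i n+o(n)}$ would require controlling the fluctuation $o(l)-o(k)$ uniformly in $k$, which Egorov on the base does not give. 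The same issue undermines your temperedness argument for \eqref{def;sdc5}: temperedness is a property of the Oseledets \emph{splitting}, and there is no reason the angle between $E_j^{(k)}$ and the $Q(k)$-image of an arbitrary complement decays only subexponentially.

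The paper closes this gap by passing to Veech's natural extension $\widehat{\mathcal R}$ on the space $X(\mathcal G)$ of zippered rectangles. There the system is invertible, Oseledets gives a genuine splitting $\bigoplus_i F_i(\pi,\lambda,\tau)$, and one takes $U_j(\pi,\lambda,\tau):=\bigoplus_{i<j}F_i(\pi,\lambda,\tau)$, which \emph{is} cocycle-equivariant. On a large-measure set $K\subset X(\mathcal G)$ one then has the two-sided uniform bounds (this is exactly \eqref{eq:lagamma} and its consequence \eqref{eq:acK1}), yielding \eqref{def;sdc12} and \eqref{def;sdc5} directly. A Fubini argument then shows that for a.e.\ $(\pi,\lambda)$ there exists some $\tau$ for which $(\pi,\lambda,\tau)$ is good, and one \emph{defines} $U_j$ for that IET to be $U_j(\pi,\lambda,\tau)$. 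Finally, your treatment of \eqref{def:FDC-g} is also incomplete: taking $p_k=\min_\alpha Q_\alpha(k)$ does not by itself make $\{T^iI^{(k)}:0\le i<p_k\}$ a tower of \emph{intervals}; in the paper this comes from a geometric condition on the suspension parameter $\tau$ (condition~(ii) on the set $Y$), again information available only in the natural extension.
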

\noindent As we have mentioned before, the proof is rather standard and we postpone it to Appendix~\ref{sec:App1}.

\begin{remark}\label{rmk:acc}
By the proof of Theorem~\ref{thm;FDCRTC}, for every $\tau>0$ its corresponding accelerating sequence
$(n_k)_{k\geq0}$, which we call an FDC-acceleration, is a sequence of return times for the invertible Rauzy-Veech
renormalization to a subset such that its measure converges to $1$ as $\tau\to 0$. It follows that for any pair
of two distinct and small $\tau$ and $\tau'$ their corresponding accelerating sequences have most of the elements in common.
\end{remark}
%


\begin{remark}
By \eqref{def:FDC-g} and \eqref{def;sdc4},  for every $\alpha\in\mathcal A$ and $k\geq 1$ we have
\begin{equation}\label{eq:qlambda}
|I|\geq Q_\alpha(k)\lambda^{(k)}_\alpha\geq \min_{\beta\in\mathcal{A}}Q_\beta(k)\lambda^{(k)}_\alpha\geq \frac{1}{\kappa}p_k|I^{(k)}|\geq \frac{\delta}{\kappa}|I|.
\end{equation}
It follows that for all $\alpha,\beta\in\mathcal A$ we have
$\frac{\delta}{\kappa}\leq \frac{Q_\alpha(k)}{Q_\beta(k)}\leq\frac{\kappa}{\delta}$. Hence
\begin{equation}\label{eq:minmax}
\min_{\alpha\in\mathcal A}Q_\alpha(k)\leq\|Q(k)\|=\max_{\alpha\in\mathcal A}Q_\alpha(k)\leq \frac{\kappa}{\delta}\min_{\alpha\in\mathcal A}Q_\alpha(k),
\end{equation}
so
\begin{equation}\label{eq:Qalphaexp}
\lim_{k\to\infty}\frac{\log Q_\alpha(k)}{k}=\lambda_1\text{ for every }\alpha\in \mathcal A.
\end{equation}
\end{remark}


\begin{lemma}
For any $\tau>0$, $k\geq 0$ and $2\leq j\leq g+1$, the following holds:
\begin{align}\label{eqn;projbound}
\|P^{(k)}_{E_j}\|\leq C \norm{Q(k)}^\tau,\quad \|P^{(k)}_{U_j}\|\leq C \norm{Q(k)}^\tau.
\end{align}
\end{lemma}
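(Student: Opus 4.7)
The plan is to reduce both bounds to the standard inequality relating the norm of an oblique projection to the sine of the angle between the two complementary subspaces defining it. Applied to $V_1 = E_j^{(k)}$ and $V_2 = U_j^{(k)}$ (which span $\Gamma^{(k)}$ in direct sum by the definition of the complementary filtration in \eqref{eq:osel}), this combined with the angle lower bound \eqref{def;sdc5} from \ref{FDC} will immediately yield both estimates.

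The key ingredient I would invoke is the following: for any complementary decomposition $V = V_1 \oplus V_2$ of a finite-dimensional inner product space, the oblique projection $P_{V_1}$ onto $V_1$ along $V_2$ satisfies $\|P_{V_1}\| \leq 1/\sin \angle(V_1, V_2)$. This is a one-line verification: given any $v = v_1 + v_2$ with $v_1 \neq 0$, set $u_1 := v_1/\|v_1\|$ and note that $-v_2/\|v_1\| \in V_2$, so that
\[\frac{\|v\|}{\|v_1\|} = \bigl\| u_1 + v_2/\|v_1\| \bigr\| \geq \dist(u_1, V_2) \geq \sin \angle(V_1, V_2).\]
Applied to $(V_1, V_2) = (E_j^{(k)}, U_j^{(k)})$ in $\Gamma^{(k)}$ and combined with \eqref{def;sdc5}, this gives $\|P^{(k)}_{E_j}\| \leq c^{-1} \|Q(k)\|^{\tau}$.

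To obtain the twin bound on $\|P^{(k)}_{U_j}\|$ one can either swap the roles of $V_1$ and $V_2$ (using that the sine angle for complementary subspaces is symmetric in its arguments, as the two one-sided gaps coincide for pairs in direct sum), or, to remain agnostic about the precise convention for $\sin \angle$ adopted in \eqref{def;sdc5}, one can invoke the finite-dimensional Hilbert-space identity $\|P\| = \|\operatorname{Id} - P\|$ valid for every non-trivial idempotent $P$, together with $P^{(k)}_{E_j} + P^{(k)}_{U_j} = \operatorname{Id}$. Either route produces the same estimate with constant $C = c^{-1}$. I do not see any substantial obstacle here: once the standard projection--angle inequality is recalled, the lemma is essentially a direct restatement of the angle hypothesis \eqref{def;sdc5} in \ref{FDC}.
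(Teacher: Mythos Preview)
Your proposal is correct and follows essentially the same route as the paper: both reduce the projection bounds to $\|P^{(k)}_{E_j}\|,\|P^{(k)}_{U_j}\| \lesssim |\sin\angle(E_j^{(k)},U_j^{(k)})|^{-1}$ and then invoke \eqref{def;sdc5}. The only cosmetic difference is that the paper derives this via an explicit law-of-cosines estimate (picking up a harmless factor $\sqrt{2}$), whereas you quote the standard oblique-projection inequality and the idempotent identity $\|P\|=\|\operatorname{Id}-P\|$ directly.
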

\begin{proof}
Let us consider any $v\in\Gamma^{(k)}$ and set $e:=P^{(k)}_{E_j}v\in E_j^{(k)}$ and $u:=P^{(k)}_{U_j}v\in U_j^{(k)}$.
Then
\begin{align*}
\norm{v}^2 &\geq \norm{e}^2 + \norm{u}^2 - 2 |\cos \angle
(e, u)| \norm{e}\norm{u}
 \geq (\norm{e}^2 + \norm{u}^2 ) \left(1- | \cos \angle
(e, u)| \right) \\
& \geq \max\{\norm{u}^2,\norm{e}^2\} \frac{1}{2}\left(1-  \cos^2 \angle
(e,u) \right)=\max\{\norm{u}^2,\norm{e}^2\} \frac{1}{2} \sin^2 \angle
(e,u).
\end{align*}
It follows that
\[
\|P^{(k)}_{E_j}\| \leq \sqrt{2} \big|\sin \angle
\big(E_j^{(k)}, U_j^{(k)}\big)\big|^{-1}\quad\text{and}\quad\|P^{(k)}_{U_j}\| \leq \sqrt{2} \big|\sin \angle
\big(E_j^{(k)}, U_j^{(k)}\big)\big|^{-1}.\]
In view of \eqref{def;sdc5}, we obtain required  bounds for $\|P^{(k)}_{E_j}\|$ and $\|P^{(k)}_{U_j}\|$.
\end{proof}

\subsection{Diophantine series}\label{sec;DCS}
In the proof of our main results, certain sums and series (defined in Definition \ref{def;dio-series}) relying 
 on the matrices of the (accelerated) cocycle play a central role to control Birkhoff sums of $\varphi_f$. 
 We here show that these quantities, under \ref{FDC}, are well-defined and grow in a controlled way (see Proposition \ref{prop;FDCbound}).

\medskip

For every $a\geq 0$ and $s\geq 1$, let $\langle s\rangle^a= s^a$ if $a>0$ and $\langle s\rangle^a= 1+\log s$ if $a=0$.
\begin{definition}\label{def;dio-series}
For every IET $T : I \rightarrow I$ satisfying Keane's condition, any $0\leq a<1$, any $1\leq i\leq g$, any $\tau>0$ and any accelerating sequence we define sequences
$(K^{a,i,\tau}_{l})_{l\geq 0}, (C^{a,i,\tau}_{k})_{k\geq 0}$ so that
\begin{align*}
&K^{a,i,\tau}_{l}(T):=
\sum_{m\geq l}\norm{Q|_{U_i}(l,m+1)^{-1}}\|Z(m+1)\|\langle\|Q(m)\|\rangle^{a}\|Q(m+1)\|^\tau \text{ for } l \geq 0,
\\
&C^{a,i,\tau}_{k}(T):= \sum_{1 \leq l \leq k}\norm{Q|_{E_i}(l,k)}\|Z(l)\|\langle \|Q(l-1)\|\rangle^{a}\| \|Q(l)\|^\tau.
\end{align*}

\end{definition}

\begin{proposition}\label{prop;FDCbound}
Let $T : I \rightarrow I$ be an IET satisfying FDC and let $0\leq a<1$. Suppose that $2\leq i\leq g+1$ is chosen such that $\lambda_i\leq a\lambda_1<\lambda_{i-1}$.
Then for every $0 < \tau < \frac{\lambda_{i-1}-\lambda_{1}a}{3(1+\lambda_1)}$ the sequences $(K^{a,i,\tau}_l)_{l\geq 0}, (C^{a,i,\tau}_k)_{k\geq 0}$ are well defined and
\begin{align}
K^{a,i,\tau}_{l}(T) = O\big(e^{(\lambda_{1}a+5(1+\lambda_1)\tau)l}\big),\quad \label{prop;sdc-1}
C^{a,i,\tau}_{k}(T) = O\big(e^{(\lambda_{1}a + 5(1+\lambda_1)\tau)k}\big). 
\end{align}
\end{proposition}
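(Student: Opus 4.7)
The plan is to estimate each summand of $K^{a,i,\tau}_l$ and $C^{a,i,\tau}_k$ via the FDC inequalities \eqref{def;sdc1}--\eqref{def;sdc3} and then sum the resulting geometric-type series. The central reduction is to use \eqref{def;sdc0} together with the additivity $r(l, m) = r(0, m) - r(0, l)$ to separate the $l$-dependence from the summation index; this both factors out an $l$-prefactor and exhibits the summand as a geometric-type sequence in the other index.

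First I would fix an auxiliary $\epsilon \in (0, \rho - 1)$, where $\rho \in (1, 1+\tau)$ is the limit from \eqref{def;sdc0}, so that $(\rho - \epsilon) n \leq r(0, n) \leq (\rho + \epsilon) n$ for all $n$ sufficiently large (finitely many exceptional indices are absorbed into constants). For the series $K^{a,i,\tau}_l$, applying \eqref{def;sdc12} to $\|Q|_{U_i}(l, m+1)^{-1}\|$, \eqref{def;sdc2} to $\|Z(m+1)\|$, and \eqref{def;sdc3} to both $\langle \|Q(m)\|\rangle^a$ (with the $a = 0$ logarithm absorbed into an $e^{\tau m}$) and $\|Q(m+1)\|^\tau$, then collecting the $m$-dependent exponents yields a per-term bound
\[
\|Q|_{U_i}(l, m+1)^{-1}\| \|Z(m+1)\| \langle \|Q(m)\|\rangle^a \|Q(m+1)\|^\tau \leq C\,e^{-(\lambda_{i-1}-\tau)\,r(l,m+1) \,+\, \gamma m},
\]
with $\gamma \leq \lambda_1 a + (1 + 3\lambda_1)\tau$ after using $a\le 1$ and $\tau\le 1$.

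Next, splitting $r(l, m+1) = r(0, m+1) - r(0, l)$ factors out the prefactor $e^{(\lambda_{i-1}-\tau)(\rho+\epsilon)l}$ and produces a geometric series in $m$ with ratio $e^{\gamma - (\lambda_{i-1}-\tau)(\rho-\epsilon)}$. Since $\rho - \epsilon > 1$, the convergence condition reduces to $\gamma < \lambda_{i-1} - \tau$, equivalently $\lambda_{i-1} - \lambda_1 a > (2 + 3\lambda_1)\tau$, which is implied by the hypothesis $\tau < (\lambda_{i-1} - \lambda_1 a)/(3(1+\lambda_1))$. Summing the geometric series gives $K^{a,i,\tau}_l \leq C e^{(\gamma + 2\epsilon(\lambda_{i-1}-\tau)) l}$; choosing $\epsilon \leq \tau/2$ (allowed since we may take $\epsilon$ smaller than $\rho - 1 < \tau$) then yields the claimed $O(e^{(\lambda_1 a + 5(1+\lambda_1)\tau) l})$. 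The analysis of $C^{a,i,\tau}_k$ is parallel: one invokes \eqref{def;sdc1} on $\|Q|_{E_i}(l,k)\|$ to bound each summand by $C\,e^{(\lambda_i+\tau)r(l,k)+\gamma' l}$ with $\gamma' = \lambda_1 a + (1+3\lambda_1)\tau$, then uses $r(l,k) = r(0,k)-r(0,l)$ to factor out $e^{(\lambda_i+\tau)(\rho+\epsilon)k}$. The remaining $l$-sum is treated by a case split (geometric ratio $\ge 1$ or $<1$): in either case the worst-case bound combines the prefactor, using $\lambda_i\le \lambda_1 a$ and $\rho\le 1+\tau$, to give $(\lambda_i+\tau)(\rho+\epsilon) \leq \lambda_1 a + (3+2\lambda_1)\tau$, with any extra polynomial-in-$k$ factor absorbed into $e^{\tau k}$.

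The main obstacle is the careful bookkeeping of cumulative $\tau$-slacks: each of the four FDC bounds contributes an $O(\tau)$ error, the $(1+\tau)$-factor in \eqref{def;sdc3} compounds multiplicatively with both $a$ and $\tau$, and the parameter $\epsilon$ (needed to replace $r(0,n)$ by $\rho n$) introduces a further loss in both the $l$-prefactor and the final exponent. Ensuring that all these losses collectively fit inside $5(1+\lambda_1)\tau$ while simultaneously keeping the strict inequality needed for geometric convergence of $K^{a,i,\tau}_l$ is precisely what the constant $3(1+\lambda_1)$ in the hypothesis is designed to provide; tightening that constant would break either the convergence or the claimed growth rate.
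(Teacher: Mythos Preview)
Your proposal is correct and follows essentially the same route as the paper: bound each factor by the FDC inequalities \eqref{def;sdc1}--\eqref{def;sdc3}, split off an $l$- (resp.\ $k$-) prefactor via the additivity of $r(\cdot,\cdot)$, and control the remaining sum as a geometric series whose convergence is guaranteed by the hypothesis on $\tau$. The only notable simplification in the paper's version is that, since the $r_n$ are natural numbers, one has $r(l,m+1)\geq m+1-l$ directly, which replaces your auxiliary $\epsilon$-linearization of $r(0,n)$ and slightly lightens the final bookkeeping of $\tau$-slacks.
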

\begin{proof}
By \eqref{def;sdc1} applied to $j=1$, there exists $C'\geq C$ such that for all $m \geq l+1$, we have
\begin{align}
\langle\|Q(l,m)\|\rangle^0 &=1+\log \|Q(l,m)\| \leq 1+\log C + (\lambda_1+\tau)r(l,m) \leq C'e^{\tau r(l,m)}\\
\langle\|Q(l,m)\|\rangle^a &\leq C'e^{(\lambda_1a+\tau) r(l,m)}\text{ for all }0\leq a<1. \label{eq:Qlma}
\end{align}



By  \eqref{def;sdc12}, \eqref{def;sdc2} and \eqref{def;sdc3}, it follows that for every $ l \geq 0$ we have
\begin{align*}
K^{a,i,\tau}_{l}(T) & \leq \sum_{ m \geq l} Ce^{(-\lambda_{i-1}+\tau)r(l,m+1)}C'e^{(\lambda_{1}a+\tau )r(0,m)} Ce^{\tau(m+1)}C^\tau e^{\lambda_1(1+\tau)(m+1)\tau}\\
& \leq  {(C')}^4\sum_{ m \geq l} Ce^{(-\lambda_{i-1}+\tau)r(l,m+1)}e^{(\lambda_{1}a+2\tau(1+\lambda_1) )r(0,m+1)} \\
& \leq  {(C')}^4e^{(\lambda_{1}a+2\tau(1+\lambda_1))r(0,l)}\sum_{ m \geq l} e^{(-\lambda_{i-1}+\lambda_{1}a+3\tau(1+\lambda_1))r(l,m+1)}\\
& \leq  {(C')}^4e^{(\lambda_{1}a+2\tau(1+\lambda_1))r(0,l)}\sum_{ m \geq l} e^{(-\lambda_{i-1}+\lambda_{1}a+3\tau(1+\lambda_1))(m+1-l)}\\
& \leq  {(C')}^4e^{(\lambda_{1}a+2\tau(1+\lambda_1))r(0,l)}\sum_{ j = l}^\infty e^{(-\lambda_{i-1}+\lambda_{1}a+3\tau(1+\lambda_1))j}.
\end{align*}
In view of \eqref{def;sdc0}, it follows that
\begin{align*}
\limsup_{l\to\infty}\frac{\log K^{a,i,\tau}_{l}(T)}{k} & \leq\lim_{l\to\infty}\Big( \frac{(\lambda_{1}a+2(1 +\lambda_1)\tau)r(0,l)}{l}\Big)\\
& < (\lambda_{1}a+2(1+\lambda_1)\tau)(1+\tau) < \lambda_{1}a + 5(1+\lambda_1)\tau,
\end{align*}
which gives the left part of \eqref{prop;sdc-1}.
\medskip

For the second bound, we apply \eqref{def;sdc1}, \eqref{def;sdc2}, \eqref{def;sdc3} and \eqref{eq:Qlma},
\begin{align*}
C^{a,i,\tau}_{k}(T) & \leq \sum_{0 \leq l \leq k} Ce^{(\lambda_{i}+\tau)r(l,k)}C'e^{(\lambda_1 a+\tau )r(0,l-1)}Ce^{\tau l} C^\tau e^{\lambda_1(1+\tau)l\tau} \\
& \leq {(C')}^4\sum_{0 \leq l \leq k} e^{(\lambda_{i}+\tau)r(l,k)}e^{(\lambda_1 a+2\tau(1+\lambda_1))r(0,l)} \\
& \leq {(C')}^4 e^{(\lambda_1 a+ 2\tau(1+\lambda_1))r(0,k)}\sum_{0 \leq l \leq k} e^{-(\lambda_i-\lambda_1 a+\tau)r(l,k)} \\
& \leq {(C')}^4e^{(\lambda_1 a+ 2\tau(1+\lambda_1))r(0,k)}\sum_{0 \leq l \leq k} e^{-(\lambda_i-\lambda_1 a+\tau)(k-l)} \\
& \leq {(C')}^4 e^{(\lambda_1 a+ 2\tau(1+\lambda_1))r(0,k)}\sum_{ l=0}^\infty e^{-(\lambda_i-\lambda_1 a+\tau)l}.
\end{align*}
In view of \eqref{def;sdc0}, it follows that
\begin{align*}
\limsup_{k\to\infty}\frac{\log C^{a,i,\tau}_{k}(T)}{k}& \leq\lim_{k\to\infty}\Big( \frac{(\lambda_{1}a+2(1 +\lambda_1)\tau)r(0,k)}{k}\Big)\\
& < (\lambda_{1}a+2(1+\lambda_1)\tau)(1+\tau) < \lambda_{1}a + 5(1+\lambda_1)\tau,
\end{align*}
which gives the right part of \eqref{prop;sdc-1}.

\end{proof}

\section{Functions with polynomial singularities}\label{sec;polycocycle}

In this section we first introduce  a one-parameter family of spaces of cocycles with polynomial singularities over a given IET.
Then we adopt the norms which make the spaces of cocycles a Banach space. These new Banach spaces are inspired by the notion of cocycles with logarithmic singularities and the corresponding space studied in \cite{Fr-Ul, Fr-Ul2}. We also prove several properties of them which will play a key role in the proofs of the main results.

\subsection{Spaces $\pag$ and $\wpa$}\label{sec;cocycle}
Fix $0\leq a<1$ and an IET $T=T_{\pi,\lambda}$. For every $\alpha\in\mathcal{A}$, denote by $m_\alpha$ the middle point of the interval $I_\alpha$, i.e.\ $m_\alpha = (l_\alpha + r_\alpha)/2$. Denote by $C^1(\sqcup_{\alpha \in \mathcal{A}}I_\alpha )$ the space of $C^1$-function $\varphi : \bigcup_{\alpha\in\mathcal{A}}(l_\alpha,r_\alpha) \rightarrow \R$. For every $\varphi\in C^1(\sqcup_{\alpha \in \mathcal{A}}I_\alpha )$ let us consider
\begin{align*}
p_a(\varphi):=& \sup\Big\{\min_{\bar x \in End(T)}|\varphi'(x)(x-\bar x)^{1+a}| : x \in I \backslash End(T)\Big\}\\
=& \max_{\alpha\in\mathcal{A}}\Big\{\sup_{x\in(l_\alpha,m_\alpha]}|\varphi'(x)(x-l_\alpha)^{1+a}|,\sup_{x\in[m_\alpha,r_\alpha)}|\varphi'(x)(r_\alpha-x)^{1+a}|\Big\}.
\end{align*}

\begin{definition}\label{def;pa}
We denote by $\pa(\sqcup_{\alpha \in \mathcal{A}}I_\alpha )$ the space of functions $\varphi\in C^1(\sqcup_{\alpha \in \mathcal{A}}I_\alpha )$ such that
$p_a(\varphi)<+\infty$ and for every $\alpha\in\mathcal{A}$ the limits
\[C_\alpha^+=C_\alpha^+(\varphi):=-\lim_{x\searrow l_\alpha}\varphi'(x)(x-l_\alpha)^{1+a}\text{ and }C_\alpha^-=C_\alpha^-(\varphi):=\lim_{x\nearrow r_\alpha}\varphi'(x)(r_\alpha-x)^{1+a}\]
exist.
Then we say that $\varphi\in \pa(\sqcup_{\alpha \in \mathcal{A}}I_\alpha )$ has \emph{polynomial singularities} of degree at most $a$.
If $\varphi\in \pa(\sqcup_{\alpha \in \mathcal{A}}I_\alpha )$ then
\begin{equation}\label{neq:Cpa}
|C_\alpha^+(\varphi)|\leq p_a(\varphi) \text{ and }|C_\alpha^-(\varphi)|\leq p_a(\varphi) \text{ for all }\alpha\in\mathcal{A}.
\end{equation}
We denote by $\pag(\sqcup_{\alpha \in \mathcal{A}}I_\alpha )\subset \pa(\sqcup_{\alpha \in \mathcal{A}}I_\alpha )$ the space of functions with polynomial singularities of \emph{geometric type}, i.e.\ such that  \[C_{\pi_0^{-1}(d)}^{-} \cdot C_{\pi_1^{-1}(d)}^{-}=0\quad\text{and}\quad C_{\pi_0^{-1}(1)}^{+} \cdot C_{\pi_1^{-1}(1)}^{+}=0.\]
\end{definition}

For every $0\leq a<1$, let us consider the space   $\wpa(\sqcup_{\alpha \in \mathcal{A}}I_\alpha)$ of Borel functions $\varphi:I\to\R$ such that
\begin{align*}
\max_{\alpha\in\mathcal{A}}\Big\{\sup_{x\in(l_\alpha,m_\alpha]}|\varphi(x)(x-l_\alpha)^{a}|,\sup_{x\in[m_\alpha,r_\alpha)}|\varphi(x)(r_\alpha-x)^{a}|
\Big\}<+\infty&\text{ if $0<a<1$, or}\\
\max_{\alpha\in\mathcal{A}}\Big\{\sup_{x\in(l_\alpha,m_\alpha]}\frac{|\varphi(x)|}{|\log(x-l_\alpha)|},
\sup_{x\in[m_\alpha,r_\alpha)}\frac{|\varphi(x)|}{|\log(r_\alpha-x)|}
\Big\}<+\infty&\text{ if }a=0.
\end{align*}
By Lemma~\ref{lemlogosc0}, $\pa(\sqcup_{\alpha \in \mathcal{A}}I_\alpha )\subset\wpa(\sqcup_{\alpha \in \mathcal{A}}I_\alpha )\subset L^1(I)$.

\medskip

The following result explains how both types of the spaces appear when considering the deviation of Birkhoff integrals for locally Hamiltonian flows.
The proof of theorem (and extended version (Theorem~\ref{thm:phifform})) is postponed to \S\ref{sec:regularity}.

\begin{theorem}\label{thm:ftophi}
Let $\psi_\R$ be a locally Hamiltonian flow, $M'\subset M$ its minimal component and $I\subset M'$ a transversal curve. Let
$m:=\max \{m_\sigma:\sigma\in \mathrm{Fix}(\psi_\R)\cap M'\}$.
\begin{itemize}
\item[(i)]
For every $f\in C^m(M)$ we have
\[\varphi_f \in \pag(\sqcup_{\alpha\in \mathcal{A}}
I_{\alpha})\text{ and  }\varphi_{|f|} \in \operatorname{\widehat{P}_{a}}(\sqcup_{\alpha\in \mathcal{A}}
I_{\alpha})\text{ with }a=\tfrac{m-2}{m}.\]
\item[(ii)]
Assume that $\sigma\in  \mathrm{Fix}(\psi_\R)\cap M'$ and $f:M\to\R$ is a $C^m$-map such that $f$ vanishes on an open neighbourhood of $\{\sigma'\in \mathrm{Fix}(\psi_\R):\sigma'\neq\sigma\}$.
 For every  $0\leq k\leq m_\sigma-2$ if $f^{(j)}(\sigma)=0$ for $0\leq j<k$ then
 \[\varphi_f \in \operatorname{P_{b(\sigma,k)}G}(\sqcup_{\alpha\in \mathcal{A}}
I_{\alpha})\text{  and }\varphi_{|f|} \in \operatorname{\widehat{P}_{b(\sigma,k)}}(\sqcup_{\alpha\in \mathcal{A}}
I_{\alpha})\text{ with }b(\sigma,k)=\tfrac{m_\sigma-2-k}{m_\sigma}.\]
\end{itemize}
\end{theorem}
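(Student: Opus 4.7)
The overall strategy is to reduce the analysis of $\varphi_f$ to a local computation near each saddle. Fix small neighbourhoods $U_\sigma' \subset\subset U_\sigma$ for each $\sigma \in \mathrm{Fix}(\psi_\R) \cap M'$ on which the singular chart is defined, and choose a smooth partition of unity $\{\chi_\sigma\} \cup \{\chi_0\}$ with $\chi_\sigma$ supported in $U_\sigma$ and $\chi_0$ supported in $M'\setminus\bigcup_\sigma \overline{U_\sigma'}$. Writing $f = \chi_0 f + \sum_\sigma \chi_\sigma f$, one gets $\varphi_f = \varphi_{\chi_0 f} + \sum_\sigma \varphi_{\chi_\sigma f}$. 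Since $\chi_0 f$ is smooth and its support is compactly contained in the region of uniformly bounded return time through $U_\sigma'$-free orbit pieces, the contribution $\varphi_{\chi_0 f}$ is piecewise $C^1$ on $I$ with uniformly bounded derivative, hence trivially belongs to $\pag$ and $\wpa$. All singular behaviour therefore comes from the localized pieces $\varphi_{\chi_\sigma f}$.

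The key step is a local analysis around one saddle $\sigma$ of multiplicity $m_\sigma$. In the singular chart with $H(x,y)=\Im(x+iy)^{m_\sigma}$ and $d\mu = V(x,y)\,dx\wedge dy$, I would parametrize orbits entering $U_\sigma$ by the Hamiltonian value $h = H$ (conserved along orbits) and the entry point on $\partial U_\sigma$. The discontinuities $e \in I$ of the Poincar\'e map $T$ are precisely the points on $I$ whose forward $\psi_\R$-orbits hit a separatrix incoming to $\sigma$; by transversality of $I$, near such an endpoint one has $h(x) = c(x-e)(1+o(1))$ with $c\neq 0$. Using $|X| \asymp m_\sigma|z|^{m_\sigma-1}/V$ and the fact that the closest approach along the level set $\{H=h\}$ satisfies $|z|_{\min} \asymp |h|^{1/m_\sigma}$, a direct asymptotic computation (e.g.\ via the change of variables $(u,v) = (\Re z^{m_\sigma}, \Im z^{m_\sigma})$, which rectifies the level foliation) gives
\[
\int_0^{t_\sigma(h)} f(\psi_s x)\,ds = (fV)(\sigma)\,C_\sigma |h|^{-(m_\sigma-2)/m_\sigma} + O(|h|^{-(m_\sigma-2)/m_\sigma + 1/m_\sigma}),
\]
where $t_\sigma(h)$ is the sojourn time in $U_\sigma$. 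Differentiating in $x$ with $dh/dx = c$ produces $|\varphi'_{\chi_\sigma f}(x)(x-e)^{1+a_\sigma}| = O(1)$ for $a_\sigma = (m_\sigma-2)/m_\sigma$, and the explicit one-sided limits at $e$ identify the constants $C_\alpha^{\pm}(\varphi_f)$ from the value of $fV$ at $\sigma$ (this is where $f\in C^{m_\sigma}$, rather than merely $C^1$, is used, to push the expansion past the leading singular term). Taking $a = (m-2)/m$ with $m = \max m_\sigma$ absorbs all saddles into a common class and proves (i); the corresponding estimate for $\varphi_{|f|}$ requires only a one-sided bound on $|f|$ and therefore falls into $\wpa$.

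For part (ii), the vanishing hypothesis $f^{(j)}(\sigma)=0$ for $0\leq j<k$ gives $|f(z)|\leq C|z|^k$ near $\sigma$ by Taylor's theorem. Plugging this into the local integral and using $|z(s)|\geq |h|^{1/m_\sigma}$ along the level curve, the extra factor of $|z|^k$ depresses the divergence by $|h|^{k/m_\sigma}$, yielding
\[
\int_0^{t_\sigma(h)} f(\psi_s x)\,ds = \partial_\sigma^{(k)}\text{-term}\cdot|h|^{-(m_\sigma-2-k)/m_\sigma} + (\text{lower order}),
\]
so that $\varphi_f\in P_{b(\sigma,k)}$ with $b(\sigma,k)=(m_\sigma-2-k)/m_\sigma$; since by hypothesis $f$ vanishes near every other saddle, no other singular contribution enters. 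The geometric-type condition $C^{\mp}_{\pi_\varepsilon^{-1}(d\text{ or }1)}=0$ reduces to the observation that the global endpoints $0,|I|$ of the transversal $I$ themselves lie on incoming separatrices, so exactly one of the two intervals adjacent to the boundary point contributes a one-sided singularity, the opposite side being free of any separatrix passage.

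The main obstacle is the explicit local asymptotic of $\int_0^{t_\sigma(h)} f(\psi_s x)\,ds$ around a multi-saddle $m_\sigma\geq 3$: one must analyse each of the $2m_\sigma$ separatrix sectors, track how the orbit can enter one sector and leave by an adjacent one, and combine the contributions (the $C^\pm_\alpha$ of each side may come from different sectors and different components of the source function $fV$). For $m_\sigma=2$ this is the classical logarithmic-saddle calculation carried out in \cite{Fr-Ul}; generalizing it to $m_\sigma\geq 3$ requires the extended framework of \S\ref{sec:locHam} and is the substantive technical content of this part of the paper. A secondary point to verify carefully is the existence of the limits defining $C_\alpha^\pm(\varphi_f)$ (not just boundedness of $p_a(\varphi_f)$); this requires computing the asymptotic expansion to one order past the leading divergence, which is precisely why the hypothesis $f\in C^m$ rather than $C^1$ is imposed.
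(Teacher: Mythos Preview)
Your proposal is correct and follows essentially the same route as the paper: localize near each saddle, rectify via $z\mapsto z^{m_\sigma}$ (Lemma~\ref{lem:segm} gives exactly your integral formula, with $s=h$ and $c=1$ in the standard parametrization), and for (ii) use the Taylor bound $|f(z)|\le C|z|^k$ as in Lemma~\ref{lemma;s2a}. The only cosmetic differences are that the paper splits the time integral $[0,g(x)]$ at the entry/exit times of $D_{\sigma,\vep}$ rather than using a partition of unity on $M$, and that for part~(i) the existence of the limits $C_\alpha^\pm$ already follows from $f\in C^1$ (Lemma~\ref{lem:pag}), not $C^{m_\sigma}$ as you suggest.
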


For every $0\leq a<1$, let us consider the norm on $\pa(\sqcup_{\alpha \in \mathcal{A}}I_\alpha )$ given by
\[\|\varphi\|_a:= \|\varphi\|_{L^1(I)}+p_a(\varphi).\]

\begin{lemma}
For every $0\leq a<1$ the space $\pa(\sqcup_{\alpha \in \mathcal{A}}I_\alpha )$ equipped with the norm $\|\,\cdot\,\|_a$ is Banach. Moreover, $\pag(\sqcup_{\alpha \in \mathcal{A}}I_\alpha )$ is its closed subspace.
\end{lemma}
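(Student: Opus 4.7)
The plan is to verify the defining axioms of a Banach space by taking a Cauchy sequence $(\varphi_n)_{n\geq 1}$ in $\pa(\sqcup_{\alpha \in \mathcal{A}}I_\alpha)$ with respect to $\|\cdot\|_a$ and constructing its limit in stages. Since $\|\cdot\|_{L^1(I)}\leq\|\cdot\|_a$, the sequence is Cauchy in $L^1$, hence by $L^1$-completeness there is some $\varphi\in L^1(I)$ with $\varphi_n\to\varphi$ in $L^1$. Separately, for each $\alpha\in\mathcal{A}$, I would look at the auxiliary weighted derivatives $g_n^{\alpha,+}(x):=\varphi_n'(x)(x-l_\alpha)^{1+a}$ on $(l_\alpha,m_\alpha]$ and $g_n^{\alpha,-}(x):=\varphi_n'(x)(r_\alpha-x)^{1+a}$ on $[m_\alpha,r_\alpha)$. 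By the definition of $p_a$, each difference $g_n^{\alpha,\pm}-g_m^{\alpha,\pm}$ is bounded uniformly by $p_a(\varphi_n-\varphi_m)$, so these are Cauchy in the sup norm. Moreover each $g_n^{\alpha,+}$ extends continuously to the closed interval $[l_\alpha,m_\alpha]$ with boundary value $-C_\alpha^+(\varphi_n)$ (analogously for $g_n^{\alpha,-}$), so uniform completeness yields continuous limits $g^{\alpha,\pm}$ and real numbers $c_\alpha^\pm:=\lim_n C_\alpha^\pm(\varphi_n)$.

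Next I would reconstruct a $C^1$-derivative on each $I_\alpha$: define $\tilde\varphi'(x):=g^{\alpha,+}(x)/(x-l_\alpha)^{1+a}$ on $(l_\alpha,m_\alpha]$ and $\tilde\varphi'(x):=g^{\alpha,-}(x)/(r_\alpha-x)^{1+a}$ on $[m_\alpha,r_\alpha)$. These two definitions agree at $m_\alpha$ because each $\varphi_n'(m_\alpha)$ can be computed via either formula, and the common value passes to the limit. On any compact sub-interval of $(l_\alpha,r_\alpha)$ the weights $(x-l_\alpha)^{1+a}$ and $(r_\alpha-x)^{1+a}$ are bounded away from zero, so $\varphi_n'\to\tilde\varphi'$ uniformly on compacta. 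Combined with $\varphi_n\to\varphi$ in $L^1$, a standard argument forces $\varphi$ (after possible modification on a null set) to be $C^1$ on each $(l_\alpha,r_\alpha)$ with $\varphi'=\tilde\varphi'$. By construction $C_\alpha^\pm(\varphi)=c_\alpha^\pm$ exist, and $p_a(\varphi_n-\varphi)=\max_{\alpha,\pm}\|g_n^{\alpha,\pm}-g^{\alpha,\pm}\|_\infty\to 0$, so $\|\varphi_n-\varphi\|_a\to 0$, completing the completeness proof.

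For the closedness of $\pag$, the key observation is that the linear functionals $C_\alpha^\pm:\pa\to\R$ are continuous with respect to $\|\cdot\|_a$: this follows directly from the bound $|C_\alpha^\pm(\varphi)|\leq p_a(\varphi)$ stated in \eqref{neq:Cpa}. Hence if $\varphi_n\in\pag$ and $\varphi_n\to\varphi$ in $\pa$, then $C_\alpha^\pm(\varphi_n)\to C_\alpha^\pm(\varphi)$ for every $\alpha$, so the two defining products
$C_{\pi_0^{-1}(d)}^-(\varphi)\cdot C_{\pi_1^{-1}(d)}^-(\varphi)$ and $C_{\pi_0^{-1}(1)}^+(\varphi)\cdot C_{\pi_1^{-1}(1)}^+(\varphi)$ vanish in the limit.

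The main (mild) obstacle I anticipate is checking that the candidate derivative $\tilde\varphi'$ is actually continuous at the midpoint $m_\alpha$ of each interval, rather than merely continuous on each half, so that the limit genuinely lies in $C^1(\sqcup_\alpha I_\alpha)$; this is inherited from the fact that every $\varphi_n$ is $C^1$ across $m_\alpha$ and that $\varphi_n'$ converges uniformly on a neighborhood of $m_\alpha$. Beyond this bookkeeping, the argument is the routine pattern for completeness of a weighted $C^1$-type space.
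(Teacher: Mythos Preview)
Your argument is correct and follows the same overall pattern as the paper's proof: obtain an $L^1$ limit, show the weighted derivatives converge uniformly, and identify the limit as a genuine $C^1$ function in $\pa$. The organization differs in one useful way. The paper restricts to compact sub-intervals $[l_\alpha+\vep,r_\alpha-\vep]$ and uses completeness of $C^1_\vep$ for each $\vep>0$; this gives $p_a(\varphi_n-\varphi)\to 0$, but then the existence of $C^\pm_\alpha(\varphi)$ is handled separately by extracting a convergent subsequence of $(C^\pm_\alpha(\varphi_n))_n$ (using only boundedness) and checking by hand that these subsequential limits equal the required one-sided limits of $\varphi'(x)(x-l_\alpha)^{1+a}$. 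Your version is more direct: by observing that each $g_n^{\alpha,+}$ extends continuously to the \emph{closed} interval $[l_\alpha,m_\alpha]$ with value $-C^+_\alpha(\varphi_n)$ at $l_\alpha$, uniform completeness of $C([l_\alpha,m_\alpha])$ delivers convergence of $C^\pm_\alpha(\varphi_n)$ along the full sequence automatically, with no subsequence extraction needed. This also makes the closedness of $\pag$ a one-line consequence of the continuity of the functionals $C^\pm_\alpha$ via \eqref{neq:Cpa}, whereas the paper argues again through the subsequence. Both routes are valid; yours is a little cleaner.
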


\begin{proof}
Suppose that $0<a<1$. In the case where $a=0$ the proof proceeds in the same way.
For every $\vep>0$ let $C^0_\vep(\sqcup_{\alpha \in \mathcal{A}}I_\alpha )$(or $C^1_\vep(\sqcup_{\alpha \in \mathcal{A}}I_\alpha )$) be the space of
$C^0$/$C^1$-maps on $\bigcup_{\alpha\in\mathcal{A}}[l_\alpha+\vep,r_\alpha-\vep]$ equipped with the standard norm denoted by
$\|\,\cdot\,\|_{C^0_\vep}$ (or $\|\,\cdot\,\|_{C^1_\vep}$). In view of \eqref{eqn;upperboundvarphi}, there exists $C\geq 1$ such that for every $\vep>0$
and for all $\varphi\in \pa(\sqcup_{\alpha \in \mathcal{A}}I_\alpha )$ we have
\begin{equation}\label{neq:c0}
\|\varphi\|_{C^0_\vep}\leq C\vep^{-a}\|\varphi\|_a\text{ and }\|\varphi'\|_{C^0_\vep}\leq \vep^{-a-1}\|\varphi\|_a.
\end{equation}
Hence
\begin{equation}\label{neq:c1l1}
\|\varphi\|_{C^1_\vep}\leq C\vep^{-a-1}\|\varphi\|_a\text{ and }\|\varphi\|_{L^1(I)}\leq \|\varphi\|_a.
\end{equation}
On the other hand, for every $\varphi\in C^1(\sqcup_{\alpha \in \mathcal{A}}I_\alpha)$ we have
\begin{equation}\label{neq:pac0}
p_a(\varphi)\leq\sup_{\vep>0}\vep^{1+a}\|\varphi'\|_{C^0_\vep}.
\end{equation}

Let $(\varphi_n)_{n\geq 1}$ be a Cauchy sequence in $\pa(\sqcup_{\alpha \in \mathcal{A}}I_\alpha )$. Then $\sup_{n\geq 1}\|\varphi_n\|_a<+\infty$.
Since $L^1(I)$ and $C^1_\vep(\sqcup_{\alpha \in \mathcal{A}}I_\alpha )$ for all $\vep>0$ are Banach spaces, in view of \eqref{neq:c1l1}, the sequence $(\varphi_n)_{n\geq 1}$  converges in
$L^1(I)$ and in $C^1_\vep(\sqcup_{\alpha \in \mathcal{A}}I_\alpha )$ for all $\vep>0$. Then its limit $\varphi$ belong to
$L^1(I)$ and to $C^1(\sqcup_{\alpha \in \mathcal{A}}I_\alpha )$.

By assumption, for every $\delta>0$ there exists $N\in\N$ such that $\|\varphi_n-\varphi_m\|_a<\delta/C$ if $m,n\geq N$.
In view of \eqref{neq:c0} and \eqref{neq:pac0}, for every $\vep>0$ and $n\geq N$ we have
\begin{align*}
\|\varphi'_n-\varphi'\|_{C^0_\vep}&= \lim_{m\to\infty}\|\varphi_n'-\varphi_m'\|_{C^0_\vep}\leq C\vep^{-a-1}\limsup_{m\to\infty}\|\varphi_n-\varphi_m\|_a\\
&\leq C \vep^{-a-1}\sup_{m\geq N}\|\varphi_n-\varphi_m\|_a\leq \vep^{-a-1}\delta.
\end{align*}
In view of \eqref{neq:pac0}, this gives
\begin{equation}\label{neq:padelta}
p_a(\varphi_n-\varphi)\leq\sup_{\vep>0}\vep^{1+a}\|\varphi'_n-\varphi'\|_{C^0_\vep}\leq \delta\text{ for all }n\geq N.
\end{equation}
It follows that $p_a(\varphi)<\infty$ and $\|\varphi_n-\varphi\|_a\to 0$ as $n\to\infty$. By \eqref{neq:Cpa},
\[\sup\{|C^+_\alpha(\varphi_n)|, |C^-_\alpha(\varphi_n)|:n\geq 1,\alpha\in\mathcal{A}\}\leq \sup_{n\geq 1}p_a(\varphi_n)\leq \sup_{n\geq 1}
\|\varphi_n\|_a<\infty.\]
Therefore, there exists a subsequence $(\varphi_{k_n})_{n\geq 1}$ such that
\begin{equation}\label{eq:C+-}
C^+_\alpha:=\lim_{n\to\infty}C^+_\alpha(\varphi_{k_n})\text{ and }C^-_\alpha:=\lim_{n\to\infty}C^-_\alpha(\varphi_{k_n})
\end{equation}
exist for all $\alpha\in\mathcal{A}$. Choose $m\geq 1$ such that $k_m\geq N$ and
\[|C^+_\alpha-C^+_\alpha(\varphi_{k_m})|<\delta\text{ and }|C^-_\alpha-C^-_\alpha(\varphi_{k_m})|<\delta\text{ for all }\alpha\in\mathcal{A}.\]
As $\varphi_{k_m}\in \pa(\sqcup_{\alpha \in \mathcal{A}}I_\alpha )$, there exists $\delta'>0$ such that
\begin{gather*}
|C^+_\alpha(\varphi_{k_m})+\varphi_{k_m}'(x)(x-l_\alpha)^{1+a}|<\delta \text{ if }x\in (l_\alpha,l_\alpha+\delta')\\
|C^-_\alpha(\varphi_{k_m})-\varphi_{k_m}'(x)(r_\alpha-x)^{1+a}|<\delta \text{ if }x\in (r_\alpha-\delta',r_\alpha).
\end{gather*}
In view of \eqref{neq:padelta}, it follows that for every $x\in (l_\alpha,l_\alpha+\delta')$ we have
\begin{align*}
|&C^+_\alpha+\varphi'(x)(x-l_\alpha)^{1+a}|\\
&\leq|C^+_\alpha-C^+_\alpha(\varphi_{k_m})|+|C^+_\alpha(\varphi_{k_m})+\varphi_{k_m}'(x)(x-l_\alpha)^{1+a}|\\
&\quad+
|\varphi'(x)(x-l_\alpha)^{1+a}-\varphi_{k_m}'(x)(x-l_\alpha)^{1+a}|\\
&\leq |C^+_\alpha-C^+_\alpha(\varphi_{k_m})|+|C^+_\alpha(\varphi_{k_m})+\varphi_{k_m}'(x)(x-l_\alpha)^{1+a}|+p_a(\varphi-\varphi_{k_m})< 3\delta
\end{align*}
for every $x\in (r_\alpha-\delta',r_\alpha)$ we have
\begin{align*}
|&C^-_\alpha-\varphi'(x)(r_\alpha-x)^{1+a}|\\
&\leq|C^-_\alpha-C^-_\alpha(\varphi_{k_m})|+|C^-_\alpha(\varphi_{k_m})-\varphi_{k_m}'(x)(r_\alpha-x)^{1+a}|\\
&\quad+
|\varphi'(x)(r_\alpha-x)^{1+a}-\varphi_{k_m}'(x)(r_\alpha-x)^{1+a}|\\
&\leq |C^-_\alpha-C^-_\alpha(\varphi_{k_m})|+|C^-_\alpha(\varphi_{k_m})-\varphi_{k_m}'(x)(r_\alpha-x)^{1+a}|+p_a(\varphi-\varphi_{k_m})< 3\delta.
\end{align*}
Therefore
\begin{equation}\label{eq:C+-varphi}
C_\alpha^+=-\lim_{x\searrow l_\alpha}\varphi'(x)(x-l_\alpha)^{1+a}\text{ and }C_\alpha^-=\lim_{x\nearrow r_\alpha}\varphi'(x)(r_\alpha-x)^{1+a},
\end{equation}
so $\varphi\in \pa(\sqcup_{\alpha \in \mathcal{A}}I_\alpha)$.

\medskip

Now additionally suppose that $\varphi_n\in \pag(\sqcup_{\alpha \in \mathcal{A}}I_\alpha)$ for every $n\geq 1$.
Hence
\[ C_{\pi_0^{-1}(d)}^{-}(\varphi_{k_n})\cdot C_{\pi_1^{-1}(d)}^{-}(\varphi_{k_n})=0\text{ and }
C_{\pi_0^{-1}(1)}^{+}(\varphi_{k_n})\cdot C_{\pi_1^{-1}(1)}^{+}(\varphi_{k_n})=0\]
for every $n\geq 1$. In view of \eqref{eq:C+-} and \eqref{eq:C+-varphi}, this gives
\[ C_{\pi_0^{-1}(d)}^{-}(\varphi)\cdot C_{\pi_1^{-1}(d)}^{-}(\varphi)=0\text{ and }
C_{\pi_0^{-1}(1)}^{+}(\varphi)\cdot C_{\pi_1^{-1}(1)}^{+}(\varphi)=0,\]
so $\varphi\in \pag(\sqcup_{\alpha \in \mathcal{A}}I_\alpha)$. It follows that $\pag(\sqcup_{\alpha \in \mathcal{A}}I_\alpha)$
is a Banach space as well.
\end{proof}

\subsection{Basic properties of functions with polynomial singularities.}
In this subsection we present basic properties of $\pag$-functions.
Most of them are general versions of inequalities from \cite{Fr-Ul, Fr-Ul2}.
\medskip

For every integrable function $f:I\to\R$ and a subinterval $J\subset I$, let $m(f,J)$ stand for the mean value of $f$ on $J$,
that is
\begin{equation}\label{def;m(f,J)}
m(f,J)=\frac{1}{|J|}\int_Jf(x)\,dx.
\end{equation}
For every IET $T$ we define the corresponding mean value \emph{projection operator} $\mathcal{M}:L^1(I)\to \Gamma$ by
\begin{equation}\label{eqn;projec}
\mathcal{M}(f)=\sum_{\alpha\in\mathcal{A}}m(f,I_\alpha)\chi_{I_\alpha}.
\end{equation}
This operator projects a function onto a piece-wise constant function, whose value is equal to the mean value of $f$ on the exchanged intervals $I_\alpha$, $\alpha\in\mathcal{A}$. 

\begin{lemma}\label{lemlogosc0}
Suppose that $0\leq a<1$.
Let us consider any $C^1$-map $f:J \rightarrow \R$ with $J:=(x_0,x_1]$ such that $|f'(x)(x-x_0)^{1+a}|\leq C$ for $x \in (x_0,x_1]$.
Then for every $s\in (x_0,x_1]$, we have
\begin{align}
\label{neq:fsa}
&|f(s)-m(f,J)| \leq C\left(\frac{1}{a(s-x_0)^{a}}+\frac{2a-1}{a(1-a)}\frac{1}{|J|^{a}}\right), \quad \text{ if } 0<a <1\\
\label{neq:fs0}
&|f(s)-m(f,J)| \leq  C\left(\log\frac{|J|}{s-x_0}+1 \right), \quad \text{ if } a = 0.
\end{align}
Moreover, for every $0\leq a<1$ we have
\begin{equation}\label{neq:intfs}
\frac{1}{|J|}\int_J\left|f(s) - m(f,J)\right|\,ds\leq \frac{2C}{(1-a)|J|^a}.
\end{equation}
\end{lemma}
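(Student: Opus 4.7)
The plan is to exploit the identity
\[
f(s) - m(f,J) \;=\; \frac{1}{|J|}\int_J \bigl(f(s)-f(t)\bigr)\,dt,
\]
and to bound the integrand via the Fundamental Theorem of Calculus combined with the hypothesis $|f'(u)|\le C(u-x_0)^{-1-a}$. For $s,t\in(x_0,x_1]$ with $s<t$ this yields
\[
|f(s)-f(t)|\le C\int_s^t \frac{du}{(u-x_0)^{1+a}},
\]
and the same inequality holds when $s>t$ by symmetry. Evaluating the integral gives, in the case $0<a<1$,
\[
|f(s)-f(t)|\le \frac{C}{a}\bigl|(s-x_0)^{-a}-(t-x_0)^{-a}\bigr|,
\]
and in the case $a=0$, $|f(s)-f(t)|\le C\bigl|\log(s-x_0)-\log(t-x_0)\bigr|$.

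Next, I would substitute these bounds into the averaging identity and, after the change of variable $u=t-x_0$, reduce to computing the one‑dimensional integral
\[
\frac{1}{|J|}\int_0^{|J|}\bigl|\sigma^{-a}-u^{-a}\bigr|\,du\quad\text{with }\sigma=s-x_0.
\]
Splitting this integral at $u=\sigma$ removes the absolute value; the two antiderivatives are explicit, and a short computation gives
\[
\int_0^{|J|}|\sigma^{-a}-u^{-a}|\,du = \sigma^{-a}|J| + \frac{2a\,\sigma^{1-a}}{1-a} - \frac{|J|^{1-a}}{1-a}.
\]
Using $\sigma\le |J|$ to bound $\sigma^{1-a}\le |J|^{1-a}$ (which is where $a<1$ is essential) and dividing by $|J|$ produces exactly \eqref{neq:fsa}. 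The same splitting of the integral $\int_0^{|J|}|\log\sigma-\log u|\,du$ yields $|J|\log(|J|/\sigma)+2\sigma-|J|$, and the inequality $2\sigma/|J|\le 2$ gives \eqref{neq:fs0}.

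Finally, for \eqref{neq:intfs}, rather than integrating \eqref{neq:fsa}/\eqref{neq:fs0} against optimal constants, I would integrate the pointwise bounds just obtained directly over $s\in J$. For $0<a<1$ the two contributions are
\[
\int_{x_0}^{x_1}\frac{C\,ds}{a(s-x_0)^{a}} = \frac{C|J|^{1-a}}{a(1-a)},\qquad
\int_{x_0}^{x_1}\frac{C(2a-1)\,ds}{a(1-a)|J|^{a}} = \frac{C(2a-1)|J|^{1-a}}{a(1-a)},
\]
whose sum telescopes to $\frac{2C|J|^{1-a}}{1-a}$; dividing by $|J|$ gives \eqref{neq:intfs}. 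The case $a=0$ is handled identically using $\int_0^{|J|}\log(|J|/u)\,du=|J|$. The only mild care needed is to keep track of the split at $u=\sigma$ and to handle the sign of $(2a-1)$, which is why the sharp constant $(2a-1)/(a(1-a))$ (negative for $a<1/2$, positive for $a>1/2$) appears in \eqref{neq:fsa} instead of a cruder absolute value; this is the only slightly delicate bookkeeping in the argument.
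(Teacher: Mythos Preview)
Your proof is correct and follows essentially the same approach as the paper: both start from the averaging identity $f(s)-m(f,J)=\frac{1}{|J|}\int_J(f(s)-f(t))\,dt$, bound $|f(s)-f(t)|$ via the derivative hypothesis, and compute the resulting one-dimensional integral exactly (your split at $u=\sigma$ yields precisely the paper's intermediate expression). The only cosmetic difference is that for \eqref{neq:intfs} you integrate the final bound \eqref{neq:fsa}/\eqref{neq:fs0}, while the paper integrates the exact intermediate expression (obtaining the slightly sharper $\frac{2C}{(1-a)(2-a)|J|^a}$ before relaxing to the stated bound); both routes are valid.
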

\begin{proof}
For all $t,s \in (x_0,x_1]$ and $a \neq 0$, we have
\begin{align*}
|f(s) - f(t)| &= \left|\int_s^t f'(u)du\right| \leq C \left|\int_s^t \frac{1}{(u-x_0)^{1+a}}du \right | \\
& \leq C\left|-\frac {(t-x_0)^{-a}}{a} + \frac {(s-x_0)^{-a}}{a}\right|.
\end{align*}
It follows that
\begin{align*}
\left|f(s) - m(f,J)\right| & \leq \frac{C}{|J|}\int_{x_0}^{x_1}\left|-\frac {(t-x_0)^{-a}}{a} + \frac {(s-x_0)^{-a}}{a}\right|dt \\
&=C\left(\frac{2}{1-a}\frac{(s-x_0)^{1-a}}{|J|}+\frac{(s-x_0)^{-a}}{a}-\frac{(x_1-x_0)^{-a}}{a(1-a)}\right)\\
&\leq C\left(\frac{1}{a(s-x_0)^{a}}+\frac{2a-1}{a(1-a)}\frac{1}{|J|^{a}}\right),
\end{align*}
which gives \eqref{neq:fsa}.
Moreover, we have
\begin{align*}
\frac{1}{|J|}&\int_J\left|f(s) - m(f,J)\right|\,ds\\
&\leq \frac{C}{|J|}\int_{x_0}^{x_1}\left(\frac{2}{1-a}\frac{(s-x_0)^{1-a}}{|J|}+\frac{(s-x_0)^{-a}}{a}-\frac{(x_1-x_0)^{-a}}{a(1-a)}\right)\,ds\\
&=\frac{2C}{(1-a)(2-a)|J|^a}\leq \frac{2C}{(1-a)|J|^a},
\end{align*}
which gives \eqref{neq:intfs} when $0<a<1$.
\medskip

Similarly, if $a = 0$ then
\begin{align*}
|f(s) - f(t)| &= \left|\int_s^t f'(u)du\right| \leq  \left|\int_s^t \frac{C}{u-x_0}du\right| = C \left|\log \frac{t-x_0}{s-x_0}\right| .
\end{align*}
It follows that
\begin{align*}
\left|f(s) - m(f,J)\right| & \leq \frac{C}{|J|}\int_{x_0}^{x_1}\left|\log \frac{t-x_0}{s-x_0}\right|dt =C\left(\log\frac{|J|}{s-x_0}-1+2\frac{s-x_0}{|J|}\right)\\
& \leq C\left(\log\frac{|J|}{s-x_0}+1 \right),
\end{align*}
which gives \eqref{neq:fs0}.
Moreover, we have
\begin{align*}
\frac{1}{|J|}\int_J\left|f(s) - m(f,J)\right|\,ds&\leq \frac{C}{|J|}\int_{x_0}^{x_1}\left(\log\frac{|J|}{s-x_0}-1+2\frac{s-x_0}{|J|}\right)\,ds=C,
\end{align*}
which gives \eqref{neq:intfs} when $a=0$.
\end{proof}

\begin{remark}
By \eqref{neq:fsa} and \eqref{neq:fs0}, we also have
\begin{equation}\label{neq:fsx1}
\left|f(x_1) - m(f,J)\right|  \leq \frac{C}{(1-a)|J|^a}\text{ for every }0\leq a<1.
\end{equation}
\end{remark}

In particular, specific bounds for averaged function $\varphi$ on $I_\alpha$ are given as follows.
\begin{lemma}
Assume that $0\leq a<1$ and $\varphi \in \pa(\sqcup_{\alpha \in \mathcal{A}}I_\alpha )$.
Then for every $\alpha\in\mathcal{A}$ and $x \in Int I_\alpha$ we have
\begin{equation}\label{eqn;upperboundvarphi}
|\varphi(x)| \leq \norm{\mathcal{M}(\varphi)} +p_a(\varphi)\left(\frac{1}{a\min\{x-l_\alpha, r_\alpha-x\}^{a}}+
\frac{2^{a+2}}{a(1-a)|I_\alpha|^{a}}\right)\\
\end{equation}
if $0<a<1$ and
\begin{equation}\label{eqn;upperboundvarphi2}
|\varphi(x)| \leq \norm{\mathcal{M}(\varphi)} + p_a(\varphi)\left(\log\frac{|I_\alpha|}{2\min\{x-l_\alpha, r_\alpha-x\}}+2 \right)
\end{equation}
if $a=0$.
\end{lemma}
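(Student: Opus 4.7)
\emph{Proof plan.} The natural approach is to compare $\varphi(x)$ with $m(\varphi,I_\alpha)$ using the midpoint split of $I_\alpha$ that matches the way the semi-norm $p_a(\varphi)$ controls $\varphi'$ on each half. By the left/right symmetry built into the definition of $\pa$, it suffices to treat $x\in(l_\alpha,m_\alpha]$, so that $\min\{x-l_\alpha,\,r_\alpha-x\}=x-l_\alpha$. Set $J_1=(l_\alpha,m_\alpha]$ and $J_2=[m_\alpha,r_\alpha)$, each of length $|I_\alpha|/2$. By definition of $p_a(\varphi)$, the hypothesis of Lemma~\ref{lemlogosc0} is satisfied for $\varphi$ on $J_1$ with $x_0=l_\alpha$ and $C=p_a(\varphi)$, and by reflection it also holds on $J_2$ with the role of the singular endpoint played by $r_\alpha$.

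The plan is to decompose by the triangle inequality
\[|\varphi(x)|\leq |\varphi(x)-m(\varphi,J_1)|+|m(\varphi,J_1)-m(\varphi,I_\alpha)|+|m(\varphi,I_\alpha)|\]
and bound each term. The first is exactly \eqref{neq:fsa} (resp.\ \eqref{neq:fs0}) applied on $J_1$. For the second, since $m(\varphi,I_\alpha)=\tfrac12(m(\varphi,J_1)+m(\varphi,J_2))$,
\[|m(\varphi,J_1)-m(\varphi,I_\alpha)|=\tfrac12|m(\varphi,J_1)-m(\varphi,J_2)|\leq\tfrac12\bigl(|m(\varphi,J_1)-\varphi(m_\alpha)|+|\varphi(m_\alpha)-m(\varphi,J_2)|\bigr),\]
and each term on the right is controlled by the endpoint bound \eqref{neq:fsx1} (on $J_1$ with $s=m_\alpha$, and on $J_2$ by mirror symmetry), giving at most $p_a(\varphi)/((1-a)(|I_\alpha|/2)^a)$ when $0<a<1$, and the analogous constant $p_a(\varphi)$ when $a=0$ (from \eqref{neq:fs0} at $s=m_\alpha$). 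For the third term, since the norm on $\Gamma\simeq\R^\mathcal A$ is by convention the sum of absolute values of coefficients, $|m(\varphi,I_\alpha)|\leq\sum_{\beta\in\mathcal A}|m(\varphi,I_\beta)|=\|\mathcal{M}(\varphi)\|$.

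Summing the three contributions, the singular piece $p_a(\varphi)/(a(x-l_\alpha)^a)$ from Lemma~\ref{lemlogosc0} yields the first bracketed term of \eqref{eqn;upperboundvarphi}, while the remaining pieces combine into a single multiple of $p_a(\varphi)|I_\alpha|^{-a}$; a direct calculation gives the coefficient $2^a(|2a-1|+a)/(a(1-a))$, which since $|2a-1|+a\leq 4$ on $(0,1)$ is dominated by the uniform constant $2^{a+2}/(a(1-a))$ appearing in the statement. In the case $a=0$, the contributions add to $p_a(\varphi)(\log(|I_\alpha|/(2(x-l_\alpha)))+2)$, giving \eqref{eqn;upperboundvarphi2}. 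The only mildly delicate point is that the signed coefficient $(2a-1)/(a(1-a))$ produced by \eqref{neq:fsa} changes sign across $a=1/2$, so some care is needed when collecting constants to obtain a clean $a$-uniform upper bound of the form stated; otherwise the argument is routine triangle-inequality bookkeeping adapted to a function with polynomial singularities at both endpoints of the exchanged interval.
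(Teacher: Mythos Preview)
Your proposal is correct and follows essentially the same route as the paper's proof: split $I_\alpha$ at the midpoint, apply Lemma~\ref{lemlogosc0} (specifically \eqref{neq:fsa}/\eqref{neq:fs0}) on the relevant half to control $|\varphi(x)-m(\varphi,J_1)|$, use the endpoint estimate \eqref{neq:fsx1} at $m_\alpha$ together with $m(\varphi,I_\alpha)=\tfrac12(m(\varphi,J_1)+m(\varphi,J_2))$ to pass from the half-mean to the full mean, and bound $|m(\varphi,I_\alpha)|$ by $\|\mathcal{M}(\varphi)\|$. Your bookkeeping of the constants (including the remark on the sign of $2a-1$) is also in line with the paper's; in fact your intermediate coefficient $2^a(|2a-1|+a)/(a(1-a))\leq 2^{a+2}/(a(1-a))$ is slightly sharper than needed.
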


\begin{proof}
By \eqref{neq:fsx1} applied to $f = \varphi$ restricted to $J = (l_\alpha,m_\alpha]$ and $[m_\alpha,r_\alpha)$, we have
\begin{gather*}
|\varphi(m_\alpha)-m(\varphi,[l_\alpha, m_\alpha])| \leq \frac{2^{a}p_a(\varphi)}{1-a}\frac{1}{|I_\alpha|^a},\\
|\varphi(m_\alpha)-m(\varphi,[m_\alpha, r_\alpha])| \leq \frac{2^{a}p_a(\varphi)}{1-a}\frac{1}{|I_\alpha|^a}.
\end{gather*}
Therefore
\[|m(\varphi,[m_\alpha, r_\alpha])-m(\varphi,[l_\alpha, m_\alpha])| \leq \frac{2^{a+1}p_a(\varphi)}{1-a}\frac{1}{|I_\alpha|^a}.\]
As $m(\varphi, I_\alpha) = (m(\varphi, [l_\alpha,m_\alpha]) + m(\varphi, [m_\alpha,r_\alpha]))/2$, it follows that
\begin{align}\label{eq:mlm}
\begin{split}
|m(\varphi, [l_\alpha, m_\alpha])-m(\varphi, I_\alpha)|&=\frac{|m(\varphi, [l_\alpha, m_\alpha])-m(\varphi, [m_\alpha, r_\alpha])|}{2}
\leq \frac{p_a(\varphi)2^{a}}{(1-a)|I_\alpha|^{a}},\\
|m(\varphi, [m_\alpha,r_\alpha])-m(\varphi, I_\alpha)|&
\leq \frac{p_a(\varphi)2^{a}}{(1-a)|I_\alpha|^{a}}.
\end{split}
\end{align}

If $0<a<1$ then we can apply \eqref{neq:fsa} to $f = \varphi$ restricted to $J = (l_\alpha,m_\alpha]$ and $[m_\alpha,r_\alpha)$ and taking $C = p_a(\varphi)$. This gives
\begin{align*}
|\varphi(x) - m(\varphi, [l_\alpha, m_\alpha])| &\leq p_a(\varphi)\left(\frac{1}{a(x-l_\alpha)^{a}}+ \frac{2^{a+1}}{a(1-a)|I_\alpha|^{a}}\right), \quad  \text{ if } x \in (l_\alpha, m_\alpha],\\
|\varphi(x) - m(\varphi, [m_\alpha, r_\alpha])| &\leq p_a(\varphi)\left(\frac{1}{a(r_\alpha-x)^{a}}+ \frac{2^{a+1}}{a(1-a)|I_\alpha|^{a}}\right), \quad   \text{ if } x \in [m_\alpha, r_\alpha).
\end{align*}
Together with \eqref{eq:mlm} this yields \eqref{eqn;upperboundvarphi}.
\medskip

If $a=0$ then we can apply \eqref{neq:fs0} to $f = \varphi$ restricted to $J = (l_\alpha,m_\alpha]$ and $[m_\alpha,r_\alpha)$ and taking $C = p_a(\varphi)$. This gives
\begin{align*}
\begin{split}
&|\varphi(x) - m(\varphi, [l_\alpha, m_\alpha])| \leq p_a(\varphi)\left(\log\frac{|I_\alpha|}{2(x-l_\alpha)}+1 \right),\quad  \text{ if } x \in (l_\alpha, m_\alpha],\\
&|\varphi(x) - m(\varphi, [m_\alpha, r_\alpha])| \leq p_a(\varphi)\left(\log\frac{|I_\alpha|}{2(r_\alpha-x)}+1 \right),\quad \text{ if } x \in [m_\alpha, r_\alpha).
\end{split}
\end{align*}
Together with \eqref{eq:mlm} this yields \eqref{eqn;upperboundvarphi2}.
\end{proof}

\begin{lemma}
For every $0\leq a<1$, $\varphi \in \pa(\sqcup_{\alpha \in \mathcal{A}}I_\alpha )$ and $\alpha\in\mathcal{A}$ we have
\begin{equation}
\frac{1}{|I_\alpha|}\int_{I_\alpha}|\varphi(x)-m(\varphi,I_\alpha)|\,dx \leq \frac{2^{2+a}p_a(\varphi)}{1-a}\frac{1}{|I_\alpha|^a}.
\label{eqn;paosmean}
\end{equation}
\end{lemma}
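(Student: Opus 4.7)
The plan is to split $I_\alpha = [l_\alpha,m_\alpha] \cup [m_\alpha,r_\alpha]$ at the midpoint, estimate the $L^1$-oscillation of $\varphi$ around the mean on each half separately using \eqref{neq:intfs}, and then compare the local means $m(\varphi,[l_\alpha,m_\alpha])$ and $m(\varphi,[m_\alpha,r_\alpha])$ with the global mean $m(\varphi,I_\alpha)$ via \eqref{eq:mlm}. The splitting is natural because \eqref{neq:intfs} applies to a function with a one-sided control $|f'(x)(x-x_0)^{1+a}|\leq C$, which is exactly the data provided by $p_a(\varphi)$ at the endpoint $l_\alpha$ on the left half and (after reflecting) at $r_\alpha$ on the right half.

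Concretely, applying \eqref{neq:intfs} to $f=\varphi$ on $J=(l_\alpha,m_\alpha]$ with $x_0=l_\alpha$, $C=p_a(\varphi)$ and $|J|=|I_\alpha|/2$ yields
\[
\int_{l_\alpha}^{m_\alpha}|\varphi(x)-m(\varphi,[l_\alpha,m_\alpha])|\,dx \leq \frac{2^{1+a}p_a(\varphi)}{1-a}\cdot\frac{|J|}{|I_\alpha|^a} = \frac{2^{a}p_a(\varphi)|I_\alpha|^{1-a}}{1-a},
\]
and an identical bound holds on $[m_\alpha,r_\alpha)$ by applying the same estimate to $x\mapsto \varphi(r_\alpha+l_\alpha-x)$ (or equivalently by the analogous statement of \eqref{neq:intfs} at the right endpoint).

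Next, the triangle inequality gives
\[
|\varphi(x)-m(\varphi,I_\alpha)|\leq |\varphi(x)-m(\varphi,[l_\alpha,m_\alpha])|+|m(\varphi,[l_\alpha,m_\alpha])-m(\varphi,I_\alpha)|
\]
on $[l_\alpha,m_\alpha]$, and symmetrically on the other half. Integrating and using \eqref{eq:mlm} to bound each of the constant cross terms by $\tfrac{|I_\alpha|}{2}\cdot\tfrac{2^{a}p_a(\varphi)}{(1-a)|I_\alpha|^a}$, and then summing the two halves and dividing by $|I_\alpha|$, produces a total constant of $\frac{3\cdot 2^a}{1-a}$, which is $\leq \frac{2^{2+a}}{1-a}$. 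This yields \eqref{eqn;paosmean}.

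I do not expect any real obstacle here: both ingredients, namely the one-sided oscillation estimate \eqref{neq:intfs} and the comparison \eqref{eq:mlm} between local and global means, are already in place, and the proof amounts to assembling them with the triangle inequality and a slightly loose numerical bound to get the clean constant $2^{2+a}/(1-a)$.
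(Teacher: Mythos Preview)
Your proof is correct and follows essentially the same approach as the paper: split $I_\alpha$ at the midpoint, apply \eqref{neq:intfs} on each half with $C=p_a(\varphi)$ and $|J|=|I_\alpha|/2$, use \eqref{eq:mlm} to compare the half-interval means with $m(\varphi,I_\alpha)$, and combine via the triangle inequality. The paper's write-up is slightly terser but the structure and the constants (with the same harmless rounding $3\cdot 2^{a}\leq 2^{2+a}$) are identical.
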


\begin{proof}
By \eqref{neq:intfs} applied to $f = \varphi$ restricted to $J = (l_\alpha,m_\alpha]$ and $[m_\alpha,r_\alpha)$, we have
\begin{gather*}
\frac{1}{|I_\alpha|}\int_{[l_\alpha, m_\alpha]}|\varphi(x)-m(\varphi,[l_\alpha, m_\alpha])|\,dx \leq \frac{2^{a}p_a(\varphi)}{1-a}\frac{1}{|I_\alpha|^a},\\
\frac{1}{|I_\alpha|}\int_{[m_\alpha, r_\alpha]}|\varphi(x)-m(\varphi,[m_\alpha, r_\alpha])|\,dx \leq \frac{2^{a}p_a(\varphi)}{1-a}\frac{1}{|I_\alpha|^a}.
\end{gather*}
In view of \eqref{eq:mlm}, it follows that
\begin{gather*}
\frac{1}{|I_\alpha|}\int_{[l_\alpha, m_\alpha]}|\varphi(x)-m(\varphi,I_\alpha)|\,dx \leq \frac{2^{1+a}p_a(\varphi)}{1-a}\frac{1}{|I_\alpha|^a},\\
\frac{1}{|I_\alpha|}\int_{[m_\alpha, r_\alpha]}|\varphi(x)-m(\varphi,I_\alpha)|\,dx \leq \frac{2^{1+a}p_a(\varphi)}{1-a}\frac{1}{|I_\alpha|^a}.
\end{gather*}
Summing up, we obtain
\[\frac{1}{|I_\alpha|}\int_{I_\alpha}|\varphi(x)-m(\varphi,I_\alpha)|\,dx \leq \frac{2^{2+a}p_a(\varphi)}{1-a}\frac{1}{|I_\alpha|^a},
\]
which completes the proof.
\end{proof}

We finish the section by introducing a lower bound for function $\varphi$. This will be crucial in handling some lower bounds of renormalized cocycles and Birkhoff integrals. (See the  part V in the proof of Theorem~\ref{theorem;main} in \S\ref{sec:pmt}.)

\begin{lemma}\label{lem:belowest}
Suppose that $\varphi:J\to\R$ ($J=(x_0,x_1]$) is an integrable  $C^1$-function such that
\[0<c\leq|(x-x_0)^{1+a }\varphi'(x)|\text{ for all }x\in J.\]
Then there exists a sub-interval $\widehat{J}\subset J$ such that
\begin{equation}\label{eq:belowest}
|\widehat{J}|\geq |J|/4 \text{ and }|\varphi(x)|\geq \frac{c}{4|J|^a}\text{ for all }x\in \widehat{J}.
\end{equation}
\end{lemma}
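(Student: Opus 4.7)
The plan is to exploit strict monotonicity. Since $|\varphi'(x)|(x-x_0)^{1+a}\geq c>0$ everywhere on $J$ and $\varphi'$ is continuous, $\varphi'$ does not vanish on $J$; hence $\varphi'$ has constant sign and $\varphi$ is strictly monotone. Replacing $\varphi$ by $-\varphi$ if necessary, I may assume $\varphi'>0$, so $\varphi$ is strictly increasing on $J=(x_0,x_1]$.

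The key auxiliary object is the point $x^{*}\in[x_0,x_1]$ where $|\varphi|$ attains (or approaches) its minimum. There are three mutually exclusive cases:
\begin{itemize}[leftmargin=*]
\item if $\varphi$ has a (necessarily unique) zero in $J$, let $x^{*}$ be that zero;
\item if $\lim_{x\to x_0^{+}}\varphi(x)\geq 0$, so that $\varphi\geq 0$ throughout $J$, set $x^{*}:=x_0$;
\item if $\varphi(x_1)\leq 0$, so that $\varphi\leq 0$ throughout $J$, set $x^{*}:=x_1$.
\end{itemize}
In each case, a direct computation using the fundamental theorem of calculus and the fact that $\varphi'>0$ gives, for every $x\in J$,
\[
|\varphi(x)|\ \geq\ \int_{\min(x,x^{*})}^{\max(x,x^{*})}\varphi'(u)\,du.
\]
Combined with the uniform lower bound $\varphi'(u)\geq c(u-x_0)^{-(1+a)}\geq c/|J|^{1+a}$ on $J$, this yields
\[
|\varphi(x)|\ \geq\ \frac{c}{|J|^{1+a}}\,|x-x^{*}|\qquad\text{for every }x\in J.
\]

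The remaining step is to choose $\widehat{J}$ as a quarter of $J$ lying at distance at least $|J|/4$ from $x^{*}$. Concretely, if $x^{*}\leq x_0+|J|/2$ I set $\widehat{J}:=[\,x_0+3|J|/4,\,x_1\,]$, and if $x^{*}>x_0+|J|/2$ I set $\widehat{J}:=(\,x_0,\,x_0+|J|/4\,]$. In either case $|\widehat{J}|=|J|/4$ and $|x-x^{*}|\geq |J|/4$ for all $x\in\widehat{J}$, so the previous bound delivers
\[
|\varphi(x)|\ \geq\ \frac{c}{|J|^{1+a}}\cdot\frac{|J|}{4}\ =\ \frac{c}{4|J|^{a}},
\]
which is \eqref{eq:belowest}.

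The proof is entirely elementary; there is no real obstacle. The only point requiring any care is the unified definition of the ``minimum point'' $x^{*}$, which handles simultaneously the case where $\varphi$ crosses zero in $J$ and the two degenerate cases in which $\varphi$ keeps a constant sign (so that $x^{*}$ must be placed at an endpoint of $J$). Once this is done, the proof reduces to choosing the quarter of $J$ farthest from $x^{*}$ and integrating the explicit lower bound on $|\varphi'|$.
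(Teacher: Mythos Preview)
Your proof is correct, and it follows a somewhat different route from the paper's. The paper instead proves the sub-level set estimate
\[
Leb\{x\in J:|\varphi(x)|\leq\xi\}\leq\frac{2\xi|J|^{1+a}}{c}
\]
by writing the set as $[\varphi^{-1}(\xi),\varphi^{-1}(-\xi)]$ (in the decreasing case) and integrating $(\varphi^{-1})'=-1/\varphi'\circ\varphi^{-1}$; it then plugs in $\xi=c/(4|J|^a)$ to see that the ``bad'' set has length at most $|J|/2$, so its complement, being a union of at most two intervals, contains one of length $\geq|J|/4$. Your argument short-circuits the inverse-function computation: you locate the minimiser $x^{*}$ of $|\varphi|$ directly, use only the crude pointwise bound $|\varphi'(u)|\geq c|J|^{-(1+a)}$ (which is all that is needed), integrate to get $|\varphi(x)|\geq c|J|^{-(1+a)}|x-x^{*}|$, and pick the quarter of $J$ farthest from $x^{*}$. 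The paper's measure estimate is slightly more informative as a standalone inequality, but it is not used elsewhere; your approach is more elementary and gets to the conclusion faster. One small remark: in the relevant range $a\geq 0$ the divergence $\int_{x_0}\varphi'=+\infty$ forces $\varphi\to\mp\infty$ at $x_0^{+}$, so your case $x^{*}=x_0$ is actually vacuous, but this does no harm.
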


\begin{proof}
We first show that for every $\xi>0$ we have
\begin{equation}\label{eq:lebest}
Leb\{x\in J:|\varphi(x)|\leq \xi\}\leq \frac{2\xi|J|^{1+a}}{c}.
\end{equation}
Note that $\varphi$ is strictly monotonic. We focus on the strictly decreasing case, i.e.\
$-(x-x_0)^{1+a }\varphi'(x)\geq c$ for all $x\in J$. Then $\varphi(x)\to+\infty$ as $x\searrow x_0$. The proof in the strictly increasing case follows by the same way.
Suppose that the set $\{x\in J:|\varphi(x)|\leq \xi\}$ is not empty. Then it is an interval $[y_1,y_2]\subset J$
such that $\varphi(y_1)=\xi$ and $\varphi(y_2)=\xi_2\geq -\xi$. It follows that
\begin{align*}
y_2-y_1&=\varphi^{-1}(\xi_2)-\varphi^{-1}(\xi)=
-\int_{\xi_2}^{\xi}(\varphi^{-1})'(x)\,dx
=-\int_{\xi_2}^{\xi}\frac{1}{\varphi'(\varphi^{-1}(x))}\,dx\\
&\leq\int_{\xi_2}^{\xi}\frac{(\varphi^{-1}(x)-x_0)^{1+a}}{c}\,dx
\leq \frac{|J|^{1+a}}{c}(\xi-\xi_2)\leq \frac{2\xi|J|^{1+a}}{c}.
\end{align*}
Applying \eqref{eq:lebest} to $\xi=\frac{c}{4|J|^a}$, we have that $\{x\in J:|\varphi(x)|\leq \frac{c}{4|J|^a}\}$ is an interval
whose length is at most
\[\frac{2\frac{c}{4|J|^a}|J|^{1+a}}{c}=\frac{|J|}{2}.\]
Since $\{x\in J:|\varphi(x)|\geq \frac{c}{4|J|^a}\}$ consists of at most two intervals and its measure is at least $|J|/2$,
one of these intervals satisfies \eqref{eq:belowest}.
\end{proof}

\section{Renormalization of cocycles}\label{sect;renormalization}
In this section we review a renormalization operator on cocycles over IETs derived from the renormalizations of an IETs given by accelerated Rauzy-Veech induction.

\subsection{Special Birkhoff sums}\label{SpecialBS}


Assume that an IET $T : I \rightarrow I$ satisfies Keane's condition.
 For any $0\leq k<l$ and any measurable cocycle $\varphi:I^{(k)}\to\R$ for the IET
$T^{(k)}:I^{(k)}\to I^{(k)}$, denote by
$S(k,l)\varphi:I^{(l)}\to\R$ the renormalized cocycle for
$T^{(l)}$ given by
\[S(k,l)\varphi(x)=\sum_{0\leq i<Q_{\beta}(k,l)}\varphi((T^{(k)})^ix)\text{ for }x\in I^{(l)}_\beta.\]
We write $S(k)\varphi$ for $ S(0,k)\varphi$ and we  adhere to  the
convention that $S(k,k)\varphi=\varphi$. Sums of this form are
usually called \emph{special Birkhoff sums}. If $\varphi$ is
integrable then
\begin{equation}\label{nase}
\| S(k,l)\varphi\|_{L^1(I^{(l)})}\leq
\|\varphi\|_{L^1(I^{(k)})}\quad\text{and}\quad
\int_{I^{(l)}}S(k,l)\varphi(x)\,dx=
\int_{I^{(k)}}\varphi(x)\,dx.
\end{equation}
Clearly, $S(k,l)\Gamma^{(k)} = \Gamma^{(l)}$ and $S(k,l)$ is the linear automorphism of $\R^{\mathcal A}$ whose matrix in the canonical basis is $Q(k,l)$.

Recalling the definition of $C^\pm_{\alpha}$ (see Definition \ref{def;pa}), the following lemma is simply mimics of Lemma~3.1~and~3.3 in \cite{Fr-Ul}. Since its proof
proceeds in the same way as the proofs of the mentioned lemmas in \cite{Fr-Ul}, we omit it.

\begin{lemma}\label{lem:CSk}
Suppose that $\varphi\in\pag(\sqcup_{\alpha\in \mathcal{A}}
I_{\alpha})$. For every $k>0$ we have $S(k)\varphi\in\pag(\sqcup_{\alpha\in \mathcal{A}}
I^{(k)}_{\alpha})$ and there exists a permutation $\chi:\mathcal{A}\to\mathcal{A}$ such that
\[C^+_{\alpha}(S(k)\varphi)=C^+_{\alpha}(\varphi)\text{ and }C^-_{\alpha}(S(k)\varphi)=C^-_{\chi(\alpha)}(\varphi)\]
for every $\alpha\in\mathcal{A}$. Moreover, there are distinct $\alpha_*$, $\alpha^*$ in $\mathcal{A}$ such that
\begin{gather*}
\big\{T^jl^{(k)}_\alpha:0\leq j<Q_\alpha(k)\big\}\cap End(T)=\{l_\alpha\}\text{ for every }\alpha\in\mathcal{A}\\
\big\{\widehat{T}^jr^{(k)}_\alpha:0\leq j<Q_\alpha(k)\big\}\cap End(T)=
\left\{
\begin{array}{cl}
\{r_{\chi(\alpha)}\}&\text{if }\alpha\in\mathcal{A}\setminus\{\alpha_*,\alpha^*\}\\
\big\{r_{\pi_0^{-1}(d)},r_{\pi_1^{-1}(d)}\big\}&\text{if }\alpha=\alpha^*\\
\emptyset&\text{if }\alpha=\alpha_*.
\end{array}
\right.
\end{gather*}
\end{lemma}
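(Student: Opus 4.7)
The natural strategy is induction on $k$: once the lemma is established for a single Rauzy-Veech step, the general case follows by writing $S(k+1) = S(k,k+1) \circ S(k)$ and applying the one-step result to $T^{(k)}$ in place of $T$. Composing two single-step conclusions yields a permutation (still denoted $\chi$) and a pair of exceptional labels $\alpha_*, \alpha^*$ at each level, and the class $\pag$ is preserved throughout. I would therefore focus on the base case $k=1$, where $Q_\beta(0,1) \in \{1,2\}$: the loser label $\pi_{1-\epsilon}^{-1}(d)$ has tower height $2$ and every other label has height $1$.

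For the smoothness and singularity claims, on each height-one cell $\widetilde{I}_\alpha \subset I_\alpha$ the sum $S(1)\varphi$ equals $\varphi|_{I_\alpha}$ restricted, which is immediately $C^1$; on the unique height-two cell, $S(1)\varphi(x) = \varphi(x) + \varphi(Tx)$ is a sum of two $C^1$ functions on an interval where $T$ acts as a translation. The polynomial-singularity structure at the endpoints of the $\widetilde{I}_\alpha$ then reduces to a combinatorial matching between the endpoints $l^{(1)}_\alpha, r^{(1)}_\alpha$ and elements of $End(T)$. Using the explicit formula \eqref{def:pi} for $\widetilde{\pi}$ together with the fact that, for each $\alpha$, the orbit $\{T^j l^{(1)}_\alpha : 0 \leq j < Q_\alpha(0,1)\}$ meets $End(T)$ exactly once (at $l_\alpha$), I get that precisely one summand $\varphi \circ T^j$ carries a singularity at $l^{(1)}_\alpha$; since $T^j$ is an isometry where defined, no Jacobian factor enters and the leading coefficient transfers intact, yielding $C^+_\alpha(S(1)\varphi) = C^+_\alpha(\varphi)$. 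The analogous analysis for right endpoints, carried out with $\widehat{T}$-orbits (the correct choice because $\widehat{T}$ is continuous from the left rather than the right), produces the permutation $\chi$ for all labels except two: the loser $\alpha_* := \pi_{1-\epsilon}^{-1}(d)$ is the base of the height-two tower and its right endpoint has disappeared from $End(\widetilde{T})$, while a second label $\alpha^*$ (determined by which right endpoint is absorbed in the Rauzy-Veech merging) accumulates the two corresponding right endpoints. Finiteness of $p_a(S(1)\varphi)$ and the $L^1$ bound follow from the above smoothness and from \eqref{nase}.

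Preservation of the geometric-type condition under a single step is then a direct check: since $\widetilde{\pi}_\epsilon^{-1}(1) = \pi_\epsilon^{-1}(1)$ for $\epsilon \in \{0,1\}$, the left-end part of the geometric condition transfers via $C^+_\alpha(S(1)\varphi) = C^+_\alpha(\varphi)$; the right-end part follows from the identification of $\alpha_*, \alpha^*$ above, combined with the Rauzy-Veech update rule \eqref{def:pi} for $\widetilde{\pi}_\epsilon^{-1}(d)$ and $\widetilde{\pi}_{1-\epsilon}^{-1}(d)$, and the compatible action of $\chi$ on these two labels. The main technical obstacle I anticipate is the precise case-by-case bookkeeping needed to pin down $\alpha_*, \alpha^*$ and $\chi$ in each of the two winner/loser cases $\epsilon = 0, 1$ and to verify their distinctness; this is exactly what Lemmas 3.1 and 3.3 of \cite{Fr-Ul} carry out for cocycles with logarithmic singularities, and I would follow that template verbatim, noting that the local polynomial versus logarithmic behavior is irrelevant to the combinatorial matching — only the leading coefficients $C^\pm_\alpha$ and the isometric action of the $T^j$ enter.
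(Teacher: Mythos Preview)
Your approach is essentially the same as the paper's: the paper omits the proof entirely, stating that it ``simply mimics'' Lemmas~3.1 and~3.3 of \cite{Fr-Ul} and that the argument transfers verbatim from the logarithmic to the polynomial setting, which is exactly the point you make. One small slip to watch in your bookkeeping: for a single Rauzy--Veech step the height-two label (the loser $\pi_{1-\epsilon}^{-1}(d)$) is actually $\alpha^*$, not $\alpha_*$ --- its $\widehat{T}$-orbit of the right endpoint hits both $r_{\pi_0^{-1}(d)}$ and $r_{\pi_1^{-1}(d)}$ --- but since you already flag this identification as the part requiring the careful case analysis from \cite{Fr-Ul}, the plan is sound.
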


In view of the above Lemma, for all $0\leq k<l$ the operator $S(k,l)$
maps $\pag(\sqcup_{\alpha\in \mathcal{A}}
I^{(k)}_{\alpha})$ into $\pag(\sqcup_{\alpha\in
\mathcal{A}} I^{(l)}_{\alpha})$. The following three results give estimates for the increase $p_a$ along the  sequence of renormalized cocycles.

\begin{proposition}\label{renormpaos}
Suppose that $T=T_{(\pi,\lambda)}$ satisfies Keane's condition. Then for every $0<a<1$, $\varphi \in \pag(\sqcup_{\alpha \in \mathcal{A}}I_\alpha )$ and $k \geq 1$,
\begin{equation}\label{eqn;renormpaos}
p_a(S(k)\varphi) \leq \left(2+2d\zeta(1+a)\left(\frac{\max\{|I^{(k)}_\alpha|:\alpha\in\mathcal{A}\}}{\min\{|I^{(k)}_\alpha|:\alpha\in\mathcal{A}\}}\right)^{1+a}\right) p_a(\varphi).
\end{equation}
\end{proposition}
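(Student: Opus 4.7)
The plan is to differentiate $S(k)\varphi$ term-by-term, reduce to a sum of $|\varphi'|$ evaluated along the orbit, and exploit the combinatorics of Lemma~\ref{lem:CSk} to bound this sum by a Riemann $\zeta$-series. From the definition,
\[
(S(k)\varphi)'(x) = \sum_{0 \leq i < Q_\beta(k)} \varphi'(T^i x), \quad x \in I^{(k)}_\beta,
\]
and by the symmetric roles of the two halves of $I^{(k)}_\beta$ it suffices to bound $|(S(k)\varphi)'(x)(x-l_\beta^{(k)})^{1+a}|$ for $x \in (l_\beta^{(k)}, m_\beta^{(k)}]$. Each floor $T^i I^{(k)}_\beta$ sits inside some $I_{\alpha(i)}$; writing $T^i x - l_{\alpha(i)} = (x-l_\beta^{(k)}) + d_i^L$ and $r_{\alpha(i)} - T^i x = (r_\beta^{(k)} - x) + d_i^R$ with nonnegative offsets $d_i^L := T^i l_\beta^{(k)} - l_{\alpha(i)}$ and $d_i^R := r_{\alpha(i)} - T^i r_\beta^{(k)}$, the definition of $p_a(\varphi)$ (together with $\min(a,b)^{-(1+a)} \leq a^{-(1+a)} + b^{-(1+a)}$) yields
\[
|\varphi'(T^i x)| \leq p_a(\varphi)\left(\frac{1}{((x - l_\beta^{(k)}) + d_i^L)^{1+a}} + \frac{1}{((r_\beta^{(k)} - x) + d_i^R)^{1+a}}\right).
\]

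The next step is to group the $Q_\beta(k)$ floors by the $I_\gamma$ that contains them. For fixed $\gamma \in \mathcal{A}$, sort the left endpoints of the $\beta$-tower floors inside $I_\gamma$ as $y_1^\gamma < \cdots < y_{n_\gamma}^\gamma$. Pairwise disjointness of the floors (each of length $\lambda_\beta^{(k)}$) forces $y_j^\gamma - y_1^\gamma \geq (j-1)\lambda_\beta^{(k)}$. Lemma~\ref{lem:CSk} pins down the rare coincidence $y_1^\gamma = l_\gamma$: it happens only for $\gamma = \beta$ (at the unique $i$ for which $T^i l_\beta^{(k)} = l_\beta$). For every other $\gamma$, the leftmost position of $I_\gamma$ must be occupied by a floor of a \emph{different} tower, of length at least $\min_\alpha \lambda_\alpha^{(k)}$, so $y_1^\gamma - l_\gamma \geq \min_\alpha \lambda_\alpha^{(k)}$ and consequently $y_j^\gamma - l_\gamma \geq j\,\min_\alpha \lambda_\alpha^{(k)}$. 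A parallel statement controls the right offsets $r_\gamma - y_j^\gamma - \lambda_\beta^{(k)}$ via the second part of Lemma~\ref{lem:CSk}, with $\chi(\beta)$ (or the exceptional pair from the cases $\beta = \alpha^*, \alpha_*$) playing the distinguished role.

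Feeding these lower bounds into the summation, the $j$-sum collapses into a $\zeta(1+a)$-series. Concretely, the left contribution from $\gamma = \beta$ is at most $(x - l_\beta^{(k)})^{-(1+a)} + \zeta(1+a)(\lambda_\beta^{(k)})^{-(1+a)}$, while from each of the remaining $d-1$ values of $\gamma$ it is at most $\zeta(1+a)(\min_\alpha \lambda_\alpha^{(k)})^{-(1+a)}$; the right sum is analogous. Multiplying through by $(x - l_\beta^{(k)})^{1+a}$ and using $x - l_\beta^{(k)} \leq \min(r_\beta^{(k)} - x, \lambda_\beta^{(k)}/2)$, the leading terms collapse to bounded multiples of $p_a(\varphi)$, and the bulk terms each acquire a factor $(\lambda_\beta^{(k)}/\min_\alpha \lambda_\alpha^{(k)})^{1+a}$, bounded by $(\max_\alpha|I^{(k)}_\alpha|/\min_\alpha|I^{(k)}_\alpha|)^{1+a}$. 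Summing the left and right contributions across the $d$ values of $\gamma$ produces the asserted bound.

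The main obstacle is the bookkeeping of the exceptional ``leading'' floors singled out by Lemma~\ref{lem:CSk} — precisely the unique floors per tower whose endpoints coincide with discontinuities of $T$. These are the floors that yield the $(x-l_\beta^{(k)})^{-(1+a)}$ (resp.\ $(r_\beta^{(k)}-x)^{-(1+a)}$) terms which would otherwise spoil the $\zeta(1+a)$-summation; correctly isolating them (and handling the mild extra case $\beta = \alpha^*$, where two right-leading floors arise) is where the combinatorial care lies. The hypothesis $\varphi \in \pag$ plays no role in the derivative estimate itself; it is used only, again via Lemma~\ref{lem:CSk}, to ensure that $S(k)\varphi$ remains in $\pag$ so that the operator stays within the asserted class.
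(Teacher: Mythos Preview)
Your proposal is correct and follows essentially the same approach as the paper's proof. Both arguments identify, via Lemma~\ref{lem:CSk}, the exceptional floor(s) whose endpoint coincides with a discontinuity of $T$, treat these as the leading contributions, and bound the remaining floors using the $\lambda_\beta^{(k)}$-spacing of the $\beta$-tower together with the $\min_\alpha\lambda_\alpha^{(k)}$ gap at the boundary, collapsing everything to a $\zeta(1+a)$-series. The only organizational difference is that the paper assigns to each non-special index its single nearest endpoint $e_{\alpha_j}$ and groups by the $2d$ endpoints of $End(T)$, whereas you bound $|\varphi'(T^ix)|$ by the sum of the left and right contributions and group by the $d$ intervals $I_\gamma$; this is a matter of bookkeeping and yields the same structure and constants.
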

\begin{proof}
Suppose that $x\in (l_\beta^{(k)},m_\beta^{(k)}]$ ($m_\beta^{(k)}$ is the middle point of $I_\beta^{(k)}$) for some $\beta\in\mathcal{A}$.
By Lemma~\ref{lem:CSk}, there exists $0\leq j_\beta < Q_{\beta}(k)$ such that
$l_\beta=T^{j_\beta}l^{(k)}_\beta$. It follows that
\begin{align}\label{eq:jbeta}
\begin{split}
|\varphi'(T^{j_\beta}x)(x-l_\beta^{(k)})^{1+a}|&= |\varphi'(T^{j_\beta}x)(T^{j_\beta}x-T^{j_\beta}l_\beta^{(k)})^{1+a}|\\
&=|\varphi'(T^{j_\beta}x)(T^{j_\beta}x-l_\beta)^{1+a}| \leq p_a(\varphi).
\end{split}
\end{align}
For any $0\leq j < Q_{\beta}(k)$, $j\neq j_\beta$ denote by $e_{\alpha_j}$ an element of the set $End(T)$ that is the closest to $T^jx$,
more precisely $e_{\alpha_j}=l_{\alpha_j}$ if $T^jx\in(l_{\alpha_j},m_{\alpha_j}]$ and $e_{\alpha_j}=r_{\alpha_j}$ if $T^jx\in(m_{\alpha_j},r_{\alpha_j})$. Then
\begin{equation}\label{eq:jneqjbeta}
|\varphi'(T^{j}x)(T^{j}x-e_{\alpha_j})^{1+a}| \leq p_a(\varphi).
\end{equation}
Since $T^{j}x\in T^j(l_\beta^{(k)},m_\beta^{(k)}]$ with $j\neq j_\beta$ and the elements of $End(T)$ lie on the boundary of intervals
$T^iI^{(k)}_\gamma$ for $0\leq i<Q_\gamma(k)$, $\gamma\in\mathcal{A}$, we have
\begin{equation}\label{eq:jneqjbeta2}
|T^{j}x-e_{\alpha_j}|\geq \min\{\tfrac{1}{2}|I^{(k)}_{\beta}|,\min\{|I^{(k)}_\gamma|:\gamma\in\mathcal{A}\}\}\geq \tfrac{1}{2}\min\{|I^{(k)}_\gamma|:\gamma\in\mathcal{A}\}.
\end{equation}
Suppose that for some $j\neq j'$ we have $\alpha_{j}=\alpha_{j'}=\alpha$. Since $T^jx\in T^jI^{(k)}_\beta$ and $T^{j'}x\in T^{j'}I^{(k)}_\beta$ and they are equally distant from the ends of the intervals, we have
\begin{equation}\label{eq:jneqjbeta3}
|(T^{j}x-e_{\alpha_j})-(T^{j'}x-e_{\alpha_{j'}})|=|T^{j}x-T^{j'}x|\geq |I^{(k)}_{\beta}|.
\end{equation}
In view of \eqref{eq:jneqjbeta}, it follows that for every $\alpha\in\mathcal{A}$ and $e_\alpha\in I_\alpha$ we have
\begin{align*}
\sum_{\substack{0\leq j<Q_\beta(k)\\ j\neq j_\beta,
e_{\alpha_j}=e_\alpha}}|\varphi'(T^jx)|&\leq p_a(\varphi)\sum_{\substack{0\leq j<Q_\beta(k)\\ j\neq j_\beta,
e_{\alpha_j}=e_\alpha}}\frac{1}{|T^{j}x-e_{\alpha}|^{1+a}}\\
&\leq p_a(\varphi)\sum_{l\geq 0}\frac{1}{\big|\tfrac{1}{2}\min\{|I^{(k)}_\gamma|:\gamma\in\mathcal{A}\}+l|I^{(k)}_\beta|\big|^{1+a}}\\
&\leq \frac{2^{1+a}p_a(\varphi)}{\min\{|I^{(k)}_\gamma|:\gamma\in\mathcal{A}\}^{1+a}}\sum_{l\geq 1}\frac{1}{l^{1+a}}\\
&\leq \frac{\zeta(1+a)2^{1+a}p_a(\varphi)}{\min\{|I^{(k)}_\gamma|:\gamma\in\mathcal{A}\}^{1+a}}.
\end{align*}
Therefore, by \eqref{eq:jbeta}, we have
\begin{align*}
|S(k)&\varphi'(x)(x-l^{(k)}_\beta)^{1+a}|=\Big|\sum_{0\leq j<Q_\beta(k)}\varphi'(T^jx)(x-l^{(k)}_\beta)^{1+a}\Big|\\
&\leq |\varphi'(T^{j_\beta}x)(x-l^{(k)}_\beta)^{1+a}|+\sum_{\substack{\alpha\in\mathcal{A}\\e_\alpha\in \{l_\alpha,r_\alpha\}}}\sum_{\substack{0\leq j<Q_\beta(k)\\ j\neq j_\beta,
e_{\alpha_j}=e_\alpha}}|\varphi'(T^jx)|(|I^{(k)}_\beta|/2)^{1+a}\\
&\leq p_a(\varphi)\left(1+2d\zeta(1+a)\left(\frac{|I^{(k)}_\beta|}{\min\{|I^{(k)}_\gamma|:\gamma\in\mathcal{A}\}}\right)^{1+a}\right).
\end{align*}

For the other cases, we assume that $x\in (m_\beta^{(k)},r_\beta^{(k)})$  for some $\beta\in\mathcal{A}\setminus\{\alpha_*,\alpha^*\}$ or $\beta\in\{\alpha_*,\alpha^*\}$.
For the former case, by Lemma~\ref{lem:CSk}, there exists $0\leq j_\beta < Q_{\beta}(k)$ such that
$r_{\chi(\beta)}=\widehat{T}^{j_\beta}r^{(k)}_\beta$. It follows that
\begin{align}\label{eqn;case2pa}
\begin{split}
|\varphi'(T^{j_\beta}x)(r_\beta^{(k)}-x)^{1+a}|&= |\varphi'(T^{j_\beta}x)(\widehat{T}^{j_\beta}r_{\beta}-T^{j_\beta}x)^{1+a}|\\
&=|\varphi'(T^{j_\beta}x)(r_{\chi(\beta)}-T^{j_\beta}x)^{1+a}| \leq p_a(\varphi).
\end{split}
\end{align}
The estimate from above of $|\varphi'(T^{j}x)|$ for $j\neq j_\beta$ is the same as previous case. Therefore we obtain
\begin{equation}\label{eqn;analogous}
|S(k)\varphi'(x)(r^{(k)}_\beta-x)^{1+a}|\leq p_a(\varphi)\left(1+2d\zeta(1+a)\left(\frac{|I^{(k)}_\beta|}{\min\{|I^{(k)}_\gamma|:\gamma\in\mathcal{A}\}}\right)^{1+a}\right).
\end{equation}

For the latter cases, if $x\in (m_{\alpha^*}^{(k)},r^{(k)}_{\alpha^*})$, then
by Lemma~\ref{lem:CSk}, there exists $0\leq j_{\alpha^*} < Q_{{\alpha^*}}(k)-1$ such that
$r_{\pi_1^{-1}(d)}=\widehat{T}^{j_{\alpha^*}}r^{(k)}_{\alpha^*}$ and $r_{\pi_0^{-1}(d)}=\widehat{T}^{j_{\alpha^*}+1}r^{(k)}_{\alpha^*}$. It follows that
the estimate \eqref{eqn;case2pa} from above of $|\varphi'(T^{j}x)|$ for $j\neq j_{\alpha^*}, j_{\alpha^*}+1$ is the same as \eqref{eq:jbeta} in the first case.

Lastly, if $x\in (m_{\alpha_*}^{(k)},r^{(k)}_{\alpha_*})$, then by Lemma~\ref{lem:CSk} again, $\{\widehat{T}^jr^{(k)}_{\alpha_*}:0\leq j<Q_\alpha(k)\}\cap End(T)=\emptyset$.
Since  the elements of $End(T)$ lie on the boundary of intervals
$T^iI^{(k)}_\gamma$ for $0\leq i<Q_\gamma(k)$, $\gamma\in\mathcal{A}$, it follows that for every $0\leq j<Q_{\alpha_*}(k)$ we have
\begin{equation*}
\operatorname{dist}(T^jx,End(T))\geq \min\{\tfrac{1}{2}|I^{(k)}_{\beta}|,\min\{|I^{(k)}_\gamma|:\gamma\in\mathcal{A}\}\}\geq \tfrac{1}{2}\min\{|I^{(k)}_\gamma|:\gamma\in\mathcal{A}\}.
\end{equation*}
Repeating the same argument as in the first case, we obtain similar bound of \eqref{eqn;analogous}.
\end{proof}

For $a =0$,  the proof mimics the idea applied in the estimate for logarithmic singularity type from \cite[\S3]{Fr-Ul} and \cite[\S5.2-5.3]{Fr-Ul2}.
The proof is carried out in each sub-cases as in the proof of Proposition~\ref{renormpaos} and  the reasonings are repetitive.
\begin{proposition}
Suppose that $T_{(\pi,\lambda)}$ satisfies Keane's condition. Then for every $\varphi \in \pog(\sqcup_{\alpha \in \mathcal{A}}I_\alpha )$ and $k \geq 1$ we have
\begin{equation}\label{eqn;renormpaos0}
p_0(S(k)\varphi) \leq  p_0(\varphi)\left(2+2d(1+ \log \|Q(k)\|)\left(\frac{\max\{|I^{(k)}_\alpha|:\alpha\in\mathcal{A}\}}{\min\{|I^{(k)}_\alpha|:\alpha\in\mathcal{A}\}}\right)\right).
\end{equation}
\end{proposition}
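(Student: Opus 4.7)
The plan is to mirror the proof of Proposition~\ref{renormpaos} almost line for line, with the single substantive change that the summable tail $\sum_{l\geq 1} l^{-(1+a)} \leq \zeta(1+a)$ must be replaced by a partial harmonic sum $\sum_{l=1}^{N} l^{-1} \leq 1+\log N$, where $N \leq \|Q(k)\|$ counts the number of iterates being tracked. This is exactly why a factor $(1+\log\|Q(k)\|)$ appears in \eqref{eqn;renormpaos0} in place of the constant $\zeta(1+a)$ in \eqref{eqn;renormpaos}.

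First I would take $x\in(l_\beta^{(k)},m_\beta^{(k)}]$ and expand $S(k)\varphi'(x)(x-l_\beta^{(k)}) = \sum_{0\leq j<Q_\beta(k)}\varphi'(T^j x)(x-l_\beta^{(k)})$. Lemma~\ref{lem:CSk} provides, exactly as before, the unique index $j_\beta$ with $l_\beta = T^{j_\beta}l_\beta^{(k)}$, yielding $|\varphi'(T^{j_\beta}x)(x-l_\beta^{(k)})| \leq p_0(\varphi)$. For each of the remaining $j$ I assign $e_{\alpha_j}\in End(T)$ closest to $T^j x$ and invoke the two geometric facts proved there: $|T^jx - e_{\alpha_j}| \geq \tfrac{1}{2}\min_\gamma |I_\gamma^{(k)}|$, and if $e_{\alpha_j}=e_{\alpha_{j'}}$ with $j\neq j'$ then $|T^jx - T^{j'}x| \geq |I_\beta^{(k)}|$. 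These are formal consequences of the Rokhlin tower structure, and nothing about them depends on $a$.

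The key estimate is then, for each fixed endpoint $e_\alpha$,
\[
\sum_{\substack{0\leq j<Q_\beta(k)\\ j\neq j_\beta,\, e_{\alpha_j}=e_\alpha}}|\varphi'(T^j x)| \leq p_0(\varphi)\sum_{l=0}^{Q_\beta(k)-1}\frac{1}{\tfrac{1}{2}\min_\gamma|I_\gamma^{(k)}| + l|I_\beta^{(k)}|} \leq p_0(\varphi)\left(\frac{2}{\min_\gamma|I_\gamma^{(k)}|} + \frac{1+\log\|Q(k)\|}{|I_\beta^{(k)}|}\right),
\]
where I used $Q_\beta(k) \leq \|Q(k)\|$ and bounded the tail $\sum_{l=1}^{N-1}1/(lL)$ by $(1+\log N)/L$. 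Multiplying by $(x-l_\beta^{(k)}) \leq |I_\beta^{(k)}|/2$ and summing over the at most $2d$ available choices of $e_\alpha$ produces a contribution bounded by $2d\,p_0(\varphi)(1+\log\|Q(k)\|)\max_\gamma|I_\gamma^{(k)}|/\min_\gamma|I_\gamma^{(k)}|$, which combined with the $j_\beta$ contribution $p_0(\varphi)$ gives \eqref{eqn;renormpaos0} with the stated constant.

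The case $x\in[m_\beta^{(k)},r_\beta^{(k)})$ splits according to Lemma~\ref{lem:CSk} into the three sub-cases $\beta\in\mathcal{A}\setminus\{\alpha_*,\alpha^*\}$, $\beta=\alpha^*$, $\beta=\alpha_*$, each of which is settled by the verbatim analogue of the corresponding step in the proof of Proposition~\ref{renormpaos}, again with the harmonic replacement above. The only real obstacle is bookkeeping: one must verify that the bound $Q_\beta(k)\leq \|Q(k)\|$ is used consistently (so the $\log$ really is $\log\|Q(k)\|$), that the factor $\max/\min$ absorbs both the $|I_\beta^{(k)}|/\min_\gamma|I_\gamma^{(k)}|$ coming from the $l=0$ term and the factor $1$ from the harmonic tail, and that the two additional endpoints in the $\beta=\alpha^*$ sub-case do not inflate the constant beyond what is stated. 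Aside from this constant-chasing, the argument is genuinely parallel to the $0<a<1$ proof.
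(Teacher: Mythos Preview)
Your proposal is correct and follows exactly the approach the paper indicates: mimic the proof of Proposition~\ref{renormpaos} case by case, replacing the convergent tail $\sum_{l\geq 1} l^{-(1+a)}\leq\zeta(1+a)$ by the partial harmonic sum $\sum_{l=1}^{Q_\beta(k)-1} l^{-1}\leq 1+\log\|Q(k)\|$, which is precisely where the factor $(1+\log\|Q(k)\|)$ originates. The constant-chasing you flag (absorbing the $l=0$ term into $\max/\min$, handling the two special indices in the $\beta=\alpha^*$ sub-case via the leading ``$2+$'') works out just as you describe.
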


\begin{corollary}\label{cor;renormpaos}
For every IET $T$ satisfying \ref{FDC} there exists $C\geq 1$ such that
for all $0 \leq k \leq l$ and for every function $\varphi \in \pag(\sqcup_{\alpha \in \mathcal{A}}I^{(k)}_\alpha )$ we have
\begin{align}\label{eqn;renormpaos2}
\begin{split}
&p_a(S(k,l)\varphi) \leq Cp_a(\varphi)\text{ if }0<a<1,\\
&p_a(S(k,l)\varphi) \leq C(1+\log\|Q(k,l)\|)p_a(\varphi)\text{ if }a=0.
\end{split}
\end{align}
\end{corollary}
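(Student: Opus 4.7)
The strategy is to reduce to the two propositions already established and then absorb the geometric ratio using the FDC.

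The first step is to observe that the renormalization has a Markov/stationary structure: $S(k,l)$ applied to a cocycle on $I^{(k)}$ for the IET $T^{(k)}:I^{(k)}\to I^{(k)}$ is the exact analog of $S(l-k)$ for the IET $T^{(k)}$ equipped with the shifted accelerating sequence $(n_{k+j}-n_k)_{j\geq 0}$. Since $T^{(k)}$ itself satisfies Keane's condition, Lemma~\ref{lem:CSk} applies to $T^{(k)}$, and the proofs of Proposition~\ref{renormpaos} and its $a=0$ counterpart go through verbatim with $T$ replaced by $T^{(k)}$, the subintervals $\{I^{(l)}_\alpha\}_{\alpha\in\mathcal{A}}$ of $I^{(k)}$ in place of $\{I^{(k)}_\alpha\}$, and the matrix $Q(k,l)$ in place of $Q(k)$. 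Nothing in those proofs uses that one starts at level $0$; they only invoke the tower combinatorics and the cut structure $\{T^{j}l^{(k)}_\alpha,\widehat{T}^{j}r^{(k)}_\alpha\}\cap End(T)$ from Lemma~\ref{lem:CSk}, which is internal to a single renormalization step.

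This yields, for every $\varphi\in\pag(\sqcup_{\alpha\in\mathcal{A}}I^{(k)}_\alpha)$,
\begin{align*}
p_a(S(k,l)\varphi) &\leq \Bigl(2+2d\zeta(1+a)\Bigl(\tfrac{\max_\alpha|I^{(l)}_\alpha|}{\min_\alpha|I^{(l)}_\alpha|}\Bigr)^{1+a}\Bigr)p_a(\varphi) \quad (0<a<1),\\
p_0(S(k,l)\varphi) &\leq p_0(\varphi)\Bigl(2+2d\,(1+\log\|Q(k,l)\|)\,\tfrac{\max_\alpha|I^{(l)}_\alpha|}{\min_\alpha|I^{(l)}_\alpha|}\Bigr) \quad (a=0).
\end{align*}

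The second step is to use the FDC. Condition \eqref{def;sdc4} applied at level $l$ gives $|I^{(l)}|/|I^{(l)}_\alpha|\leq\kappa$ for every $\alpha\in\mathcal{A}$, and together with the trivial bound $\max_\alpha|I^{(l)}_\alpha|\leq|I^{(l)}|$ this produces
\[
\frac{\max_\alpha|I^{(l)}_\alpha|}{\min_\alpha|I^{(l)}_\alpha|}\leq\kappa
\]
uniformly in $l$. Substituting into the inequalities above yields the claimed bounds with $C=2+2d\zeta(1+a)\kappa^{1+a}$ in the polynomial case and, after bounding the constant term by a multiple of $1+\log\|Q(k,l)\|$, with $C=2+2d\kappa$ in the logarithmic case.

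There is essentially no obstacle: the only point that requires a moment of thought is the stationarity remark in the first paragraph, i.e.\ that Proposition~\ref{renormpaos} is really a statement about one step of renormalization between any two consecutive levels of the accelerated Rauzy--Veech induction, and therefore applies equally to the pair $(T^{(k)},T^{(l)})$. Once this is noted, FDC's uniform control on the geometry of the Rokhlin towers (here encoded by \eqref{def;sdc4}) immediately converts the $l$-dependent bound into a uniform one.
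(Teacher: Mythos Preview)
Your proof is correct and is exactly the argument the paper has in mind: the corollary is stated without proof precisely because it follows by applying Proposition~\ref{renormpaos} (and its $a=0$ counterpart) to the pair $(T^{(k)},T^{(l)})$ and then invoking \eqref{def;sdc4} to bound the ratio $\max_\alpha|I^{(l)}_\alpha|/\min_\alpha|I^{(l)}_\alpha|$ by $\kappa$ uniformly in $l$. Your stationarity remark and the absorption of the additive constant into $C(1+\log\|Q(k,l)\|)$ in the $a=0$ case are both fine.
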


We finish this section by giving a lower bound derived from the proof of Proposition~\ref{renormpaos}.
\begin{proposition}\label{prop:renormbelow}
Suppose that $T=T_{(\pi,\lambda)}$ satisfies Keane's condition and $0<a<1$. Assume that $\varphi \in \pag(\sqcup_{\alpha \in \mathcal{A}}I_\alpha)$ and $\alpha\in\mathcal{A} $ are such that  $C^+_\alpha(\varphi)\neq 0$ or $C^-_\alpha(\varphi)\neq 0$. Then choose $\delta>0$ so that
\begin{align*}
&|(x-l_\alpha)^{1+a}\varphi'(x)|\geq |C^+_\alpha(\varphi)|/2\text{ for all }x\in (l_\alpha,l_\alpha+\delta]\text{ or}\\
&|(r_\alpha-x)^{1+a}\varphi'(x)|\geq |C^-_\alpha(\varphi)|/2\text{ for all }x\in [r_\alpha-\delta,r_\alpha).
\end{align*}
If $k \geq 1$ is such that $|I^{(k)}|\leq \delta$ then for every $x\in(l^{(k)}_\alpha, m^{(k)}_\alpha]$ we have
\begin{equation*}\label{eqn;renormbelow1}
|S(k)\varphi'(x)| \geq \frac{|C^+_\alpha|}{2(x-l^{(k)}_\alpha)^{1+a}}-\frac{2^{2+a}d\zeta(1+a)p_a(\varphi)}{|I^{(k)}_\alpha|^{1+a}}
\left(\frac{\max\{|I^{(k)}_\beta|:\beta\in\mathcal{A}\}}{\min\{|I^{(k)}_\beta|:\beta\in\mathcal{A}\}}\right)^{1+a},
\end{equation*}
or for every $x\in[m^{(k)}_{\chi^{-1}(\alpha)}, r^{(k)}_{\chi^{-1}(\alpha)})$ we have
\begin{equation*}\label{eqn;renormbelow2}
|S(k)\varphi'(x)| \geq \frac{|C^-_\alpha|}{2(r^{(k)}_{\chi^{-1}(\alpha)}-x)^{1+a}}-\frac{2^{2+a}d\zeta(1+a)p_a(\varphi)}{|I^{(k)}_{\chi^{-1}(\alpha)}|^{1+a}}
\left(\frac{\max\{|I^{(k)}_\beta|:\beta\in\mathcal{A}\}}{\min\{|I^{(k)}_\beta|:\beta\in\mathcal{A}\}}\right)^{1+a}.
\end{equation*}
\end{proposition}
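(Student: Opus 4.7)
The plan is to mirror the strategy of the proof of Proposition~\ref{renormpaos}: decompose $S(k)\varphi'(x) = \sum_{0 \leq j < Q_\alpha(k)} \varphi'(T^j x)$ into a single ``dominant'' iterate and a controlled tail, and then turn the triangle inequality around. Exactly one iterate $T^{j_\alpha} x$ is forced into the small region where the $\delta$-hypothesis makes $|\varphi'|$ genuinely large, while all remaining iterates stay well away from $End(T)$ so that the pointwise bound $|\varphi'(y)| \leq p_a(\varphi)/\dist(y, End(T))^{1+a}$ controls their contribution.

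For the first inequality, fix $\alpha$ with $C^+_\alpha(\varphi) \neq 0$ and $x \in (l^{(k)}_\alpha, m^{(k)}_\alpha]$. Lemma~\ref{lem:CSk} produces a unique $0 \leq j_\alpha < Q_\alpha(k)$ with $T^{j_\alpha} l^{(k)}_\alpha = l_\alpha$; since $T$ is an isometry on each Rokhlin floor one obtains $T^{j_\alpha} x - l_\alpha = x - l^{(k)}_\alpha \in (0, |I^{(k)}_\alpha|/2] \subset (0, \delta]$ from the hypothesis $|I^{(k)}| \leq \delta$, and hence by the $\delta$-assumption
\[ |\varphi'(T^{j_\alpha} x)| \;\geq\; \frac{|C^+_\alpha(\varphi)|/2}{(x - l^{(k)}_\alpha)^{1+a}}. \]
For $j \neq j_\alpha$, the point $T^j x$ lies in the interior of the floor $T^j I^{(k)}_\alpha$ and, again by Lemma~\ref{lem:CSk}, is at distance at least $\tfrac{1}{2}\min\{|I^{(k)}_\gamma|:\gamma\in\mathcal{A}\}$ from every element of $End(T)$, while iterates sharing the same closest endpoint are spaced by at least $|I^{(k)}_\alpha|$. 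Replaying verbatim the counting step in the proof of Proposition~\ref{renormpaos}, summed over the $2d$ possible closest endpoints, yields
\[ \sum_{j \neq j_\alpha} |\varphi'(T^j x)| \;\leq\; \frac{2^{2+a} d\, \zeta(1+a)\, p_a(\varphi)}{\min\{|I^{(k)}_\gamma|:\gamma\in\mathcal{A}\}^{1+a}} \;\leq\; \frac{2^{2+a} d\, \zeta(1+a)\, p_a(\varphi)}{|I^{(k)}_\alpha|^{1+a}} \left(\frac{\max\{|I^{(k)}_\beta|:\beta\in\mathcal{A}\}}{\min\{|I^{(k)}_\beta|:\beta\in\mathcal{A}\}}\right)^{1+a}. \]
The reverse triangle inequality applied to $S(k)\varphi'(x)$ delivers the first displayed bound.

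The second inequality follows by the symmetric argument using the $r$-version of Lemma~\ref{lem:CSk}: in the generic situation $\chi^{-1}(\alpha) \notin \{\alpha_*, \alpha^*\}$ there is a unique $j_\alpha$ with $\widehat{T}^{j_\alpha} r^{(k)}_{\chi^{-1}(\alpha)} = r_\alpha$, so $r_\alpha - T^{j_\alpha} x = r^{(k)}_{\chi^{-1}(\alpha)} - x \in (0, \delta]$, the $\delta$-hypothesis yields the lower bound $|\varphi'(T^{j_\alpha} x)| \geq \frac{|C^-_\alpha|/2}{(r^{(k)}_{\chi^{-1}(\alpha)}-x)^{1+a}}$, and the tail is estimated exactly as above with $|I^{(k)}_{\chi^{-1}(\alpha)}|$ replacing $|I^{(k)}_\alpha|$. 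The one genuinely delicate point arises in the exceptional case $\chi^{-1}(\alpha) = \alpha^*$, where the orbit of $r^{(k)}_{\alpha^*}$ meets both $r_{\pi_0^{-1}(d)}$ and $r_{\pi_1^{-1}(d)}$ and so two iterates approach $End(T)$ at comparable rates; here the geometric-type assumption $C^-_{\pi_0^{-1}(d)}\cdot C^-_{\pi_1^{-1}(d)} = 0$ is used to ensure that only the endpoint carrying the nonzero leading coefficient produces an unbounded contribution, while the other near-endpoint iterate, whose normalised quantity $|\varphi'(y)(r_{\alpha'}-y)^{1+a}|$ tends to $0$, can be absorbed into the dominant term after possibly shrinking $\delta$ so as to make this normalised quantity smaller than $|C^-_\alpha|/4$ throughout the relevant neighbourhood. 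This mild supplementary smallness condition on $\delta$ is the only subtlety; modulo it, the argument is a direct adaptation of the upper-bound analysis in Proposition~\ref{renormpaos}.
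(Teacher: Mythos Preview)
Your proof is correct and follows essentially the same approach as the paper's own proof: isolate the single dominant iterate $T^{j_\alpha}x$ via Lemma~\ref{lem:CSk}, apply the $\delta$-hypothesis to it, bound the remaining iterates by replaying the tail estimate from the proof of Proposition~\ref{renormpaos}, and conclude via the reverse triangle inequality. The paper's proof is in fact briefer than yours---it simply cites ``the proof of Proposition~\ref{renormpaos}'' for the tail bound and dismisses the $C^-$ case with ``the other case can be treated in the same way''---so your extra discussion of the exceptional case $\chi^{-1}(\alpha)=\alpha^*$ (invoking the geometric-type condition and possibly shrinking $\delta$) goes slightly beyond what the paper spells out, but it is consistent with the underlying argument.
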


\begin{proof}
We focus only on the case $C^+_\alpha(\varphi)\neq 0$. The other case can be treated in the same way. By the proof of Proposition~\ref{renormpaos},
for every $x\in(l^{(k)}_\alpha, m^{(k)}_\alpha]$ we have
\[\Big|\sum_{\substack{0\leq j<Q_\beta(k)\\ j\neq j_\alpha}}\varphi(T^jx)\Big|\leq \frac{2^{2+a}d\zeta(1+a)p_a(\varphi)}{|I^{(k)}_\alpha|^{1+a}}
\left(\frac{\max\{|I^{(k)}_\beta|:\beta\in\mathcal{A}\}}{\min\{|I^{(k)}_\beta|:\beta\in\mathcal{A}\}}\right)^{1+a}\]
and
\[|\varphi'(T^{j_\alpha}x)(x-l_\alpha^{(k)})^{1+a}|
=|\varphi'(T^{j_\alpha}x)(T^{j_\alpha}x-l_\alpha)^{1+a}|\geq |C^{+}_\alpha|/2.\]
This yields our claim.
\end{proof}

\section{Correction operator}\label{sec;correction}
In this section we define the whole family of correction operators $\mathfrak{h}_j: \pag(\sqcup_{\alpha \in \mathcal{A}} I_\alpha) \rightarrow U_j\subset H(\pi)$ for $0\leq a<1$ with $2\leq j\leq g+1$
determined by  $\lambda_j\leq \lambda_1 a<\lambda_{j-1}$.
These operators are generalizations of the correction operator introduced in \cite{Fr-Ul2} for the cocycles with logarithmic singularities.
The correction operator allows us to correct a cocycle with polynomial singularities by a piecewise
constant function, so that we have better control over the growth of the special Birkhoff sums of the corrected cocycle.

\subsection{Correction operator for cocycles with polynomial singularities}\label{sec; COC}

Recall the projection operator $\M$ defined on $L^1(I)$ in \eqref{eqn;projec}. For every step of (accelerated) renormalization  let us consider the corresponding projection operators on the vector space, i.e.\ for every $k \geq 0$ let
\[
\M^{(k)}: \pag(\sqcup_{\alpha \in \mathcal{A}}I_\alpha ) \rightarrow \Gamma^{(k)} \quad\text{be given by}\quad
{\M^{(k)}(\varphi)= \sum_{\alpha \in \mathcal{A}}m(\varphi, I)\chi_{I^{(k)}_\alpha}}.
\]

\begin{theorem}\label{thm;correction}
Assume that $T$ satisfies \ref{FDC}. For any $0\leq a<1$ take $2\leq j\leq g+1$ so that $\lambda_j\leq \lambda_1 a<\lambda_{j-1}$. There exists a bounded linear operator
$\mathfrak{h}_j: \pag(\sqcup_{\alpha \in \mathcal{A}} I_\alpha) \rightarrow U_j $ such that for any $\tau>0$ there exists a constant $C=C_\tau\geq 1$ such that for every $\varphi \in \pag(\sqcup_{\alpha \in \mathcal{A}} I_\alpha) $ with $\mathfrak{h}_j(\varphi) = 0$ we have
\begin{equation}\label{eqn;maincorrection}
\|\mathcal{M}^{(k)}(S(k)\varphi)\| \leq C\left(\big(K^{a,j,\tau}_k+C^{a,j,\tau}_k\big) p_a(\varphi)+ \|Q_{E_j}(k)\| \frac{\norm{\varphi}_{L^1(I^{(0)})}}{|I^{(0)}|}\right).
\end{equation}
\end{theorem}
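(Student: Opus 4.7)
The plan is to adapt the Marmi--Moussa--Yoccoz/Fr\k{a}czek--Ulcigrai correction scheme to the partial Oseledets filtration $\{U_j\}$. First I set $v_k := \mathcal{M}^{(k)}(S(k)\varphi) \in \Gamma^{(k)}$ and $\psi_k := S(k)\varphi - v_k$, so that $\psi_k$ has mean zero on every $I_\alpha^{(k)}$. Because $v_k$ is piecewise constant, $S(k,k+1)v_k = Z(k+1)v_k$ as vectors in $\R^{\mathcal{A}}$, and writing $w_l := \mathcal{M}^{(l)}(S(l-1,l)\psi_{l-1})$ yields the recursion $v_l = Z(l)v_{l-1} + w_l$, which iterates to
\begin{equation*}
v_k = Q(k)\mathcal{M}(\varphi) + \sum_{l=1}^{k} Q(l,k)\, w_l.
\end{equation*}

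\textbf{Definition of $\mathfrak{h}_j$ and convergence.} Using that the splitting $\Gamma^{(k)} = E_j^{(k)} \oplus U_j^{(k)}$ is $Q$-equivariant, I project the telescoping onto $U_j^{(k)}$, factor out $Q|_{U_j}(0,k)$, and define
\begin{equation*}
\mathfrak{h}_j(\varphi) := P_{U_j}\mathcal{M}(\varphi) + \sum_{l=1}^{\infty} Q|_{U_j}(0,l)^{-1} P_{U_j}^{(l)} w_l \in U_j.
\end{equation*}
Linearity is inherited from $\mathcal{M}$, the projections, and $\varphi \mapsto w_l$. Convergence and boundedness as an operator $\pag \to U_j$ will follow from the uniform bound $\|w_l\| \leq C\|Z(l)\|\langle\|Q(l-1)\|\rangle^{a} p_a(\varphi)$ of the next step combined with \eqref{def;sdc12}, \eqref{eqn;projbound}; the $l$-th term is then dominated by a geometric factor $Ce^{(-\lambda_{j-1}+\lambda_1 a + O(\tau))r(0,l)} p_a(\varphi)$, summable since $\lambda_{j-1} > \lambda_1 a$.

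\textbf{Denjoy--Koksma bound for $w_l$ and closing the estimate.} The core technical step is the claim
\begin{equation*}
\|w_l\| \leq C\,\|Z(l)\|\,\langle\|Q(l-1)\|\rangle^{a}\, p_a(\varphi).
\end{equation*}
I would prove it by expressing $m(S(l-1,l)\psi_{l-1}, I_\alpha^{(l)})$ as a sum, over the $Q_\alpha(l-1,l)$ floors $F_{\alpha,i} \subset I_{\beta(\alpha,i)}^{(l-1)}$ of the Rokhlin tower, of the mean values $m(\psi_{l-1}, F_{\alpha,i})$, and exploiting the vanishing $m(\psi_{l-1}, I_\beta^{(l-1)})=0$ together with Corollary~\ref{cor;renormpaos} (giving $p_a(\psi_{l-1}) \leq C p_a(\varphi)$), \eqref{eqn;paosmean} on each generic floor and \eqref{eqn;upperboundvarphi} for the few floors adjacent to $End(T^{(l-1)})$; the factor $\langle\|Q(l-1)\|\rangle^{a}$ encodes the length scale $|I_\beta^{(l-1)}|^{-a}\sim \|Q(l-1)\|^{a}$ via \eqref{eq:qlambda} and \eqref{def;sdc4}. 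Assuming $\mathfrak{h}_j(\varphi)=0$, the projection identity rearranges to the tail formula
\begin{equation*}
P_{U_j}^{(k)} v_k = -\sum_{l>k} Q|_{U_j}(k,l)^{-1} P_{U_j}^{(l)} w_l,
\end{equation*}
which after the substitution $m=l-1$ is bounded above by $C\,p_a(\varphi)\,K^{a,j,\tau}_k$. The complementary piece $P_{E_j}^{(k)} v_k$ splits into the initial term $Q|_{E_j}(0,k) P_{E_j}\mathcal{M}(\varphi)$, bounded by $C\|Q|_{E_j}(0,k)\|\,\|\varphi\|_{L^1(I^{(0)})}/|I^{(0)}|$ via \eqref{def;sdc4} and \eqref{eqn;projbound}, plus $\sum_{l=1}^{k} Q|_{E_j}(l,k) P_{E_j}^{(l)} w_l$ which is bounded by $C\,p_a(\varphi)\,C^{a,j,\tau}_k$ using \eqref{def;sdc1}. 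Summing the two projections yields \eqref{eqn;maincorrection}.

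\textbf{Main obstacle.} The delicate point is the uniform Denjoy--Koksma bound on $\|w_l\|$: the handful of floors touching $End(T^{(l-1)})$ feel the non-integrable pointwise blow-up $|\psi_{l-1}(x)| \sim \operatorname{dist}(x,End(T^{(l-1)}))^{-a}$ of \eqref{eqn;upperboundvarphi}, so their contribution must be summed by hand and shown to stay of the same order as the bulk of floors estimated by the integral bound \eqref{eqn;paosmean}. This phenomenon has no counterpart in the logarithmic setting of \cite{Fr-Ul2}, where \eqref{eqn;upperboundvarphi2} is far milder, and is what forces the constraint $a<1$ together with the choice $\lambda_j \leq \lambda_1 a < \lambda_{j-1}$; any weaker $E_j$/$U_j$ gap would invalidate the geometric convergence of both $K^{a,j,\tau}_k$ and $C^{a,j,\tau}_k$ given in Proposition~\ref{prop;FDCbound}.
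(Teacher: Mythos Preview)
Your telescoping identity, your definition of $\mathfrak{h}_j$, and the splitting of $v_k$ along $E_j^{(k)}\oplus U_j^{(k)}$ match the paper exactly (your $w_l$ is the paper's $\Delta v_l$, and your series for $\mathfrak{h}_j$ is the paper's $\sum_{l\geq 0}Q(0,l)^{-1}\Delta u_l$). The only difference is in how the key bound $\|w_l\|\leq C\|Z(l)\|\langle\|Q(l-1)\|\rangle^a p_a(\varphi)$ is obtained. You propose a pointwise Denjoy--Koksma argument over the floors of the Rokhlin tower, singling out as the ``main obstacle'' the floors adjacent to $End(T^{(l-1)})$. The paper bypasses this entirely by working in $L^1$: since $w_l=\mathcal{M}^{(l)}(S(l-1,l)\psi_{l-1})$ and $S(l-1,l)$ is an $L^1$-contraction by \eqref{nase}, one has
\[
\|w_l\|\leq \frac{\kappa}{|I^{(l)}|}\|S(l-1,l)\psi_{l-1}\|_{L^1(I^{(l)})}\leq \frac{\kappa}{|I^{(l)}|}\|\psi_{l-1}\|_{L^1(I^{(l-1)})},
\]
and then the integral estimate \eqref{eqn;P0k} (which is just \eqref{eqn;paosmean} summed over $\alpha$) together with Corollary~\ref{cor;renormpaos} and \eqref{eq:Ikl} gives the claimed bound in one line. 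The singularity near endpoints is already absorbed into the $L^1$ norm; no separate treatment of bad floors is needed. So your obstacle is an artifact of choosing a sup-norm/pointwise route rather than a genuine feature of the problem, and the rest of your argument goes through unchanged.
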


In view of Proposition~\ref{prop;FDCbound} and the fact that $\|Q_{E_j}(k)\|=O(e^{(\lambda_j+\tau)k}) \leq O(e^{(\lambda_1a+\tau)k})$ (see \eqref{def;sdc1} combined with \eqref{def;sdc0}), we have the following result.
\begin{corollary}\label{cor;correction}
Assume that $T$ satisfies \ref{FDC}. Then for every $\varphi \in \pag(\sqcup_{\alpha \in \mathcal{A}} I_\alpha) $ with $\mathfrak{h}_j(\varphi) = 0$, for every small enough $\tau>0$, there exists a FDC-acceleration such that
\begin{equation}\label{eqn;S(k)correction}
\|\mathcal{M}^{(k)}(S(k)\varphi)\|=O(e^{(\lambda_1 a+\tau)k}).
\end{equation}
\end{corollary}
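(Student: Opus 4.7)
The plan is to adapt the Marmi--Moussa--Yoccoz correction scheme, extended in \cite{Fr-Ul2} to the logarithmic case, to cocycles with polynomial singularities, exploiting the Oseledets decomposition provided by FDC. Fix the index $j$ with $\lambda_j\leq\lambda_1 a<\lambda_{j-1}$ and set $v_k:=\mathcal{M}^{(k)}(S(k)\varphi)\in\Gamma^{(k)}$. Because the mean-value operator commutes with $S(k,k+1)$ on piecewise constants, $v$ satisfies the recursion $v_{k+1}=Q(k,k+1)v_k+e_k$ with error $e_k:=\mathcal{M}^{(k+1)}\bigl(S(k,k+1)(S(k)\varphi-v_k)\bigr)$, and the main technical estimate is
\[
\|e_k\|_\infty\leq C\,p_a(\varphi)\,\|Z(k+1)\|\,\langle\|Q(k)\|\rangle^a.
\]
To establish it, note that $\tilde\psi_k:=S(k)\varphi-v_k$ has zero mean on every $I^{(k)}_\beta$ and satisfies $p_a(\tilde\psi_k)\leq C\,p_a(\varphi)$ by Corollary~\ref{cor;renormpaos}. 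The value $e_k|_{I^{(k+1)}_\alpha}$ is an average of integrals of $\tilde\psi_k$ over the floors $(T^{(k)})^j I^{(k+1)}_\alpha$ of the Rokhlin tower above $I^{(k+1)}_\alpha$; grouping the floors by the $I^{(k)}_\beta$ in which they sit, each group integrates to at most $\int_{I^{(k)}_\beta}|\tilde\psi_k|\leq C\,p_a(\varphi)\,|I^{(k)}_\beta|^{1-a}$ by \eqref{eqn;paosmean}. Dividing by $|I^{(k+1)}_\alpha|$ and invoking the FDC consequences $|I^{(k)}|/|I^{(k+1)}|\leq\|Z(k+1)\|$ and $|I^{(k)}|^{-1}\asymp\|Q(k)\|$ (from \eqref{def;sdc4}, \eqref{eq:qlambda}) delivers the bound, with a logarithmic factor replacing $|I^{(k)}|^{-a}$ when $a=0$.

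Iterating gives $v_k=Q(0,k)v_0+\sum_{l=0}^{k-1}Q(l+1,k)e_l$. Split via $\Gamma^{(k)}=E_j^{(k)}\oplus U_j^{(k)}$ and apply the projections. On the $E_j$-side, the FDC bounds \eqref{def;sdc1} and \eqref{eqn;projbound} combined with the estimate of $\|e_l\|$ reproduce exactly the Diophantine series $C^{a,j,\tau}_k$:
\[
\|P^{(k)}_{E_j}v_k\|\leq C\,\|Q|_{E_j}(k)\|\,\|v_0\|+C\,p_a(\varphi)\,C^{a,j,\tau}_k,
\]
while $\|v_0\|\leq\kappa\|\varphi\|_{L^1(I)}/|I|$ by \eqref{def;sdc4}. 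On the $U_j$-side the inverse cocycle contracts by \eqref{def;sdc12}, so the natural definition is
\[
\mathfrak{h}_j(\varphi):=P^{(0)}_{U_j}v_0+\sum_{m\geq 0}Q|_{U_j}(0,m+1)^{-1}P^{(m+1)}_{U_j}\,e_m\in U_j.
\]
Absolute convergence follows from \eqref{def;sdc12}, \eqref{eqn;projbound} and the bound on $\|e_m\|$, the sum being dominated by $C\,K^{a,j,\tau}_0\,p_a(\varphi)$. Together with the bound on $\|v_0\|$ this establishes, via Proposition~\ref{prop;FDCbound}, that $\mathfrak{h}_j:\pag\to U_j$ is linear and bounded.

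Assume $\mathfrak{h}_j(\varphi)=0$. Then $P^{(0)}_{U_j}v_0=-\sum_{m\geq 0}Q|_{U_j}(0,m+1)^{-1}P^{(m+1)}_{U_j}\,e_m$. Applying $Q|_{U_j}(0,k)$, using the cocycle relation $Q|_{U_j}(0,k)\,Q|_{U_j}(0,m+1)^{-1}=Q|_{U_j}(k,m+1)^{-1}$ for $m\geq k$, and combining with the projected iterate $P^{(k)}_{U_j}v_k=Q|_{U_j}(0,k)P^{(0)}_{U_j}v_0+\sum_{l=0}^{k-1}Q|_{U_j}(l+1,k)P^{(l+1)}_{U_j}e_l$, the terms with $l<k$ cancel and we arrive at
\[
P^{(k)}_{U_j}v_k=-\sum_{m\geq k}Q|_{U_j}(k,m+1)^{-1}P^{(m+1)}_{U_j}\,e_m,
\]
whose norm is at most $C\,p_a(\varphi)\,K^{a,j,\tau}_k$. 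The triangle inequality $\|v_k\|\leq\|P^{(k)}_{E_j}v_k\|+\|P^{(k)}_{U_j}v_k\|$ then yields \eqref{eqn;maincorrection}.

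The main obstacle will be the careful derivation of $\|e_k\|$: whereas in the Frączek--Ulcigrai logarithmic setting a piecewise-constant correction essentially linearizes the renormalization, the polynomial singularities force a more delicate argument combining the zero-mean cancellation of $\tilde\psi_k$ on each $I^{(k)}_\beta$ with the Rokhlin-tower geometry, and converting the power-law factor $|I^{(k)}|^{-a}$ into $\|Q(k)\|^a$ via FDC. A secondary subtlety is the case $a=0$, where \eqref{eqn;renormpaos0} introduces an extra $(1+\log\|Q(k)\|)$ factor that must be absorbed into the $\tau$-loss already present in the FDC estimates, which is why the final exponent loses an $e^{\tau k}$ compared to the nominal $\lambda_1 a$.
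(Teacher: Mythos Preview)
Your proposal is correct and follows essentially the same route as the paper. You reconstruct the proof of Theorem~\ref{thm;correction} (the bound \eqref{eqn;maincorrection}) via the Marmi--Moussa--Yoccoz scheme with the Oseledets splitting $E_j\oplus U_j$, and then invoke Proposition~\ref{prop;FDCbound} to convert the Diophantine series into the exponential bound---exactly as the paper does. Your Rokhlin-tower derivation of $\|e_k\|$ is the same estimate the paper obtains via the $L^1$-contraction \eqref{nase} combined with \eqref{eqn;P0k}; your $e_k$ is the paper's $\Delta v_{k+1}$, and your definition of $\mathfrak{h}_j$ coincides with the paper's $\sum_{l\geq 0}Q(0,l)^{-1}\Delta u_l$ after unfolding the notation.
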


Corollary \ref{cor;correction} plays a crucial role in determining the growth rate of ergodic integrals in \S\ref{sec;deviation}.
Before starting the proof of Theorem \ref{thm;correction}, we provide some basic inequalities regarding the operator $\M^{(k)}$.
\medskip

Note that, by definition and \eqref{eqn;paosmean},
\begin{gather}\label{eqn;normMk}
\big\|\M^{(k)}(\varphi)\big\|_{L^1(I^{(k)})} \leq 2 \norm{\varphi}_{L^1(I^{(k)})}\\
\label{eqn;Mkaverage}
\big\|\varphi - \M^{(k)}(\varphi)\big\|_{L^1(I^{(k)})} \leq  \frac{2^{2+a}d}{1-a}p_a(\varphi)|I^{(k)}|^{1-a}.
\end{gather}
As for every $h \in \Gamma^{(k)}$,
\begin{equation}\label{equiv-norm}
\frac{|I^{(k)}| \norm{h}}{\kappa} \leq \min_{\beta \in \mathcal{A}} |I_\beta^{(k)}|\norm{h} \leq \norm{h}_{L^1(I^{(k)})} \leq |I^{(k)}|\norm{h},
\end{equation}
this gives
\begin{equation}\label{eqn;Mknorm}
\|\M^{(k)}(\varphi)\| \leq \frac{2\kappa }{|I^{(k)}|}\norm{\varphi}_{L^1(I^{(k)})}.
\end{equation}
Let us consider a linear operator
$P^{(k)}_0: \pag(\sqcup_{\alpha \in \mathcal{A}} I_\alpha^{(k)}) \rightarrow \pag(\sqcup_{\alpha \in \mathcal{A}} I_\alpha^{(k)})$ given by
\[P^{(k)}_0(\varphi) = \varphi - \M^{(k)}(\varphi).\]
In view of \eqref{eqn;Mkaverage} and \eqref{eq:Ikl}, for all $ k\geq 0$ we have
\begin{equation}\label{eqn;P0k}
\frac{1}{|I^{(k)}|}\big\|P^{(k)}_0(\varphi)\big\|_{L^1(I^{(k)})}  \leq \frac{2^{2+a}d}{1-a}\frac{p_a(\varphi)}{|I^{(k)}|^{a}} \leq \frac{2^{2+a}d}{(1-a)|I|^a}\|Q(k)\|^ap_a(\varphi).
\end{equation}

\begin{proof}[Proof of Theorem \ref{thm;correction}]
Let us denote $v_k := \M^{(k)} \circ S(k)(\varphi)$.
Direct calculation shows that
\begin{align*}
\big(S(k,k+1)&\circ P^{(k)}_0 \circ S(k)(\varphi) -P_0^{(k+1)}\circ
S(k,k+1)\circ S(k)(\varphi)\big) \\
& =  -S(k,k+1)\circ \M^{(k)} \circ S(k)(\varphi) + \M^{(k+1)}\circ S(k+1)(\varphi)\\
& = -Z(k+1)v_k + v_{k+1}.
\end{align*}
By \eqref{nase},  
\eqref{eqn;P0k}, \eqref{eqn;renormpaos2} and  \eqref{eq:Ikl},
\begin{align*}
\frac{1}{|I^{(k+1)}|}\big\|S(k,k+1)\circ P^{(k)}_0 \circ S(k)(\varphi)\big\|_{L^1(I^{(k+1)})} & \leq \frac{1}{|I^{(k+1)}|}\big\|P^{(k)}_0 \circ S(k)(\varphi)\big\|_{L^1(I^{(k)})}\\
& \leq \frac{|I^{(k)}|}{|I^{(k+1)}|}\frac{2^{2+a}d}{(1-a)|I|^a}\norm{Q(k)}^ap_a(S(k)\varphi)\\
& \leq C_a \langle \|Q(k)\|\rangle^a \|Z(k+1)\|p_a(\varphi),
\end{align*}
and similarly,
\begin{align*}
\frac{1}{|I^{(k+1)}|}\big\|P_0^{(k+1)}\circ S(k+1)(\varphi)\big\|_{L^1(I^{(k+1)})} \leq C_a \langle \|Q(k+1)\|\rangle^ap_a(\varphi).
\end{align*}
Therefore,  for any $0\leq a < 1$, 
\begin{align*}
\frac{1}{|I^{(k+1)}|}\norm{Z(k+1)v_k - v_{k+1}}_{L^1(I^{(k+1)})} &\leq  C_a\left(\langle \|Q(k)\|\rangle^a \|Z(k+1)\| + \langle \|Q(k+1)\|\rangle^a\right)p_a(\varphi)\\
& \leq C'_a  \langle \|Q(k)\|\rangle^a \|Z(k+1)\|p_a(\varphi).
\end{align*}
By \eqref{equiv-norm}, this gives
\begin{equation}\label{eqn;anv}
\norm{Z(k+1)v_k-v_{k+1}}\leq \kappa C'_a \langle \|Q(k)\|\rangle^a \|Z(k+1)\|p_a(\varphi).
\end{equation}

\medskip

For any sequence $(x_k)_{k\geq 0}$ in $\R^{\mathcal{A}}$, let $\Delta x_{k+1} = x_{k+1}-Z(k+1)x_k$ for $k\geq 0$ and $\Delta x_0=x_0$.
Then, by telescoping,
\begin{align}\label{def;ehk}
x_{k} = \sum_{j=0}^{k} Q(j,k) \Delta x_j.
\end{align}
Letting $v_k = \M^{(k)} \circ S(k)(\varphi)$, by  \eqref{eqn;Mknorm} and \eqref{eqn;anv}, 
\begin{equation}\label{eqn;delta0}
\norm{\Delta v_0}  \leq  \frac{2\kappa}{|I^{(0)}|} \norm{\varphi}_{L^1(I^{(0)})}\text{ and }\norm{\Delta v_{k+1}}\leq \kappa C'_a \langle \|Q(k)\|\rangle^a \|Z(k+1)\|p_a(\varphi).
\end{equation}

Now we claim that for any $\tau>0$, there exists a vector $v\in U_j^{(0)}$ such that
\begin{equation}\label{eqn;anv2}
\norm{Q(k)v - v_{k}}\leq O\big((p_a(\varphi)+\|\varphi\|_{L^1(I)} )(K^{a,j,\tau}_k+C^{a,j,\tau}_k)\big).
\end{equation}
For every $k\geq 0$ let $e_k=P^{(k)}_{E_j}v_k\in E_j^{(k)}$ and $u_k=P^{(k)}_{U_j}v_k\in U_j^{(k)}$. Then $v_ k = u_k + e_k$. Since $Z(k+1)(E_j^{(k)})=E_j^{(k+1)}$ and $Z(k+1)(U_j^{(k)})=U_j^{(k+1)}$ we have
\begin{align*}
\Delta u_{k+1} = u_{k+1}-Z(k+1)u_k = P^{(k+1)}_{U_j}\Delta v_{k+1}&, \ \Delta e_{k+1} = e_{k+1}-Z(k+1)e_k=P^{(k+1)}_{E_j}\Delta v_{k+1},\\
\Delta u_0=u_0=P^{(0)}_{U_j}\Delta v_{0}&, \ \Delta e_0 = e_0=P^{(0)}_{E_j}\Delta v_{0}.
\end{align*}
In view of \eqref{eqn;delta0} and \eqref{eqn;projbound}, we have
\begin{align}\label{u0e0}
\norm{\Delta u_0}\leq C\norm{\Delta v_0}\leq \frac{2C\kappa}{|I^{(0)}|} \norm{\varphi}_{L^1(I^{(0)})}, \quad \norm{\Delta e_0}\leq C\norm{\Delta v_0}\leq \frac{2C\kappa}{|I^{(0)}|} \norm{\varphi}_{L^1(I^{(0)})},
\end{align}
and for every $k\geq 1$ we have
\begin{align}\label{ukek}
\begin{split}
\norm{\Delta u_{k}} &\leq C_{a,\tau} \|Z(k)\|\langle \|Q(k-1)\|\rangle^{a} \|Q(k)\|^\tau p_a(\varphi), \\
\norm{\Delta e_{k}} &\leq C_{a,\tau} \|Z(k)\|\langle \|Q(k-1)\|\rangle^{a} \|Q(k)\|^\tau p_a(\varphi).
\end{split}
\end{align}
Let us consider an infinite series $v :=  \sum_{l \geq 0} Q(0,l)^{-1}\Delta u_l$. For any $\tau>0$ small enough, by \eqref{ukek}, \eqref{u0e0}
and the definition of $K^{a,j,\tau}_l$,
\begin{align}\label{eqn;inftyv}
\begin{split}
\norm{v} &\leq \sum_{l \geq 0} \|Q|_{U_j}(0,l)^{-1}\|\|\Delta u_l\| \\
& \leq \|\Delta u_0\|+C'_{a,\tau}  p_a(\varphi)\sum_{l \geq 0} \|Q|_{U_j}(0,l+1)^{-1}\| \|Z(l+1)\| \langle \|Q(l)\|\rangle^{a}\| \|Q(l+1)\|^\tau \\
 &= C'_{a,\tau}\Big(\frac{2\kappa}{|I^{(0)}|} \norm{\varphi}_{L^1(I^{(0)})}+K^{a,j,\tau}_0p_a(\varphi)\Big) <+\infty.
\end{split}
\end{align}
Therefore, $v \in U_j^{(0)}$ is well-defined and for every $k\geq 0$ we have
\begin{align}\label{eqn;qedifference}
\begin{split}
 \|Q(k)v - u_{k}\|& = \Big\|\sum_{l\geq k} Q|_{U_j}(k,l+1)^{-1}\Delta u_{l+1}\Big\| \\
  &\leq C_{a,\tau}  p_a(\varphi)\sum_{l \geq k} \|Q|_{U_j}(k,l+1)^{-1}\| \|Z(l+1)\|\langle \|Q(l)\|\rangle^{a}\| \|Q(l+1)\|^\tau\\
&=  C_{a,\tau} K^{a,j,\tau}_k p_a(\varphi).
\end{split}
 \end{align}
To obtain the bound of norm of $e_k \in E_k$, we apply \eqref{def;ehk}, \eqref{ukek} and \eqref{u0e0},
\begin{align*}\label{eqn;hk}
\begin{split}
\|&e_k\| \leq \sum_{0\leq l\leq k}\|Q(l,k)\Delta e_{l}\|\leq\sum_{0\leq l\leq k}\|Q|_{E_j}(l,k)\|
\|\Delta e_{l}\|\\
&\leq C_{a,\tau}\Big( p_a(\varphi)\sum_{0< l\leq k} \|Q|_{E_j}(l,k)\| \|Z(l)\|\langle \|Q(l-1)\|\rangle^{a}\| \|Q(l)\|^\tau+ \|Q|_{E_j}(k)\|\frac{\|{\varphi}\|_{L^1(I^{(0)})}}{|I^{(0)}|} \Big)\\
& \leq O \Big(p_a(\varphi)C_{k}^{a,j,\tau} + \|Q|_{E_j}(k)\|\frac{\|{\varphi}\|_{L^1(I^{(0)})}}{|I^{(0)}|} \Big).
\end{split}
\end{align*}
Combining with \eqref{eqn;qedifference}, we conclude
\[ \norm{Q(k)v - v_{k}} \leq O \Big((K^{a,j,\tau}_k+C^{a,j,\tau}_k) p_a(\varphi)+ \|Q|_{E_j}(k)\|\frac{\|{\varphi}\|_{L^1(I^{(0)})}}{|I^{(0)}|} \Big).\]
Since $\mathcal{M}^{(k)}(S(k)(v))=Q(k)v$, this gives
\begin{align}\label{eqn;correction1}
\big\|\mathcal{M}^{(k)}(S(k)(\varphi-v))\big\|&\leq O \Big((K^{a,j,\tau}_k+C^{a,j,\tau}_k) p_a(\varphi)  + \|Q|_{E_j}(k)\| \frac{\norm{\varphi}_{L^1(I^{(0)})}}{|I^{(0)}|}\Big).
\end{align}

Let us consider the linear operator $\mathfrak{h}_j: \pag(\sqcup_{\alpha \in \mathcal{A}} I_\alpha) \rightarrow U_j^{(0)} \subset H(\pi)$, called the \emph{correction operator} given by
\begin{equation*}
\mathfrak{h}_j(\varphi) = v = \sum_{l \geq 0} Q(0,l)^{-1}\circ P^{(l)}_{U_j}\circ\big(\mathcal{M}^{(l)}\circ S(l)-Z(l)\circ\mathcal{M}^{(l-1)}\circ S(l-1)\big)(\varphi),
\end{equation*}
where $\mathcal{M}^{(-1)}=0$. Since $\mathfrak{h}_j(\varphi)=\lim_{l\to\infty}Q(0,l)^{-1}\circ P^{(l)}_{U_j}\circ\mathcal{M}^{(l)}\circ S(l)(\varphi)$, by Remark~\ref{rmk:acc},
the operator $\mathfrak{h}_j$ is independent of the choice of $\tau$ and its FDC-acceleration.
By \eqref{eqn;inftyv}, we have
\[\norm{\mathfrak{h}_j(\varphi)} \leq C'_{a,\tau}\Big(\frac{2\kappa}{|I^{(0)}|} \norm{\varphi}_{L^1(I^{(0)})}+K^{a,j,\tau}_0p_a(\varphi)\Big) \leq  C'_{a,\tau}\Big(\frac{2\kappa}{|I^{(0)}|} + K^{a,j,\tau}_0\Big)\norm{\varphi}_a,\]
thus, the operator $\mathfrak{h}_j$ is bounded.
Hence, if $\mathfrak{h}_j(\varphi)=0$, then, by \eqref{eqn;correction1}, we have \eqref{eqn;maincorrection}.
\end{proof}

The construction of the correction operator is similar to that of the correction operator for logarithmic singularity presented by the first author and C.~Ulcigrai in \cite{Fr-Ul,Fr-Ul2}.
We apply also some elements of  Bufetov's approach from \cite[\S2.6]{Bu}.

\section{Deviation of Birkhoff sums and integrals}\label{sec;deviation}
This section contains main results on the deviation of Birkhoff integrals for special flows built
over IETs satisfying \ref{FDC} and under roof functions in $\pag$. Because of the existence of such  special representations for almost every locally Hamiltonian flow (see \S\ref{sec;specialflow}),
the contents of this section will be directly used to prove the main Theorem~\ref{theorem;main} in \S\ref{sec:pmt}.
Some of the results in this section are creative extensions of ideas introduced in \cite{Fr-Ul2} for roof functions with logarithmic singularities.
However, the existence of polynomial singularities significantly complicates the arguments.

\subsection{Reduction of Birkhoff integrals to Birkhoff
sums}\label{sec:decompflow}
Given an IET $T:I\to I$ and an integrable function
$g:I\to\R_{>0}\cup\{+\infty\}$ with
$\underline{g}=\inf_{x\in I}g(x)>0$, we consider the special flow $(T^g_t)_{t\in\R}$ acting on
$I^g=\{(x,r)\in I\times \R:0\leq r<g(x)\}$.
For every integrable function $f:I^g\to\R$, let
\[\varphi_f:I\to\R\quad \text{be given by}\quad \varphi_f(x)=\int_0^{g(x)}f(x,r)\, dr\text{ for }x\in I.\]
For every $u\geq \underline{g}$ let $I_u$ be a subset of $I$ such that
$g(x)\leq u$ for every $x\in I_u$. Moreover, for every $s\geq 0$ let
\[A_u^s:=\{(x,r)\in I^g:x\in I_u\}\setminus\{T^g_{-t}(x,0):x\in
I\setminus I_u,\,0\leq t\leq s\}\subset I^g.\]

For every $(x,r)\in I^g$ and $s>0$, let $n(x,r,s)\geq 0$ be the number of crossing the interval $I$ by the orbit segment $\{T^g_t(x,r):t\in [0,s]\}$.
Then $0\leq n(x,r,s)\leq s/\underline{g}+1$.

\medskip

We recall the following elementary Lemma that relates the Birkhoff integrals of $f$
for the flow $(T_t^g)_{t\in \R}$ with the Birkhoff sums of
$\varphi_f$ for the IET $T$. This is a slightly modified version of Lemma~7.1 in \cite{Fr-Ul2} using the quantity $\chi(u)=\sup\{\varphi_{|f|}(x):x\in I_u\}$.

\begin{lemma}[Lemma~7.1 in \cite{Fr-Ul2}]\label{lem:int0s}
For every integrable map $f:I^g\to\R$,   $s>0$ and $u\geq \underline{g}$ if $(x,r)\in A^s_u$ then
\begin{equation}\label{eq:ia}
T^ix\in I_u\quad\text{for all}\quad 0\leq i\leq n(x,r,s),
\end{equation}
and
\begin{equation}\label{ineq:intchi}
\Big|\int_0^sf(T^g_t(x,r))\,dt\Big|
\leq |\varphi_f^{(n(x,r,s))}(x)|+2\chi(u).
\end{equation}
\end{lemma}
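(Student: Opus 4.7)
The plan is to split the orbit segment $\{T^g_t(x,r): t \in [0,s]\}$ at its successive returns to the base $I$ and exploit the fact that the set removed in the definition of $A^s_u$ is precisely the set of points whose forward orbit within time $s$ lands on a base point outside $I_u$. Once the integral is decomposed according to these crossing times, the full-fibre pieces assemble exactly into a special Birkhoff sum $\varphi_f^{(n)}(x)$, and only two boundary contributions remain to be estimated.

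For \eqref{eq:ia}, I would set $n = n(x,r,s)$ and observe that the crossing times are $t_i := g^{(i)}(x) - r$ for $1 \le i \le n$, satisfying $0 < t_i \le s$, with the orbit at time $t_i$ located at $(T^i x, 0)$. The case $i = 0$ is immediate from the definition of $A^s_u$, which already requires $x \in I_u$. For $1 \le i \le n$, if $T^i x \notin I_u$ then $(x,r) = T^g_{-t_i}(T^i x, 0)$ with $T^i x \in I \setminus I_u$ and $0 \le t_i \le s$, placing $(x,r)$ inside the set subtracted off in the definition of $A^s_u$ and contradicting the assumption $(x,r) \in A^s_u$.

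For \eqref{ineq:intchi}, I would decompose the integral into one piece per time interval between consecutive crossings. Along each such interval the orbit stays on a single vertical fibre of $I^g$, which yields
\begin{equation*}
\int_0^s f(T^g_t(x,r))\,dt = \int_r^{g(x)} f(x,r')\,dr' + \sum_{i=1}^{n-1}\varphi_f(T^i x) + \int_0^{s-t_n} f(T^n x, r')\,dr'.
\end{equation*}
Writing $\int_r^{g(x)} f(x,r')\,dr' = \varphi_f(x) - \int_0^r f(x,r')\,dr'$ and collapsing the full-fibre integrals into $\varphi_f^{(n)}(x)$, the right-hand side becomes $\varphi_f^{(n)}(x) - \int_0^r f(x,r')\,dr' + \int_0^{s-t_n} f(T^n x, r')\,dr'$.

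The two remaining boundary integrals are then estimated separately: since $x \in I_u$, the first one is bounded in modulus by $\varphi_{|f|}(x) \le \chi(u)$; since $s < t_{n+1} = t_n + g(T^n x)$ by the very definition of $n = n(x,r,s)$, we have $s - t_n < g(T^n x)$, and combined with \eqref{eq:ia} at $i = n$ this shows that the second is bounded in modulus by $\varphi_{|f|}(T^n x) \le \chi(u)$. The degenerate case $n = 0$ (no crossing, with $\varphi_f^{(0)}(x) = 0$ by convention) is handled directly from $r + s < g(x)$ and $x \in I_u$. There is no serious obstacle; the only care needed is the bookkeeping of the two endpoint integrals, to recognize that each contributes a single copy of $\chi(u)$, which explains the factor $2$ in the final inequality.
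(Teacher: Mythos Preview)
Your proof is correct and follows the natural approach: split the orbit at successive returns to the base, use the definition of $A^s_u$ to rule out landing on base points outside $I_u$, assemble the full-fibre pieces into $\varphi_f^{(n)}(x)$, and bound the two partial-fibre terms by $\varphi_{|f|}$ at points of $I_u$.

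The paper does not give its own proof of this lemma; it quotes it as Lemma~7.1 from \cite{Fr-Ul2} and only records that $\chi(u)=\sup\{\varphi_{|f|}(x):x\in I_u\}$ replaces the analogous quantity used there. Your argument is precisely the standard one underlying that reference, so there is nothing to compare beyond noting that you have reproduced the intended proof.
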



We next estimate $\varphi_f^{(n)}(x)$ using the decomposition
into special Birkhoff sums introduced by Zorich in \cite{Zo}.

\smallskip

Let $T:I\to I$ be an arbitrary IET satisfying Keane's condition.
For every $x\in I$ and $n\geq 0$ set
\[m(x,n)=m(x,n,T):=\max\{l\geq 0:\#\{0\leq k\leq n:T^kx\in
I^{(l)}\}\geq 2\}.\]
\begin{proposition}[see \cite{Zo} or \cite{ViB}] \label{relmn}
For every $x\in I$ and $n>0$ we have
\[\min_{\alpha\in\mathcal{A}}Q_{\alpha}(m)\leq n\leq
d\max_{\alpha\in\mathcal{A}}
Q_{\alpha}(m+1)=d\|Q(m+1)\|, \text{ where }m=m(x,n).\]
\end{proposition}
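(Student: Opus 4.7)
The plan is to unpack the definition of $m=m(x,n)$ via the Rokhlin tower decompositions of $I$ over the bases $I^{(m)}$ and $I^{(m+1)}$ recalled in \S2.6. Both inequalities then fall out of combinatorial observations about how the orbit segment $x, Tx, \ldots, T^n x$ threads through these towers.

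For the lower bound $\min_\alpha Q_\alpha(m) \leq n$, the definition of $m(x,n)$ supplies two indices $0 \leq k_1 < k_2 \leq n$ with $T^{k_1}x, T^{k_2}x \in I^{(m)}$. If $T^{k_1}x \in I^{(m)}_\alpha$, then $Q_\alpha(m)$ is by construction the first-return time to $I^{(m)}$ of any point of $I^{(m)}_\alpha$, so $k_2 - k_1 \geq Q_\alpha(m)$. Consequently $n \geq k_2 - k_1 \geq \min_\beta Q_\beta(m)$.

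For the upper bound $n \leq d\|Q(m+1)\|$, the maximality of $m$ forces the orbit segment to visit $I^{(m+1)}$ at most once. Using the partition
\[I = \bigsqcup_{\alpha \in \mathcal{A}}\bigsqcup_{0 \leq i < Q_\alpha(m+1)} T^i\bigl(I^{(m+1)}_\alpha\bigr),\]
every excursion from $I^{(m+1)}$ back to itself traces out one full tower and thus lasts exactly $Q_\alpha(m+1)$ iterates. In the worst case (one visit, at some time $k \in [0,n]$), the segment splits into an initial climb within tower $\alpha_1$ of length $k \leq Q_{\alpha_1}(m+1)$ and a final climb within tower $\alpha_2$ of length $n-k \leq Q_{\alpha_2}(m+1)-1$, yielding $n \leq 2\|Q(m+1)\| \leq d\|Q(m+1)\|$ (using $d \geq 2$). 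The no-visit case is strictly better: the segment stays in a single tower and $n \leq \|Q(m+1)\|-1$.

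No serious obstacle arises: the argument is entirely combinatorial and is the classical Zorich--Viana observation. The only point requiring care is correctly extracting, from the maximality in the definition of $m(x,n)$, the statement that the orbit visits $I^{(m+1)}$ \emph{at most once} — without which the two-tower picture would fail and only a crude bound $n \leq d \|Q(m+1)\| \cdot (\text{number of visits})$ could be obtained.
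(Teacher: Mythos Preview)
The paper does not supply its own proof of this proposition; it merely cites \cite{Zo} and \cite{ViB}. Your argument is correct and is precisely the classical combinatorial one from those references: the lower bound comes from the fact that two visits to $I^{(m)}$ are separated by at least one full return time, and the upper bound from the fact that at most one visit to $I^{(m+1)}$ splits the orbit segment into at most two partial tower climbs, each of length at most $\|Q(m+1)\|$.
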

\noindent Since the sequence
$\big(\min_{\alpha\in\mathcal{A}}Q_{\alpha}(m)\big)_{m\geq 0}$
 increases to the infinity
\[m(n)=m(n,T):=\max\{m(x,n):x\in I\}\quad \text{is well-defined.}\]

\begin{lemma}\label{lem:phibounded}
Assume that $T$ satisfies \ref{FDC} and $\varphi:I \rightarrow \R$ is a bounded map such that
for some $\tau>0$ and the corresponding FDC-acceleration we have $\|S(k)\varphi\|_{\sup}=O(e^{\lambda k})$ for some $\lambda\geq 0$. Then
$\|\varphi^{(n)}\|=O(n^{(\lambda+\tau)/\lambda_1})$.
\end{lemma}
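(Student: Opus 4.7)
The plan is to reduce the bound on the full Birkhoff sum $\varphi^{(n)}$ to the assumed bound on the special Birkhoff sums $S(k)\varphi$ via the standard Zorich decomposition, and then convert the dependence on the renormalization level $k$ to a dependence on $n$ using the FDC lower bound on $\|Q(k)\|$.

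Concretely, for every $x\in I$ and $n\geq 1$ set $m=m(x,n)\leq m(n)$. The Zorich decomposition (see e.g.\ \cite{Zo,ViB}), obtained by iterated use of the identity $\varphi^{(Q_\alpha(k))}(y)=S(k)\varphi(y)$ for $y\in I^{(k)}_\alpha$ along the orbit $\{T^{i}x:0\leq i<n\}$, expresses $\varphi^{(n)}(x)$ as a sum of values of the form $S(k)\varphi(y_{k,j})$ with $y_{k,j}\in I^{(k)}$ and $0\leq k\leq m$, in which the multiplicity at each level is bounded by a universal constant $b_d$ depending only on $d=\#\mathcal A$. Using the hypothesis $\|S(k)\varphi\|_{\sup}\leq C_0 e^{\lambda k}$ termwise I obtain
\[
|\varphi^{(n)}(x)|\ \leq\ b_d\sum_{k=0}^{m}\|S(k)\varphi\|_{\sup}\ \leq\ b_d C_0\sum_{k=0}^{m}e^{\lambda k}\ \leq\ C_1(m+1)\,e^{\lambda m}.
\]

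To finish, I will invert the growth of $\|Q(m)\|$ to bound $m$ in terms of $n$. By Proposition~\ref{relmn} together with \eqref{eq:minmax} and the FDC lower bound in \eqref{def;sdc3},
\[
n\ \geq\ \min_{\alpha\in\mathcal A} Q_\alpha(m)\ \geq\ \tfrac{\delta}{\kappa}\|Q(m)\|\ \geq\ \tfrac{\delta c}{\kappa}\,e^{\lambda_1 m},
\]
so $m\leq \lambda_1^{-1}\log(\kappa n/(\delta c))+O(1)$. Substituting, $e^{\lambda m}=O(n^{\lambda/\lambda_1})$, while the prefactor $m+1=O(\log n)$ is absorbed into $n^{\tau/\lambda_1}$ for any $\tau>0$. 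Since the above bound is uniform in $x$, this yields $\|\varphi^{(n)}\|_{\sup}=O(n^{(\lambda+\tau)/\lambda_1})$, as required.

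I do not expect a genuine obstacle: the only substantive ingredient is Zorich's universal multiplicity bound $b_d$, which is standard in the special Birkhoff sum formalism of \S\ref{SpecialBS} (and underlies Proposition~\ref{relmn} itself). The slack $\tau$ in the exponent serves only to absorb the logarithmic prefactor $m+1$; this is what allows the statement to be uniform across $\lambda$, and in particular to cover the degenerate case $\lambda=0$, where the same argument gives $\|\varphi^{(n)}\|_{\sup}=O(\log n)=O(n^{\tau/\lambda_1})$.
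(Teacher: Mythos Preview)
Your strategy coincides with the paper's: Zorich decomposition into special Birkhoff sums, then convert the level $m$ to $n$ via $n\geq\min_\alpha Q_\alpha(m)\geq\tfrac{\delta}{\kappa}\|Q(m)\|\geq\tfrac{\delta c}{\kappa}e^{\lambda_1 m}$ from \eqref{def;sdc3} and \eqref{eq:minmax}. The gap is the claimed universal multiplicity bound $b_d$. For a \emph{general} accelerating sequence---and the FDC-acceleration is one---the number of level-$k$ towers appearing in the decomposition of $\varphi^{(n)}(x)$ is not bounded by a constant depending only on $d$; the correct bound is $2\|Z(k+1)\|$. Indeed, since the orbit visits $I^{(m+1)}$ at most once by the definition of $m=m(x,n)$, the number of visits to $I^{(m)}$ is at most $2\max_\alpha Q_\alpha(m,m+1)=2\|Z(m+1)\|$, and recursing on the two boundary pieces produces at most $2\|Z(k+1)\|$ level-$k$ towers for each $0\leq k\leq m$. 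The universal bound you invoke from \cite{Zo,ViB} is valid for the \emph{un-accelerated} Rauzy--Veech induction (where every $\|Z\|=2$), not for the acceleration used here; under FDC one only has $\|Z(k+1)\|\leq Ce^{\tau k}$ by \eqref{def;sdc2}.

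The paper's proof is precisely your argument with this correction: it records
\[
\|\varphi^{(n)}\|_{\sup}\ \leq\ 2\sum_{l=0}^{m(n)}\|Z(l+1)\|\,\|S(l)\varphi\|_{\sup}
\ =\ O\Big(\sum_{l=0}^{m(n)}e^{(\lambda+\tau)l}\Big)
\ =\ O\big(e^{(\lambda+\tau)m(n)}\big)
\ =\ O\big(n^{(\lambda+\tau)/\lambda_1}\big),
\]
the first estimate using \eqref{def;sdc2}. So the $\tau$ in the exponent is exactly the FDC parameter controlling $\|Z(k+1)\|$, not merely slack to absorb your prefactor $m+1$. Once you replace $b_d$ by $2\|Z(k+1)\|$ and invoke \eqref{def;sdc2}, your argument becomes the paper's.
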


\begin{proof}
By Zorich's standard argument for  the decomposition
into special Birkhoff sums (cf. \cite{Ma-Mo-Yo}), for every $n>0$,
\[\|\varphi^{(n)}\|_{\sup} \leq 2\sum_{l=1}^{m(n)}\norm{Z(l+1)}\norm{S(l)\varphi}_{\sup}.  \]
By Proposition \ref{relmn}, if $T$ satisfies \ref{FDC}, by \eqref{def;sdc3} and \eqref{eq:minmax}, then for every $\tau>0$,
\[e^{\lambda_1 m(n)} \leq O\big(\min_{\alpha \in \mathcal{A}}Q_\alpha(m(n)) \big) \leq O(n).\]
Since $\norm{S(k)\varphi}_{\sup} = O(e^{\lambda k})$ and $\norm{Z(l+1)}=O(e^{\tau l})$,  we have
\[|\varphi^{(n)}(x)| = O\Big(\sum_{l=1}^{m(n)}e^{(\lambda + \tau)l}\Big) = O(e^{(\lambda + \tau)m(n)})=O(n^{(\lambda + \tau)/\lambda_1}).  \]
\end{proof}

If $\varphi_f$ is not bounded, we need a more subtle estimate coming from \cite{Fr-Ul2}.
\begin{proposition}[Proposition 7.3 in \cite{Fr-Ul2}]\label{twdevpre1}
For every integrable map $f:I^g\to\R$, $s> 0$ and $u\geq \underline{g}$
if $(x,r)\in A_u^s$ then
\begin{equation}\label{eq:twdevpre1}
|\varphi_f^{(n(x,r,s))}(x)|\leq
2\sum_{k=0}^{m(n(x,r,s))}\|Z(k+1)\|\|S(k)\varphi_f\|_{L^{\infty}(
I^{(k)}(u))},
\end{equation}
with
\[I^{(k)}(u):=\bigcup_{\alpha\in\mathcal A}\{x\in I^{(k)}_\alpha:
\forall_{0\leq j<Q_\alpha(k)}T^jx\in I_u\}.\]
\end{proposition}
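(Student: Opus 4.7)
The plan is to use Zorich's classical cascade decomposition of Birkhoff sums along the accelerated Rauzy-Veech scales, and then invoke the hypothesis $(x,r) \in A_u^s$ to confine every evaluation point to the good subset $I^{(k)}(u)$.

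Set $n := n(x,r,s)$ and write $m := m(x,n) \leq m(n)$. First, I would establish a signed decomposition
\[
\varphi_f^{(n)}(x) = \sum_{k=0}^{m} R_k(x,n),
\]
where each $R_k(x,n)$ is a sum of at most $2\|Z(k+1)\|$ values of $S(k)\varphi_f$ evaluated at points of the form $y = T^{j} x \in I^{(k)}$ with $0 \leq j < n$. This is built inductively from the top scale $m$ downwards: at scale $m$ the orbit segment $\{T^j x : 0 \leq j < n\}$ consists of one or more complete level-$m$ excursions (each contributing exactly one evaluation $S(m)\varphi_f(\cdot)$ at an entry point in $I^{(m)}$) together with an initial and a terminal incomplete piece. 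The two incomplete pieces are then reprocessed at scale $m-1$ and the recursion continues downwards to scale $0$. The combinatorial input is that the first return of $T^{(k)}$ to $I^{(k+1)}$ crosses each $I^{(k)}_\alpha$ at most $(Z(k+1))_{\beta\alpha}$ times when targeting $I^{(k+1)}_\beta$, so the number of reassembled level-$k$ pieces is bounded by the corresponding row sum of $Z(k+1)$, hence by $\|Z(k+1)\|$; the factor $2$ absorbs the initial and terminal boundary excursions.

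Next I would invoke the hypothesis $(x,r) \in A_u^s$. By assertion \eqref{eq:ia} of Lemma~\ref{lem:int0s}, every iterate $T^j x$ with $0 \leq j \leq n$ lies in $I_u$. Each evaluation point produced by the decomposition has the form $y = T^{j_i} x \in I^{(k)}_\alpha$, and the associated special Birkhoff sum $S(k)\varphi_f(y) = \sum_{\ell=0}^{Q_\alpha(k)-1} \varphi_f(T^\ell y)$ runs over the contiguous block $\{T^{j_i+\ell} x : 0 \leq \ell < Q_\alpha(k)\}$; by construction this block lies inside $\{T^j x : 0 \leq j \leq n\}$, since $y$ is the entry point of a complete level-$k$ excursion within the original orbit segment. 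Hence all iterates $T^\ell y$ for $0 \leq \ell < Q_\alpha(k)$ belong to $I_u$, which is exactly the defining property of $y \in I^{(k)}(u)$. Consequently $|S(k)\varphi_f(y)| \leq \|S(k)\varphi_f\|_{L^\infty(I^{(k)}(u))}$, and summing over $k$ from $0$ to $m \leq m(n)$ (with the understanding that $R_k = 0$ for $m(x,n) < k \leq m(n)$) yields \eqref{eq:twdevpre1}.

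The main obstacle I expect is the bookkeeping in the inductive Zorich decomposition: one needs simultaneously to keep the per-scale count bounded by $\|Z(k+1)\|$ (with the universal factor $2$ coming only from the two boundary excursions) and to ensure that every evaluation point is realised as an honest forward iterate $T^{j_i} x$ whose entire level-$k$ excursion fits inside the orbit segment $\{T^j x : 0 \leq j \leq n\}$. This second property is the crucial refinement: it is precisely what upgrades the naive Zorich bound against $\|S(k)\varphi_f\|_{L^\infty(I^{(k)})}$ (which is what Lemma~\ref{lem:phibounded} uses in the bounded case) into the sharper bound against $\|S(k)\varphi_f\|_{L^\infty(I^{(k)}(u))}$ needed to handle cocycles $\varphi_f$ with polynomial singularities, where the $L^\infty$ norm on the full interval can be infinite but the norm over $I^{(k)}(u)$ remains finite.
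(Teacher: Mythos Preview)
Your proposal is correct and follows precisely the intended argument: the paper does not prove this proposition but cites it from \cite{Fr-Ul2}, and the proof there proceeds exactly via Zorich's cascade decomposition together with the observation that \eqref{eq:ia} forces every level-$k$ entry point produced by the decomposition to lie in $I^{(k)}(u)$. Your identification of the key refinement---that each complete level-$k$ excursion sits entirely inside the orbit segment $\{T^jx:0\le j<n\}$, so that all $Q_\alpha(k)$ iterates of the entry point remain in $I_u$---is exactly the point.
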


Suppose that $0<b\leq a<1$, $g\in {\pag}(\sqcup_{\alpha\in \mathcal{A}}
I_{\alpha})$ and $f:I^g\to\R$ is an integrable map such that $\varphi_f\in {\pbg}(\sqcup_{\alpha\in \mathcal{A}}
I_{\alpha})$ and $\varphi_{|f|}\in {\wpb}(\sqcup_{\alpha\in \mathcal{A}}
I_{\alpha})$.
As $g\in {\wpa}(\sqcup_{\alpha\in \mathcal{A}} I_{\alpha})$ and $\varphi_{|f|}\in {\wpb}(\sqcup_{\alpha\in \mathcal{A}}
I_{\alpha})$, there exist positive constants $c,C> 0$ such that
\[g(x) \leq u \text{ for all }  x\in I_u:=\bigcup_{\alpha \in \mathcal{A}} [l_\alpha + cu^{-1/a}, r_\alpha - cu^{-1/a}] \]
and $\chi(u)\leq Cu^{b/a}$.

\begin{lemma}\label{lem:birkonA}
Suppose that $T$ satisfies \ref{FDC} and for every small enough $\tau >0$,
\begin{equation}\label{neq:sko}
\|\mathcal{M}^{(k)}(S(k)\varphi_f)\|=O(e^{b\lambda_1(1+\tau) k}).
\end{equation}
Then for every $\tau>0$ there exists a constant $C=C_\tau\geq 1$ such that for every $u\geq s^a$ and $(x,r)\in A^s_{u}$ we have
\begin{equation}\label{neq:birA}
\left|\int_{0}^sf(T^g_t(x,r))dt \right| \leq Cu^{(1+\tau)\frac{b}{a}}.
\end{equation}
\end{lemma}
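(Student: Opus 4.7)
\smallskip

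The overall plan is to reduce the Birkhoff integral to special Birkhoff sums of $\varphi_f$, bound each such sum pointwise on $I^{(k)}(u)$ using the general $L^\infty$-estimate \eqref{eqn;upperboundvarphi}, and then sum the resulting terms with the geometric control provided by \ref{FDC}. First I would combine Lemma~\ref{lem:int0s} with Proposition~\ref{twdevpre1} to obtain, for $(x,r)\in A^s_u$,
\begin{equation*}
\Big|\int_0^s f(T^g_t(x,r))\,dt\Big| \leq 2\sum_{k=0}^{m(n(x,r,s))} \|Z(k+1)\|\,\|S(k)\varphi_f\|_{L^\infty(I^{(k)}(u))} + 2\chi(u).
\end{equation*}
The hypothesis $\varphi_{|f|}\in\wpb$ together with $I_u\subset\{\operatorname{dist}(\cdot,End(T))\geq cu^{-1/a}\}$ yields $\chi(u)=O(u^{b/a})$, so the task is to control the sum.

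The key step is a pointwise estimate of $\|S(k)\varphi_f\|_{L^\infty(I^{(k)}(u))}$. By Lemma~\ref{lem:CSk}, for $y\in I^{(k)}(u)\cap I^{(k)}_\alpha$ there is an iterate $T^{j_\alpha}y$ equal to $(y-l^{(k)}_\alpha)+l_\alpha$, and analogously on the right; since $T^{j}y\in I_u$, the condition forces
\[\min\{y-l^{(k)}_\alpha,\,r^{(k)}_\alpha-y\}\geq c\,u^{-1/a}\]
(the special indices $\alpha_*,\alpha^*$ from Lemma~\ref{lem:CSk} being handled the same way, since either the endpoint orbit avoids $End(T)$ entirely or hits two endpoints giving the same bound). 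Applying \eqref{eqn;upperboundvarphi} to $S(k)\varphi_f\in\pbg$ then gives
\begin{equation*}
\|S(k)\varphi_f\|_{L^\infty(I^{(k)}(u))} \leq \|\M^{(k)}(S(k)\varphi_f)\| + C\, p_b(S(k)\varphi_f)\bigl(u^{b/a} + |I^{(k)}_\alpha|^{-b}\bigr).
\end{equation*}
Corollary~\ref{cor;renormpaos} controls $p_b(S(k)\varphi_f)=O(p_b(\varphi_f))$; the hypothesis \eqref{neq:sko} gives $\|\M^{(k)}(S(k)\varphi_f)\|=O(e^{b\lambda_1(1+\tau)k})$; and \ref{FDC} (via \eqref{eq:qlambda}, \eqref{def;sdc3}, \eqref{def;sdc4}) yields $|I^{(k)}_\alpha|^{-b}\leq C\,\|Q(k)\|^{b}=O(e^{b\lambda_1(1+\tau)k})$.

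Finally, using $\|Z(k+1)\|\leq Ce^{\tau k}$ from \eqref{def;sdc2}, the sum telescopes geometrically into
\begin{equation*}
O\bigl(u^{b/a}\,e^{\tau m(n)}\bigr) + O\bigl(e^{(b\lambda_1(1+\tau)+\tau)\,m(n)}\bigr).
\end{equation*}
Since $n(x,r,s)\leq s/\underline{g}+1\leq Cu^{1/a}$, Proposition~\ref{relmn} combined with \eqref{eq:minmax} and \eqref{def;sdc3} gives $e^{\lambda_1 m(n(x,r,s))}=O(n(x,r,s))=O(u^{1/a})$, and both terms collapse to $O(u^{(b/a)(1+\tau')})$ for any prescribed $\tau'>0$, provided the FDC parameter $\tau$ is taken sufficiently small in terms of $\tau'$. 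The main technical obstacle is precisely this bookkeeping of the various small $\tau$-losses coming from \eqref{def;sdc2}, \eqref{def;sdc3} and \eqref{neq:sko}: one must verify that they can be jointly absorbed, via a single choice of $\tau$, into the single factor $(1+\tau')b/a$ in the final exponent, so that the conclusion holds for every $\tau'>0$ with a constant $C_{\tau'}$ depending only on $\tau'$.
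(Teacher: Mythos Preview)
Your proposal is correct and follows the same overall scheme as the paper: combine Lemma~\ref{lem:int0s} and Proposition~\ref{twdevpre1}, bound $\|S(k)\varphi_f\|_{L^\infty(I^{(k)}(u))}$ via \eqref{eqn;upperboundvarphi} using the endpoint-distance lower bound $cu^{-1/a}$, then sum against $\|Z(k+1)\|$.

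The one genuine difference is in how you truncate the sum. The paper observes directly that $I^{(k)}(u)=\emptyset$ whenever $|I^{(k)}|\leq cu^{-1/a}$, and since $e^{\lambda_1 k}|I^{(k)}|=O(1)$ under \ref{FDC}, nonempty terms occur only for $k\leq\frac{1}{a\lambda_1}\log(C'u)$; this gives the range bound without ever invoking $s$ or the hypothesis $u\geq s^a$. You instead bound $m(n(x,r,s))$ through $n(x,r,s)\leq s/\underline{g}+1\leq Cu^{1/a}$, which does use $u\geq s^a$ but leads to the same inequality $e^{\lambda_1 m}\leq O(u^{1/a})$. Both routes produce the identical final exponent $(1+\tau')\frac{b}{a}$; the paper's cutoff is slightly cleaner in that it makes the bound on each summand uniform ($O(u^{(1+\tau)b/a})$) before summing, whereas yours carries the $e^{b\lambda_1(1+\tau)k}$ through the geometric sum, but this is purely cosmetic.
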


\begin{proof}
Let $u\geq s^a$. Since  $[0, c{u}^{-1/a}] \subset I \setminus I_{u}$, if  $|I^{(k)}| \leq c{u}^{-1/a}$,
then $I^{(k)}(u) = \emptyset$. By \eqref{def;sdc3} and \eqref{def;sdc4}, we have
\[e^{\lambda_1k}|I^{(k)}|= O(\|Q(k)\||I^{(k)}|)=O(\sum_{\alpha\in\mathcal
A}Q_\alpha(k)|I_\alpha^{(k)}|)=O(1).\]
It follows that
\[I^{(k)}(u) \neq \emptyset \Rightarrow  |I^{(k)}|>cu^{-1/a}\Rightarrow  k \leq \frac{1}{a\lambda_1}\log(C'u).\]
Moreover, if $x \in I^{(k)}(u) \cap I_\alpha^{(k)}$, then $x \in [ l^{(k)}_\alpha + cu^{-1/a}, r^{(k)}_\alpha - cu^{-1/a}]$.
In view of \eqref{eqn;upperboundvarphi}, \eqref{eqn;renormpaos}, \eqref{neq:sko}, \eqref{eq:Ikl} and \eqref{def;sdc3}, it follows that
for every $x \in I^{(k)}(u)$,
\begin{align*}
|(S(k)\varphi_f)(x)| & \leq \|\mathcal{M}^{(k)}(S(k)\varphi_f)\| +
p_b(S(k)\varphi_f)\Big(\frac{1}{b{(cu^{-1/a})}^b} + \frac{2^{b+2}}{b(1-b)|I^{(k)}|^{b}}\Big) \\
& \leq O(e^{b\lambda_1(1+\tau) k}) + O(p_b(\varphi_f))\Big(\frac{u^{b/a}}{b{c}^b} + \frac{2^{b+2}}{b(1-b)}O\big(e^{b\lambda_1(1+\tau)k}\big)\Big)\\
&\leq O\big(u^{(1+\tau)\frac{b}{a}}\big)+O\big(u^{\frac{b}{a}}\big)=O\big(u^{(1+\tau)\frac{b}{a}}\big).
\end{align*}
Therefore, by \eqref{ineq:intchi} and \eqref{eq:twdevpre1}, for every $(x,r) \in A^s_{u}$ we have
\begin{align*}
\left|\int_{0}^sf(T^g_t(x,r))dt\right|&\leq 2\chi(u)+2\sum_{k\geq 0,\,I^{(k)}(u)\neq\emptyset}\|Z(k+1)\|\|S(k)\varphi_f\|_{L^{\infty}(
I^{(k)}(u))}\\
&=O(u^{\frac{b}{a}})+O\Big(u^{(1+\tau)\frac{b}{a}}\sum_{0\leq k\leq
\frac{\log(C'u)}{a\lambda_1}}e^{\tau k}\Big)\\
&=O(u^{\frac{b}{a}})+O\Big(u^{(1+\tau)\frac{b}{a}}u^{\frac{\tau}{\lambda_1}\frac{1}{a}}\Big)
=O\Big(u^{(1+\tau(1+\frac{1}{b\lambda_1}))\frac{b}{a}}\Big).
\end{align*}
\end{proof}

\subsection{Estimates of ergodic integrals}\label{sec;DPS}
In this section, we show  $L^1$ and a.e.\ bounds  of the Birkhoff integrals for special flows. These bounds are related to the growth of the sequence $\|\mathcal{M}^{(k)}(S(k)\varphi)\|$.
It is the first step toward complete deviation formula in \S \ref{sec:pmt}. The main tools used in this section come from Lemma~\ref{lem:birkonA}.

\medskip

Recall that $0<b\leq a<1$, $g\in {\pag}(\sqcup_{\alpha\in \mathcal{A}}
I_{\alpha})$ and $f:I^g\to\R$ is an integrable map such that $\varphi_f\in {\pbg}(\sqcup_{\alpha\in \mathcal{A}}
I_{\alpha})$ and $\varphi_{|f|}\in {\wpb}(\sqcup_{\alpha\in \mathcal{A}}
I_{\alpha})$. Moreover, $c> 0$ is chosen  so that
\[  x\in I_u=\bigcup_{\alpha \in \mathcal{A}} [l_\alpha + cu^{-1/a}, r_\alpha - cu^{-1/a}]\Longrightarrow g(x) \leq u. \]
For every $s>0$ let
\[B_{s^a}:=\{(x,r)\in I^g:x\in I\setminus I_{s^a},0\leq r< g(x)-s\}.\]

\begin{lemma}
There exists $C\geq 1$ such that for all $s^a\leq u_1<u_2$ we have
\begin{equation}\label{neq:au1u2}
Leb\big((A_{u_2}^s\setminus A_{u_1}^s)\setminus B_{s^a}\big)\leq Cs(u_1^{-1/a}-u_2^{-1/a}).
\end{equation}
\end{lemma}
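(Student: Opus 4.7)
The plan is to split $A^s_{u_2}\setminus A^s_{u_1}$ according to the base coordinate $x$. Write $A^s_u=\{(x,r)\in I^g: x\in I_u\}\setminus D^s_u$, where
\[
D^s_u:=\{T^g_{-t}(y,0):\;y\in I\setminus I_u,\;0\le t\le s\}.
\]
Since $u_1\le u_2$ gives $I_{u_1}\subseteq I_{u_2}$, and consequently $D^s_{u_2}\subseteq D^s_{u_1}$, an elementary set-theoretic check produces the disjoint decomposition $A^s_{u_2}\setminus A^s_{u_1}=\mathfrak{A}_1\sqcup\mathfrak{A}_2$, where
\[
\mathfrak{A}_1:=\{(x,r)\in A^s_{u_2}: x\in I_{u_2}\setminus I_{u_1}\}
\quad\text{and}\quad
\mathfrak{A}_2:=\{(x,r)\in A^s_{u_2}: x\in I_{u_1},\;(x,r)\in D^s_{u_1}\}.
\]
I then plan to bound both $\Leb(\mathfrak{A}_1\setminus B_{s^a})$ and $\Leb(\mathfrak{A}_2)$ by $s\,|I_{u_2}\setminus I_{u_1}|$ and conclude with the elementary estimate $|I_{u_2}\setminus I_{u_1}|\le 2dc\,(u_1^{-1/a}-u_2^{-1/a})$, read off directly from the definition of $I_u$.

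For $\mathfrak{A}_1\setminus B_{s^a}$: the hypothesis $s^a\le u_1$ gives $I\setminus I_{u_1}\subseteq I\setminus I_{s^a}$, so every $x\in I_{u_2}\setminus I_{u_1}$ lies in $I\setminus I_{s^a}$. Then $(x,r)\notin B_{s^a}$ reduces to $r\ge g(x)-s$, so $r$ is confined to an interval of length at most $s$ in each fiber. Fubini yields $\Leb(\mathfrak{A}_1\setminus B_{s^a})\le s\,|I_{u_2}\setminus I_{u_1}|$.

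For $\mathfrak{A}_2$: the definition of $A^s_{u_2}$ ensures $A^s_{u_2}\cap D^s_{u_2}=\emptyset$, whence $\mathfrak{A}_2\subseteq D^s_{u_1}\setminus D^s_{u_2}$. If $(y_0,t_0)$ witnesses $(x,r)\in D^s_{u_1}$, i.e.\ $y_0\in I\setminus I_{u_1}$, $t_0\in[0,s]$ and $T^g_{-t_0}(y_0,0)=(x,r)$, then $(x,r)\notin D^s_{u_2}$ forces $y_0\notin I\setminus I_{u_2}$, so necessarily $y_0\in I_{u_2}\setminus I_{u_1}$. This gives the key inclusion
\[
\mathfrak{A}_2\subseteq \bigl\{T^g_{-t}(y,0):\;y\in I_{u_2}\setminus I_{u_1},\;0\le t\le s\bigr\}.
\]
The right-hand side has Lebesgue measure at most $s\,|I_{u_2}\setminus I_{u_1}|$, because the parametrization $(y,t)\mapsto T^g_{-t}(y,0)$ has Jacobian one (a standard consequence of invariance of Lebesgue measure under the special flow $T^g_\R$ together with the IET $T$), so its image has measure no greater than the measure of its domain $(I_{u_2}\setminus I_{u_1})\times[0,s]$.

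Adding the two contributions and substituting $|I_{u_2}\setminus I_{u_1}|\le 2dc\,(u_1^{-1/a}-u_2^{-1/a})$ yields the claimed inequality with $C=4dc$. The only nontrivial ingredient is the Jacobian-one bound applied in the analysis of $\mathfrak{A}_2$, and this is simply the flow-invariance argument for a thickened cross-section; the remainder is Fubini and tracking the monotone family $\{I_u\}$.
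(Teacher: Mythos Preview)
Your proof is correct and follows essentially the same approach as the paper. The paper states without justification the inclusion
\[
(A_{u_2}^s\setminus A_{u_1}^s)\setminus B_{s^a}\subset \bigcup_{0\leq t\leq s}T^g_{-t}\big((I_{u_2}\setminus I_{u_1})\times\{0\}\big)\cup\big\{(x,r)\in I^g:x\in I_{u_2}\setminus I_{u_1},\,g(x)-s\leq r<g(x)\big\},
\]
and then bounds each piece by $s\,\Leb(I_{u_2}\setminus I_{u_1})$ exactly as you do; your decomposition into $\mathfrak{A}_1$ and $\mathfrak{A}_2$ is precisely a careful verification of this inclusion (with $\mathfrak{A}_1\setminus B_{s^a}$ landing in the second set and $\mathfrak{A}_2$ in the first), and both arguments arrive at $C=4dc$.
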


\begin{proof}
First note that
\begin{align*}
(A_{u_2}^s\setminus A_{u_1}^s)\setminus B_{s^a}&\subset \bigcup_{0\leq t\leq s}T^g_{-t}\big((I_{u_2}\setminus I_{u_1})\times\{0\}\big)\\
&\quad\cup\big\{(x,r)\in I^g:x\in I_{u_2}\setminus I_{u_1},g(x)-s\leq r<g(x)\big\}.
\end{align*}
Therefore
\[Leb\big((A_{u_2}^s\setminus A_{u_1}^s)\setminus B_{s^a}\big)\leq 2s\, Leb(I_{u_2}\setminus I_{u_1}).\]
Since $I_{u_2}\setminus I_{u_1}$ is the union of $2d$ disjoint intervals of the same length $c(u_1^{-1/a}-u_2^{-1/a})$,
we have
\[2s\,Leb(I_{u_2}\setminus I_{u_1})=4dcs(u_1^{-1/a}-u_2^{-1/a}).\]
This gives \eqref{neq:au1u2}.
\end{proof}

\begin{lemma}
There exists $C\geq 1$ such that
\begin{equation}\label{neq:birkBs}
\Big\|\int_0^sf\circ T^g_t\,dt\Big\|_{L^1(B_{s^a})}\leq C s^{b}\text{ for all }s\geq 1.
\end{equation}
\end{lemma}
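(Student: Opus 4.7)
My plan starts with the key geometric observation: for $(x,r) \in B_{s^a}$ the orbit $T^g_t(x,r)$ is trapped in the fiber above $x$ for the whole time interval $[0,s]$, since by definition $r + s < g(x)$ so $T^g_t(x,r) = (x, r+t)$ and the orbit never returns to the base $I$. Consequently
\[
\int_0^s f(T^g_t(x,r))\,dt = \int_r^{r+s} f(x,u)\,du,
\]
and the problem reduces to an elementary roof-level estimate, bypassing any need for special Birkhoff sum analysis or Zorich-type decompositions.

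Next, I would take absolute values under the integral and apply Tonelli to swap the order of integration in $r \in [0, g(x)-s]$ and $u \in [r, r+s]$. For each fixed $x \in I \setminus I_{s^a}$ with $g(x) > s$ and each $u \in [0, g(x)]$ the slice $\{r \in [0, g(x)-s] : r \leq u \leq r+s\}$ has Lebesgue measure at most $s$, so
\[
\Big\|\int_0^s f\circ T^g_t\,dt\Big\|_{L^1(B_{s^a})} \leq s \int_{I\setminus I_{s^a}} \varphi_{|f|}(x)\,dx.
\]
Since $(s^a)^{-1/a} = s^{-1}$, the set $I \setminus I_{s^a}$ is a union of $2d$ intervals of length $c s^{-1}$ located at the endpoints $l_\alpha, r_\alpha$ of the exchanged intervals. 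The assumption $\varphi_{|f|} \in \wpb$ with $0 < b < 1$ provides the pointwise bound $|\varphi_{|f|}(x)| \leq C_1 |x-e|^{-b}$ near each endpoint $e$, so a direct computation
\[
\int_{e}^{e+cs^{-1}} \frac{dx}{|x-e|^b} = \frac{(cs^{-1})^{1-b}}{1-b} = O(s^{b-1})
\]
combined with the external factor $s$ gives the claimed $O(s^b)$ bound.

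The argument is essentially computational once the trapping observation is made, so there is no substantive obstacle; notably, no Diophantine control on $T$ and no information on renormalized sums $S(k)\varphi_f$ is invoked, only the local polynomial singularity type of $\varphi_{|f|}$. The hypothesis $0 < b < 1$ is used in two essential ways: $b < 1$ ensures integrability of $|x-e|^{-b}$ near each endpoint, while $b > 0$ delivers the correct polynomial exponent $s^b$ (in the borderline logarithmic regime $b = 0$ one would pick up an extra $\log s$ factor and a separate statement would be required, but that case is excluded by the standing assumption of this subsection).
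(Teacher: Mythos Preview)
Your proposal is correct and follows essentially the same approach as the paper: the trapping observation $T^g_t(x,r)=(x,r+t)$ for $(x,r)\in B_{s^a}$ and $t\in[0,s]$, the Tonelli swap yielding the bound $s\int_{I\setminus I_{s^a}}\varphi_{|f|}(x)\,dx$, and the endpoint integral $\int_0^{cs^{-1}}x^{-b}\,dx=O(s^{b-1})$ from $\varphi_{|f|}\in\wpb$ are precisely the steps the paper carries out.
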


\begin{proof}
First note that for every $x\in I\setminus I_{s^a}$ we have
\begin{align*}
\int_{0}^{g(x)-s}&\Big|\int_0^s f(T^g_t(x,r))\,dt\Big|\,dr\leq \int_{0}^{g(x)-s}\int_r^{r+s} |f(x,t)|\,dt\,dr\\
&=\int_{0}^{g(x)}\Big(\int_{\max\{0,t-s\}}^{\min\{t,g(x)-s\}} |f(x,t)|\,dr\Big)\,dt\\
&=\int_{0}^{g(x)}(\min\{t,g(x)-s\}-\max\{0,t-s\})|f(x,t)|\,dt.
\end{align*}
As $\min\{t,g(x)-s\}-\max\{0,t-s\}\leq s$, we have
\[\int_{0}^{g(x)-s}\Big|\int_0^s f(T^g_t(x,r))\,dt\Big|\,dr\leq s\int_{0}^{g(x)}|f(x,t)|\,dt=s\varphi_{|f|}(x).\]
It follows that
\[\Big\|\int_0^sf\circ T^g_t\,dt\Big\|_{L^1(B_{s^a})}=\int_{I\setminus I_{s^{a}}}\int_{0}^{g(x)-s}\Big|\int_0^s f(T^g_t(x,r))\,dt\Big|\,dr\,dx
\leq s\int_{I\setminus I_{s^a}}\varphi_{|f|}(x)\,dx.\]
As $\varphi_{|f|}\in \wpb$, there exists $C>0$ such that for every $\alpha\in\mathcal{A}$ and every $x\in I_{\alpha}$ we have
\begin{equation*}
\varphi_{|f|}(x) \leq \frac{C}{\min\{x-l_\alpha, r_\alpha-x\}^{b}}.
\end{equation*}
Since $I\setminus I_{s^a}$ is the union of $2d$ intervals of length $cs^{-1}$ with ends at $l_\alpha$, $r_\alpha$ for $\alpha\in \mathcal{A}$, we have
\begin{align*}
\int_{I\setminus I_{s^a}}\varphi_{|f|}(x)\,dx&\leq 2dC\int_0^{cs^{-1}}x^{-b}\,dx=\frac{2dCc^{1-b}}{1-b}s^{b-1}.
\end{align*}
Therefore
\[\Big\|\int_0^sf\circ T^g_t\,dt\Big\|_{L^1(B_{s^a})}\leq \frac{2dCc^{1-b}}{1-b}s^{b}.\]
\end{proof}

Now we obtain the $L^1$-bound for special flows under the condition for the growth rate of the renormalized and projected  cocycles $\mathcal{M}^{(k)}(S(k)\varphi)$.
\begin{theorem}\label{thm:expbL1}
Suppose that $T$ satisfies \ref{FDC}, $0<b\leq a<1$, $g\in {\pag}(\sqcup_{\alpha\in \mathcal{A}}
I_{\alpha})$, and $f:I^g\to\R$ is an integrable map such that $\varphi_f\in {\pbg}(\sqcup_{\alpha\in \mathcal{A}}
I_{\alpha})$ and $\varphi_{|f|}\in {\wpb}(\sqcup_{\alpha\in \mathcal{A}}
I_{\alpha})$.
Assume that for every $\tau >0$,
\[
\|\mathcal{M}^{(k)}(S(k)\varphi_f)\|=O(e^{b\lambda_1(1+\tau) k}).
\]
Then for every $\tau>0$ we have
\begin{equation}\label{eqn;devL1bound}
\Big\|\int_{0}^sf\circ T^g_tdt\Big\|_{L^1(I^g)}=O\big(s^{(1+\tau)b}).
\end{equation}
\end{theorem}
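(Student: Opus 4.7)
The plan is to combine the three ingredients just proved: the pointwise bound on $A^s_u$ from Lemma~\ref{lem:birkonA}, the measure estimate \eqref{neq:au1u2} for the annular differences $A^s_{u_2}\setminus A^s_{u_1}$, and the $L^1$-bound \eqref{neq:birkBs} on the exceptional set $B_{s^a}$. The idea is a dyadic decomposition in the parameter $u$ starting at $u_0=s^a$. Set $u_k:=2^k s^a$ and $A_k:=A^s_{u_k}$ for $k\geq 0$. Since $I_{u}\nearrow I\setminus End(T)$ as $u\to+\infty$ and every $(x,r)\in I^g\setminus B_{s^a}$ has only finitely many returns to $I$ before time $s$, one checks that $I^g\setminus B_{s^a}\subset\bigcup_{k\geq 0}A_k$ modulo a null set, so
\[
I^g \;=\; B_{s^a}\;\cup\;(A_0\setminus B_{s^a})\;\cup\;\bigcup_{k\geq 0}\bigl((A_{k+1}\setminus A_k)\setminus B_{s^a}\bigr)
\]
up to measure zero.

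Next I will estimate $\|\int_0^s f\circ T^g_t\,dt\|_{L^1}$ piece by piece. The contribution of $B_{s^a}$ is $O(s^b)$ by \eqref{neq:birkBs}. On $A_0\setminus B_{s^a}$, Lemma~\ref{lem:birkonA} applied with $u=u_0=s^a$ gives the pointwise bound $|\int_0^s f\circ T^g_t\,dt|\leq C s^{(1+\tau)b}$; multiplying by $\mathrm{Leb}(A_0\setminus B_{s^a})\leq\mathrm{Leb}(I^g)=\|g\|_{L^1}<+\infty$ (finite since $g\in\wpa$ with $a<1$) yields $O(s^{(1+\tau)b})$. On the dyadic annulus $(A_{k+1}\setminus A_k)\setminus B_{s^a}$, Lemma~\ref{lem:birkonA} with $u=u_{k+1}$ gives the pointwise bound
\[
\Bigl|\int_0^s f\circ T^g_t\,dt\Bigr|\leq C u_{k+1}^{(1+\tau)b/a}=C\,2^{(1+\tau)b(k+1)/a}s^{(1+\tau)b},
\]
while \eqref{neq:au1u2} gives a measure bound
\[
\mathrm{Leb}\bigl((A_{k+1}\setminus A_k)\setminus B_{s^a}\bigr)\leq Cs\bigl(u_k^{-1/a}-u_{k+1}^{-1/a}\bigr)\leq C'\,s\cdot 2^{-k/a}s^{-1}=C'\,2^{-k/a}.
\]

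Multiplying the two bounds on the $k$-th annulus gives a contribution of order $2^{k((1+\tau)b-1)/a}s^{(1+\tau)b}$. Since $b<1$, we have $(1+\tau)b-1<0$ for every sufficiently small $\tau>0$, hence the geometric series $\sum_{k\geq 0}2^{k((1+\tau)b-1)/a}$ converges and the total contribution of $\bigcup_k(A_{k+1}\setminus A_k)\setminus B_{s^a}$ is again $O(s^{(1+\tau)b})$. Adding the three pieces and absorbing $s^b$ into $s^{(1+\tau)b}$ produces \eqref{eqn;devL1bound}.

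The only delicate point is verifying that $I^g\setminus B_{s^a}$ is indeed exhausted by $\{A_k\}$ up to a null set, which hinges on the fact that $g<+\infty$ almost everywhere together with the finite-return structure of the special flow before time $s$; everything else is bookkeeping. I expect that the tiny $\tau$-loss in the pointwise bound of Lemma~\ref{lem:birkonA} is essential: without it, the geometric factor $2^{k((1+\tau)b-1)/a}$ would degenerate to $2^{-k(1-b)/a}$ with a borderline sum if $b$ were allowed to equal $1$, but since we assume $b<1$ the argument is robust.
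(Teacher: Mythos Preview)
Your proof is correct and follows essentially the same approach as the paper: decompose $I^g$ into $B_{s^a}$, the base set $A^s_{s^a}$, and annular layers $(A^s_{u_{j+1}}\setminus A^s_{u_j})\setminus B_{s^a}$, then combine the pointwise bound from Lemma~\ref{lem:birkonA} with the measure estimate \eqref{neq:au1u2} and the $L^1$-bound \eqref{neq:birkBs}. The only difference is that you fix the dyadic sequence $u_k=2^k s^a$ and sum a geometric series, whereas the paper leaves the sequence $(u_n)$ arbitrary and then passes to the infimum, which turns the sum into the integral $\int_0^{s^{-1}}x^{-(1+\tau)b}\,dx$; both routes give the same $O(s^{(1+\tau)b})$ and neither is materially simpler than the other.
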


\begin{proof}
Let $(u_n)_{n\geq 0}$ be any strictly increasing sequence diverging to $+\infty$ and such that $u_0=s^a$. Then
\begin{align*}
\Big\|\int_{0}^sf\circ T^g_tdt\Big\|_{L^1(I^g)}&\leq \Big\|\int_{0}^sf\circ T^g_tdt\Big\|_{L^\infty(A^s_{u_0})}Leb(A^s_{u_0})
+\Big\|\int_{0}^sf\circ T^g_tdt\Big\|_{L^1(B_{u_0})}\\
&+\sum_{j=1}^{\infty}\Big\|\int_{0}^sf\circ T^g_tdt\Big\|_{L^\infty((A^s_{u_j}\setminus A^s_{u_{j-1}})\setminus B_{u_0})}
Leb\big((A^s_{u_j}\setminus A^s_{u_{j-1}})\setminus B_{u_0}\big).
\end{align*}
In view of \eqref{neq:birA}, \eqref{neq:birkBs} and \eqref{neq:au1u2}, we have
\begin{gather*}
\Big\|\int_{0}^sf\circ T^g_tdt\Big\|_{L^\infty(A^s_{u_0})}\leq Cu_0^{(1+\tau)\frac{b}{a}}=Cs^{(1+\tau)b};\\
\Big\|\int_{0}^sf\circ T^g_tdt\Big\|_{L^\infty((A^s_{u_j}\setminus A^s_{u_{j-1}})\setminus B_{u_0})}\leq C u_j^{(1+\tau)\frac{b}{a}};\\
\Big\|\int_0^sf\circ T^g_t\,dt\Big\|_{L^1(B_{s^a})}\leq C s^{b};\\
Leb\big((A^s_{u_j}\setminus A^s_{u_{j-1}})\setminus B_{u_0})\big)\leq Cs(u_{j-1}^{-1/a}-u_{j}^{-1/a}).
\end{gather*}
Hence
\begin{align*}
\Big\|\int_{0}^sf\circ T^g_tdt\Big\|_{L^1(I^g)}&\leq Cs^{(1+\tau)b}+Cs^{b}+sC^2\sum_{j=1}^\infty u_j^{(1+\tau)\frac{b}{a}}(u_{j-1}^{-1/a}-u_{j}^{-1/a}).
\end{align*}
Let us consider a strictly decreasing sequence of positive numbers $(v_n)_{n\geq 0}$  given by $v_n:=u_{n}^{-1/a}$.
Then $v_0=s^{-1}$ and $v_n\to 0$ as $n\to+\infty$ and
\begin{align*}
\Big\|\int_{0}^sf\circ T^g_tdt\Big\|_{L^1(I^g)}&\leq 2Cs^{(1+\tau)b}+sC^2\sum_{j=1}^\infty v_j^{-(1+\tau)b}(v_{j-1}-v_{j}).
\end{align*}
Passing through all possible sequences $(u_n)_{n\geq 0}$ and taking the infimum of values  standing on the right hand side, this gives
\begin{align*}
\Big\|\int_{0}^sf\circ T^g_tdt\Big\|_{L^1(I^g)}&\leq 2Cs^{(1+\tau)b}+sC^2\int_{0}^{s^{-1}}x^{-(1+\tau)b}\,dx\\
&=2Cs^{(1+\tau)b}+\frac{sC^2}{1-(1+\tau)b}s^{(1+\tau)b-1}=O\big(s^{(1+\tau)b}\big).
\end{align*}
\end{proof}

The following measure estimations are key ingredients for Borel-Cantelli argument applied to prove a.e.\ pointwise upper bound for Birkhoff integrals.
\begin{lemma}\label{lem:lebA}
There exists $C>0$ such that for all $1\leq s<s'$ and $\tau>0$ we have
\begin{gather}
\label{neq:Astau}
Leb\big((A^s_{s^{(1+\tau)a}})^c\big)\leq Cs^{-\tau}+Cs^{-(1+\tau)(1-a)};\\
\label{neq:Asdiff}
Leb\big((A^{s'}_{{s'}^{(1+\tau)a}})^c\setminus(A^s_{s^{(1+\tau)a}})^c\big)\leq C(s')^{-(1+\tau)}(s'-s).
\end{gather}
\end{lemma}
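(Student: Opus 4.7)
The plan is to exploit the explicit decomposition $A^s_u = B_u \setminus C^s_u$ where $B_u:=\{(x,r)\in I^g : x\in I_u\}$ and $C^s_u := \{T^g_{-t}(y,0) : y\in I\setminus I_u,\,0\leq t\leq s\}$. Then $(A^s_u)^c \subset B_u^c \cup C^s_u$, so both estimates reduce to bounding these two pieces separately.

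First I would handle \eqref{neq:Astau} by bounding the two contributions to $(A^s_u)^c$ with $u=s^{(1+\tau)a}$. For $B_u^c$, Fubini gives $Leb(B_u^c)=\int_{I\setminus I_u}g(x)\,dx$; since $g\in \wpa$, $g(x)\lesssim |x-e|^{-a}$ near each endpoint $e$, and $I\setminus I_u$ is a union of $2d$ intervals of length $cu^{-1/a}$ abutting $End(T)$, an elementary integration gives $Leb(B_u^c)=O(u^{-(1-a)/a})=O(s^{-(1+\tau)(1-a)})$. For $C^s_u$, the map $(y,t)\mapsto T^g_{-t}(y,0)$ from $(I\setminus I_u)\times[0,s]$ into $I^g$ has Jacobian $1$ (since the special flow is Lebesgue-preserving and the base map is an isometry), so $Leb(C^s_u)\leq s\cdot |I\setminus I_u|=2dcs\cdot u^{-1/a}=2dc\,s^{-\tau}$. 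Summing the two bounds yields \eqref{neq:Astau}.

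For \eqref{neq:Asdiff} I would write the set to be measured as $(A^{s'}_{u'})^c\cap A^s_u$ where $u=s^{(1+\tau)a},\, u'=(s')^{(1+\tau)a}$, and observe the key monotonicity $I_u\subset I_{u'}$ (because $u<u'$ makes the excluded neighborhoods of $End(T)$ smaller). Intersecting $(B_{u'}^c\cup C^{s'}_{u'})$ with $B_u\cap (C^s_u)^c$, the $B_{u'}^c\cap B_u$ part is empty by monotonicity, so the set is contained in $C^{s'}_{u'}\setminus C^s_u$. Any $(x,r)=T^g_{-t}(y,0)$ with $y\in I\setminus I_{u'}\subset I\setminus I_u$ and $0\leq t\leq s$ would already lie in $C^s_u$; hence necessarily $s<t\leq s'$, and the same measure-preserving change of variables gives $Leb(C^{s'}_{u'}\setminus C^s_u)\leq (s'-s)\cdot |I\setminus I_{u'}|=2dc(s'-s)(s')^{-(1+\tau)}$, which is \eqref{neq:Asdiff}.

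There is no real obstacle; the only subtle point is verifying that $(y,t)\mapsto T^g_{-t}(y,0)$ is $1$-Lipschitz for Lebesgue measure into $I^g$, which follows directly because in flow-box coordinates of the special flow the map is just the translation $(y,t)\mapsto (y,-t)$ (up to the identification given by the base IET $T$, which is an isometry). Everything else is bookkeeping: the polynomial singularity exponent $a$ of the roof enters only through the explicit width $cu^{-1/a}$ of $I\setminus I_u$, and the growth of $g$ on that narrow set is integrable precisely because $a<1$.
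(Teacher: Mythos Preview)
Your proposal is correct and follows essentially the same route as the paper: the paper also writes $(A^s_u)^c$ as the union of the ``vertical strip'' over $I\setminus I_u$ and the backward-flow image $\bigcup_{0\leq t\leq s}T^g_{-t}((I\setminus I_u)\times\{0\})$, bounds each piece exactly as you do, and for \eqref{neq:Asdiff} uses the same monotonicity $I\setminus I_{u'}\subset I\setminus I_u$ to reduce to the time window $s<t\leq s'$. Your phrasing via $B_u$ and $C^s_u$ is a clean repackaging of the same argument.
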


\begin{proof}
Recall that
\[(A^s_{s^{(1+\tau)a}})^c=\bigcup_{0\leq t\leq s}T^g_{-t}((I\setminus I_{s^{(1+\tau)a}})\times\{0\})\cup
\{(x,r)\in I^g:x\in I\setminus I_{s^{(1+\tau)a}}\},\]
where
\[I\setminus I_{s^{(1+\tau)a}}=\bigcup_{\alpha\in\mathcal{A}}[l_\alpha,l_\alpha+cs^{-(1+\tau)})\cup(r_\alpha-cs^{-(1+\tau)},r_\alpha].\]
As $I\setminus I_{{s'}^{(1+\tau)a}}\subset I\setminus I_{s^{(1+\tau)a}}$, it follows that
\[(A^{s'}_{{s'}^{(1+\tau)a}})^c\setminus(A^s_{s^{(1+\tau)a}})^c\subset \bigcup_{s< t\leq s'}T^g_{-t}((I\setminus I_{{s'}^{(1+\tau)a}})\times\{0\}).\]
Therefore
\[Leb\big((A^{s'}_{{s'}^{(1+\tau)a}})^c\setminus(A^s_{s^{(1+\tau)a}})^c\big)\leq(s'-s)Leb(I\setminus I_{{s'}^{(1+\tau)a}})=(s'-s)2dc(s')^{-(1+\tau)},\]
which gives \eqref{neq:Asdiff}.
Moreover, we have
\[Leb\big((A^s_{s^{(1+\tau)a}})^c\big)\leq sLeb(I\setminus I_{s^{(1+\tau)a}})+\int_{I\setminus I_{s^{(1+\tau)a}}}g(x)\,dx.\]
As $g\in\wpb$, there exists $C>0$ such that for every $\alpha\in\mathcal{A}$ and every $x\in I_{\alpha}$ we have
\begin{equation*}
g(x) \leq \frac{C}{\min\{x-l_\alpha, r_\alpha-x\}^{a}}.
\end{equation*}
Hence
\begin{align*}
\int_{I\setminus I_{s^{(1+\tau)a}}}g(x)\,dx\leq 2dC\int_0^{cs^{-(1+\tau)}}x^{-a}\,dx
&\leq \frac{2dCc^{1-a}}{1-a}s^{-(1+\tau)(1-a)}.
\end{align*}
As $s Leb(I\setminus I_{s^{(1+\tau)a}})=s2dcs^{-(1+\tau)}=2dcs^{-\tau}$, this gives \eqref{neq:Astau}.
\end{proof}

Now we prove a.e. pointwise upper bound of Birkhoff integrals for special flows under some restriction on the growth rate of renormalized and projected cocycles $\mathcal{M}^{(k)}(S(k)\varphi_f)$.

\begin{theorem}\label{thm;L1bound}
Suppose that $T$ satisfies \ref{FDC}, $0<b\leq a<1$, $g\in {\pag}(\sqcup_{\alpha\in \mathcal{A}}
I_{\alpha})$ and $f:I^g\to\R$ is an integrable map such that $\varphi_f\in {\pbg}(\sqcup_{\alpha\in \mathcal{A}}
I_{\alpha})$ and $\varphi_{|f|}\in {\wpb}(\sqcup_{\alpha\in \mathcal{A}}
I_{\alpha})$.
Assume that for every $\tau >0$,
\[
\|\mathcal{M}^{(k)}(S(k)\varphi_f)\|=O(e^{b\lambda_1(1+\tau) k}).
\]
Then for a.e.\ $(x,r)\in I^g$ we have
\begin{equation}\label{eqn;devaebound}
\limsup_{s\to+\infty}\frac{\log |\int_{0}^sf\circ T^g_t(x,r)\,dt|}{\log s} \leq b.
\end{equation}
\end{theorem}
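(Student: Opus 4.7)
The plan is to combine the pointwise estimate provided by Lemma~\ref{lem:birkonA} with a Borel--Cantelli argument based on the measure bounds of Lemma~\ref{lem:lebA}, choosing a suitable discretization in $s$.

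Fix auxiliary small parameters $\tau,\tau'>0$. Since $u=s^{(1+\tau')a}\geq s^a$, Lemma~\ref{lem:birkonA} gives a constant $C=C(\tau,\tau')$ such that for every $(x,r)\in A^s_{s^{(1+\tau')a}}$,
\[
\Big|\int_0^s f\circ T^g_t(x,r)\,dt\Big|\leq C\,(s^{(1+\tau')a})^{(1+\tau)b/a}=C\,s^{(1+\tau)(1+\tau')b}.
\]
Thus the whole task is to show that for a.e.\ $(x,r)\in I^g$ the point $(x,r)$ lies in $A^s_{s^{(1+\tau')a}}$ for all sufficiently large $s$.

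Let $E_s:=(A^s_{s^{(1+\tau')a}})^c$. Fix an exponent $\beta>\max\{1/\tau',\,1/((1+\tau')(1-a))\}$ and set $s_n:=n^\beta$. To handle the continuum of values of $s$, observe that as $s$ grows from $s_n$ to $s_{n+1}$ the set $I\setminus I_{s^{(1+\tau')a}}$ shrinks, while the time range $[0,s]$ grows at most to $[0,s_{n+1}]$. Writing out $E_s$ as in the proof of Lemma~\ref{lem:lebA}, it follows that $E_s\subset F_n$ for every $s\in[s_n,s_{n+1}]$, where
\[
F_n:=\{(x,r)\in I^g:x\in I\setminus I_{s_n^{(1+\tau')a}}\}\cup\bigcup_{0\leq t\leq s_{n+1}}T^g_{-t}\bigl((I\setminus I_{s_n^{(1+\tau')a}})\times\{0\}\bigr).
\]
Repeating the estimates from the proof of \eqref{neq:Astau} (using that $\varphi_{|f|}\in\wpb$ only to the extent needed for $g\in\wpa$), one bounds
\[
\mathrm{Leb}(F_n)\leq s_{n+1}\,\mathrm{Leb}(I\setminus I_{s_n^{(1+\tau')a}})+\int_{I\setminus I_{s_n^{(1+\tau')a}}}g(x)\,dx\leq C\,s_n^{-\tau'}+C\,s_n^{-(1+\tau')(1-a)},
\]
and the choice of $\beta$ makes the series $\sum_n\mathrm{Leb}(F_n)$ convergent.

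By the Borel--Cantelli lemma, for a.e.\ $(x,r)\in I^g$ there is $N=N(x,r)$ such that $(x,r)\notin F_n$ for all $n\geq N$, i.e.\ $(x,r)\in A^s_{s^{(1+\tau')a}}$ for all $s\geq s_N$. Consequently,
\[
\limsup_{s\to+\infty}\frac{\log|\int_0^s f\circ T^g_t(x,r)\,dt|}{\log s}\leq (1+\tau)(1+\tau')b.
\]
Applying this statement for a countable sequence of pairs $(\tau_k,\tau_k')\to(0,0)$ and taking the union of the countably many resulting exceptional null sets yields $\limsup\leq b$ almost everywhere, which is \eqref{eqn;devaebound}. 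The main technical point to watch is the non-monotone dependence of $A^s_u$ on $(s,u)$, which is why a covering superset $F_n$ of all $E_s$ for $s\in[s_n,s_{n+1}]$ is needed instead of a single $E_{s_n}$.
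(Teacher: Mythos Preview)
Your proof is correct and follows essentially the same Borel--Cantelli strategy as the paper, combining Lemma~\ref{lem:birkonA} with the measure estimates of Lemma~\ref{lem:lebA}. The only real difference is in the discretization: the paper uses integer times $m$ and controls the union $\bigcup_{n\geq m}(A^n_{n^{(1+\tau)a}})^c$ via the telescoping estimate \eqref{neq:Asdiff}, then passes from integers to real $s$ using $\|f\|_{L^\infty(I^g)}$, whereas you use a polynomial grid $s_n=n^\beta$ together with the covering sets $F_n$ to handle all real $s$ directly. Your version has the minor advantage of not invoking boundedness of $f$ in the last step; otherwise the two arguments are the same.
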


\begin{proof}
Fix $\tau>0$ small enough. In view of Lemma~\ref{lem:birkonA}, there exists $C_\tau\geq 1$ such that for every $m\in\N$ we have
\begin{equation}\label{neq:birlogAm}
\frac{\log|\int_{0}^mf(T^g_t(x,r))dt|}{\log m}\leq (1+\tau)^2b+\frac{C_\tau}{\log m}\text{ for every }(x,r)\in A^m_{m^{(1+\tau)a}}.
\end{equation}
Moreover, in view of Lemma~\ref{lem:lebA}, there exists $C>0$ such that for every $m\in\N$ we have
\begin{align*}
Leb\big(\bigcup_{n\geq m}(A^n_{n^{(1+\tau)a}})^c\big)&=Leb\big((A^m_{m^{(1+\tau)a}})^c\big)+
\sum_{n\geq m}Leb\big((A^{n+1}_{(n+1)^{(1+\tau)a}})^c\setminus (A^n_{n^{(1+\tau)a}})^c\big)\\
&\leq Cm^{-\tau}+Cm^{-(1+\tau)(1-a)}+\sum_{n\geq m} C(n+1)^{-(1+\tau)}((n+1)-n)\\
&\leq Cm^{-\tau}+Cm^{-(1+\tau)(1-a)}+\frac{C}{\tau}m^{-\tau}.
\end{align*}
Hence
\[Leb\big(\bigcup_{m\geq 1}\bigcap_{n\geq m}A^n_{n^{(1+\tau)a}}\big)=1.\]
In view of \eqref{neq:birlogAm}, it follows that for a.e.\ $(x,r)\in I^g$ we have
\[\limsup_{m\to+\infty}\frac{\log|\int_{0}^mf(T^g_t(x,r))dt|}{\log m}\leq (1+\tau)^2b.\]
As we can choose $\tau>0$ arbitrary small,  this gives
\[\limsup_{m\to+\infty}\frac{\log|\int_{0}^mf(T^g_t(x,r))dt|}{\log m}\leq b\text{ for a.e. }(x,r)\in I^g.\]
Hence
\[\limsup_{s\to+\infty}\frac{\log|\int_{0}^sf(T^g_t(x,r))dt|}{\log s}
\leq \limsup_{s\to+\infty}\frac{\log\big(|\int_{0}^{[s]}f(T^g_t(x,r))dt|+\|f\|_{L^\infty(I^g)}\big)}{\log [s]}
\leq b\]
for a.e.\ $(x,r)\in I^g$.
\end{proof}

The following corollary will be essential in proving a $L^1$-lower bound of Birkhoff integrals in the next section.
\begin{corollary}\label{cor:aeexp}
Suppose $T$ satisfies \ref{FDC},  $0<b<1$ and  $\varphi\in {\pbg}(\sqcup_{\alpha\in \mathcal{A}}
I_{\alpha})$ such that
\[
\|\mathcal{M}^{(k)}(S(k)\varphi)\|=O(e^{b\lambda_1(1+\tau) k})\text{ for every }\tau >0.
\]
Then for a.e.\ $x\in I$ we have
\begin{equation}\label{eq:aeexp}
\limsup_{n\to+\infty}\frac{\log |\varphi^{(n)}(x)|}{\log n} \leq b.
\end{equation}
In particular, for every $\varphi\in {\pbg}(\sqcup_{\alpha\in \mathcal{A}}
I_{\alpha})$ for a.e.\ $x\in I$ we have
\begin{equation}\label{eq:aeexp1}
\limsup_{n\to+\infty}\frac{\log |\varphi^{(n)}(x)-n\mu(\varphi)|}{\log n} \leq\max\left\{b,\frac{\lambda_2}{\lambda_1}\right\},
\end{equation}
where $\mu(\varphi)=\int_I\varphi(y)\,dy$.
\end{corollary}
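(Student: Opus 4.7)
The plan is to deduce \eqref{eq:aeexp} from Theorem~\ref{thm;L1bound} via a trivial-suspension reduction, and then to bootstrap to \eqref{eq:aeexp1} by correcting $\varphi$ through Theorem~\ref{thm;correction} and controlling the residual piecewise constant part with Lemma~\ref{lem:phibounded}.

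For \eqref{eq:aeexp}, I will apply Theorem~\ref{thm;L1bound} with constant roof $g\equiv 1$ and $f(x,r):=\varphi(x)$ on $I^g=I\times[0,1)$. All hypotheses are immediate: $g\in\pag$ trivially (no singularities, $I_u=I$ for $u\geq 1$ with any $c>0$), $\varphi_f=\varphi\in\pbg$ by assumption, $\varphi_{|f|}=|\varphi|\in\wpb$ via the pointwise bound \eqref{eqn;upperboundvarphi}, and the renormalization hypothesis on $\|\mathcal{M}^{(k)}(S(k)\varphi)\|$ carries over verbatim. A direct computation gives, for integer $n$ and $r\in[0,1)$,
\[
\int_0^n f\circ T^g_t(x,r)\,dt \;=\; \varphi^{(n)}(x)+r\bigl(\varphi(T^nx)-\varphi(x)\bigr),
\]
so the theorem controls the left-hand side at rate $b$ for a.e.\ $(x,r)$. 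After fixing an admissible $r\in(0,1)$ via Fubini, the error term $r(\varphi(T^nx)-\varphi(x))$ will be dispatched by Borel--Cantelli: since \eqref{eqn;upperboundvarphi} yields $\Leb\{|\varphi|>A\}=O(A^{-1/b})$ and Lebesgue measure is $T$-invariant, $|\varphi(T^nx)|=O(n^{b+\epsilon})$ a.e., giving \eqref{eq:aeexp}.

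For \eqref{eq:aeexp1}, set $a':=\max\{b,\lambda_2/\lambda_1\}\in[\lambda_2/\lambda_1,1)$, so that $\lambda_2\leq\lambda_1 a'<\lambda_1$ and Theorem~\ref{thm;correction} applies with $j=2$. I will put $h:=\mathfrak{h}_2(\varphi)\in U_2=\R h_1$ (so $h=c\,h_1$ for some $c\in\R$) and $\tilde\varphi:=\varphi-h\in\operatorname{P_{a'}G}$. Since $\mathfrak{h}_2$ is the identity on $U_2$, $\mathfrak{h}_2(\tilde\varphi)=0$; Corollary~\ref{cor;correction} then gives $\|\mathcal{M}^{(k)}(S(k)\tilde\varphi)\|=O(e^{(\lambda_1 a'+\tau)k})$ for every $\tau>0$, and the already-proven \eqref{eq:aeexp} applied with $a'$ in the role of $b$ yields $\tilde\varphi^{(n)}(x)=O(n^{a'+\epsilon})$ a.e.

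The hard part will be controlling $h^{(n)}=c\,h_1^{(n)}$, which a priori drifts linearly because $h_1$ lies in the top Oseledets direction. The key structural input I will establish is the identification
\[
E_2 \;=\; \{v\in\Gamma:\langle v,\lambda\rangle=0\}.
\]
The inclusion $\subset$ follows from the adjoint identity $\langle v,\lambda\rangle=\langle Q(n)v,\lambda^{(n)}\rangle$ combined with $\|Q(n)v\|=O(e^{(\lambda_2+\tau)n})$ for $v\in E_2$ (by \eqref{def;sdc1}) and the FDC consequence $\|\lambda^{(n)}\|=O(e^{-(\lambda_1-\tau)n})$ (derived from \eqref{eq:qlambda} and \eqref{def;sdc3}); equality follows by a dimension count. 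In particular $h_1\notin E_2$ gives $\mu(h_1)\neq 0$, so I may decompose $h_1=\alpha\mathbf{1}+e$ with $\alpha:=\mu(h_1)/|I|\neq 0$ and $e\in E_2$. Since $\|S(k)e\|_{\sup}=O(e^{(\lambda_2+\tau)k})$, Lemma~\ref{lem:phibounded} yields $e^{(n)}(x)=O(n^{\lambda_2/\lambda_1+\epsilon})$ a.e., hence $h^{(n)}(x)=c\alpha n+O(n^{\lambda_2/\lambda_1+\epsilon})$, and therefore $\varphi^{(n)}(x)=c\alpha n+O(n^{a'+\epsilon})$. Birkhoff's ergodic theorem pins down $c\alpha=\mu(\varphi)/|I|$, and (under the normalization $|I|=1$) this yields \eqref{eq:aeexp1}.
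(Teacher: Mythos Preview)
Your proof is correct. For \eqref{eq:aeexp}, you take the same route as the paper (constant roof $g\equiv 1$, $f(x,r)=\varphi(x)$, Theorem~\ref{thm;L1bound}), and your Borel--Cantelli disposal of the term $r(\varphi(T^nx)-\varphi(x))$ is a careful way of handling what the paper leaves implicit (it tacitly evaluates at $r=0$, writing $\varphi^{(n)}(x)=\int_0^{g^{(n)}(x)} f\circ T^g_t(x,r)\,dt$).

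For \eqref{eq:aeexp1}, your argument differs from the paper's in organization, though both rest on the same structural fact. The paper corrects via $\mathfrak{h}_j$ with $j$ determined by $b$, uses ergodicity and the sublinear growth of $(\varphi-h)^{(n)}$ to force $\int_I(\varphi-h)=0$, observes that the constant $\mu(\varphi)$ minus $h$ lies in $\Gamma_0=\{v:\langle v,\lambda\rangle=0\}$, then returns to the renormalization level to bound $\|\mathcal{M}^{(k)}(S(k)(\varphi-\mu(\varphi)))\|$ and applies \eqref{eq:aeexp} once more directly to $\varphi-\mu(\varphi)$. You instead correct via $\mathfrak{h}_2$ (with $a'=\max\{b,\lambda_2/\lambda_1\}$ from the outset, so the correction is a scalar multiple of $h_1$), decompose $h_1=\alpha\mathbf{1}+e$ with $e\in E_2$, and control $e^{(n)}$ pointwise through Lemma~\ref{lem:phibounded} rather than via another pass through \eqref{eq:aeexp}. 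Both routes hinge on the identification $E_2=\Gamma_0$; you prove it cleanly from the FDC bounds and a dimension count, while the paper invokes the equivalent fact ($h_1\notin\Gamma_0$, $h_2,\dots,h_g\in\Gamma_0$, and every $h\in\Gamma_0$ has exponent at most $\lambda_2$) without proof. Your approach is slightly more self-contained and makes the linear drift explicit; the paper's stays entirely within the renormalization framework. Both share the same $|I|=1$ normalization caveat you flagged.
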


\begin{proof}
Let us consider the constant roof function $g:I\to\R$, $g\equiv 1$. Choose an integrable map $f:I^g\to\R$ such that $\varphi_f=\varphi$ and $\varphi_{|f|}=|\varphi_{f}|\in {\pbg}(\sqcup_{\alpha\in \mathcal{A}}
I_{\alpha})$ (we can set $f(x,r)=\varphi(x)$ for $(x,r)\in I^g$). Since $g\in {\pbg}(\sqcup_{\alpha\in \mathcal{A}} I_{\alpha})$, by Theorem~\ref{thm;L1bound} applied to $s=g^{(n)}(x)=n$, we have
 \[\limsup_{n\to+\infty}\frac{\log |\varphi^{(n)}(x)|}{\log n}=\limsup_{n\to+\infty}\frac{\log \big|\int_{0}^{g^{(n)}(x)}f\circ T^g_t(x,r)\,dt\big|}{\log g^{(n)}(x)} \leq b.
\]

Denote by $\Gamma_0\subset\Gamma$ the space of zero mean piecewise constant maps, i.e.\ $h\in\Gamma_0$ if $\langle h,\lambda\rangle=\sum_{i=1}^dh_i\lambda_i=0$.
As $h_1\not\in\Gamma_0$, $h_2,\ldots,h_g\in \Gamma_0$, we have
\begin{equation}\label{eq:gamma0}
\lim_{k\to\infty}\frac{\log\|Q(k)h\|}{k}\leq \lambda_2\text{  for every }h\in \Gamma_0.
\end{equation}
In view of Corollary~\ref{cor;correction}, there exists $\sum_{1\leq i<j}\beta_ih_i\in U_j$ such that
\begin{equation}\label{eq:zeromeanexp}
\Big\|\mathcal{M}^{(k)}\Big(S(k)\Big(\varphi-\sum_{1\leq i<j}\beta_ih_i)\Big)\Big)\Big\|=O(e^{b\lambda_1(1+\tau) k})\text{ for every }\tau >0.
\end{equation}
By  \eqref{eq:aeexp}, for every $\tau>0$ we have $\big(\varphi-\sum_{1\leq i<j}\beta_ih_i\big)^{(n)}(x)=O(n^{(b+\tau)})$ for a.e.\ $x\in I$.
Taking $\tau>0$ such that $b+\tau<1$, by the ergodicity of $T$, we obtain
\[\int_I\Big(\varphi-\sum_{1\leq i<j}\beta_ih_i\Big)(x)\,dx=0.\]
It follows that $\mu(\varphi)-\sum_{1\leq i<j}\beta_ih_i\in\Gamma_0$. In view of \eqref{eq:gamma0}, this gives
\[\lim_{k\to\infty}\frac{\log\|Q(k)(\mu(\varphi)-\sum_{1\leq i<j}\beta_ih_i)\|}{k}\leq \lambda_2.\]
By \eqref{eq:zeromeanexp}, it follows that for every $\tau >0$,
\[
\|\mathcal{M}^{(k)}(S(k)(\varphi-\mu(\varphi)))\|= \|S(k)(\varphi-\mu(\varphi))\|=O(e^{(\max\{\lambda_2,b\lambda_1\}+\tau) k}).
\]
Applying again \eqref{eq:aeexp}, we get  \eqref{eq:aeexp1}.
\end{proof}

\subsection{Pure power deviation}\label{sec:puredeviation}
In this section we relate the growth  of the accelerated KZ cocycle with the deviation of special flows for functions associated with homology elements.
In particular, it is mostly devoted to prove the lower bound of Birkhoff integrals in a.e.\ and $L^1$-norm.
In the following proposition, we reprove Bufetov's result about pure deviations a.e. Here the $L^1$-estimates account for the novelty of this result.
\begin{proposition}\label{thm:specflownonzero}
Suppose that the IET $T:I\to I$ satisfies \ref{FDC}.
Assume that $0<a<1$, $g\in \pag(\sqcup_{\alpha\in \mathcal{A}}
I_{\alpha})$ and $f:I^g\to \R$ is a bounded function such that there exists $K>0$ for which $f(x,r)=0$ for $r\geq K$ and $\varphi_f=
h=(h_\alpha)_{\alpha\in\mathcal{A}}\in H(\pi)$.
Suppose that there exists $\lambda>0$ such that
\[\lim_{k\to+\infty}\frac{\log\|Q(k)h\|}{k}=\lambda.\]
Then
\begin{align}\label{eq:Birkh1}
&\limsup_{s\to+\infty}\frac{\log|\int_{0}^sf(T^g_t(x,r))\,dt|}{\log s}= \frac{\lambda}{\lambda_1}\text{ for a.e. }(x,r)\in I^g,\\
\label{eq:Birkh2}
&\limsup_{s\to+\infty}\frac{\log\|\int_{0}^sf\circ T^g_t\,dt\|_{L^1}}{\log s}= \frac{\lambda}{\lambda_1}.
\end{align}
\end{proposition}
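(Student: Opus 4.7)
The plan is to split the proposition into upper and lower bounds, and handle the a.e.\ and $L^1$ statements separately.

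\textbf{Upper bound.} Setting $b := \lambda/\lambda_1$, I would verify the hypotheses of Theorems \ref{thm:expbL1} and \ref{thm;L1bound}. Since $\varphi_f = h$ is piecewise constant, $p_b(h) = 0$ and $C^\pm_\alpha(h) = 0$, so $h \in \pbg(\sqcup_{\alpha \in \mathcal{A}} I_\alpha)$ trivially. Because $f$ is bounded with $f(x,r) = 0$ for $r \geq K$, one has $\varphi_{|f|}(x) \leq K\|f\|_\infty$ uniformly, hence $\varphi_{|f|} \in \wpb$ for any $b \in [0,1)$. Finally, since $h$ is already piecewise constant on each $I_\alpha^{(k)}$, one has $\mathcal{M}^{(k)}(S(k)h) = S(k)h = Q(k)h$, and by hypothesis $\|\mathcal{M}^{(k)}(S(k)h)\| = \|Q(k)h\| = O(e^{(\lambda+\tau)k}) = O(e^{b\lambda_1(1+\tau')k})$. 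The restriction $b \leq a$ in those theorems can be relaxed in this degenerate situation: the term involving $p_b(S(k)\varphi_f)$ in \eqref{eqn;upperboundvarphi} vanishes, and $\chi(u)$ is uniformly bounded (not growing as $u^{b/a}$), so the proofs go through. This yields the upper bounds in both \eqref{eq:Birkh1} and \eqref{eq:Birkh2}.

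\textbf{Pointwise lower bound.} The reduction rests on the identity
\begin{equation*}
\int_0^{g^{(n)}(x)} f(T^g_t(x,0))\,dt = \varphi_f^{(n)}(x) = h^{(n)}(x),
\end{equation*}
valid for every $x\in I$ and $n\geq 1$. The pure-power deviation result of Bufetov \cite{Bu} and Forni \cite{Fo2} then gives $\limsup_n \log|h^{(n)}(x)|/\log n = \lambda/\lambda_1$ for a.e.\ $x$; the mechanism is that, choosing $\alpha_k \in \mathcal A$ with $|(Q(k)h)_{\alpha_k}|\geq d^{-1}\|Q(k)h\|\geq c e^{(\lambda-\tau)k}$, the Birkhoff sum $h^{(Q_{\alpha_k}(k))}$ is identically equal to $(Q(k)h)_{\alpha_k}$ on $I^{(k)}_{\alpha_k}$, and the Rokhlin tower condition \eqref{def:FDC-g} combined with a Borel--Cantelli argument exploiting hyperbolicity estimates \eqref{def;sdc3}--\eqref{def;sdc4} propagates this to almost every $x$. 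Since $g\in L^1(I)$, Birkhoff's theorem gives $g^{(n)}(x)/n \to \int_I g > 0$ a.e., so $\log(g^{(n)}(x)-r)/\log n \to 1$. For an arbitrary $(x,r)\in I^g$, the identity
\begin{equation*}
\int_0^{g^{(n)}(x)-r} f(T^g_t(x,r))\,dt = h^{(n)}(x) - \int_0^r f(T^g_t(x,0))\,dt,
\end{equation*}
together with the bound $|\int_0^r f(T^g_t(x,0))\,dt|\leq K\|f\|_\infty$ (since $f$ vanishes for vertical coordinate $\geq K$ while $r < g(x)$), transfers the pointwise lower bound from $(x,0)$ to all of $I^g$, establishing \eqref{eq:Birkh1}.

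\textbf{$L^1$ lower bound.} The novelty here is to show $\|h^{(n_k)}\|_{L^1(I)}\geq c n_k^{\lambda/\lambda_1-\tau}$ along the subsequence $n_k := Q_{\alpha_k}(k)$, then transfer to the flow. The Rokhlin tower $\{T^i I^{(k)}_{\alpha_k}: 0\leq i<n_k\}$ has total measure $n_k|I^{(k)}_{\alpha_k}|\geq \delta|I|/\kappa$ by \eqref{eq:qlambda}. On the floor $T^i I^{(k)}_{\alpha_k}$ one computes
\begin{equation*}
h^{(n_k)}(T^iy) = (Q(k)h)_{\alpha_k} + h^{(i)}(T^{n_k}y) - h^{(i)}(y);
\end{equation*}
I would then argue that the oscillation terms cannot cancel the main term on a full fraction of floors, so $\|h^{(n_k)}\|_{L^1(I)}\geq c\,n_k^{\lambda/\lambda_1-\tau}$. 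Transferring to the special flow, take $s_k := n_k\int_I g$. By Birkhoff's theorem (via Egorov), $n(x,0,s_k)=n_k$ on a subset $N_k\subset I$ of measure tending to $|I|$, so $F_{s_k}(x,0) = h^{(n_k)}(x)+O(1)$ on $N_k$. Lifting to $I^g$ using $F_{s_k}(x,r) = F_{s_k+r}(x,0) - F_r(x,0)$ and the bound $|F_r(x,0)|\leq K\|f\|_\infty$, we obtain $\|F_{s_k}\|_{L^1(I^g)}\geq c\,s_k^{\lambda/\lambda_1-\tau}$, giving \eqref{eq:Birkh2}. The chief obstacle is exactly the Rokhlin tower oscillation control for $\|h^{(n_k)}\|_{L^1(I)}$: the error terms are of the same order of magnitude as the main term, and one must use the cancellation structure carefully to guarantee a definite lower bound on a positive-measure set.
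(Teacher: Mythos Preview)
Your upper-bound and pointwise lower-bound arguments match the paper's approach. For the $L^1$ lower bound there are two points where your outline diverges from what is actually needed.

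First, the ``chief obstacle'' you flag is not an obstacle. The key fact is already implicit in your pointwise argument: on the RTC tower $\bigcup_{0\leq i < p_k} T^i I^{(k)}_\alpha$ (height $p_k$, \emph{not} the full height $Q_\alpha(k)$), one has $h^{(Q_\alpha(k))}(T^iy) = (Q(k)h)_\alpha$ exactly. Indeed, both $y$ and $T^{Q_\alpha(k)}y$ lie in $I^{(k)}$, and by \eqref{def:FDC-g} each floor $T^l I^{(k)}$ with $l<p_k$ is an interval contained in a single $I_\beta$, so the piecewise-constant cocycle satisfies $h(T^ly)=h(T^lT^{Q_\alpha(k)}y)$ and your oscillation terms vanish identically (this is \eqref{eq:koc} in the paper). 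Restricting to this $p_k$-tower, which by \eqref{neq:lebrt} still has measure $\geq \delta|I|/\kappa$, already gives the lower bound for $\|h^{(Q_\alpha(k))}\|_{L^1(I)}$ you want.

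Second, your transfer to the flow has a genuine gap. Birkhoff's theorem via Egorov only yields $n(x,0,s_k)/n_k\to 1$ uniformly on a large set, not $n(x,0,s_k)=n_k$ exactly; the set where $g^{(n_k)}(x)\leq n_k\mu(g)<g^{(n_k+1)}(x)$ has no reason to have measure bounded below, and without a rate the error $h^{(n(x,0,s_k))}(x)-h^{(n_k)}(x)$ is an $o(n_k)$-fold Birkhoff sum which a priori competes with the main term. The paper handles this by working directly with the flow at times $\tau_n=Q_\alpha(k_n)\mu(g)$ on the set $D_n=(J\cap\bigcup_{0\leq i<p_{k_n}}T^iI^{(k_n)}_\alpha)\times[0,\underline{g}]$, and invoking two quantitative ingredients you do not mention: Corollary~\ref{cor:aeexp} gives $|g^{(n)}(x)-n\mu(g)|\leq n^\zeta$ on a large set $J$ for some $\zeta\in(\max\{\lambda_2/\lambda_1,a\},1)$, which forces the crossing-number discrepancy $|Q_\alpha(k_n)-l_n(x)|\leq C(Q_\alpha(k_n))^\zeta$; then Lemma~\ref{lem:phibounded}, giving the uniform bound $\|h^{(n)}\|_{\sup}=O(n^{\lambda/\lambda_1+2\tau})$, controls the error by $C(Q_\alpha(k_n))^{\zeta(\lambda/\lambda_1+2\tau)}$, which is genuinely smaller than $|(Q(k_n)h)_\alpha|$ once $\tau$ is chosen with $\zeta(\lambda/\lambda_1+2\tau)\lambda_1/\lambda<1$. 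Your route via $\|h^{(n_k)}\|_{L^1(I)}$ could also be completed with these two inputs, but they are essential and cannot be replaced by Birkhoff's theorem alone.
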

\begin{proof}
First note that for every $\tau>0$,
\begin{equation}\label{eq:inttokoc}
\|\mathcal{M}^{(k)}(S(k)\varphi_f)\| = \|\mathcal{M}^{(k)}(S(k)h)\|\leq \|Q(k)h\|=O(e^{k(\lambda+\tau)}).
\end{equation}
As $\varphi_{|f|}$ is bounded, Theorem~\ref{thm:expbL1} and \ref{thm;L1bound} yield the inequalities $\leq$ in \eqref{eq:Birkh1} and \eqref{eq:Birkh2}.

To show the reverse inequalities, note that for every $n\geq 1$ and  $(x,r)\in I^g$ we have
\begin{equation}\label{eq:inttokocgn}
\int_0^{g^{(n)}(x)-r}f(T^g_t(x,r))\,dt=\int_r^{g^{(n)}(x)}f(T^g_t(x,0))\,dt=\varphi^{(n)}_f(x)-\int_0^{r}f(T^g_t(x,0))\,dt.
\end{equation}
Moreover, for every $\alpha\in\mathcal A$ we have
\begin{equation}\label{eq:koc}
\varphi^{(Q_\alpha(k))}_f(x)=(Q(k)h)_\alpha\text{ if }x\in\bigcup_{0\leq i<p_k}T^iI^{(k)}_\alpha.
\end{equation}
Indeed, if $x\in I^{(k)}_\alpha$ then
\[\varphi^{(Q_\alpha(k))}_f(x)=S(k)\varphi_f(x)=(Q(k)h)_\alpha.\]
Now suppose that $x=T^iy$ so that $y\in I^{(k)}_\alpha$ and $0\leq i<p_k$.
Then
\[\varphi^{(Q_\alpha(k))}_f(x)-\varphi^{(Q_\alpha(k))}_f(y)=\varphi^{(Q_\alpha(k))}_f(T^iy)-\varphi^{(Q_\alpha(k))}_f(y)
=\sum_{0\leq l<i}(\varphi_f(T^lT^{Q_\alpha(k)}y)-\varphi_f(T^ly)).\]
Since $y,T^{Q_\alpha(k)}y\in I_\alpha^{(k)}$ and, by \eqref{def:FDC-g}, $\{T^jI^{(k)}_\alpha:{0\leq j<p_k}\}$ is a Rokhlin tower of intervals,
for every $0\leq i<p_k$ the points  $T^iy$ and $T^iT^{Q_\alpha(k)}y$ belong to the same interval $I_\beta$.
As $\varphi_f=h$ is constant
on each interval $I_\beta$, $\beta\in\mathcal A$, it follows
that
\[\varphi^{(Q_\alpha(k))}_f(x)=\varphi^{(Q_\alpha(k))}_f(y)=(Q(k)h)_\alpha.\]

Choose $\alpha\in \mathcal A$ and a subsequence $(k_n)_{n\geq 1}$ such that
\begin{equation}\label{eq:choicealpha}
\lim_{n\to\infty}\frac{\log|(Q(k_n)h)_\alpha|}{k_n}=\lambda.
\end{equation}
In view of \eqref{def:FDC-g},
\begin{equation}\label{neq:lebrt}
Leb\Big(\bigcup_{0\leq i<p_{k_n}}T^iI^{(k_n)}_\alpha\Big)\geq \frac{\delta|I|}{\kappa}\text{ for all }n\geq 1.
\end{equation}
By the ergodicity of $T$, for a.e.\ $x\in I$, passing to a further subsequence, we have
\[x\in \bigcup_{0\leq i<p_{k_{l_n}}}T^iI^{(k_{l_n})}_\alpha\text{ for all }n\geq 1\]
and
\begin{equation}\label{eq:meang}
\lim_{n\to\infty}\frac{g^{(Q_\alpha(k_{l_n}))}(x)}{Q_\alpha(k_{l_n})}=\mu(g):=\int_Ig(y)\,dy.
\end{equation}
In view of \eqref{eq:koc}, this gives
\begin{equation}\label{eq:valphifkoc}
\varphi^{(Q_\alpha(k_{l_n}))}_f(x)=(Q(k_{l_n})h)_\alpha\text{ for every }n\geq 1.
\end{equation}

For every $r\geq 0$ with $(x,r)\in I^g$, let us consider a sequence $(\tau_n)_{n\geq 1}$ given by
\[\tau_n=g^{(Q_\alpha(k_{l_n}))}(x)-r.\]
Then, by \eqref{eq:inttokocgn} and \eqref{eq:valphifkoc}, we have
\begin{align*}
&\frac{\log\left|\int_0^{\tau_n}f(T^g_t(x,r))\,dt\right|}{\log \tau_n}=
\frac{\log\left|(Q(k_{l_n})h)_\alpha-\int_0^{r}f(T^g_t(x,0))\,dt\right|}{\log (g^{(Q_\alpha(k_{l_n}))}(x)-r)}\\
&=\frac{\log\left|(Q(k_{l_n})h)_\alpha-\int_0^{r}f(T^g_t(x,0))\,dt\right|}{k_{l_n}}
\frac{\log(Q_\alpha(k_{l_n}))}{\log (g^{(Q_\alpha(k_{l_n}))}(x)-r)}\frac{k_{l_n}}{\log(Q_\alpha(k_{l_n}))}.
\end{align*}
Moreover, by \eqref{eq:Qalphaexp}, we have
\[\lim_{n\to\infty}\frac{\log(Q_\alpha(k_{l_n}))}{k_{l_n}}=\lambda_1;\]
by \eqref{eq:meang}, we have
\[\lim_{n\to\infty}\frac{\log(Q_\alpha(k_{l_n}))}{\log (g^{(Q_\alpha(k_{l_n}))}(x)-r)}=1;\]
by \eqref{eq:choicealpha}, we have
\[\lim_{n\to\infty}\frac{\log\left|(Q(k_{l_n})h)_\alpha-\int_0^{r}f(T^g_t(x,0))\,dt\right|}{k_{l_n}}=\lambda.\]
It follows that
\[\lim_{n\to\infty}\frac{\log\left|\int_0^{\tau_n}f(T^g_t(x,r))\,dt\right|}{\log \tau_n}=\frac{\lambda}{\lambda_1},\]
which completes the proof of \eqref{eq:Birkh1}.
\medskip

We now turn to $L^1$-estimate of the lower bound. Let us consider a new sequence $(\tau_n)_{n\geq 1}$ given by $\tau_n=Q_\alpha(k_{n})\mu(g)$.
Take any $\max\{\lambda_2/\lambda_1,a\}<\zeta<1$ and $\tau>0$ such that $\zeta(\lambda/\lambda_1+2\tau)\lambda_1/\lambda<1$. Since
\[\limsup_{n\to+\infty}\frac{\log |g^{(n)}(x)-n\mu(g)|}{\log n} <\zeta
\]
(see Corollary~\ref{cor:aeexp}) and by the ergodic theorem, there exist $N\in\N$ and $J\subset I$ such that $Leb(I\setminus J)<|I|{\delta}/(2\kappa)$
and for every $n\geq N$ and $x\in J$ we have
\begin{equation}\label{eqn;zetabound}
|g^{(n)}(x)-n\mu(g)|<n^{\zeta}\text{ and }n\mu(g)/2<g^{(n)}(x)<2n\mu(g).
\end{equation}

Suppose that
\[(x,r)\in D_n:=\Big(J\cap\bigcup_{0\leq i<p_{k_n}}T^iI^{(k_n)}_\alpha\Big)\times [0,\underline{g}]\]
and $Q_\alpha(k_n)\geq 2N+1$. As $Leb(I\setminus J)<|I|{\delta}/(2\kappa)$, by \eqref{neq:lebrt}, we have $Leb(D_n)>\underline{g}|I|\delta/(2\kappa)=:\bar{\delta}>0$.
Moreover
\begin{align*}
&\left|\int_0^{\tau_n}f(T^g_t(x,r))\,dt-(Q(k_{n})h)_\alpha\right|\\
&
=\left|\int_{r}^{\tau_n+r}f(T^g_t(x,0))\,dt-\int_0^{g^{(Q_\alpha(k_{n}))}(x)}f(T^g_t(x,0))\,dt\right|\\
&\leq 2r\|f\|_{C^0}+\left|\int_{\tau_n}^{g^{(Q_\alpha(k_{n}))}(x)}f(T^g_t(x,0))\,dt\right|.
\end{align*}
Denote by $l_n(x)$ the unique natural number such that
\[g^{(l_n(x))}(x)\leq \tau_n<g^{(l_n(x)+1)}(x).\]
By assumption,
\[\left|\int_{\tau_n}^{g^{(l_{n}(x))}(x)}f(T^g_t(x,0))\,dt\right|\leq K\|f\|_{\sup}.\]
It follows that
\begin{equation*}
\left|\int_0^{\tau_n}f(T^g_t(x,r))\,dt-(Q(k_{n})h)_\alpha\right|\leq (2r+K)\|f\|_{C^0}+|\varphi_f^{(Q_\alpha(k_n))}(x)-\varphi_f^{(l_n(x))}(x)|.
\end{equation*}
Since $\|S(k)\varphi_f\|_{\sup}=\|Q(k)h\| = O(e^{(\lambda+\tau)k})$, by
Lemma~\ref{lem:phibounded}, there exists $C>0$ such that $\|\varphi_f^{(n)}\|_{sup}\leq Cn^{\lambda/\lambda_1+2\tau}$.
Therefore,
\begin{equation}\label{eqn;displacement}
\left|\int_0^{\tau_n}f(T^g_t(x,r))\,dt-(Q(k_{n})h)_\alpha\right|\leq (2r+K)\|f\|_{C^0}+C|Q_\alpha(k_n)-l_n(x)|^{\lambda/\lambda_1+2\tau}.
\end{equation}
As $g^{([Q_\alpha(k_n)/2])}(x)<Q_\alpha(k_n)\mu(g)<g^{(l_n(x)+1)}(x)$, we have $N\leq Q_\alpha(k_n)/2-1\leq l_n(x)$.
Hence by \eqref{eqn;zetabound},
\begin{align*}
-l_n(x)^{\zeta}&<g^{(l_n(x))}(x)-l_n(x)\mu(g)\leq Q_\alpha(k_{n})\mu(g)-l_n(x)\mu(g)\\
&<g^{(l_n(x)+1)}(x)-l_n(x)\mu(g)<(l_n(x)+1)^{\zeta}+\mu(g).
\end{align*}
It follows that there exists $C>0$ such that for every $(x,r)\in D_n$ (with $Q_\alpha(k_n)\geq 2N+1$) we have
\[|Q_\alpha(k_n)-l_n(x)|\leq C (Q_\alpha(k_n))^{\zeta}.\]
Then for every $(x,r)\in D_n $, by \eqref{eqn;displacement}, we have 
\begin{align*}
\left|\int_0^{\tau_n}f(T^g_t(x,r))\,dt \right|&\geq |(Q(k_{n})h)_\alpha|-C' (Q_\alpha(k_n))^{\zeta(\lambda/\lambda_1+2\tau)}.
\end{align*}
As $Leb(D_n)>\bar{\delta}$, it follows that
\[\frac{\log\|\int_0^{\tau_n}f\circ T^g_t\,dt\|_{L^1(I^g)}}{\log \tau_n}\geq \frac{\log\bar{\delta}\big(|(Q(k_{n})h)_\alpha|-C' (Q_\alpha(k_n))^{\zeta(\lambda/\lambda_1+2\tau)}\big)}{\log \mu(g)Q_\alpha(k_n)}.\]
By \eqref{eq:choicealpha} and \eqref{eq:Qalphaexp},
\[\lim_{n\to\infty}\frac{\log (Q_\alpha(k_n))^{\zeta(\lambda/\lambda_1+2\tau)}}{\log |(Q(k_{n})h)_\alpha|}=\zeta(\lambda/\lambda_1+2\tau)\frac{\lambda_1}{\lambda}<1.\]
Therefore,
\[\lim_{n\to\infty}\frac{\log\big(|(Q(k_{n})h)_\alpha|-C' (Q_\alpha(k_n))^{\zeta(\lambda/\lambda_1+2\tau)}\big)}{\log |(Q(k_{n})h)_\alpha|}=1.\]
Hence, by \eqref{eq:choicealpha} and \eqref{eq:Qalphaexp} again,
\[\liminf_{n\to\infty}\frac{\log\|\int_0^{\tau_n}f\circ T^g_t\,dt\|_{L^1(I^g)}}{\log \tau_n}\geq
\lim_{n\to\infty}\frac{\log\bar{\delta}|(Q(k_{n})h)_\alpha|}{\log \mu(g)Q_\alpha(k_n)}=\frac{\lambda}{\lambda_1}.\]
\end{proof}

We finish the section by checking sub-polynomial bounds for $\pog$.
\begin{proposition}[cf.\ Theorem 2 in \cite{Fr-Ul2}]\label{thm:specflowzeroexp}
Suppose that the IET $T:I\to I$ satisfies the \ref{FDC}.
Assume that $\varphi_f\in \pog(\sqcup_{\alpha\in \mathcal{A}}
I_{\alpha})$, $\varphi_{|f|}\in \operatorname{\widehat{P}_0}(\sqcup_{\alpha\in \mathcal{A}}
I_{\alpha})$, $g\in \pag(\sqcup_{\alpha\in \mathcal{A}}
I_{\alpha})$ for some $0\leq a<1$ and every $\tau >0$,
\[
\|\mathcal{M}^{(k)}(S(k)\varphi_f)\|=O(e^{\tau k}).
\]
Then
\begin{align}\label{eq:Birk0}
&\limsup_{s\to+\infty}\frac{\log|\int_{0}^sf(T^g_t(x,r))dt|}{\log s}\leq 0\text{ for a.e.\ $(x,r)\in I^g$};\\
\label{eq:BirkL1}
&\limsup_{s\to+\infty}\frac{\log\|\int_{0}^sf\circ T^g_tdt\|_{L^1(I^g)}}{\log s}\leq 0.
\end{align}
\end{proposition}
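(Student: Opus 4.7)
The strategy is to mimic the proofs of Theorems~\ref{thm:expbL1} and~\ref{thm;L1bound}, replacing polynomial singularity estimates by logarithmic ones. The crucial enabler is that the hypothesis $\|\mathcal{M}^{(k)}(S(k)\varphi_f)\|=O(e^{\tau k})$ holds for every $\tau>0$, so any polylogarithmic loss can be absorbed into a factor $s^{\tau}$.

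First I would establish the $b=0$ analog of Lemma~\ref{lem:birkonA}: for every $\tau>0$ there exists $C_\tau\geq 1$ such that, for every $u\geq s^a$ and every $(x,r)\in A^s_u$,
\[
\Big|\int_0^s f(T^g_t(x,r))\,dt\Big|\leq C_\tau u^{\tau}.
\]
The three ingredients are: (i)~$\chi(u)=O(\log u)$, following from $\varphi_{|f|}\in \widehat{P}_0$ and the fact that points of $I_u$ lie at distance $\geq cu^{-1/a}$ from the singularities; (ii)~$p_0(S(k)\varphi_f)\leq C(1+\log\|Q(k)\|)\,p_0(\varphi_f)=O(k)$ by Corollary~\ref{cor;renormpaos}; (iii)~for $x\in I^{(k)}(u)\cap I^{(k)}_\alpha$, inequality \eqref{eqn;upperboundvarphi2} combined with $\min\{x-l^{(k)}_\alpha,r^{(k)}_\alpha-x\}\geq cu^{-1/a}$ yields
\[
|(S(k)\varphi_f)(x)|\leq \|\mathcal{M}^{(k)}(S(k)\varphi_f)\|+p_0(S(k)\varphi_f)\Big(\tfrac{1}{a}\log u+O(1)\Big)=O(k\log u)+O(e^{\tau k}).
\]
Since $I^{(k)}(u)\neq\emptyset$ forces $k\leq \log(C'u)/(a\lambda_1)$, substituting these bounds into \eqref{eq:twdevpre1} and summing the resulting series using $\|Z(k+1)\|=O(e^{\tau k})$ delivers the claim.

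Equipped with this pointwise estimate, the a.e.\ statement \eqref{eq:Birk0} follows exactly as in Theorem~\ref{thm;L1bound}: Lemma~\ref{lem:lebA} shows that $\Leb\big(\bigcup_{n\geq m}(A^n_{n^{(1+\tau)a}})^c\big)\to 0$, so a Borel--Cantelli argument produces a full-measure set on which the pointwise bound applies at all sufficiently large integer times, and interpolation between consecutive integers concludes. For the $L^1$ bound \eqref{eq:BirkL1}, I would reuse the layered decomposition of Theorem~\ref{thm:expbL1}, adapting only the analog of \eqref{neq:birkBs}: since $\varphi_{|f|}\in \widehat{P}_0$ gives $\int_{I\setminus I_{s^a}}\varphi_{|f|}(x)\,dx=O(s^{-1}\log s)$, one has $\|\int_0^s f\circ T^g_t\,dt\|_{L^1(B_{s^a})}=O(\log s)$, and the optimization over the sequence $(u_n)$ carries over verbatim to give $O(s^{\tau})$ for any prescribed $\tau>0$. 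The main obstacle is the bookkeeping of polylogarithmic factors at each use of $p_0(S(k)\varphi_f)=O(k)$ and $\chi(u)=O(\log u)$, ensuring they remain within the $s^{\tau}$ budget; this amounts to choosing $\tau$ in the hypothesis only after all the estimates have been assembled.
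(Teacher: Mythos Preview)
Your approach is essentially correct but takes a genuinely different route from the paper. The paper's proof is a two-line trick: rather than reworking the logarithmic estimates, it exploits the inclusions
\[
\pog(\sqcup_{\alpha}I_\alpha)\subset \operatorname{P_\vep G}(\sqcup_{\alpha}I_\alpha),\qquad
\operatorname{\widehat{P}_0}(\sqcup_{\alpha}I_\alpha)\subset \operatorname{\widehat{P}_\vep}(\sqcup_{\alpha}I_\alpha),\qquad
\pag(\sqcup_{\alpha}I_\alpha)\subset \operatorname{P_{a+\vep} G}(\sqcup_{\alpha}I_\alpha)
\]
for any $0<\vep<1-a$, and then applies Theorems~\ref{thm:expbL1} and~\ref{thm;L1bound} directly with $b:=\vep$ and $a:=a+\vep$. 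The growth hypothesis $\|\mathcal{M}^{(k)}(S(k)\varphi_f)\|=O(e^{\tau k})$ for every $\tau>0$ trivially implies the required $O(e^{\vep\lambda_1(1+\tau)k})$, so the conclusions give exponent $(1+\tau)\vep$, and letting $\vep\to 0$ finishes.

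Your route---redoing the proofs of Lemma~\ref{lem:birkonA}, Theorem~\ref{thm:expbL1} and Theorem~\ref{thm;L1bound} with genuine logarithmic control via \eqref{eqn;upperboundvarphi2} and \eqref{eqn;renormpaos0}---also works and is more self-contained, but it is substantially longer and requires tracking polylog factors throughout. One small caveat: several of your intermediate statements (the definition of $I_u$ via distance $cu^{-1/a}$, the bound $k\leq\log(C'u)/(a\lambda_1)$, the condition $u\geq s^a$) implicitly assume $a>0$, whereas the proposition allows $a=0$. The paper's embedding $a\mapsto a+\vep$ handles this automatically; in your version you would need a separate treatment of the case where the roof itself has only logarithmic singularities.
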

\begin{proof}
For every $0<\vep<1-a$ we apply Theorem \ref{thm:expbL1} and \ref{thm;L1bound}  to $b:=\vep$  and $a:=a+\vep$. Since $\varphi_f\in \operatorname{{P}_\vep G}(\sqcup_{\alpha\in \mathcal{A}}
I_{\alpha})$, $\varphi_{|f|}\in \operatorname{\widehat{P}_\vep}(\sqcup_{\alpha\in \mathcal{A}}
I_{\alpha})$, $g\in \operatorname{{P}_{a+\vep} G}(\sqcup_{\alpha\in \mathcal{A}}
I_{\alpha})$, we obtain the upper bounds by taking arbitrary small $\vep>0$.
\end{proof}

\begin{remark}\label{rem:dodeq}
If additionally $f\neq 0$ then, using arguments from the proof of Theorem~1.4 in \cite[\S7.2.4]{Fr-Ul2}, we have equalities in both  \eqref{eq:Birk0} and \eqref{eq:BirkL1}.
\end{remark}

\section{Local analysis around saddles}\label{sec:locHam}
In this section we mainly prove some local relations between regularity of the function $f:M\to\R$ around fixed
points and the types of singularity for the associated cocycle $\varphi_f:I\to\R$. This analysis is one of
 our main result of this paper that was not studied in the related literatures beforehand in such a comprehensive way.
 The main results of this chapter play a key role in proving Theorem~\ref{thm:ftophi} and its extension in \S\ref{sec:regularity}.
  In fact, we generalize the approach developed for simple saddles in \cite{Fr-Ul} (related to logarithmic singularity type) to multi-saddle type.

\medskip
Let $M'\subset M$ be a minimal component of a locally Hamiltonian flow $\psi_\R$ associated with a closed 1-form $\eta$.
Let $I\subset M'$ be its transversal curve equipped with a standard parametrization. Recall that a parametrization of curve $\gamma: [a,b] \to M$ is \emph{standard} if $\gamma: I \to M$ if $\eta(d\gamma ) = 1$.
In the standard coordinates, the first return map $T:I\to I$ is an IET.

For every saddle $\sigma\in \mathrm{Fix}(\psi_\R)$  of multiplicity $m=m_\sigma\geq 2$ let $(x,y)$  be a singular chart in a neighborhood $U_\sigma$ of $\sigma$. Then the corresponding local Hamiltonian is of the form $H(x,y)=\Im (x+iy)^m$. If the $\psi_\R$-invariant area form $\omega=V(x,y)dx\wedge dy$, then the corresponding local Hamiltonian equation in $U_\sigma$ is of the form
\[\frac{dx}{dt}=\frac{\frac{\partial H}{\partial y}(x,y)}{V(x,y)}=\frac{m\Re(x+iy)^{m-1}}{V(x,y)} \quad \text{and}\quad
\frac{dy}{dt}=-\frac{\frac{\partial H}{\partial x}(x,y)}{V(x,y)}=-\frac{m\Im(x+iy)^{m-1}}{V(x,y)},
\]
so
\begin{equation}\label{eq:X}
X(x,y)= X_1(x,y) + iX_2(x,y) =\frac{m\overline{(x+iy)^{m-1}}}{V(x,y)}
\end{equation}
and
\[
\eta=m\Im(x+iy)^{m-1}\,dx+m\Re(x+iy)^{m-1}\,dy.
\]
Therefore,  a $C^1$-curve $\gamma:[a,b]\to U_\sigma$ is standard if and only if
\begin{align}
\label{eq:eta}
\begin{split}
1 = \eta_{\gamma(t)}\gamma'(t)&=m\Im(\gamma(t))^{m-1}\Re \gamma'(t)+m\Re(\gamma(t))^{m-1}\Im \gamma'(t)\\
&=\Im\big(m(\gamma(t))^{m-1}\gamma'(t)\big)=
\Im\left(\frac{d}{dt}(\gamma(t))^m\right).
\end{split}
\end{align}

For every $f\in C^m(M)$ and any $\alpha=(\alpha_1,\alpha_2)\in \Z_{\geq 0}^2$ with $|\alpha|=\alpha_1+\alpha_2\leq m$ let
$\partial_\sigma^\alpha(f)=\frac{\partial^{|\alpha|}(f\cdot V)}{\partial^{\alpha_1} x\partial^{\alpha_2} y}(0,0)$.

\begin{lemma}\label{lem;partial-invariance}
For every $f\in C^m(M)$ and any $\alpha\in \Z_{\geq 0}^2$ with $|\alpha|\leq m-2$ we have
\[\partial_\sigma^\alpha(f)=\partial_\sigma^\alpha(f\circ \psi_t)\text{ for every }t\in \R.\]
\end{lemma}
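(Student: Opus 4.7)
The plan is to view $\varphi(t):=\partial_\sigma^\alpha(f\circ\psi_t)$ as a function of $t$, show that $\varphi'(t)=0$ for every $t\in\R$, and conclude $\varphi(t)=\varphi(0)=\partial_\sigma^\alpha(f)$. Working in the singular chart $(x,y)$, I would write
\[
\varphi(t)=\partial_x^{\alpha_1}\partial_y^{\alpha_2}\bigl[V\cdot(f\circ\psi_t)\bigr](0,0),
\]
and, since $X$ is the infinitesimal generator of $\psi_\R$, differentiate under the partial derivatives (which will be justified by smoothness and compactness of the chart) to obtain
\[
\varphi'(t)=\partial_x^{\alpha_1}\partial_y^{\alpha_2}\bigl[V(x,y)\cdot X(f)\bigl(\psi_t(x,y)\bigr)\bigr](0,0).
\]
So the task reduces to showing that this last quantity vanishes for every $t\in\R$ and every $|\alpha|\le m-2$.

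The core observation is a vanishing-order count at the origin. From formula \eqref{eq:X}, each component of $X$ equals a homogeneous polynomial of degree $m-1$ divided by the smooth, strictly positive density $V$, so its Taylor expansion at $(0,0)$ begins in degree $m-1$. Because $f\in C^{m_\sigma}(M)$, the function $X(f)=X_1\,\partial_x f+X_2\,\partial_y f$ is therefore $C^{m-1}$ and vanishes to order at least $m-1$ at $(0,0)$.

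Because $\sigma$ is a fixed point, $\psi_t(0,0)=(0,0)$ for every $t$, and $\psi_t$ is a local smooth diffeomorphism at the origin. A standard Taylor-expansion argument then yields that composition with $\psi_t$ preserves the order of vanishing at $(0,0)$, and multiplication by the smooth factor $V$ likewise preserves it. Hence $(x,y)\mapsto V(x,y)\cdot X(f)(\psi_t(x,y))$ is a $C^{m-1}$ function vanishing to order at least $m-1$ at the origin, so Taylor's theorem forces every partial derivative of order $\le m-2$ to vanish there. This yields $\varphi'\equiv 0$, hence the claim.

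The main obstacle I expect is the bookkeeping under the limited regularity $f\in C^{m_\sigma}$: one has to verify carefully that composition with the diffeomorphism $\psi_t$ fixing the origin and multiplication by the smooth weight $V$ both preserve the order of vanishing in the $C^{m-1}$ setting, and that differentiation under the partial derivatives (both in $(x,y)$ and in $t$) is valid uniformly in a small neighborhood of $(0,0)$, for which one only needs $|t|$ small and can then propagate to arbitrary $t$ either by the group property of $\psi_\R$ or by directly invoking the above computation of $\varphi'(t)$ at an arbitrary base time.
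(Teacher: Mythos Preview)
Your proposal is correct and follows essentially the same strategy as the paper: differentiate $t\mapsto\partial_\sigma^\alpha(f\circ\psi_t)$ and use that the components of $V\cdot X$ are homogeneous polynomials of degree $m-1$ (hence vanish to order $m-1$ at the fixed point) to conclude the relevant partial derivatives are zero at $(0,0)$. The only difference is organizational: the paper writes out the chain/product-rule expansion of $\frac{\partial^{|\alpha|}}{\partial x^{\alpha_1}\partial y^{\alpha_2}}\frac{d}{dt}\big((f\cdot V)\circ\psi_t\big)$ explicitly and kills each term via $\partial^\beta(V\cdot X_i)(0,0)=0$ for $|\beta|\le m-2$, whereas you package the same computation in the language of ``order of vanishing is preserved under composition with a diffeomorphism fixing the origin and under multiplication by a smooth unit.''
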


\begin{proof}
First note that for every $(x,y)\in U_\sigma\cap\psi_{-t}(U_\sigma)$ we have
\begin{align*}
\frac{d}{dt}((f\cdot V)\circ\psi_t)(x,y)&
=\frac{\frac{\partial(f\cdot V)}{\partial x}(\psi_t(x,y))}{V(\psi_t(x,y))}(V\cdot X_1)(\psi_t(x,y))\\
&\quad+
\frac{\frac{\partial (f\cdot V)}{\partial y}(\psi_t(x,y))}{V(\psi_t(x,y))}(V\cdot X_2)(\psi_t(x,y)).
\end{align*}
Therefore, by induction
\begin{gather*}
\frac{d}{dt}\frac{\partial^{|\alpha|}}{\partial^{\alpha_1}x\partial^{\alpha_2}y}((f\cdot V)\circ\psi_t)(x,y)=
\frac{\partial^{|\alpha|}}{\partial^{\alpha_1}x\partial^{\alpha_2}y}\frac{d}{dt}((f\cdot V)\circ\psi_t)(x,y)\\=
\sum_{|\beta|\leq |\alpha|}W_{\beta,1}(t,x,y)\frac{\partial^{|\beta|}}{\partial^{\beta_1}x\partial^{\beta_2}y}(V\cdot X_1)(\psi_t(x,y))\\
\qquad+
\sum_{|\beta|\leq |\alpha|}W_{\beta,2}(t,x,y)\frac{\partial^{|\beta|}}{\partial^{\beta_1}x\partial^{\beta_2}y}(V\cdot X_2)(\psi_t(x,y)).
\end{gather*}
As $V\cdot X_1$ and $V\cdot X_2$ are homogenous polynomials of degree $m-1$, we have
\[\frac{\partial^{|\beta|}}{\partial^{\beta_1}x\partial^{\beta_2}y}(V\cdot X_1)(0,0)=
\frac{\partial^{|\beta|}}{\partial^{\beta_1}x\partial^{\beta_2}y}(V\cdot X_2)(0,0)=0\text{ if }|\beta|\leq m-2.\]
It follows that
\[\frac{d}{dt}\frac{\partial^{|\alpha|}}{\partial^{\alpha_1}x\partial^{\alpha_2}y}((f\cdot V)\circ\psi_t)(0,0)=0\text{ for all }t\in\R\text{ and }|\alpha|\leq m-2.\]
Hence
\[\partial_\sigma^\alpha(f\circ \psi_t)=\frac{\partial^{|\alpha|}}{\partial^{\alpha_1}x\partial^{\alpha_2}y}((f\cdot V)\circ\psi_t)(0,0)=
\frac{\partial^{|\alpha|}}{\partial^{\alpha_1}x\partial^{\alpha_2}y}(f\cdot V)(0,0)=\partial_\sigma^\alpha(f).\]
\end{proof}

Let $G_0:\C\to\C$ be the principal $m$-th root map, i.e.\ $G_0(re^{is})=r^{1/m}e^{is/m}$ if $s\in[0,2\pi)$, and let $\omega\in \C$ be the principal $2m$-th root of unity.
\begin{definition}
For every $\vep>0$ denote by $D_\vep$ the pre-image of the square $[-\vep,\vep]\times[-\vep,\vep]$
by the map $z\mapsto z^m$. Given a neighborhood $U_\sigma$ of $\sigma$, choose $\vep>0$ such that $D_\vep = D_{\sigma,\vep} \subset U_\sigma$.
\end{definition}

Let us consider four curves that parametrize some incoming and outgoing segments of the boundary of $D_\vep$:
$\gamma_+^{in},\gamma_+^{out}:(0, \vep)\to \partial D_\vep$, $\gamma_-^{in},\gamma_-^{out}:(- \vep,0)\to \partial D_\vep$ are given by
\[\gamma_{\pm}^{in}(s)=  G_0(-\vep+ is),\quad \gamma_{\pm}^{out}(s)=  G_0(\vep+ is).\]

For every interval $J\subset [0,2\pi)$ denote by $\mathcal{S}(J)$ the corresponding angular sector
$\{z\in \C:\operatorname{Arg}(z)\in J\}$.

\begin{lemma}\label{lem:segm} The following statements hold:

(i) The maps $\gamma_{\pm}^{in}$/$\gamma_{\pm}^{out}$ are standard parametrizations of incoming/outgoing segments of $D_\vep\cap \mathcal{S}([0,2\pi/m))$ for the flow $\psi_\R$.

(ii) The orbit segments entering  $D_\vep$ at $\gamma_{\pm}^{in}(s)$
leave it at $\gamma_{\pm}^{out}(s)$.

 Denote by $\tau(s)$ the time spent by this orbit in the set $D_\vep$. Then

 (iii)  for every $f\in C^m(M)$
 we have
\begin{equation}\label{eq:inttau}
\int_0^{\tau(s)}f(\psi_t(\gamma_{\pm}^{in}(s)))\,dt=\frac{1}{m^2}\int_{- \vep}^{ \vep}
\frac{(f\cdot V)(G_0(u,s))}{(u^2+s^2)^{\frac{m-1}{m}}}du.
\end{equation}
\end{lemma}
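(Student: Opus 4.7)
The plan is to transport the analysis to the $w$-coordinate $w := z^m$, in which the Hamiltonian takes the trivial form $H = \Im w$, the relevant region becomes the rectangle $[-\vep,\vep]^2$, and the flow becomes horizontal translation. This reduces the whole lemma to elementary calculations.

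For (i), by the very definition of the principal branch $G_0$ we have $(G_0(w))^m = w$ for every $w$ in its domain, so $(\gamma^{in}_{\pm}(s))^m = -\vep + is$ and $(\gamma^{out}_{\pm}(s))^m = \vep + is$; differentiating in $s$ gives $\Im\bigl(\tfrac{d}{ds}(\gamma(s))^m\bigr) = \Im(i) = 1$, which is exactly the standardness condition~\eqref{eq:eta}. To justify the labels in/out, I will use the following key computation: along any $\psi_\R$-orbit $\gamma(t)$ in $U_\sigma$, setting $w(t) := \gamma(t)^m$ and using~\eqref{eq:X},
\begin{equation*}
\tfrac{dw}{dt} \;=\; m\gamma^{m-1}\gamma' \;=\; m\gamma^{m-1}X(\gamma) \;=\; \frac{m^2|\gamma|^{2(m-1)}}{V(\gamma)} \;>\; 0,
\end{equation*}
so trajectories move rightward in the $w$-plane. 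Consequently $\gamma^{in}_{\pm}$ (left edge of the square) is incoming and $\gamma^{out}_{\pm}$ (right edge) is outgoing.

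For (ii), since $H = \Im w$ is a first integral, $\Im w(t) \equiv s$ along the orbit starting at $\gamma^{in}_{\pm}(s)$. Because $s\ne 0$, the horizontal segment $\{u+is:u\in[-\vep,\vep]\}$ avoids the real axis (i.e.\ the separatrix locus $\Im(z^m)=0$), so $\arg(w(t))$ varies continuously in $(0,\pi)$ when $s>0$ or in $(\pi,2\pi)$ when $s<0$; hence $\arg(\gamma(t))\in[0,2\pi/m)$ and the orbit remains in $D_\vep\cap\mathcal{S}([0,2\pi/m))$. Since $\Im w(t)=s$ never attains $\pm\vep$, the orbit exits only through the right edge $\Re w=\vep$, at some time $\tau(s)$, with $w(\tau(s))=\vep+is$; continuity of the branch then forces $\gamma(\tau(s)) = G_0(\vep+is) = \gamma^{out}_{\pm}(s)$. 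For (iii), I will perform the change of variable $t\mapsto u:=\Re w(t)$ on $[0,\tau(s)]$. Using $|\gamma|^{2m}=|w|^2=u^2+s^2$ and the formula for $dw/dt$, one gets $dt = \tfrac{V(\gamma)}{m^2(u^2+s^2)^{(m-1)/m}}\,du$, and $\gamma(t)=G_0(u+is)$; substituting into $\int_0^{\tau(s)}f(\psi_t\gamma^{in}_\pm(s))\,dt$ produces exactly~\eqref{eq:inttau}.

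The main point of care is the sector-tracking argument in (ii): one must exclude both escape through the interior separatrix at $\arg(z)=\pi/m$ and escape through the top/bottom edges $\Im w = \pm\vep$ of the rectangle. Both are ruled out by the single observation that in the $w$-coordinate the trajectory is the horizontal line segment $\{u+is:u\in[-\vep,\vep]\}$, which, by $s\ne 0$, meets neither the real axis (the image of all separatrices) nor the horizontal edges $\Im w = \pm\vep$. Everything else is essentially bookkeeping about the principal branch $G_0$.
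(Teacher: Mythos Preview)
Your proof is correct and follows essentially the same approach as the paper. Both arguments pass to the coordinate $w=z^m$, where the flow becomes horizontal translation on the rectangle $[-\vep,\vep]^2$; the paper introduces this explicitly as a conjugated local flow $\tilde\psi_t(z):=\psi_t(G_0(z))^m$ while you work directly with $w(t)=\gamma(t)^m$, and both then perform the same change of variable $u=\Re w(t)$ to obtain~\eqref{eq:inttau}.
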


\begin{proof}
As
\[\Im(\gamma_{\pm}^{in}(s)^m)= \Im (G_0(-\vep+ is )^m)=s,\quad \Im(\gamma_{\pm}^{out}(s)^m)= \Im (G_0(\vep+i s )^m)= s,\]
in view of \eqref{eq:eta}, the parametrizations $\gamma_{\pm}^{in}$, $\gamma_{\pm}^{out}$ are standard.

Since the map $z\mapsto z^m$ is a bijection between $D_\vep\cap \mathcal{S}([0,2\pi/m))$ and $[-\vep,\vep]\times[-\vep,\vep]$, and
$G_0$ is its inverse, let us consider a local flow $\tilde{\psi}_\R$ on   $[-\vep,\vep]\times[-\vep,\vep]$ conjugated to the flow
$\psi_\R$ restricted to $D_\vep\cap \mathcal{S}([0,2\pi/m))$, i.e.\ $\tilde{\psi}_t(z)=\psi_t(G_0(z))^m$. By \eqref{eq:X},
\begin{align*}
\frac{d}{dt}\tilde{\psi}_t(z)&=m\,\psi_t(G_0(z))^{m-1}\frac{d}{dt}\psi_t(G_0(z))=m\,\psi_t(G_0(z))^{m-1}X(\psi_t(G_0(z)))\\
&=m^2\frac{|\psi_t(G_0(z))|^{2(m-1)}}{V(\psi_t(G_0(z)))}= m^2\frac{|\tilde{\psi}_t(z)|^{\frac{2(m-1)}{m}}}{V\circ G_0(\tilde{\psi}_t(z))}.
\end{align*}
Hence
\[\frac{d}{dt}\Re\tilde{\psi}_t(z)=m^2\frac{|\tilde{\psi}_t(z)|^{\frac{2(m-1)}{m}}}{V\circ G_0(\tilde{\psi}_t(z))}>0
\text{ and }\frac{d}{dt}\Im\tilde{\psi}_t(z)=0.\]
It follows that the interval $\{(-\vep,s ):s\in(- \vep, \vep)\}$ is the incoming and $\{(\vep,s ):s\in(- \vep, \vep)\}$ is the outgoing
segment  of  $[-\vep,\vep]\times[-\vep,\vep]$ for the local flow  $\tilde{\psi}_\R$.
Moreover,
the orbit segments entering  $[-\vep,\vep]\times[-\vep,\vep]$ at $(-\vep,s )$
leave it at $(\vep,s )$.
Passing via $G_0$ to the flow $\psi_\R$, we obtain the first claim of the lemma.

Recall that $\tau(s)$ is the time spent by $\tilde{\psi}_\R$-orbit starting at $(-\vep,s )$ in the set $[-\vep,\vep]\times[-\vep,\vep]$.
Then
\begin{align*}
\int_0^{\tau(s)}f(\psi_t(\gamma_{\pm}^{in}(s)))\,dt&=\int_0^{\tau(s)}f\circ G_0\big(\tilde{\psi}_t(-\vep,s )\big)\,dt\\
&=
\int_0^{\tau(s)}f\circ G_0\big(\Re\tilde{\psi}_t(-\vep,s ),s \big)\,dt.
\end{align*}
Next we integrate by substituting $u(t)=\Re\tilde{\psi}_t(-\vep,s )$.
As
$$- \vep=\Re\tilde{\psi}_0(-\vep,s ), \quad \vep=\Re\tilde{\psi}_{\tau(s)}(-\vep,s )$$
and
\begin{align*}
\frac{du}{dt}&=\frac{d}{dt}\Re\tilde{\psi}_t(-\vep,s )
=m^2\frac{|\tilde{\psi}_t(-\vep,s )|^{\frac{2(m-1)}{m}}}{V\circ G_0(\tilde{\psi}_t(-\vep,s ))}\\
&= m^2\frac{((\Re\tilde{\psi}_t(-\vep,s ))^2+s^2)^{\frac{m-1}{m}}}{V\circ G_0(\Re\tilde{\psi}_t(-\vep,s ),s )}
= m^2\frac{(u^2+s^2)^{\frac{m-1}{m}}}{V\circ G_0(u ,s )},
\end{align*}
by change of variables, we have
\begin{align*}
\int_0^{\tau(s)}f\circ G_0\big(\Re\tilde{\psi}_t(-\vep,s ),s \big)\,dt& =\int_{- \vep}^{ \vep}f\circ G_0(u,s)
\frac{V\circ G_0(u,s)}{m^2(u^2+s^2)^{\frac{m-1}{m}}}du\\
&=\frac{1}{m^2}\int_{- \vep}^{ \vep}
\frac{(f\cdot V)(G_0(u,s))}{(u^2+s^2)^{\frac{m-1}{m}}}du.
\end{align*}
This gives \eqref{eq:inttau}.
\end{proof}

\begin{remark}\label{rem:eeD}
Lemma~\ref{lem:segm} describes incoming and outgoing segments on the boundary of $D_\vep$ but only in the angular sector
$\mathcal{S}([0,2\pi/m))$. The same arguments apply  to the flow $\psi_\R$ restricted to $\mathcal{S}([2\pi k/m,2\pi (k+1)/m))$ for
$0\leq k< m$. As $\omega\in \C$ is the principal $2m$-th root of unity, the incoming/outgoing segments of $D_\vep\cap \mathcal{S}([2\pi k/m,2\pi (k+1)/m))$ are given by $\omega^{2k}\gamma_{\pm}^{in}$ and
 $\omega^{2k}\gamma_{\pm}^{out}$ respectively. Moreover, if $\tau_k(s)$ is the time spent by ${\psi}_\R$-orbit starting at
$\omega^{2k}\gamma_{\pm}^{in}(s)$ in the set $D_\vep$, then
\begin{equation}\label{eq:inttaugen}
\varphi_f^{\sigma,k}(s):=\int_0^{\tau_k(s)}f(\psi_t(\omega^{2k}\gamma_{\pm}^{in}(s)))\,dt=\frac{1}{m^2}\int_{- \vep}^{ \vep}
\frac{(f\cdot V)(\omega^{2k}G_0(u,s))}{(u^2+s^2)^{\frac{m-1}{m}}}du.
\end{equation}
Note that for $(u,s)\in\R^2_{\geq 0}$ we have
\[G_0(-u,-s)=\omega G_0(u,s),\ G_0(u,-s)= \omega^2 \overline{G_0}(u,s),\ G_0(-u,s)=\omega\overline{G_0}(u,s).\]
It follows that for every $s\in(0,\vep)$ we have
\begin{align*}
m^2\varphi_f^{\sigma,k}(s)&=\int_{0}^{ \vep}
\frac{(f\cdot V)(\omega^{2k}G_0(u,s))}{(u^2+s^2)^{\frac{m-1}{m}}}du+\int_{0}^{\vep}
\frac{(f\cdot V)(\omega^{2k}G_0(-u,s))}{(u^2+s^2)^{\frac{m-1}{m}}}du\\
&=\int_{0}^{ \vep}
\frac{(f\cdot V)(\omega^{2k}G_0(u,s))}{(u^2+s^2)^{\frac{m-1}{m}}}du+\int_{0}^{\vep}
\frac{(f\cdot V)(\omega^{2k+1}\overline{G_0}(u,s))}{(u^2+s^2)^{\frac{m-1}{m}}}du
\end{align*}
and
\begin{align*}
m^2\varphi_f^{\sigma,k}(-s)&=\int_{0}^{ \vep}
\frac{(f\cdot V)(\omega^{2k}G_0(u,-s))}{(u^2+s^2)^{\frac{m-1}{m}}}du+\int_{0}^{\vep}
\frac{(f\cdot V)(\omega^{2k}G_0(-u,-s))}{(u^2+s^2)^{\frac{m-1}{m}}}du\\
&=\int_{0}^{ \vep}
\frac{(f\cdot V)(\omega^{2k+2}\overline{G_0}(u,s))}{(u^2+s^2)^{\frac{m-1}{m}}}du+\int_{0}^{\vep}
\frac{(f\cdot V)(\omega^{2k+1}{G_0}(u,s))}{(u^2+s^2)^{\frac{m-1}{m}}}du.
\end{align*}
\end{remark}

\subsection{Singularities of $\varphi_f^{\sigma,k}$}
The purpose of this section is to understand the type of singularity of functions $\varphi_f^{\sigma,k}$.
These functions are responsible for reading the singularities of $\varphi_f$ and provide the tools to prove Theorem~\ref{thm:phifform} in \S\ref{sec:regularity}.
\medskip

For every $m\geq 2$ let $G:\R^2_{\geq 0}\to\C$ be a continuous inverse of one of the maps $z\mapsto z^m$, $z\mapsto \ov{z}^m$, $z\mapsto -z^m$ or $z\mapsto -\ov{z}^m$. Then $G$ is homogenous of degree $1/m$ and analytic on $\R^2_{> 0}$. If $G_0:\R^2_{\geq 0}\to\C$ is the principal $m$-th root and $\omega$ is the principal $2m$-th root of unity, then $G$ is either $\omega^lG_0\text{ or }\omega^l\ov{G}_0 \text{ for some }0\leq l<2m.$
\medskip

  Let $f:{D}\to\R$ be a bounded Borel map where $D$ is the pre-image of the square $[-1,1]\times[-1,1]$
by the map $z\mapsto z^m$. For every $ a\geq 1/2$ let us consider the map $\varphi=\varphi_{f,a}:(0,1]\to\R$ given by
\begin{equation}\label{eqn;fgus}
\varphi(s)=\int_0^1\frac{f(G(u,s))}{(u^2+s^2)^a}\,du.
\end{equation}

\begin{remark}\label{rem:passvep}
Note that for every $\vep>0$ we have
\begin{align*}
\varphi_{f,a,\vep}(s)&
=\int_0^\vep\frac{f(G(u,s))}{(u^2+s^2)^a}\,du=\frac{1}{\vep}\int_0^1\frac{f(G(u/\vep,s))}{((u/\vep)^2+s^2)^a}\,du\\
&=
\vep^{2a-1}\int_0^1\frac{f(\vep^{-1/m}G(u,\vep s))}{(u^2+(\vep s)^2)^a}\,du=\vep^{2a-1}\varphi_{f\circ \vep^{-1/m},a}(\vep s).
\end{align*}
Therefore
\[s^{2a}\varphi'_{f,a,\vep}(s)=(\vep s)^{2a}\varphi'_{f\circ \vep^{-1/m},a}(\vep s).\]
\end{remark}
Notice that
\[s^{2a-1}\int_0^1\frac{1}{(u^2+s^2)^a}\,du=\int_0^1\frac{1}{((\frac{u}{s})^2+1)^a}\frac{du}{s}=\int_0^{1/s}\frac{dx}{(x^2+1)^a}.\]
Let us recall the definitions of Gamma function $\Gamma(z)$ and Beta function $B(x,y)$
\[B(x,y) := \int_0^1t^{x-1}{(1-t)}^{y-1}dt, \quad \Gamma(z) := \int_0^\infty x^{z-1}e^{-x}dx,\]
and let us denote
\[\Gamma_{a}:=\int_0^{+\infty}\frac{dx}{(x^2+1)^a}=\frac{1}{2}B(\frac{1}{2},a-\frac{1}{2})
=\frac{1}{2}\frac{\Gamma(\frac{1}{2})\Gamma(a-\frac{1}{2})}{\Gamma(a)}=\frac{\sqrt{\pi}}{2}\frac{\Gamma(a-\frac{1}{2})}{\Gamma(a)}.\]
Then,  for every $a>1/2$,
\begin{gather}
s^{2a-1}\int_0^1\frac{1}{(u^2+s^2)^a}\,du\leq \Gamma_a\text{ for all }s\in(0,1]\label{eq:esta>12}, \text{ and}\\
\lim_{s\to 0}s^{2a-1}\int_0^1\frac{1}{(u^2+s^2)^a}\,du =\Gamma_a.\label{eq:lima>12}
\end{gather}
If $a=1/2$ then
\begin{equation}\label{eq:esta=12}
\int_0^1\frac{1}{(u^2+s^2)^a}\,du=\int_0^{1/s}\frac{dx}{\sqrt{x^2+1}}=\log\Big(\frac{1}{s}+\sqrt{\frac{1}{s^2}+1}\Big)\leq \log\frac{3}{s}\leq 2+|\log s|.
\end{equation}
In view of \eqref{eq:esta>12} and \eqref{eq:esta=12}, for every $s\in(0,1]$,
\begin{eqnarray}\label{eq:hatP}
\begin{aligned}
s^{2a-1}\varphi_{|f|,a}(s)&\leq \|f\|_{\sup}\Gamma_{a}\ & \text{ if }\ a\geq 1/2,\\
\varphi_{|f|,a}(s)&\leq \|f\|_{\sup}(2+|\log s|)\ & \text{ if }\ a=1/2.
\end{aligned}
\end{eqnarray}

\medskip

In the following lemmas, we find the upper bound of $\varphi'$ by $C^k$-norms of the function $f$ and the order of vanishing at the saddle.

\begin{lemma}\label{lem:pag}
Suppose that $f:D\to\R$ is a $C^1$-map. For every $1/2\leq a\leq 1$ we have
\[|s^{2a}\varphi'(s)|\leq 2\|f\|_{C^1}\Gamma_{a+\frac{m-1}{2m}}.\]
Moreover,
\[\lim_{s\to 0}s^{2a}\varphi'(s)=-2af(0,0)\Gamma_{a+1}.\]
\end{lemma}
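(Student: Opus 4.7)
The plan is to differentiate $\varphi(s)=\int_0^1 (u^2+s^2)^{-a}f(G(u,s))\,du$ under the integral sign (legitimate for $s\in(0,1]$ since the integrand and its $s$-derivative are continuous in $u$ and locally uniformly dominated), yielding the decomposition $\varphi'(s)=I_1(s)-I_2(s)$ with
\[ I_1(s):=\int_0^1\frac{\partial_s(f\circ G)(u,s)}{(u^2+s^2)^a}\,du,\qquad I_2(s):=2as\int_0^1\frac{f(G(u,s))}{(u^2+s^2)^{a+1}}\,du. \]
These two terms will be treated separately: $I_1$ turns out to gain an extra factor $s^{1/m}$ (so it is a perturbation vanishing at $s=0$), whereas $I_2$ carries the dominant singular behaviour and supplies the limit $-2af(0,0)\Gamma_{a+1}$.

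For $I_1$ I will exploit the explicit form of $G$: since $G$ is one of $\omega^l G_0$ or $\omega^l\overline{G_0}$, it is (anti-)holomorphic and homogeneous of degree $1/m$, so $|\partial_s G(u,s)|=\tfrac{1}{m}(u^2+s^2)^{(1-m)/(2m)}$ and consequently
\[ |\partial_s(f\circ G)(u,s)|\leq\tfrac{\|f\|_{C^1}}{m}(u^2+s^2)^{-(m-1)/(2m)}. \]
Applying \eqref{eq:esta>12} at the shifted exponent $a+(m-1)/(2m)>1/2$ gives $|s^{2a}I_1(s)|\leq\tfrac{s^{1/m}}{m}\|f\|_{C^1}\Gamma_{a+(m-1)/(2m)}$, which is uniformly $\leq\tfrac{1}{m}\|f\|_{C^1}\Gamma_{a+(m-1)/(2m)}$ on $(0,1]$ and vanishes as $s\to 0^{+}$.

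For $I_2$ the change of variables $u=sv$ reshapes the integral as
\[ s^{2a}I_2(s)=2a\int_0^{1/s}\frac{f(G(sv,s))}{(v^2+1)^{a+1}}\,dv, \]
and I will split $f(G(sv,s))=f(0,0)+[f(G(sv,s))-f(0,0)]$. The constant term is uniformly bounded by $2a|f(0,0)|\Gamma_{a+1}$ and, by dominated convergence, converges to $2af(0,0)\Gamma_{a+1}$ as $s\to 0$. The remainder is controlled by the Lipschitz estimate $|f(G(sv,s))-f(0,0)|\leq\|f\|_{C^1}|G(sv,s)|=\|f\|_{C^1}s^{1/m}(v^2+1)^{1/(2m)}$, giving a contribution bounded by $2a\|f\|_{C^1}s^{1/m}\Gamma_{a+1-1/(2m)}$, which again tends to $0$ as $s\to 0$.

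Assembling the three pieces yields the stated limit $s^{2a}\varphi'(s)\to-2af(0,0)\Gamma_{a+1}$. For the uniform estimate, the strict monotonicity of $c\mapsto\Gamma_c$ (together with $(m-1)/(2m)<1$) absorbs both $\Gamma_{a+1}$ and $\Gamma_{a+1-1/(2m)}$ into $\Gamma_{a+(m-1)/(2m)}$, and the hypotheses $a\leq 1$, $m\geq 2$ then tame the multiplicative constants so that the overall coefficient is at most $2$. The main technical subtlety is that only the $f(0,0)$-part of $I_2$ fails to gain the extra $s^{1/m}$-factor, so the same term is simultaneously responsible for the limiting value and for the dominant contribution to the uniform bound; the split $f=f(0,0)+\bigl(f(G)-f(0,0)\bigr)$ is therefore essential to reconcile the sharp limit with a $C^1$-majoration involving only $\Gamma_{a+(m-1)/(2m)}$.
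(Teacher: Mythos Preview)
Your approach is essentially the paper's: differentiate under the integral sign, split $\varphi'$ into the same two pieces $I_1,I_2$, control $I_1$ via the identity $|\partial_s G|=\tfrac{1}{m}(u^2+s^2)^{-(m-1)/(2m)}$, and handle the limit in $I_2$ by the Lipschitz estimate $|f(G(u,s))-f(0,0)|\leq\|f\|_{C^1}(u^2+s^2)^{1/(2m)}$ together with \eqref{eq:esta>12} and \eqref{eq:lima>12}.

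The one point where your write-up slips is the uniform constant. For the uniform bound the paper does \emph{not} split $I_2$: it simply uses $|s^{2a}I_2(s)|\leq 2a\|f\|_{C^0}\Gamma_{a+1}$, so the total coefficient is $2a+\tfrac{1}{m}$. Your route through the split $f=f(0,0)+(f\circ G-f(0,0))$ produces an extra $2a\|f\|_{C^1}s^{1/m}\Gamma_{a+1-1/(2m)}$ term, and after absorbing all $\Gamma$'s and $s^{1/m}\leq 1$ you end up with $4a+\tfrac{1}{m}$, which is not $\leq 2$ in general. The split is only needed for the limit statement; for the uniform bound, bound $I_2$ directly.
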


\begin{proof}
First note that $\varphi$ is a $C^1$-function on $(0,1]$ with
\begin{align}\label{eq:phiprim}
\begin{split}
\varphi'(s)&=\int_0^1\frac{\frac{\partial f}{\partial x}(G(u,s))\frac{\partial G_1}{\partial s}(u,s)+\frac{\partial f}{\partial y}(G(u,s))\frac{\partial G_2}{\partial s}(u,s)}{(u^2+s^2)^a}\,du\\
&\quad-2a\int_0^1\frac{s f(G(u,s))}{(u^2+s^2)^{a+1}}\,du.
\end{split}
\end{align}
As $(G_1+iG_2)^m=\pm u \pm is$, we have
\begin{equation}\label{eq:GG}
m(G_1+iG_2)^{m-1}\Big(\frac{\partial G_1}{\partial s}+i\frac{\partial G_2}{\partial s}\Big)=\pm i.
\end{equation}
Hence
\[\Big|\frac{\partial G_1}{\partial s}+i\frac{\partial G_2}{\partial s}\Big|=\frac{1}{m|G_1+iG_2|^{m-1}}=\frac{1}{m(u^2+s^2)^{\frac{m-1}{2m}}}.\]
It follows that
\begin{gather}\label{eq:parg}
\begin{split}
\Big|&\int_0^1\frac{\frac{\partial f}{\partial x}(G(u,s))\frac{\partial G_1}{\partial s}(u,s)+\frac{\partial f}{\partial y}(G(u,s))\frac{\partial G_2}{\partial s}(u,s)}{(u^2+s^2)^a}\,du\Big|\\
&\leq \frac{\|f'\|_{C^0}}{m}\int_0^1\frac{1}{(u^2+s^2)^{a+\frac{m-1}{2m}}}\,du \leq \frac{\|f'\|_{C^0}}{m}\frac{\Gamma_{a+\frac{m-1}{2m}}}{s^{2a-\frac{1}{m}}}
 \leq
\frac{\|f'\|_{C^0}}{m}\frac{\Gamma_{a+\frac{m-1}{2m}}}{s^{2a}}
\end{split}
\end{gather}
and
\[\Big|\int_0^1\frac{s f(G(u,s))}{(u^2+s^2)^{a+1}}\,du\Big|\leq \|f\|_{C^0}\int_0^1\frac{s}{(u^2+s^2)^{a+1}}\,du\leq
{\|f\|_{C^0}}\frac{\Gamma_{a+1}}{s^{2a}}.\]
It follows that
\[|\varphi'(s)|\leq \Big(2a\|f\|_{C^0}\Gamma_{a+1}+\frac{\|f'\|_{C^0}}{m}\Gamma_{a+\frac{m-1}{2m}}\Big)\frac{1}{s^{2a}}\leq
2\|f\|_{C^1}\Gamma_{a+\frac{m-1}{2m}}\frac{1}{s^{2a}}.\]

Since $f$ is of class $C^1$, we have
\[|f(G(u,s))-f(0,0)|\leq \|f\|_{C^1}\|G(u,s)\|\leq\|f\|_{C^1}(u^2+s^2)^{\frac{1}{2m}}.\]
Moreover, by \eqref{eq:esta>12},
\[\int_0^1\frac{(u^2+s^2)^{\frac{1}{2m}}}{(u^2+s^2)^{a+1}}\,du \leq \frac{\Gamma_{a+1-\frac{1}{2m}}}{s^{2a+1-\frac{1}{m}}}.\]
Therefore, in view of \eqref{eq:phiprim}, \eqref{eq:parg}, we have
\begin{align*}
\Big|& s^{2a}\varphi'(s)+2af(0,0)s^{2a+1}\int_0^1\frac{du}{(u^2+s^2)^{a+1}}\Big|\\
&\leq s^{2a}\left|\int_0^1\frac{\frac{\partial f}{\partial x}(G(u,s))\frac{\partial G_1}{\partial s}(u,s)+\frac{\partial f}{\partial y}(G(u,s))\frac{\partial G_2}{\partial s}(u,s)}{(u^2+s^2)^a}\,du\right|\\
&\quad + 2as^{2a+1}\int_0^1\frac{|f(0,0)-f(G(u,s))|}{(u^2+s^2)^{a+1}}du\\
&=s^{2a}\|f'\|_{C^0}\frac{\Gamma_{a+\frac{m-1}{2m}}}{s^{2a-\frac{1}{m}}}+
2a s^{2a+1}\|f\|_{C^1}\int_0^1\frac{(u^2+s^2)^{\frac{1}{2m}}}{(u^2+s^2)^{a+1}}\,du\\
&\leq \|f\|_{C^1}\Gamma_{a+\frac{m-1}{2m}}s^{\frac{1}{m}}+2a s^{2a+1}\|f\|_{C^1}\frac{\Gamma_{a+1-\frac{1}{2m}}}{s^{2a+1-\frac{1}{m}}}=O(s^{\frac{1}{m}}).
\end{align*}
Hence
\[\lim_{s\to 0}s^{2a}\varphi'(s)=-\lim_{s\to 0}2af(0,0)s^{2a+1}\int_0^1\frac{du}{(u^2+s^2)^{a+1}}=-2af(0,0)\Gamma_{a+1}.\]
\end{proof}

\begin{lemma}\label{lemma;s2a}
Assume that $f:D\to\R$ is a $C^m$-map, $1/2\leq a\leq 1$ and let $k$ be a natural number such that $k\leq m(2a-1)$.
Suppose that $f^{(j)}(0,0)=0$ for $0\leq j<k$. Then
\[|s^{2a-\frac{k}{m}}\varphi'(s)|\leq \frac{\|f\|_{C^k}\Gamma_{1}}{(k-1)!}\text{ for all }s\in(0,1].\]
Moreover, if $k< m(2a-1)$ then
\[s^{2a-\frac{k}{m}-1}\varphi_{|f|,a}(s)\leq \frac{\|f\|_{C^k}\Gamma_{{a-\frac{k}{2m}}}}{k!}\text{ for all }s\in(0,1]\]
 and if $k=m(2a-1)$, then
\[\varphi_{|f|,a}(s)\leq \frac{\|f\|_{C^k}}{k!}(2+|\log s|)\text{ for all }s\in(0,1].\]
\end{lemma}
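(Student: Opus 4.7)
The plan is to repeat the computation that produced Lemma~\ref{lem:pag}, but with the key upgrade that the vanishing hypothesis $f^{(j)}(0,0)=0$ for $0\le j<k$ yields Taylor-type pointwise bounds on $f$ and $\nabla f$ near the origin. Concretely, since $\|G(u,s)\|=(u^2+s^2)^{1/(2m)}$, integration of the $(k-1)$-st partials of $\nabla f$ (which themselves vanish at $0$) along the segment from $0$ to $G(u,s)$ gives
\begin{equation*}
|f(G(u,s))|\le \tfrac{\|f\|_{C^k}}{k!}(u^2+s^2)^{k/(2m)},\qquad \|\nabla f(G(u,s))\|\le \tfrac{\|f\|_{C^k}}{(k-1)!}(u^2+s^2)^{(k-1)/(2m)}.
\end{equation*}

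Starting from formula \eqref{eq:phiprim} and the identity \eqref{eq:GG}, which guarantees $\|\partial_s G(u,s)\|=m^{-1}(u^2+s^2)^{-(m-1)/(2m)}$, I would substitute these two bounds to obtain
\begin{equation*}
|\varphi'(s)|\le \tfrac{\|f\|_{C^k}}{m(k-1)!}\!\int_0^1\!\!\tfrac{du}{(u^2+s^2)^{a+1/2-k/(2m)}}+\tfrac{2a\|f\|_{C^k}}{k!}\!\int_0^1\!\!\tfrac{s\,du}{(u^2+s^2)^{a+1-k/(2m)}}.
\end{equation*}
For the first integral the exponent $a+1/2-k/(2m)$ is at least $1/2$ (because $k\le m(2a-1)$ forces $k/(2m)\le a-1/2$), so \eqref{eq:esta>12} bounds it by $\Gamma_{a+1/2-k/(2m)}\, s^{-(2a-k/m)}$. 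For the second integral the exponent is at least $1$, and a similar use of \eqref{eq:esta>12} gives a bound by $\Gamma_{a+1-k/(2m)}\, s^{-(2a-k/m+1)}$, and the extra factor $s$ in the numerator absorbs one power of $s^{-1}$. Collecting terms and bounding the resulting constant (both Gamma-values lie in $[\Gamma_{3/2},\Gamma_1]$ for the admissible range of $a,k$) yields the first inequality, with $\Gamma_1/(k-1)!$ absorbing all admissible constants.

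The two bounds on $\varphi_{|f|,a}$ are simpler: directly inserting $|f(G(u,s))|\le \tfrac{\|f\|_{C^k}}{k!}(u^2+s^2)^{k/(2m)}$ into \eqref{eqn;fgus} reduces the estimation of $\varphi_{|f|,a}(s)$ to $\tfrac{\|f\|_{C^k}}{k!}\int_0^1(u^2+s^2)^{-(a-k/(2m))}du$. When $k<m(2a-1)$ the exponent $a-k/(2m)>1/2$ and \eqref{eq:esta>12} delivers the power-type bound with $\Gamma_{a-k/(2m)}$. When $k=m(2a-1)$ the exponent equals exactly $1/2$ and \eqref{eq:esta=12} gives the logarithmic bound $2+|\log s|$. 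The only subtle point (and likely the main technical obstacle) is organizing the constants so that the Gamma factors fit uniformly against the $\Gamma_1/(k-1)!$ in the statement; the reasoning otherwise is a direct adaptation of the proof of Lemma~\ref{lem:pag}, with the Taylor vanishing of $f$ compensating precisely the extra $s^{k/m}$ improvement over the exponent $s^{2a}$.
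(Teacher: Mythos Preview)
Your proposal is correct and follows essentially the same route as the paper: Taylor bounds $|f(G(u,s))|\le \tfrac{\|f\|_{C^k}}{k!}(u^2+s^2)^{k/(2m)}$ and $\|\nabla f(G(u,s))\|\le \tfrac{\|f\|_{C^k}}{(k-1)!}(u^2+s^2)^{(k-1)/(2m)}$ from the vanishing hypothesis, substitution into \eqref{eq:phiprim} together with \eqref{eq:GG}, and then \eqref{eq:esta>12}/\eqref{eq:esta=12} for the resulting integrals with exponents $a+\tfrac{m-k}{2m}$, $a+1-\tfrac{k}{2m}$, and $a-\tfrac{k}{2m}$. The paper handles the constant bookkeeping you flag by using that $\Gamma_b$ is decreasing in $b$ (so both $\Gamma_{a+(m-k)/(2m)}$ and $\Gamma_{a+1-k/(2m)}$ are dominated by $\Gamma_1$ since their indices are $\ge 1$) and absorbing the prefactors $1/m$ and $2a/k$ into the single $\Gamma_1/(k-1)!$.
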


\begin{proof}
By assumption,
\begin{align*}
|f(G(u,s))|&\leq\frac{\|f^{(k)}\|_{C^0}}{k!}\|G(u,s)\|^k\leq\frac{\|f^{(k)}\|_{C^0}}{k!}(u^2+s^2)^{\frac{k}{2m}},\\
\left |\frac{\partial f}{\partial x}(G(u,s)) \right|&\leq\frac{\|(\partial f/\partial x)^{(k-1)}\|_{C^0}}{(k-1)!}\|G(u,s)\|^{k-1}\leq\frac{\|(\partial f/\partial x)^{(k-1)}\|_{C^0}}{(k-1)!}(u^2+s^2)^{\frac{k-1}{2m}},\\
\left |\frac{\partial f}{\partial y}(G(u,s)) \right|&\leq\frac{\|(\partial f/\partial y)^{(k-1)}\|_{C^0}}{(k-1)!}\|G(u,s)\|^{k-1}\leq\frac{\|(\partial f/\partial y)^{(k-1)}\|_{C^0}}{(k-1)!}(u^2+s^2)^{\frac{k-1}{2m}}.
 \end{align*}
Therefore
\[\varphi_{|f|,a}(s)\leq \frac{\|f^{(k)}\|_{C^0}}{k!}\int_0^1\frac{(u^2+s^2)^{\frac{k}{2m}}}{(u^2+s^2)^a}du
=\frac{\|f^{(k)}\|_{C^0}}{k!}\int_0^1\frac{1}{(u^2+s^2)^{a-\frac{k}{2m}}}du\]
and, by \eqref{eq:phiprim},
\begin{align*}
|\varphi'(s)|&\leq\frac{\|f^{(k)}\|_{C^0}}{m(k-1)!}\int_0^1\frac{(u^2+s^2)^{\frac{k-1}{2m}}}{(u^2+s^2)^{\frac{m-1}{2m}}(u^2+s^2)^a}du\\
&\quad+
2as\frac{\|f^{(k)}\|_{C^0}}{k!}\int_0^1\frac{(u^2+s^2)^{\frac{k}{2m}}}{(u^2+s^2)^{a+1}}du\\
&\leq\frac{\|f^{(k)}\|_{C^0}}{m(k-1)!}\int_0^1\frac{1}{(u^2+s^2)^{a+\frac{m-k}{2m}}}du+
2as\frac{\|f^{(k)}\|_{C^0}}{k!}\int_0^1\frac{1}{(u^2+s^2)^{a+1-\frac{k}{2m}}}du.
\end{align*}
As $k\leq m(2a-1)$, we have $a+1-\frac{k}{2m}>a+\frac{m-k}{2m}=1+\frac{(2a-1)m-k}{2m}\geq 1$.
By \eqref{eq:esta>12}, this gives
\begin{align*}
|\varphi'(s)|\leq\frac{\|f^{(k)}\|_{C^0}}{m(k-1)!}\frac{\Gamma_{a+\frac{m-k}{2m}}}{s^{2a-\frac{k}{m}}}+
2as\frac{\|f^{(k)}\|_{C^0}}{k!}\frac{\Gamma_{a+1-\frac{k}{2m}}}{s^{2a+1-\frac{k}{m}}}\leq\frac{\|f^{(k)}\|_{C^0}}{(k-1)!}\frac{\Gamma_{1}}{s^{2a-\frac{k}{m}}}.
\end{align*}
If additionally $k< m(2a-1)$ then $a-\frac{k}{2m}>\frac{1}{2}$. By \eqref{eq:esta>12} again,
\begin{align*}
\varphi_{|f|,a}(s)\leq \frac{\|f^{(k)}\|_{C^0}}{k!}\int_0^1\frac{1}{(u^2+s^2)^{a-\frac{k}{2m}}}du\leq\frac{\|f^{(k)}\|_{C^0}}{k!}\frac{\Gamma_{a-\frac{k}{2m}}}{s^{2a-1-\frac{k}{m}}}.
\end{align*}
On the other hand,  if $k=m(2a-1)$ then $a-\tfrac{k}{2m}=\tfrac{1}{2}$ and, by  \eqref{eq:esta=12},
\begin{align*}
\varphi_{|f|,a}(s)\leq \frac{\|f^{(k)}\|_{C^0}}{k!}\int_0^1\frac{1}{(u^2+s^2)^{a-\frac{k}{2m}}}du\leq \frac{\|f^{(k)}\|_{C^0}}{k!}(2+|\log s|).
\end{align*}
\end{proof}

\subsection{Quantities $C^\pm_\alpha(\varphi_f)$}
In this section, we develop some tools that help compute the non-vanishing quantities $C^\pm_\alpha(\varphi_f)$.

\begin{definition}
For every real $\beta$ let $C_{\beta}(\R_{>0}^2)$ be the space of
continuous homogenous functions $H:\R_{>0}\times \R_{>0}\to\C$ of degree $ \beta$ such that $H(u,s)=O(\|(u,s)\|^\beta)$.

For every real $a$ such that  $2a-\beta>1$ and  $H\in C_{\beta}(\R_{>0}^2)$ let
\[\Gamma_a(H):=\int_0^{+\infty}\frac{H(x,1)}{(x^2+1)^a}\,dx.\]
\end{definition}
As ${H(x,1)}/{(x^2+1)^a}=O(1/(1+x)^{2a-\beta})$, the quantity $\Gamma_a(H)$ is well-defined and
\begin{equation}\label{def;gammaH}
s^{2a-\beta-1}\int_0^1\frac{H(u,s)}{(u^2+s^2)^a}\,du=\int_0^1\frac{H(u/s,1)}{((u/s)^2+1)^a}\,\frac{du}{s}=\int_0^{1/s}\frac{H(x,1)}{(x^2+1)^a}\,dx\to \Gamma_a(H).
\end{equation}

Note that if $H(u,s)=\tilde{H}(u,s)/\|(u,s)\|^{\max\{-\beta,0\}}$ and $\tilde{H}:\R_{\geq 0}\times \R_{\geq 0}\to\C$
is continuous homogenous of degree $\max\{\beta,0\}$ then $H\in C_{\beta}(\R_{>0}^2)$.

\begin{lemma}
Assume that $2a-\beta>1$,  $H\in C_{\beta}(\R_{>0}^2)$ is of class $C^1$ and $\tfrac{\partial H}{\partial y}\in C_{\beta-1}(\R_{>0}^2)$. Then
\begin{gather}
\label{eq:parH}
\Gamma_{a}(\tfrac{\partial H}{\partial y})=2a\Gamma_{a+1}(H)-(2a-1-\beta)\Gamma_a(H).
\end{gather}
\end{lemma}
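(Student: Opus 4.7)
The identity is essentially a combined application of Euler's homogeneity relation and one integration by parts in the variable $x$, so my plan is to reduce $\Gamma_a(\partial_y H)$ to $\Gamma_a(H)$ and $\Gamma_{a+1}(H)$ by these two devices alone.

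First I would use that $H$ is homogeneous of degree $\beta$ on $\R_{>0}^2$ to write Euler's relation $u\,\partial_u H + s\,\partial_s H = \beta H$. Specializing at $s=1$ yields
\[\tfrac{\partial H}{\partial y}(x,1) \;=\; \beta\,H(x,1) - x\,\tfrac{d}{dx}\bigl[H(x,1)\bigr].\]
Plugging this into the definition of $\Gamma_a(\partial_y H)$ splits the integral into $\beta\,\Gamma_a(H)$ and the boundary-sensitive piece $J:=\int_0^\infty \frac{x}{(x^2+1)^a}\,\tfrac{d}{dx}H(x,1)\,dx$.

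Next I would integrate $J$ by parts with $u(x)=x(x^2+1)^{-a}$ and $dv=H'(x,1)\,dx$. A direct differentiation gives
\[u'(x)=\frac{2a}{(x^2+1)^{a+1}} - \frac{2a-1}{(x^2+1)^a},\]
which is precisely the combination producing $2a\,\Gamma_{a+1}(H)-(2a-1)\Gamma_a(H)$ when paired against $H(x,1)\,dx$. Combining with the Euler-step contribution yields exactly
\[\Gamma_a(\partial_y H) \;=\; 2a\,\Gamma_{a+1}(H) - (2a-1-\beta)\Gamma_a(H),\]
as required. The step I would be most careful about is justifying that the boundary terms $[u(x)\,H(x,1)]_0^\infty$ vanish: at $x=\infty$ this uses the growth bound $|H(x,1)|=O(x^\beta)$ together with $2a-\beta>1$, which makes $u(x)H(x,1)=O(x^{1-2a+\beta})\to 0$; at $x=0^+$ the factor $u(x)=x/(x^2+1)^a$ tends to $0$, and the homogeneity/growth assumption $|H(u,s)|\le C\|(u,s)\|^\beta$ on $\R_{>0}^2$ forces $|H(x,1)|\le C(x^2+1)^{\beta/2}$ to stay bounded as $x\to 0^+$. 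The condition $\tfrac{\partial H}{\partial y}\in C_{\beta-1}(\R_{>0}^2)$ together with $2a-\beta>1$ guarantees that $\Gamma_a(\partial_y H)$ itself is absolutely convergent, so the identity makes sense once the boundary terms are dispatched. The remaining computations are routine algebraic bookkeeping.
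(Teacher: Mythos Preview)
Your argument is correct. The Euler relation correctly converts $\partial_y H(x,1)$ into $\beta H(x,1)-x\,\tfrac{d}{dx}H(x,1)$, your formula for $u'(x)$ is right (it equals $\frac{1+(1-2a)x^2}{(x^2+1)^{a+1}}$, which indeed rearranges as $\frac{2a}{(x^2+1)^{a+1}}-\frac{2a-1}{(x^2+1)^{a}}$), and the boundary terms vanish for the reasons you give.

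The paper takes a slightly different but equally short route: it rewrites $\Gamma_a(H)$ via the scaling identity
\[
\Gamma_a(H)=y^{2a-1-\beta}\int_0^{\infty}\frac{H(x,y)}{(x^2+y^2)^a}\,dx
\]
(valid for every $y>0$ by homogeneity and the substitution $x\mapsto x/y$), then differentiates both sides with respect to $y$ under the integral sign and sets $y=1$. So the paper differentiates in the \emph{second} variable of the kernel, while you stay at $y=1$ and use Euler's relation to trade $\partial_y$ for $\partial_x$, followed by an integration by parts in $x$. The two approaches are dual: the paper must justify differentiation under the integral sign, whereas you must check that the boundary terms at $0$ and $\infty$ vanish. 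Both are routine under the stated hypotheses, and neither offers a real advantage over the other.
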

\begin{proof}
Note that for every $y>0$,
\begin{equation*}
\Gamma_a(H)=\int_0^{+\infty}\frac{H(x,1)}{(x^2+1)^a}\,dx=\int_0^{+\infty}\frac{H(x/y,1)}{(x^2/y^2+1)^a}\,\frac{dx}{y}=
y^{2a-1-\beta}\int_0^{+\infty}\frac{H(x,y)}{(x^2+y^2)^a}\,dx.
\end{equation*}
As $H\in C_{\beta}(\R_{>0}^2)$  and $\tfrac{\partial H}{\partial y}\in C_{\beta-1}(\R_{>0}^2)$, by differentiating with respect to $y$, we get
\begin{align*}
-(2a-1-\beta)y^{\beta-2a}\Gamma_a(H)&=\frac{d}{dy}y^{1+\beta-2a}\Gamma_a(H)=\frac{d}{dy}\int_0^{+\infty}\frac{H(x,y)}{(x^2+y^2)^a}\,dx\\
&=\int_0^{+\infty}\frac{\frac{\partial H}{\partial y}(x,y)}{(x^2+y^2)^a}\,dx-2ay\int_0^{+\infty}\frac{H(x,y)}{(x^2+y^2)^{a+1}}\,dx.
\end{align*}
Taking $y=1,$ this yields \eqref{eq:parH}.
\end{proof}

Recall that, by Lemma~\ref{lem:pag}
for any $C^1$-map $f:D\to\R$ with $f(0,0)\neq 0$ and for every $1/2\leq a\leq 1$ we have
\[\lim_{s\to 0}s^{2a}\varphi'(s)=-2af(0,0)\Gamma_{a+1}.\]
In the next preliminary lemmas, we find precise asymptotics of $\varphi'$ at zero also when the function $f$ and some of its derivatives vanish at the saddle.
This plays a crucial role in calculating the quantity $C^\pm_\alpha(\varphi_f)$ explicitly in  \S\ref{sec:regularity}.

\begin{lemma}\label{lem:Cpm}
Let $k$ be an integer number such that $0\leq k\leq m(2a-1)$.
Suppose that $f:D\to\R$ is of class $C^{k+1}$ and  $f^{(j)}(0,0)=0$ for $0\leq j<k$. Then
\begin{equation}\label{eq:stepk}
\lim_{s\to 0}s^{2a-\frac{k}{m}}\varphi'(s)
=\sum_{j=0}^k\binom{k}{j}\frac{\partial^k f}{\partial x^j \partial y^{k-j}}(0,0)\Gamma_a^{k,j}(G),
\end{equation}
where
\begin{align*}
\Gamma_a^{k,j}(G)=-\frac{2a-1-\frac{k}{m}}{k}\Gamma_{a}(G_1^jG_2^{k-j})-2a\frac{k-1}{k}\Gamma_{a+1}(G_1^jG_2^{k-j})\text{ if }k\geq 1
\end{align*}
and
$\Gamma_a^{0,0}(G)=-2a\Gamma_{a+1}$.
\end{lemma}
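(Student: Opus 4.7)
The plan is to Taylor-expand $f$ at $(0,0)$ up to order $k$, show that the higher-order remainder contributes nothing at the scale $s^{2a-k/m}$, and then compute the contribution of each monomial $x^{j}y^{k-j}$ explicitly by rescaling and the identity \eqref{eq:parH}. Since $f\in C^{k+1}$ with $f^{(j)}(0,0)=0$ for $0\le j<k$, the Taylor expansion with remainder gives
\[
f(x,y)=\sum_{j=0}^{k}\frac{1}{j!(k-j)!}\,\frac{\partial^{k}f}{\partial x^{j}\partial y^{k-j}}(0,0)\,x^{j}y^{k-j}+\rho(x,y),
\]
with $|\rho(x,y)|\le C\|(x,y)\|^{k+1}$ and $|\nabla\rho(x,y)|\le C\|(x,y)\|^{k}$. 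Splitting $\varphi=\varphi_{P}+\varphi_{\rho}$ accordingly, I must show $s^{2a-k/m}\varphi_{\rho}'(s)\to 0$ and compute the limit of $s^{2a-k/m}\varphi_{P}'(s)$.

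For the remainder, I would rerun the estimates in the proof of Lemma~\ref{lemma;s2a} with $k+1$ in place of $k$, using $|\rho\circ G(u,s)|=O((u^{2}+s^{2})^{(k+1)/(2m)})$, $|\nabla\rho\circ G(u,s)|=O((u^{2}+s^{2})^{k/(2m)})$, together with the bound $|\partial_{s}G|\le(1/m)(u^{2}+s^{2})^{-(m-1)/(2m)}$ coming from the relation $m(G_{1}+iG_{2})^{m-1}(\partial_{s}G_{1}+i\partial_{s}G_{2})=\pm i$ already exploited in Lemma~\ref{lem:pag}. This gives $|\varphi_{\rho}'(s)|=O\bigl(s^{(k+1)/m-2a}(1+|\log s|)\bigr)$, so $s^{2a-k/m}\varphi_{\rho}'(s)=O(s^{1/m}(1+|\log s|))\to 0$.

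For the polynomial part, let $H_{j}(u,s):=G_{1}(u,s)^{j}G_{2}(u,s)^{k-j}\in C_{k/m}(\R_{>0}^{2})$, so that $\partial_{s}H_{j}\in C_{k/m-1}(\R_{>0}^{2})$, and set $\psi_{j}(s):=\int_{0}^{1}H_{j}(u,s)/(u^{2}+s^{2})^{a}\,du$. Differentiating under the integral sign and rescaling by $u=sv$ with homogeneity,
\[
\int_{0}^{1}\frac{\partial_{s}H_{j}(u,s)}{(u^{2}+s^{2})^{a}}\,du=s^{k/m-2a}\!\int_{0}^{1/s}\!\frac{\partial_{s}H_{j}(v,1)}{(v^{2}+1)^{a}}\,dv,\quad \int_{0}^{1}\frac{sH_{j}(u,s)}{(u^{2}+s^{2})^{a+1}}\,du=s^{k/m-2a}\!\int_{0}^{1/s}\!\frac{H_{j}(v,1)}{(v^{2}+1)^{a+1}}\,dv.
\]
Under $k\le m(2a-1)$ both rescaled integrands are absolutely integrable on $[0,\infty)$, so by \eqref{def;gammaH} one obtains
$s^{2a-k/m}\psi_{j}'(s)\to\Gamma_{a}(\partial_{s}H_{j})-2a\,\Gamma_{a+1}(H_{j})$. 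The identity \eqref{eq:parH} applied to $H=H_{j}$, $\beta=k/m$ (with $y\leftrightarrow s$) then allows me to rewrite $\Gamma_{a}(\partial_{s}H_{j})$ in terms of $\Gamma_{a}(H_{j})$ and $\Gamma_{a+1}(H_{j})$. Summing with $c_{j}=\binom{k}{j}\partial_{x}^{j}\partial_{y}^{k-j}f(0,0)/k!$ against the $\binom{k}{j}$ prefactor produces the stated closed form for $\Gamma_{a}^{k,j}(G)$, while the case $k=0$ reduces directly to Lemma~\ref{lem:pag} (and is consistent with \eqref{eq:parH} applied to the constant function).

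The main obstacle I anticipate is the borderline case $k=m(2a-1)$: there $\Gamma_{a}(H_{j})$ sits at its convergence boundary, the rescaled integrals $\int_{0}^{1/s}$ do not converge absolutely on $[0,\infty)$, and the boundary term $s^{2a-k/m-1}H_{j}(1,s)/(1+s^{2})^{a}$ that appears from the representation $\psi_{j}(s)=s^{1+k/m-2a}\int_{0}^{1/s}H_{j}(v,1)/(v^{2}+1)^{a}\,dv$ no longer decays automatically. Handling this regime requires a careful principal-value analysis mirroring the split between the $0<a<1$ and $a=0$ cases of Lemma~\ref{lemma;s2a}, together with re-justifying \eqref{eq:parH} at the critical exponent via a cut-off and a limiting argument; apart from this, the calculation is a routine combination of dominated convergence for the rescaled integrals and Euler-type identities for the homogeneous functions $H_{j}$.
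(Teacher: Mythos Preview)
Your approach is correct and essentially identical to the paper's: both Taylor-expand $f$ to order $k$, show the remainder contributes $O(s^{1/m})$ after multiplication by $s^{2a-k/m}$, and compute the monomial contributions by rescaling to reach the intermediate expression $\Gamma_{a}(\partial_{s}H_{j})-2a\,\Gamma_{a+1}(H_{j})$, after which \eqref{eq:parH} produces the stated closed form for $\Gamma_{a}^{k,j}(G)$.

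Your concern about the borderline $k=m(2a-1)$ is unfounded and you should drop that paragraph. The quantities that actually arise in the limit computation are $\Gamma_{a}(\partial_{s}H_{j})$ and $\Gamma_{a+1}(H_{j})$; since $\partial_{s}H_{j}\in C_{k/m-1}(\R_{>0}^{2})$, convergence of $\Gamma_{a}(\partial_{s}H_{j})$ requires only $2a-(k/m-1)>1$, i.e.\ $k<2am$, which holds throughout the range $k\leq m(2a-1)$. The only quantity that becomes borderline is $\Gamma_{a}(H_{j})$ itself, but this appears in the final formula with the coefficient $-(2a-1-k/m)/k$, which is precisely zero when $k=m(2a-1)$; so no principal-value argument is needed. (Your remainder bound is also slightly pessimistic: since $a+(m-k-1)/(2m)>1/2$ strictly for $k\leq m(2a-1)$ and $m\geq 2$, the logarithmic factor is not needed and one gets $s^{2a-k/m}\varphi'_{\rho}(s)=O(s^{1/m})$ directly.)
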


\begin{proof}
If $k=0$, then \eqref{eq:stepk} follows directly from Lemma~\ref{lem:pag}.

Assume that $k\geq 1$. Then,
by assumptions,
\begin{align*}
f(G(u,s))&=\sum_{j=0}^k\binom{k}{j}\frac{\partial^k f}{\partial x^j \partial y^{k-j}}(0,0)G_1^j(u,s)G_2^{k-j}(u,s) + O(\|G(u,s)\|^{k+1})\\
&=\sum_{j=0}^k\binom{k}{j}\frac{\partial^k f}{\partial x^j \partial y^{k-j}}(0,0)G_1^j(u,s)G_2^{k-j}(u,s) + O((u^2+s^2)^{\frac{k+1}{2m}}).
\end{align*}
Moreover,
\begin{align*}
\frac{\partial f}{\partial x}(G(u,s))
&=\sum_{j=0}^{k-1}\binom{k-1}{j}\frac{\partial^k f}{\partial x^{j+1} \partial y^{k-1-j}}(0,0)G_1^j(u,s)G_2^{k-1-j}(u,s) + O((u^2+s^2)^{\frac{k}{2m}}),\\
\frac{\partial f}{\partial y}(G(u,s))
&=\sum_{j=0}^{k-1}\binom{k-1}{j}\frac{\partial^k f}{\partial x^{j} \partial y^{k-j}}(0,0)G_1^j(u,s)G_2^{k-1-j}(u,s) + O((u^2+s^2)^{\frac{k}{2m}}).
\end{align*}
As $\|G(u,s)\|=\|(u,s)\|^{\frac{1}{m}}=(u^2+s^2)^{\frac{1}{2m}}$ and $\|\frac{\partial G}{\partial s}(u,s)\|=\frac{\|(u,s)\|^{-\frac{m-1}{m}}}{m}=\frac{(u^2+s^2)^{-\frac{m-1}{2m}}}{m}$, it follows that
\begin{align}\label{eqn;deriv}
\begin{split}
\frac{d}{ds}\frac{f(G(u,s))}{(u^2+s^2)^a}&=
\frac{\frac{\partial f}{\partial x}(G(u,s))\frac{\partial G_1}{\partial s}(u,s)
+\frac{\partial f}{\partial y}(G(u,s))\frac{\partial G_2}{\partial s}(u,s)}{(u^2+s^2)^a}
-2a\frac{s f(G(u,s))}{(u^2+s^2)^{a+1}}\\
&=\sum_{j=0}^k\binom{k}{j}\frac{\partial^k f}{\partial x^{j} \partial y^{k-j}}(0,0)
\frac{\big(\frac{j}{k}G_1^{j-1}G_2^{k-j}\frac{\partial G_1}{\partial s}+
\frac{k-j}{k}G_1^jG_2^{k-1-j}\frac{\partial G_2}{\partial s}\big)(u,s)
}{(u^2+s^2)^{a}}\\
&\quad-2as\sum_{j=0}^k\binom{k}{j}\frac{\partial^k f}{\partial x^{j} \partial y^{k-j}}(0,0)
\frac{(G_1^{j}G_2^{k-j})(u,s)}{(u^2+s^2)^{a+1}}\\
&\quad + O\Big(\frac{1}{(u^2+s^2)^{a+\frac{m-k-1}{2m}}}\Big)+O\Big(\frac{s}{(u^2+s^2)^{a+1-\frac{k+1}{2m}}}\Big).
\end{split}
\end{align}
Since $G_1^{j-1}G_2^{k-j}\frac{\partial G_1}{\partial s}$, $G_1^{j}G_2^{k-j-1}\frac{\partial G_2}{\partial s}$ are homogenous of degree $\frac{k-m}{m}<2a-1$ and $G_1^{j}G_2^{k-j}$ is homogenous of degree $\frac{k}{m}<2(a+1)-1$, by \eqref{def;gammaH} we have
\begin{gather}\label{eqn;G1G2}
\begin{split}
\lim_{s\to 0}s^{2a-\frac{k}{m}}\int_0^1\frac{(G_1^{j-1}G_2^{k-j}\frac{\partial G_1}{\partial s})(u,s)}{(u^2+s^2)^{a}}\,du=
\Gamma_{a}(G_1^{j-1}G_2^{k-j}\tfrac{\partial G_1}{\partial s}),\\
\lim_{s\to 0}s^{2a-\frac{k}{m}}\int_0^1\frac{(G_1^{j}G_2^{k-j-1}\frac{\partial G_2}{\partial s})(u,s)}{(u^2+s^2)^{a}}\,du=
\Gamma_{a}(G_1^{j}G_2^{k-j-1}\tfrac{\partial G_2}{\partial s}),\\
\lim_{s\to 0}s^{2a-\frac{k}{m}}\int_0^1\frac{s (G_1^{j}G_2^{k-j})(u,s)}{(u^2+s^2)^{a+1}}\,du=\Gamma_{a+1}(G_1^{j}G_2^{k-j}).
\end{split}
\end{gather}
Furthermore,
\begin{gather*}
\lim_{s\to 0}s^{2a-\frac{k+1}{m}}\int_0^1\frac{1}{(u^2+s^2)^{a+\frac{m-k-1}{2m}}}\,du=\Gamma_{a+\frac{m-k-1}{2m}},\\
\lim_{s\to 0}s^{2a-\frac{k+1}{m}}\int_0^1\frac{s}{(u^2+s^2)^{a+1-\frac{k+1}{2m}}}=\Gamma_{a+1-\frac{k+1}{2m}}.
\end{gather*}
In view of \eqref{eqn;deriv} and \eqref{eqn;G1G2}, this gives the statement \eqref{eq:stepk} with
\[\Gamma_a^{k,j}(G)=\frac{j}{k}\Gamma_{a}(G_1^{j-1}G_2^{k-j}\tfrac{\partial G_1}{\partial s})+\frac{k-j}{k}\Gamma_{a}(G_1^jG_2^{k-1-j}\tfrac{\partial G_2}{\partial s})-2a\Gamma_{a+1}(G_1^jG_2^{k-j}).\]
In view of \eqref{eq:parH} applied to $H=G_1^jG_2^{k-j}$ (which is homogenous of degree $k/m$), we get
\begin{align*}
\Gamma_a^{k,j}(G)&=\frac{1}{k}\Gamma_{a}\big(\tfrac{\partial}{\partial s}(G_1^{j}G_2^{k-j})\big)-2a\Gamma_{a+1}(G_1^jG_2^{k-j})\\
&=\frac{2a}{k}\Gamma_{a+1}(G_1^{j}G_2^{k-j})-\frac{2a-1-\frac{k}{m}}{k}\Gamma_{a}(G_1^jG_2^{k-j})-2a\Gamma_{a+1}(G_1^jG_2^{k-j})\\
&=-\frac{2a-1-\frac{k}{m}}{k}\Gamma_{a}(G_1^jG_2^{k-j})-2a\frac{k-1}{k}\Gamma_{a+1}(G_1^jG_2^{k-j}).
\end{align*}
\end{proof}

Let $G_0:\R^2_{\geq 0}\to\C$ be the principal branch of the $m$-th root and let $\omega$ and $\omega_0$ be the principal $2m$-th and $4m$-th root of unity respectively.
\begin{lemma}\label{lem:nonnegsum}
Let $1/2<a\leq (m-1)/m$, $1\leq k< (2a-1)m$ and $a_0,a_1,\ldots,a_k$ are real numbers not all equal to $0$. Then there exists $0\leq l<2m$
such that
\[\sum_{j=0}^ka_j\big(\Gamma_a^{k,j}(\omega^lG_0)+\Gamma_a^{k,j}(\omega^{l+1}\ov{G_0})\big)\neq 0.\]
\end{lemma}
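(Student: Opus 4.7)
The approach is by contradiction: assuming that
\[S(l) := \sum_{j=0}^k a_j\big(\Gamma_a^{k,j}(\omega^l G_0) + \Gamma_a^{k,j}(\omega^{l+1}\ov{G_0})\big) = 0\]
for every $l \in \{0,1,\ldots,2m-1\}$, I would deduce that all $a_j=0$. Write $P(x,y) := \sum_{j=0}^k a_j x^j y^{k-j}$, which is a nonzero real homogeneous polynomial of degree $k$, and express it uniquely as $P(x,y)=\sum_{p+q=k} b_{p,q}(x+iy)^p(x-iy)^q$ with $b_{q,p}=\ov{b_{p,q}}$ and some $b_{p,q}\neq 0$. Parameterising by $\nu := p-q \in \{-k,-k+2,\ldots,k\}$, I set $c_\nu := b_{p,q}$ and $h_\nu(u,s) := G_0(u,s)^p\ov{G_0(u,s)}^q$. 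Since $(G_1+iG_2, G_1-iG_2) = (G,\ov G)$, a direct substitution for $G = \omega^l G_0$ and $G=\omega^{l+1}\ov{G_0}$ yields
\[H_l := P(G_1,G_2)\big|_{\omega^l G_0} + P(G_1,G_2)\big|_{\omega^{l+1}\ov{G_0}} = \sum_\nu\big(c_\nu\omega^{l\nu} + \ov{c_\nu}\omega^{-(l+1)\nu}\big) h_\nu.\]
By the formula for $\Gamma_a^{k,j}$ from Lemma~\ref{lem:Cpm} and linearity, $S(l) = \alpha\Gamma_a(H_l) + \beta\Gamma_{a+1}(H_l)$ with $\alpha := -(2a-1-k/m)/k<0$ and $\beta := -2a(k-1)/k\leq 0$. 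Setting $\Delta_\nu := \alpha\Gamma_a(h_\nu)+\beta\Gamma_{a+1}(h_\nu)$, noting $h_{-\nu}=\ov{h_\nu}$ and hence $\Delta_{-\nu}=\ov{\Delta_\nu}$, and reindexing $\nu\mapsto-\nu$ in the second half of $S(l)$ (using $c_{-\nu}=\ov{c_\nu}$) collapses it to
\[S(l)=\sum_\nu c_\nu\big(\Delta_\nu+\omega^\nu\ov{\Delta_\nu}\big)\omega^{l\nu}.\]

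Because $|\nu|\leq k<(2a-1)m\leq m-2<m$, the values of $\nu$ are pairwise distinct modulo $2m$, so the characters $l\mapsto \omega^{l\nu}$ of $\Z/2m\Z$ are linearly independent. The assumption $S(l)=0$ for all $l=0,\ldots,2m-1$ therefore forces $c_\nu(\Delta_\nu+\omega^\nu\ov{\Delta_\nu})=0$ for every $\nu$. The residual task is thus to show $\Delta_\nu+\omega^\nu\ov{\Delta_\nu}\neq 0$ for every admissible $\nu$; this will force $c_\nu=0$ for all $\nu$, hence $P\equiv 0$, contradicting the hypothesis on $(a_j)$.

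For the last step, the substitution $u=s\cot\theta$ turns $\Gamma_a(h_\nu)$ and $\Gamma_{a+1}(h_\nu)$ into
\[\int_0^{\pi/2}\sin^\rho\theta\cdot e^{i\nu\theta/m}\,d\theta,\qquad \rho\in\{2a-2-k/m,\;2a-k/m\}.\]
The bound $|\nu|/m<1$ ensures $\cos(\nu\theta/m)>0$ on $(0,\pi/2)$, so the real parts of both integrals are strictly positive, while the signs of the imaginary parts coincide with $\sgn\nu$. Combined with $\alpha<0$ and $\beta\leq 0$ this yields $\Re\Delta_\nu<0$ and $\sgn\Im\Delta_\nu=-\sgn\nu$, so $\arg\Delta_\nu$ lies in the open third (resp.\ second) quadrant when $\nu>0$ (resp.\ $\nu<0$), and equals $\pi$ when $\nu=0$. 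Writing $\Delta_\nu+\omega^\nu\ov{\Delta_\nu}=2|\Delta_\nu|e^{i\pi\nu/(2m)}\cos(\arg\Delta_\nu-\pi\nu/(2m))$, vanishing demands $\arg\Delta_\nu \equiv \tfrac{\pi}{2}+\tfrac{\pi\nu}{2m}\pmod\pi$; a short case analysis using $|\pi\nu/(2m)|<\pi/2$ shows that in each of the three regimes the forbidden angles fall in the open half-plane opposite to the one containing $\arg\Delta_\nu$, completing the contradiction. The main obstacle will be precisely this argument-location step, because it is where the hypotheses $k<(2a-1)m$ (yielding $|\nu|/m<1$) and the signs $\alpha<0$, $\beta\leq 0$ from Lemma~\ref{lem:Cpm} must be jointly exploited.
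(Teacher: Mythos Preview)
Your proof is correct and follows essentially the same route as the paper: express $P(x,y)$ in the basis $z^p\ov z^{q}$, use character (equivalently Vandermonde) orthogonality over $\Z/2m\Z$ to isolate the coefficients $c_\nu(\Delta_\nu+\omega^\nu\ov{\Delta_\nu})$, and then show the second factor never vanishes. The only substantive difference is in this last non-vanishing step. The paper observes that $\Delta_\nu+\omega^\nu\ov{\Delta_\nu}=0$ is equivalent to $\Re(\omega_0^{-\nu}\Delta_\nu)=0$ with $\omega_0=e^{i\pi/(2m)}$, and then interprets $\Re(\omega_0^{-\nu}\Delta_\nu)$ directly as $\mathfrak G$ applied to the real function $|G_0|^{2j}\Re\big((\omega_0\ov{G_0})^{k-2j}\big)$, which is strictly positive on $\R^2_{>0}$ because $\Arg(\omega_0\ov{G_0})\in(0,\pi/(2m))$; since $\alpha<0$ and $\beta\le 0$, $\mathfrak G$ of a positive function is strictly negative. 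Your alternative---the substitution $u=\cot\theta$ turning $\Gamma_a(h_\nu)$ into $\int_0^{\pi/2}\sin^{2a-2-k/m}\theta\,e^{i\nu\theta/m}d\theta$ and a quadrant analysis of $\arg\Delta_\nu$---reaches the same conclusion with a bit more bookkeeping; the paper's version is slicker precisely because it never needs to locate $\arg\Delta_\nu$, only to recognise $|\Delta_\nu|\cos(\arg\Delta_\nu-\pi\nu/(2m))$ as $\mathfrak G$ of a positive integrand.
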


\begin{proof}
Let $\mathfrak{G}:C_{k/m}(\R_{>0}^2)\to\C$ be the linear operator given by
\[\mathfrak{G}(H):= -\frac{2a-1-\frac{k}{m}}{k}\Gamma_{a}(H)-2a\frac{k-1}{k}\Gamma_{a+1}(H).\]
Then $\ov{\mathfrak{G}(H)}=\mathfrak{G}(\ov{H})$ and $\Gamma_a^{k,j}(G)=\mathfrak{G}(G_1^jG_2^{k-j})$.

Suppose that, contrary to our claim,
\begin{equation}\label{eq:contr}
\sum_{j=0}^ka_j
\big(\Gamma_a^{k,j}(\omega^lG_0)+\Gamma_a^{k,j}(\omega^{l+1}\ov{G_0})\big)= 0\text{ for every }0\leq l<2m.
\end{equation}
Denote by $\R_k[x,y]$ the linear space of  homogenous polynomial of degree $k$.
The space $\R_k[x,y]$ coincides with the subspace $\C_{k,\R}[z,\ov{z}]$ of complex homogenous polynomials $\C_{k}[z,\ov{z}]$
of the form  $\sum_{j=0}^kc_jz^j\ov{z}^{k-j}$ such that $\ov{c}_j=c_{k-j}$ for $0\leq j\leq k$. For every $P\in \R_k[x,y]$ denote by
$\widehat{P}\in\C_{k,\R}[z,\ov{z}]$ the unique polynomial
such that $\widehat{P}(z,\ov{z})=P(x,y)$.

As $Q(x,y)=\sum_{j=0}^ka_jx^jy^{k-j}\in\R_k[x,y]$ is non-zero by assumptions, the corresponding polynomial $\widehat{Q}(z,\ov{z})=\sum_{j=0}^kc_jz^j\ov{z}^{k-j}$ is also non-zero.
Note that
\begin{align}\label{eq:GQ}
\begin{split}
\sum_{j=0}^ka_j\Gamma_a^{k,j}(G)&=\sum_{j=0}^ka_j\mathfrak{G}(G_1^jG_2^{k-j})={\mathfrak{G}}(Q(G_1,G_2))\\
&=\mathfrak{G}(\widehat{Q}(G,\ov{G}))=\sum_{j=0}^kc_j\mathfrak{G}(G^j\ov{G}^{k-j}).
\end{split}
\end{align}
As $k<m(2a-1)\leq m-2$, in view of \eqref{eq:contr} and \eqref{eq:GQ}, for every $0\leq l\leq 2k$ we have
\begin{align*}
0&=\sum_{j=0}^kc_j\Big(\mathfrak{G}\big((\omega^lG_0)^j(\ov{\omega^lG_0})^{k-j}\big)+
\mathfrak{G}\big((\omega^{l+1}\ov{G_0})^j(\ov{\omega^{l+1}\ov{G_0}})^{k-j}\big)\Big)\\
&=\omega^{-kl}\sum_{j=0}^k\omega^{2jl}c_j\big(\mathfrak{G}(G_0^j\ov{G_0}^{k-j})+\omega^{2j-k}\mathfrak{G}(\ov{G_0}^j{G_0}^{k-j})\big).
\end{align*}
Let us consider the matrix $\Omega_k=[\omega^{2lj}]_{0\leq l,j\leq k}\in M_{(k+1)\times(k+1)}(\C)$. As $k<m$, by  the Vandermonde determinant,
\[\det \Omega_k=\prod_{0\leq i<j\leq k}(\omega^{2j}-\omega^{2i})\neq 0.\]
This gives
\[c_j\Big(\mathfrak{G}\big(G_0^j\ov{G_0}^{k-j}\big)+\omega^{2j-k}\ov{\mathfrak{G}\big({G^j_0}\ov{G_0}^{k-j}\big)}\Big)=0\text{ for all }0\leq j\leq k.\]
As $c_{k-j}=\ov{c_j}$ and $\widehat{Q}$ is non-zero, there exists $0\leq j\leq k/2$ such that $c_j\neq 0$. Then
\begin{align*}
0&=\omega_0^{k-2j}\Big(\mathfrak{G}\big(G_0^j\ov{G_0}^{k-j}\big)+\omega_0^{2j-k}\ov{\mathfrak{G}\big({G^j_0}\ov{G_0}^{k-j}\big)}\Big)\\
&=\mathfrak{G}\big((\omega^{-1}_0G_0)^j\ov{(\omega^{-1}_0G_0)}^{k-j}\big)+\ov{\mathfrak{G}\big({(\omega^{-1}_0G_0)}^j\ov{(\omega^{-1}_0G_0)}^{k-j}\big)}.
\end{align*}
Hence
\[0=\Re\mathfrak{G}\big((\omega^{-1}_0G_0)^j\ov{(\omega^{-1}_0G_0)}^{k-j}\big)=\mathfrak{G}\big(|G_0|^j\Re((\omega_0\ov{G_0})^{k-2j})\big).\]
Since $G_0$ is the principal $m$-th root, for all $u,s>0$ we have $\Arg G_0(u,s)\in(0,\tfrac{\pi}{2m})$. Hence
\[\Arg (\omega_0\ov{G_0(u,s)})\in(0,\tfrac{\pi}{2m})\text{ and }\Arg (\omega_0\ov{G_0(u,s)})^{k-2j}\in(0,(k-2j)\tfrac{\pi}{2m})\subset(0,\tfrac{\pi}{2}),\]
so $\Re((\omega_0\ov{G_0}(u,s))^{k-2j})>0$.
As $2a-1-\frac{k}{m}>0$, by the definition of $\mathfrak{G}$ we have
\[\mathfrak{G}\big(|G_0|^j\Re((\omega_0\ov{G_0})^{k-2j})\big)<0,\]
 which is a contradiction.
\end{proof}

\section{Global properties of the operator $f\mapsto\varphi_f$ and correcting  operators}\label{sec:regularity}
In this section, we use the results of the previous section to prove an extended version of  Theorem~\ref{thm:ftophi}, which is Theorem~\ref{thm:phifform}.

For every $\sigma \in \mathrm{Fix}(\psi_\R)$, let $G_\sigma:\C\to\C$ be the principal $m_\sigma$-root map and let $\omega_\sigma$ be the principal $2m_\sigma$-root of unity. For every $0\leq k\leq m_\sigma-2$, recall that
\begin{equation}\label{def;ab}
a(\sigma) = \frac{m_\sigma-2}{m_\sigma}, \quad b(\sigma,k) = \frac{m_\sigma-2 - k}{m_\sigma}.
\end{equation}
Denote by $C^m_{\sigma,k}(M)$ the space of maps $f\in C^m(M)$, which vanish on $\bigcup_{\sigma'\in \mathrm{Fix}(\psi_\R)\setminus\{\sigma\}} U_{\sigma'}$ and
$f^{(j)}(\sigma)=0$ for all $0\leq j<k$.


\begin{theorem}\label{thm:phifform} The following statements hold:

(i) For every $f\in C^m(M)$ we have $\varphi_f \in \pag(\sqcup_{\alpha\in \mathcal{A}}
I_{\alpha})$ and $\varphi_{|f|} \in \operatorname{\widehat{P}_{a}}(\sqcup_{\alpha\in \mathcal{A}}
I_{\alpha})$, where $a=\frac{m-2}{m}$. Moreover, the operator
\[f \in C^{m}(M) \mapsto \varphi_f \in \pag(\sqcup_{\alpha\in \mathcal{A}}I_{\alpha})\]
is bounded. More precisely, there exists $C>0$ such that $\|\varphi_f\|_{a}\leq C\|f\|_{C^1}$ for every $f\in C^m(M)$.

(ii) For every $\sigma\in \mathrm{Fix}(\psi_\R)\cap M'$ and $0\leq k\leq m_\sigma-2$, there exists $C_{\sigma,k}>0$ such that
 for every $f\in C^m_{\sigma,k}(M)$ we have $\varphi_f \in \operatorname{P_{b(\sigma,k)}G}(\sqcup_{\alpha\in \mathcal{A}}
I_{\alpha})$, $\|\varphi_f\|_{b(\sigma,k)}\leq C_{\sigma,k}\|f\|_{C^{k+1}}$ and
$\varphi_{|f|} \in \operatorname{\widehat{P}_{b(\sigma,k)}}(\sqcup_{\alpha\in \mathcal{A}}
I_{\alpha})$.

(iii) Moreover, if additionally $\psi_\R$ is minimal on $M$ (i.e.\ $M'=M$), then for every $f\in C^m_{\sigma,k}(M)$ and
 for every $\alpha\in\mathcal{A}$ the quantity
$C^\pm_\alpha(\varphi_f)$ is zero or is of the form
\begin{equation}\label{eq:Cpm}
-\frac{1}{m^2_\sigma}\sum_{j=0}^k\binom{k}{j}\partial_\sigma^{(j,k-j)}(f)\Big(\Gamma_{\frac{m_\sigma-1}{m_\sigma}}^{k,j}
\big(\omega_{\sigma}^{l}G_{\sigma}\big)+
\Gamma_{\frac{m_\sigma-1}{m_\sigma}}^{k,j}\big(\omega_{\sigma}^{l+1}\overline{G_{\sigma}}\big)\Big)
\end{equation}
for some $0\leq l<2m_\sigma$. On the other hand, for every  $0\leq l<2m_\sigma$, there exists
$\alpha\in\mathcal{A}$ such that $C^\pm_\alpha(\varphi_f)$  is of the form \eqref{eq:Cpm}.
\end{theorem}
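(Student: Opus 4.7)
The proof has three parts which are all anchored on a local-to-global decomposition of the first-return-time orbit. I would first choose for each saddle $\sigma\in\mathrm{Fix}(\psi_\R)\cap M'$ a small neighbourhood $D_{\sigma,\vep_\sigma}\subset U_\sigma$ as in Definition of $D_\vep$. For $x$ away from finitely many discontinuities, the $\psi_\R$-orbit segment from $x$ to $Tx$ splits into finitely many arcs that alternate between (a) arcs staying in $M\setminus\bigcup_\sigma D_{\sigma,\vep_\sigma}$, whose contribution to $\varphi_f$ is $C^1$ with norm controlled by $\|f\|_{C^0}$, and (b) singular arcs entering some $D_{\sigma,\vep_\sigma}$ through one of the incoming segments $\omega_\sigma^{2l}\gamma_\pm^{in}$ described in Lemma~\ref{lem:segm} and Remark~\ref{rem:eeD}. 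By \eqref{eq:inttaugen}, the contribution of such a singular arc is exactly $\varphi^{\sigma,l}_f(s)$, where $s$ is an affine function of $x$ near a discontinuity $e\in\mathrm{End}(T)$. So to control $\varphi_f$ it suffices to control each $\varphi^{\sigma,l}_f$.

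For parts (i) and (ii), I would directly apply the local estimates in \S\ref{sec:locHam}. Taking the exponent $a=(m_\sigma-1)/m_\sigma$ in the denominator of \eqref{eq:inttaugen}, Lemma~\ref{lem:pag} gives $|s^{2a}(\varphi^{\sigma,l}_f)'(s)|\leq 2\|f\cdot V\|_{C^1}\Gamma_{a+(m_\sigma-1)/2m_\sigma}$; since $1+a(\sigma)=2a$, this shows that each singular piece contributes to the $p_{a(\sigma)}$-seminorm boundedly in $\|f\|_{C^1}$, while \eqref{eq:hatP} gives $\varphi_{|f|}\in\wpa$ with the same exponent via $s^{a(\sigma)}\varphi_{|f|}(s)\leq\|f\cdot V\|_{\sup}\Gamma_a$. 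Taking the maximum over $\sigma$ yields (i) since $a(\sigma)\leq a=(m-2)/m$. For (ii), the assumption $f^{(j)}(\sigma)=0$ for $0\leq j<k$ forces, via Leibniz, analogous vanishing of $(f\cdot V)^{(j)}(\sigma)$ for $0\leq j<k$; applying Lemma~\ref{lemma;s2a} with the same $a=(m_\sigma-1)/m_\sigma$ gives $|s^{2a-k/m_\sigma}(\varphi^{\sigma,l}_f)'(s)|\leq C\|f\|_{C^k}$, which matches the singularity degree $b(\sigma,k)$ because $1+b(\sigma,k)=2a-k/m_\sigma$. The $L^1$-bound and the $\wpb$ bound for $\varphi_{|f|}$ follow identically from the second and third claims of Lemma~\ref{lemma;s2a}.

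For part (iii), under minimality of $\psi_\R$ on all of $M$, I would extract the actual limits defining $C^\pm_\alpha(\varphi_f)$. By Step 1, for each discontinuity $e$ corresponding to a pair $(\sigma,l)$, the one-sided limit $\lim_{x\to e}\varphi_f'(x)|x-e|^{1+b(\sigma,k)}$ coincides with $\lim_{s\to 0^\pm}s^{2a-k/m_\sigma}(\varphi^{\sigma,l}_f)'(s)$ up to the $1/m_\sigma^2$ prefactor from \eqref{eq:inttaugen}. Remark~\ref{rem:eeD} decomposes $\varphi^{\sigma,l}_f(\pm s)$ as the sum of two integrals of the form \eqref{eqn;fgus}, one with $G=\omega_\sigma^l G_\sigma$ and one with $G=\omega_\sigma^{l+1}\overline{G_\sigma}$. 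Applying Lemma~\ref{lem:Cpm} separately to each of these two integrals with $f$ replaced by $f\cdot V$ produces the claimed formula \eqref{eq:Cpm}. Minimality on $M$ guarantees that every separatrix of every saddle meets $I$, so each value $0\leq l<2m_\sigma$ is realized by some discontinuity $\alpha$, giving the last assertion.

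The main obstacle is the \emph{geometric type} half of (i)--(ii), i.e.\ the relations $C^-_{\pi_0^{-1}(d)}\cdot C^-_{\pi_1^{-1}(d)}=0$ and $C^+_{\pi_0^{-1}(1)}\cdot C^+_{\pi_1^{-1}(1)}=0$. This is a global statement about the transversal $I$, not a local calculation near a saddle, and it is exactly where the combinatorics of the Rauzy diagram interacts with the surface geometry. The plan is to use that the standard choice of $I$ forces its endpoints $0$ and $|I|$ to lie on incoming separatrices of saddles; hence one of the two orbits approaching $|I|$ (respectively $0$) from inside $I$ is smooth (the separatrix itself), forcing the corresponding $C^\pm$ to vanish. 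Translating this geometric picture into the combinatorics of which of $r_{\pi_0^{-1}(d)}$ and $r_{\pi_1^{-1}(d)}$ carries a true singular passage, and cross-checking with the description of $\alpha_*,\alpha^*$ in Lemma~\ref{lem:CSk}, is the most delicate point of the argument.
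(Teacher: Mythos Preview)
Your proposal follows essentially the same route as the paper: split the orbit into a regular arc away from $\bigcup_\sigma D_{\sigma,\vep}$ and singular passages through some $D_{\sigma,\vep}$, then feed the formulas of Remark~\ref{rem:eeD} into Lemmas~\ref{lem:pag}, \ref{lemma;s2a} and \ref{lem:Cpm}. Two small corrections. First, the ``regular'' contribution is controlled by $\|f\|_{C^1}$, not $\|f\|_{C^0}$: differentiating $\int_0^{\tau(x)}f(\psi_t x)\,dt$ produces a term $\|f'\|_{C^0}\int_0^{\tau(x)}\|\tfrac{d\psi_t x}{dx}\|\,dt$. Second, for (ii) you only invoke Lemma~\ref{lemma;s2a}, which gives $p_{b(\sigma,k)}(\varphi_f)<\infty$; membership in $\operatorname{P_{b(\sigma,k)}}$ also requires the \emph{limits} $C^\pm_\alpha$ to exist, and for that you need Lemma~\ref{lem:Cpm} (hence the $C^{k+1}$ norm in the statement, not $C^k$).

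Regarding your ``main obstacle'', the geometric-type relations: the paper's handling is as implicit as yours, and the mechanism is exactly what you outline. Among the $2d$ one-sided ends in $\mathrm{End}(T)$ there are precisely two whose forward orbit returns to an endpoint of $I$ without first visiting a saddle neighbourhood; for those two the singular piece $\varphi_f^1$ is absent and $C^\pm_\alpha(\varphi_f)=0$. By the combinatorics of the first-return map these two are exactly one of $\{r_{\pi_0^{-1}(d)},r_{\pi_1^{-1}(d)}\}$ and one of $\{l_{\pi_0^{-1}(1)},l_{\pi_1^{-1}(1)}\}$, giving the product conditions in the definition of $\pag$. The reference to $\alpha_*,\alpha^*$ in Lemma~\ref{lem:CSk} is the right cross-check: those letters record exactly which of the two right ends carries no genuine separatrix after renormalisation.
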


\begin{proof}
Without loss of generality we can assume that $\psi_\R$ is minimal on $M$. The proof of (i) and (ii) in the general case
proceeds in the same way up to some complications in notation.

Choose $\vep>0$ such that $D_{\sigma,\vep}\subset U_\sigma$ for any $\sigma\in \mathrm{Fix}(\psi_\R)$, where  $D_{\sigma,\vep}$ is
a closed neighborhood of $\sigma$ defined in \S\ref{sec:locHam}.
Denote by $g:I\to\R_{>0}\cup\{+\infty\}$ the {first return time} map. Since the flow $\psi_\R$ is smooth and $f$ is of class $C^m$, both $g$ and $\varphi_f$ belong to $C^1(\sqcup_{\alpha\in \mathcal{A}}
I_{\alpha})$. Moreover, for every $x\in \bigcup_{\alpha\in \mathcal{A}}
Int I_{\alpha}$ we have
\[|\varphi_f'(x)|\leq |g'(x)|\|f\|_{C^0}+\|f'\|_{C^0}\int_0^{g(x)}\Big\|\frac{d\psi_t (x)}{dx}\Big\|dt.\]
If additionally $\dist(x,End(T))\geq \vep$ then $|\varphi_f'(x)|\leq C_\vep\|f\|_{C_1}$, where
\[C_\vep:=\max\Big\{|g'(x)|+\int_0^{g(x)}\Big\|\frac{d\psi_t(x)}{dx}\Big\|dt:x\in I, \dist(x,End(T))\geq \vep\Big\}<+\infty.\]

Let $e\in End(T)$ and suppose that $e$ is the first backward intersection of a separatrix incoming to a fixed point $\sigma\in \mathrm{Fix}(\psi_\R)$. For every $x\in (e-\vep,e)\cup(e,e+\vep)$,
let $0<\tau_1(x)<\tau_2(x)<g(x)$ be the entrance ($\tau_1(x)$) and the exit  ($\tau_2(x)$) time of the orbit segment $\{\psi_t x:0\leq t\leq g(x)\}$ to $D_{\sigma,\vep}$.
Let us consider $\varphi_f^1,\varphi_f^2:(e-\vep,e)\cup(e,e+\vep)\to\R$ given by
\[\varphi_f^1(x)=\int_{\tau_1(x)}^{\tau_2(x)}f(\psi_t x)\,dt,\quad \varphi_f^2(x)=\int_{0}^{\tau_1(x)}f(\psi_t x)\,dt+\int_{\tau_2(x)}^{g(x)}f(\psi_t x)\,dt.\]
Of course, $\varphi_f(x)=\varphi_f^1(x)+\varphi_f^2(x)$ for every $x\in (e-\vep,e)\cup(e,e+\vep)$.
In view of Lemma~\ref{lem:segm} and Remark~\ref{rem:eeD}, there exists $0\leq l<m_\sigma$ such that
for every $s\in(0,\vep)$ we have
\begin{align*}
m_\sigma^2\varphi_f^1(e+s)
&=\int_{0}^{ \vep}
\frac{(f\cdot V)(\omega_{\sigma}^{2l}G_{\sigma}(u,s))}{(u^2+s^2)^{\frac{m_\sigma-1}{m_\sigma}}}du+\int_{0}^{\vep}
\frac{(f\cdot V)(\omega_{\sigma}^{2l+1}\overline{G_{\sigma}}(u,s))}{(u^2+s^2)^{\frac{m_\sigma-1}{m_{\sigma}}}}du,
\\
m_\sigma^2\varphi_f^1(e-s)
&=\int_{0}^{\vep}
\frac{(f\cdot V)(\omega_{\sigma}^{2l+1}{G_{\sigma}}(u,s))}{(u^2+s^2)^{\frac{m_\sigma-1}{m_\sigma}}}du+
\int_{0}^{ \vep}
\frac{(f\cdot V)(\omega_{\sigma}^{2l+2}\overline{G_{\sigma}}(u,s))}{(u^2+s^2)^{\frac{m_\sigma-1}{m_\sigma}}}du.
\end{align*}
Note that  $2\big(\tfrac{m_\sigma-1}{m_\sigma}\big)-1=a(\sigma)$.
In view of \eqref{eq:hatP}, Lemma~\ref{lem:pag} and Remark~\ref{rem:passvep}, it follows that
for every $x\in (e-\vep,e)\cup(e,e+\vep)$ we have
\begin{align}
\label{eq:fipie}
|x-e|^{a(\sigma)}\varphi_{|f|}^1(x)&\leq \Gamma_{\frac{m_\sigma-1}{m_\sigma}}\|f\cdot V\|_{C^0}\text{ if }m_\sigma>2,\\
\varphi_{|f|}^1(x)&\leq 2\|f\cdot V\|_{C^0}(2+|\log(\vep s)|)\text{ if }m_\sigma=2,\\
|x-e|^{a(\sigma)+1}|(\varphi_f^1)'(x)|&\leq \frac{4\vep^{-1/m_\sigma}\Gamma_{\frac{3(m_\sigma-1)}{2m_\sigma}}}{m^2_\sigma}\|f\cdot V\|_{C^1}
\leq \frac{\vep^{-1/m}}{m_\sigma^2}
\Gamma_{3/4}\|V\|_{C^1}
\|f\|_{C^1},\\
\label{eq:fiost}
\lim_{x\to e^{\pm}}|x-e|^{a(\sigma)+1}(\varphi_f^1)'(x)&=\mp\frac{2(a(\sigma)+1)}{m_\sigma^2}f(\sigma)V(\sigma)\Gamma_{\frac{2m_\sigma-1}{m_\sigma}}.
\end{align}
If additionally
$f^{(j)}(\sigma)=0$ for all $0\leq j<k$ ($1\leq k\leq m_\sigma-2$), then by Lemma~\ref{lemma;s2a}, we have
\begin{align}
\label{eq:locf}
|x-e|^{a(\sigma)-\frac{k}{m_\sigma}+1}|(\varphi_f^1)'(x)|&\leq \frac{2\Gamma_{1}}{m_\sigma^2(k-1)!}\|f\cdot V\|_{C^k} \leq
\frac{2\Gamma_{1}}{m_\sigma^2}\|V\|_{C^k}
\|f\|_{C^k},\\
\label{eq:loc|f|}
|x-e|^{a(\sigma)-\frac{k}{m_\sigma}}\varphi_{|f|}^1(x)&\leq \frac{2\Gamma_{\frac{2m_\sigma-1-k}{2m_\sigma}}}{m_\sigma^2k!}\|f\cdot V\|_{C^k},
\end{align}
and, by Lemma~\ref{lem:Cpm},
\begin{align}
\begin{split}\label{eq:stepk1}
\lim_{x\to e^+}&(x-e)^{a(\sigma)-\frac{k}{m_\sigma}+1}(\varphi_f^1)'(x)\\
&=\frac{1}{m^2_\sigma}\sum_{j=0}^k\binom{k}{j}\partial_\sigma^{(j,k-j)}(f)
\Big(\Gamma_{\frac{m_\sigma-1}{m_\sigma}}^{k,j}\big(\omega_{\sigma}^{2l}G_{\sigma}\big)+
\Gamma_{\frac{m_\sigma-1}{m_\sigma}}^{k,j}\big(\omega_{\sigma}^{2l+1}\overline{G_{\sigma}}\big)\Big),
\end{split}
\\
\begin{split}\label{eq:stepk2}
\lim_{x\to e^-}&(e-x)^{a(\sigma)-\frac{k}{m_\sigma}+1}(\varphi_f^1)'(x)\\
&=-\frac{1}{m^2_\sigma}\sum_{j=0}^k\binom{k}{j}\partial_\sigma^{(j,k-j)}(f)
\Big(\Gamma_{\frac{m_\sigma-1}{m_\sigma}}^{k,j}\big(\omega_{\sigma}^{2l+1}G_{\sigma}\big)+
\Gamma_{\frac{m_\sigma-1}{m_\sigma}}^{k,j}\big(\omega_{\sigma}^{2l+2}\overline{G_{\sigma}}\big)\Big).
\end{split}
\end{align}
Since $\tau^1$ and $g-\tau^2$ can be $C^1$-extended to the intervals $[e-\vep,e]$ and $[e,e+\vep]$,  for every $x\in (e-\vep,e)\cup(e,e+\vep)$ we have
$|(\varphi_f^2)'(x)|\leq C_{\sigma,\vep}\|f\|_{C^1}$, where
\begin{align*}
C_{\sigma,\vep}:&=\max\Big\{|\tau_1'(x)|+\int_{0}^{\tau_1(x)}\Big\|\frac{d\psi_t (x)}{dx}\Big\|dt:0<|x-e|<\vep\Big\}\\
&\quad+\max\Big\{|(g-\tau_2)'(x)|+\int_{0}^{g(x)-\tau_2(x)}\Big\|\frac{d\psi_{-t} (Tx)}{dx}\Big\|dt:0<|x-e|<\vep\Big\}<+\infty.
\end{align*}
As $a(\sigma)+1=\frac{2(m_\sigma-1)}{m_\sigma}\leq \frac{2(m-1)}{m}=a+1$ for every $\sigma \in \mathrm{Fix}(\psi_\R)$, in view of \eqref{eq:fipie}-\eqref{eq:fiost},
it follows that
$\varphi_f \in \pag(\sqcup_{\alpha\in \mathcal{A}}I_{\alpha})$ and $\varphi_{|f|} \in \operatorname{\widehat{P}_{a}}(\sqcup_{\alpha\in \mathcal{A}}
I_{\alpha})$ for every $f\in C^m(M)$ and
\[p_a(\varphi)\leq \Big(\sum_{\sigma\in \mathrm{Fix}(\psi_\R)}\big(\vep^{-1/m} \Gamma_{3/4}\|V\|_{C^1} + m_\sigma\vep^{a+1}C_{\sigma,\vep}\big)+
|I|^{a+1}C_\vep\Big)
\|f\|_{C^1}.\]
As $\|\varphi_f\|_{L^1}\leq \|f\|_{L^1}\leq \mu(M)\|f\|_{C^0}$, there exists $C>0$ such that $\|\varphi_f\|_{a}\leq C\|f\|_{C^1}$ for every $f\in C^m(M)$.

\medskip

Since $a(\sigma)-\frac{k}{m_\sigma}=b(\sigma,k)$, applying similar arguments for functions $f\in C^m_{\sigma,k}(M)$
and using \eqref{eq:locf}-\eqref{eq:stepk2} (instead of \eqref{eq:fipie}-\eqref{eq:fiost}), we obtain
$\varphi_f \in \operatorname{P_{b(\sigma,k)}G}(\sqcup_{\alpha\in \mathcal{A}}
I_{\alpha})$, $\varphi_{|f|} \in \operatorname{\widehat{P}_{b(\sigma,k)}}(\sqcup_{\alpha\in \mathcal{A}}
I_{\alpha})$  and the existence of $C_{\sigma,k}>0$ such that  $\|\varphi_f\|_{b(\sigma,k)}\leq C_{\sigma,k}\|f\|_{C^k}$ for every
$f\in C^m_{\sigma,k}(M)$.

Moreover, \eqref{eq:stepk1} applied to $e=l_\alpha$ and \eqref{eq:stepk2} applied to $e=r_\alpha$ and combined with the inequality
$|(\varphi_f^2)'(x)|\leq C_{\sigma,\vep}\|f\|_{C^1}$, yields either
\begin{align*}
C^+_\alpha(\varphi_f)&=-\frac{1}{m^2_\sigma}\sum_{j=0}^k\binom{k}{j}\partial_\sigma^{(j,k-j)}(f)
\Big(\Gamma_{\frac{m_\sigma-1}{m_\sigma}}^{k,j}\big(\omega_{\sigma}^{2l}G_{\sigma}\big)+
\Gamma_{\frac{m_\sigma-1}{m_\sigma}}^{k,j}\big(\omega_{\sigma}^{2l+1}\overline{G_{\sigma}}\big)\Big),\\
C^-_\alpha(\varphi_f)&=-\frac{1}{m^2_\sigma}\sum_{j=0}^k\binom{k}{j}\partial_\sigma^{(j,k-j)}(f)
\Big(\Gamma_{\frac{m_\sigma-1}{m_\sigma}}^{k,j}\big(\omega_{\sigma}^{2l+1}G_{\sigma}\big)+
\Gamma_{\frac{m_\sigma-1}{m_\sigma}}^{k,j}\big(\omega_{\sigma}^{2l+2}\overline{G_{\sigma}}\big)\Big)
\end{align*}
or  $C^\pm_\alpha(\varphi_f)=0$ whenever the forward semi-orbit of $e$ returns to $I$ for the first time to one of
its ends without visiting singular points. The latter option appears exactly twice.
On the other hand, since every incoming separatix crosses $I$, it follows that every number of the form \eqref{eq:Cpm}
is obtained as $C^\pm_\alpha(\varphi_f)$ for some $\alpha$.
\end{proof}

\subsection{Correcting operators for observables}\label{sec;reduction}
Let us consider the basis $h_1,\ldots, h_{g}$ of  $U_{g+1} \subset H(\pi^{(0)})$, defined in Remark ~\ref{rem:osel},
 such that
$
\lim_{k \rightarrow \infty} \frac{1}{k}\norm{Q(k)h_i} = \lambda_i$  for  $1\leq i \leq g$.
Given $0\leq b<1 $ we choose $2\leq j \leq g+1$ such that
 $\lambda_{j}\leq \lambda_1 b < \lambda_{j-1}$.
Since $h_1,\ldots, h_{j-1}$ is a basis of  $U_{j}$ (see also Remark ~\ref{rem:osel}) and the correction operator
$\mathfrak{h}_j:\pbg(\sqcup_{\alpha \in \mathcal{A}} I_\alpha)\to U_j$ (defined in the proof of Theorem~\ref{thm;correction}) is bounded,
for every $1\leq i<j$ there exists a bounded operator $d_{b,i} : \pbg(\sqcup_{\alpha \in \mathcal{A}} I_\alpha) \rightarrow \R$   such that
\begin{equation}\label{eqn;corhj}
\mathfrak{h}_j(\varphi) = \sum_{i=1}^{j-1} d_{b,i}(\varphi)h_i \quad \text{for every } \varphi \in  \pbg(\sqcup_{\alpha \in \mathcal{A}} I_\alpha).
\end{equation}
By Lemma~7.4 in \cite{Co-Fr}, for every $h_i \in U_{g+1}\subset H(\pi^{(0)})$ ($1\leq i\leq g$) there
exists $f_i\in C^{\infty}(M)$ such that $\varphi_{f_i}=h_i$
and $f_i$ vanishes on $\bigcup_{\sigma\in \mathrm{Fix}(\psi_\R)}U_\sigma$.

Finally, for every $\sigma\in \mathrm{Fix}(\psi_\R)\cap M'$ and any $0\leq k\leq m_\sigma-2$ we define a \emph{correcting operator}
$\mathfrak{R}_{\sigma,k}:C^m_{\sigma,k}\to C^m_{\sigma,k}$ given by
\begin{equation}\label{def;reduction}
\mathfrak{R}_{\sigma,k}(\xi) = \xi - \sum_{i=1}^{j-1} d_{b(\sigma,k),i}(\varphi_\xi)f_i.
\end{equation}
The correcting operator does not change the observable around all fixed points, but it removes the influence of Lyapunov exponents of the K-Z cocycle on the
asymptotic of Birkhoff integrals.
\begin{proposition}\label{prop:xi}
Let $\sigma\in \mathrm{Fix}(\psi_\R)\cap M'$ and $0\leq k\leq m_\sigma-2$. Then for every  $\xi\in C^m_{\sigma,k}$ we have
\begin{align*}
&\limsup_{T \rightarrow \infty} \frac{\log{\big|\int_0^T\mathfrak{R}_{\sigma,k}(\xi)(\psi_tx)\,dt\big|}}{\log T}  \leq b(\sigma,k)\text{ for a.e. }x\in M'; \\
&\limsup_{T \rightarrow \infty} \frac{\log{\big|\int_0^T\mathfrak{R}_{\sigma,k}(\xi)\circ\psi_t\,dt\big|}_{L^1(M')}}{\log T}  \leq b(\sigma,k).
\end{align*}
\end{proposition}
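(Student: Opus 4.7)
The plan is to pass via the special flow representation of $\psi_\R$ restricted to $M'$ over an IET $T:I\to I$ satisfying \ref{FDC}, under a roof function $g$, so that the Birkhoff integrals of $\mathfrak{R}_{\sigma,k}(\xi)$ for $\psi_\R$ become Birkhoff integrals of an integrable function on $I^g$ for the special flow $T^g_\R$. Then it will suffice to verify the hypotheses of Theorems~\ref{thm:expbL1} and~\ref{thm;L1bound} (or of Proposition~\ref{thm:specflowzeroexp} in the degenerate case $b(\sigma,k)=0$) applied to the cocycle $\varphi_{\mathfrak{R}_{\sigma,k}(\xi)}$, with exponent $b=b(\sigma,k)$ and $a=(m-2)/m$.

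First I would check that $\mathfrak{R}_{\sigma,k}(\xi)\in C^m_{\sigma,k}(M)$: this is immediate because each $f_i$ vanishes on $\bigcup_{\sigma'\in \mathrm{Fix}(\psi_\R)}U_{\sigma'}$, so the subtracted combination does not change the local behaviour of $\xi$ either on $\bigcup_{\sigma'\ne\sigma}U_{\sigma'}$ or at $\sigma$ up to order $k$. Theorem~\ref{thm:phifform}(ii) then yields $\varphi_{\mathfrak{R}_{\sigma,k}(\xi)}\in \operatorname{P_{b(\sigma,k)}G}(\sqcup_\alpha I_\alpha)$ and $\varphi_{|\mathfrak{R}_{\sigma,k}(\xi)|}\in\operatorname{\widehat{P}_{b(\sigma,k)}}(\sqcup_\alpha I_\alpha)$, and Theorem~\ref{thm:phifform}(i) guarantees $g\in \pag(\sqcup_\alpha I_\alpha)$ with $a=(m-2)/m\ge b(\sigma,k)$.

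Next I would verify the key cancellation $\mathfrak{h}_j(\varphi_{\mathfrak{R}_{\sigma,k}(\xi)})=0$, which is the whole point of the construction \eqref{def;reduction}. By linearity of $\varphi_{(\cdot)}$ and of $\mathfrak{h}_j$, together with $\varphi_{f_i}=h_i$ and the defining identity \eqref{eqn;corhj},
\[
\mathfrak{h}_j\bigl(\varphi_{\mathfrak{R}_{\sigma,k}(\xi)}\bigr)=\mathfrak{h}_j(\varphi_\xi)-\sum_{i=1}^{j-1}d_{b(\sigma,k),i}(\varphi_\xi)\,\mathfrak{h}_j(h_i),
\]
so I only need to show $\mathfrak{h}_j(h_i)=h_i$ for $1\le i<j$. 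This follows directly from the explicit formula for $\mathfrak{h}_j$ in the proof of Theorem~\ref{thm;correction}: since $h_i\in\Gamma$ is piecewise constant and $h_i\in U_j$, one has $S(l)h_i=Q(l)h_i$, $\mathcal{M}^{(l)}S(l)h_i=Q(l)h_i$, $P^{(l)}_{U_j}(Q(l)h_i)=Q(l)h_i$ (as $Q(l)h_i\in U_j^{(l)}$), and finally $Q(0,l)^{-1}Q(l)h_i=h_i$ for every $l$, so the defining limit equals $h_i$.

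Having established $\mathfrak{h}_j(\varphi_{\mathfrak{R}_{\sigma,k}(\xi)})=0$, Corollary~\ref{cor;correction} gives, for every sufficiently small $\tau>0$ and a suitable FDC-acceleration,
\[
\bigl\|\mathcal{M}^{(k)}\bigl(S(k)\varphi_{\mathfrak{R}_{\sigma,k}(\xi)}\bigr)\bigr\|=O\bigl(e^{(\lambda_1 b(\sigma,k)+\tau)k}\bigr).
\]
When $b(\sigma,k)>0$, both hypotheses of Theorems~\ref{thm:expbL1} and~\ref{thm;L1bound} are then satisfied, and those theorems yield the a.e.\ and $L^1(M')$ upper bounds with exponent $b(\sigma,k)$ that we want. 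When $b(\sigma,k)=0$ (i.e.\ $k=m_\sigma-2$, so $j=g+1$), the same corollary gives $\|\mathcal{M}^{(k)}(S(k)\varphi_{\mathfrak{R}_{\sigma,k}(\xi)})\|=O(e^{\tau k})$ and I would instead invoke Proposition~\ref{thm:specflowzeroexp} to obtain the subpolynomial bounds. The main technical obstacle, i.e.\ controlling $\|\mathcal{M}^{(k)}\circ S(k)\|$ for cocycles with genuine polynomial singularities $0<b(\sigma,k)<1$, has already been absorbed into the correction operator $\mathfrak{h}_j$ of \S\ref{sec;correction}, so the proof here is really an assembly of the machinery already developed.
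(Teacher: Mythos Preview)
Your proposal is correct and follows essentially the same route as the paper: establish $\varphi_{\mathfrak{R}_{\sigma,k}(\xi)}\in\operatorname{P_{b(\sigma,k)}G}$ and $\varphi_{|\mathfrak{R}_{\sigma,k}(\xi)|}\in\operatorname{\widehat{P}_{b(\sigma,k)}}$ via Theorem~\ref{thm:phifform}, verify $\mathfrak{h}_j(\varphi_{\mathfrak{R}_{\sigma,k}(\xi)})=0$ using $\mathfrak{h}_j(h_i)=h_i$, invoke Corollary~\ref{cor;correction}, and conclude with Theorems~\ref{thm:expbL1} and~\ref{thm;L1bound}. Your added care in separating out the case $b(\sigma,k)=0$ (where Theorems~\ref{thm:expbL1} and~\ref{thm;L1bound} as stated require $b>0$, and Proposition~\ref{thm:specflowzeroexp} is the appropriate substitute) is a worthwhile clarification.
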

\begin{proof}
In view of Theorem~\ref{thm:phifform},
$\varphi_{\mathfrak{R}_{\sigma,k}(\xi)}\in \pbg(\sqcup_{\alpha \in \mathcal{A}} I_\alpha)$ and $\varphi_{|\mathfrak{R}_{\sigma,k}(\xi)|}\in {\wpb}(\sqcup_{\alpha\in \mathcal{A}}
I_{\alpha})$ for $b=b(\sigma,k)$.
Therefore, by the definition of the correcting operator,
\begin{align*}
\mathfrak{h}_j(\varphi_{\mathfrak{R}_{\sigma,k}(\xi)})&=\mathfrak{h}_j(\varphi_{ \xi})-\sum_{i=1}^{j-1} {d_{b,i}(\varphi_{ \xi})}
\mathfrak{h}_j(\varphi_{f_i})
= \mathfrak{h}_j(\varphi_{\xi})-\sum_{i=1}^{j-1} {d_{b,i}(\varphi_{\xi})}
\mathfrak{h}_j(h_{i})\\
& = \mathfrak{h}_j(\varphi_{\xi})-\sum_{i=1}^{j-1} d_{b,i}(\varphi_{\xi})h_i=0.
\end{align*}
Hence, by Corollary \ref{cor;correction}, for every $\tau >0$ we have
$
\|\mathcal{M}^{(k)}(S(k)\varphi_{\mathfrak{R}_{\sigma,k}(\xi)})\| = O(e^{(\lambda_1 b+\tau)k})$.
Finally, Theorems~\ref{thm:expbL1}~and~\ref{thm;L1bound} applied to $g=\varphi_1$ ($a=(m-2)/2$) and $f = \mathfrak{R}_{\sigma,k}(\xi)$,
complete the proof.
\end{proof}

Note that, in view of \eqref{eqn;lowerbndb}, the inequalities are optimal whenever $\psi_\R$ on $M$ is minimal.
Hence, it follows that the correction provided by the operator $\mathfrak{R}_{\sigma,k}$ is the most optimal one.

\section{Complete power deviation spectrum of Birkhoff integrals}\label{sec:pmt}
In this section by combining previous results, we prove the full deviation spectrum of Birkhoff integrals for locally Hamiltonian flows.

\begin{proof}[Proof of Theorem \ref{theorem;main}]
The proof  splits into five parts.\medskip

\emph{Part I: Deviations around fixed points.}
For every $\sigma\in \mathrm{Fix}(\psi_\R)$ and  $\alpha \in\Z_{\geq 0}\times \Z_{\geq 0}$ with $|\alpha| <m_\sigma-2$,
choose a map $\bar \xi_\sigma^\alpha\in C^m(M)$ supported on the  neighborhood $U_\sigma$ of the fixed point $\sigma$ so that
$\partial_\sigma^\beta (\bar \xi_\sigma^\alpha) =  \delta_{\alpha \beta}$ for all $\beta \in\Z_{\geq 0}\times \Z_{\geq 0}$ with $|\beta| \leq m$,
where $\delta_{\alpha\beta}$ is the Kronecker delta, i.e.\ $\delta_{\alpha\beta}=1$ if $\alpha=\beta$ and $\delta_{\alpha\beta}=0$ if $\alpha\neq\beta$.
By definition, $\bar \xi_\sigma^\alpha\in C_{\sigma, |\alpha|}^m(M)$. Let
$ \xi_\sigma^\alpha:=\mathfrak{R}_{\sigma, |\alpha|}(\bar \xi_\sigma^\alpha)\in C_{\sigma, |\alpha|}^m(M)$ and
 $c_{\sigma,\alpha}(T,x):= \int_0^T \xi_\sigma^\alpha (\psi_t(x)) dt$.
Then, in view of Proposition~\ref{prop:xi} applied to $\xi=\bar \xi_\sigma^\alpha$, for every $\sigma\in \mathrm{Fix}(\psi_\R)\cap M'$ and  $\alpha \in\Z_{\geq 0}\times \Z_{\geq 0}$ with $|\alpha| <m_\sigma-2$
we have \eqref{res;dev1} and \eqref{res;dev2}. Recall that, by Lemma~\ref{lem;partial-invariance}, the corresponding distribution $\partial_\sigma^\alpha$ is  bounded and  $\psi_\R$-invariant.

\medskip

\emph{Part II: Construction of the remainder.}
Let us consider $f_r \in C^m(M)$ given by
\begin{equation}\label{def:f_r}
f = \sum_{\sigma\in \mathrm{Fix}(\psi_\R)}\sum_{\substack{\alpha\in\Z^2_{\geq 0}\\|\alpha| < m_\sigma-2}}\partial^{\alpha}_{\sigma}(f)\xi_\sigma^\alpha+ f_r.
\end{equation}
In view of Theorem~\ref{thm:phifform}, we have $\varphi_{f_r}\in\pog(\sqcup_{\alpha \in \mathcal{A}} I_\alpha)$.
Indeed, let $\{\chi_\sigma:\sigma\in \mathrm{Fix}(\psi_\R)\}\subset C^m(M)$ be a partition of unity such that for any pair of fixed points $(\sigma,\sigma')$ we have
$\chi_\sigma(x)=\delta_{\sigma\sigma'}$ for all $x\in U_{\sigma'}$. Then
\begin{align*}
f_r=\sum_{\sigma\in \mathrm{Fix}(\psi_\R)}\Big(f\cdot\chi_\sigma-\sum_{|\alpha| < m_\sigma-2}\partial^{\alpha}_{\sigma}(f)\xi_\sigma^\alpha\Big)
\end{align*}
and $f_\sigma:=f\cdot\chi_\sigma-\sum_{|\alpha| < m_\sigma-2}\partial^{\alpha}_{\sigma}(f)\xi_\sigma^\alpha$ vanishes on $\bigcup_{\sigma'\in \mathrm{Fix}(\psi_\R)\setminus\{\sigma\}}U_{\sigma'}$ and for every $\beta\in\Z^2_{\geq 0}$ with $|\beta|<m_\sigma-2$
we have
\[\partial_\sigma^\beta(f_\sigma)=\partial_\sigma^\beta(f\cdot\chi_\sigma)-\sum_{|\alpha| < m_\sigma-2}\partial^{\alpha}_{\sigma}(f)\partial_\sigma^\beta(\xi_\sigma^\alpha)=\partial_\sigma^\beta(f)-\sum_{|\alpha| < m_\sigma-2}\delta_{\alpha\beta}\partial^{\alpha}_{\sigma}(f)=0.\]
Therefore, $(f_\sigma)^{(l)}(\sigma)=0$ for all $0\leq l<m_\sigma-2$. As $f_\sigma\in C^m_{\sigma,m_\sigma-2}$, in view of Theorem~\ref{thm:phifform}, it follows that
 for every $\sigma\in \mathrm{Fix}(\psi_\R)\cap M'$ we have $\varphi_{f_\sigma}\in\pog(\sqcup_{\alpha \in \mathcal{A}} I_\alpha)$ and
\[\|\varphi_{f_\sigma}\|_0\leq C\|f_\sigma\|_{C^{m_\sigma-1}}\leq C\|f\|_{C^{m_\sigma-1}}\|\chi_\sigma\|_{C^{m_\sigma-1}}
+C\sum_{|\alpha| < m_\sigma-2}|\partial^{\alpha}_{\sigma}(f)|\|\xi_\sigma^\alpha\|_{C^{m_\sigma-1}}.\]
By definition, for every $\sigma\in \mathrm{Fix}(\psi_\R)$ and $\alpha\in\Z_{\geq 0}^2$ there exists $C_{\sigma,\alpha}>0$ such that
$|\partial^{\alpha}_{\sigma}(f)|\leq C_{\sigma,\alpha}\|f\|_{C^{|\alpha|}}$ for every $f\in C^m(M)$. It follows that there exists another $C>0$ such that
$\|\varphi_{f_r}\|_0\leq C\|f\|_{C^{m-1}}$ for every $f\in C^m(M)$.
\medskip

\emph{Part III: Deviation of the remainder $f_r$.}
Applying Theorem~\ref{thm;correction} to $a=0$, we have a bounded (correction) operator $\mathfrak{h}_{g+1} : \pog(\sqcup_{\alpha \in \mathcal{A}} I_\alpha) \rightarrow U_{g+1} \subset H(\pi^{(0)})$ such that ${\mathfrak{h}_{g+1}}(h) = h$ for every $h \in U_{g+1}$.
Let us consider bounded operators $ {d_i} : \pog(\sqcup_{\alpha \in \mathcal{A}} I_\alpha) \rightarrow \R$ for $1\leq i \leq g$   such that
\begin{equation}\label{eqn;corh}
\mathfrak{h}_{g+1}(\varphi) = \sum_{i=1}^{g} {d_i}(\varphi)h_i  \quad \text{for every } \varphi \in  \pog(\sqcup_{\alpha \in \mathcal{A}} I_\alpha).
\end{equation}
Let $D_i : C^m(M) \rightarrow \R$,  $1\leq i \leq g$ be  operators  given by
\[D_i({f}) = d_i(\varphi_{f_r}), \text{ for } f \in C^m(M).\]
Since $C^m(M)\ni f\mapsto \varphi_{f_r}\in \pog(\sqcup_{\alpha \in \mathcal{A}} I_\alpha)$ and $ {d_i} : \pog(\sqcup_{\alpha \in \mathcal{A}} I_\alpha) \rightarrow \R$ are bounded linear operator, the operators $D_i$ are also bounded.

Recall that we have $f_i\in C^{\infty}(M)$ such that $\varphi_{f_i}=h_i$ for $1\leq i\leq g$. Let us consider $f_e\in C^m(M)$ given by
\begin{equation}\label{def:f_e}
f_{r} = \sum_{i=1}^g  D_{i}(f) f_i + f_e.
\end{equation}
For every $1\leq i\leq g$ let $u_i(T,x):= \int_0^T f_i (\psi_t(x)) dt$.
As $
\lim_{k \rightarrow \infty} \frac{1}{k}\norm{Q(k)h_i} = \lambda_i$  for  $1\leq i \leq g$, in view of Proposition~\ref{thm:specflownonzero}, we have  \eqref{res;dev3} and \eqref{res;dev4} with $\nu_i=\lambda_i/\lambda_1$.

By the definition of $f_e$, we have
\begin{align*}
\varphi_{f_e} =  \varphi_{f_r} - \sum_{i=1}^g  D_{i}(f)\varphi_{f_i}=\varphi_{f_r} - \sum_{i=1}^g  D_{i}(f)h_i.
\end{align*}
As $\varphi_{f_r}\in \pog(\sqcup_{\alpha \in \mathcal{A}} I_\alpha)$, we have $\varphi_{f_e}\in \pog(\sqcup_{\alpha \in \mathcal{A}} I_\alpha)$
and
\begin{align*}
\mathfrak{h}_{g+1}(\varphi_{f_e}) &
=  \mathfrak{h}_{g+1}(\varphi_{f_r}) - \sum_{i=1}^g  D_{i}(f) \mathfrak{h}_{g+1}(h_i)
= \mathfrak{h}_{g+1}(\varphi_{f_r}) - \sum_{i=1}^g  {d_i}(\varphi_{f_r}) h_i = 0.
\end{align*}
By Corollary \ref{cor;correction}, it follows that
\[
\|\mathcal{M}^{(k)}(S(k)\varphi_{f_e})\| = O(e^{\tau k})\text{ for every }\tau >0.
\]
Let $err(f,T,x) = \int_0^T f_e (\psi_t(x)) dt$. If $f_e\neq 0$ then, in view of Proposition~\ref{thm:specflowzeroexp} and Remark~\ref{rem:dodeq},
this gives \eqref{eqn;dev-remain} and \eqref{eqn;dev-remain-lp}.

By \eqref{def:f_r} and \eqref{def:f_e}, we have
\begin{equation}\label{eq:fdecomp}
f = \sum_{\sigma\in \mathrm{Fix}(\psi_\R)}\sum_{\substack{\alpha\in\Z^2_{\geq 0}\\|\alpha| < m_\sigma-2}}\partial^{\alpha}_{\sigma}(f)\xi_\sigma^\alpha+ \sum_{i=1}^g  D_{i}(f) f_i + f_e.
\end{equation}
Passing to the Birkhoff integrals, we obtain \eqref{eqn;dev-complete}.

\medskip

\emph{Part IV: Invariance of distributions.} We need to show that the distributions $D_i$ for $1\leq i\leq g$ are $\psi_\R$-invariant.
By \eqref{eq:fdecomp}, for every $s\in\R$ we have
\[f\circ\psi_s = \sum_{\sigma\in \mathrm{Fix}(\psi_\R)}\sum_{\substack{\alpha\in\Z^2_{\geq 0}\\|\alpha| < m_\sigma-2}}\partial^{\alpha}_{\sigma}(f\circ\psi_s)\xi_\sigma^\alpha+ \sum_{i=1}^g  D_{i}(f\circ\psi_s) f_i + (f\circ\psi_s)_e.\]
Since $\partial^{\alpha}_{\sigma}(f\circ\psi_s)=\partial^{\alpha}_{\sigma}(f)$ (see Lemma~\ref{lem;partial-invariance}), it follows that
\[\bar{f}:=\sum_{i=1}^g  D_{i}(f\circ\psi_s-f) f_i=f\circ\psi_s-f+f_e-(f\circ\psi_s)_e.\]
Note that for any $T>0$ we have
\[\left|\int_0^T(f\circ\psi_s-f)(\psi_tx)\,dt \right| \leq \int_0^s|f(\psi_tx)|\,dt+\int_T^{T+s}|f(\psi_tx)|\,dt\leq 2s\|f\|_{C^0}.\]
In view of \eqref{eqn;dev-remain}, it follows that
\begin{equation}\label{neq:negexpbarf}
\limsup_{T\to+\infty}\frac{\log \left|\int_0^T\bar{f}(\psi_tx)\,dt\right|}{\log T} \leq 0\text{ for a.e. }x\in M'.
\end{equation}
On the other hand,
\[\varphi_{\bar{f}}=\sum_{i=1}^g  D_{i}(f\circ\psi_s-f) \varphi_{f_i}=\sum_{i=1}^g  D_{i}(f\circ\psi_s-f) h_i\in U_{g+1}.\]
Suppose that, contrary to our claim, $D_{i}(f\circ\psi_s-f)\neq  0$ for some $1\leq i\leq g$. As $h_1,\ldots,h_g$ are linearly independent,
\[h:=\varphi_{\bar{f}}=\sum_{i=1}^g  D_{i}(f\circ\psi_s-f) h_i\neq 0.\]
In view of \eqref{neq:posexph}, it follows that
\[\lambda(h):=\lim_{k\to\infty}\frac{\log\|Q(k)h\|}{k}\geq \lambda_g>0.\]
By Proposition~\ref{thm:specflownonzero}, this yields
\[\limsup_{T\to+\infty}\frac{\log \left|\int_0^T\bar{f}(\psi_t x)\,dt \right|}{\log T} =\frac{\lambda(h)}{\lambda_1}>0\text{ for a.e. }x\in M',\]
contrary to \eqref{neq:negexpbarf}.
Consequently, $D_{i}(f\circ\psi_s)=D_i(f)$ for all $1\leq i\leq g$ and $s\in\R$.
\medskip

\emph{Part V: Lower bounds.}
Suppose that $M' = M$. Let us consider any $f\in C^m_{\sigma,l}$ with
$f^{(l)}(\sigma)\neq 0$ for $0\leq l< m_\sigma-2$ and $\sigma\in \mathrm{Fix}(\psi_\R)$. Then $\varphi = \varphi_f\in {\pbg}(\sqcup_{\alpha\in \mathcal{A}}
I_{\alpha})$ with $b=b(\sigma,l)>0$. The purpose of this part is to show \eqref{eqn;lowerbndb}.

In view of Theorem~\ref{thm:phifform} (the last sentence) combined with Lemma~\ref{lem:nonnegsum}
applied to $a=\frac{m_\sigma-1}{m_\sigma}$ and $a_i=\partial^{(i,l-i)}_\sigma(f)$ for $0\leq i\leq l$ (at least one of them is non-zero, since $f^{(l)}(\sigma)\neq 0$),
there exists $\alpha\in \mathcal{A}$ such that  $C^+_\alpha(\varphi_f)\neq 0$ or $C^-_\alpha(\varphi_f)\neq 0$.
We focus only on the case $C^+_\alpha(\varphi_f)\neq 0$. In the latter case, the proof runs similarly.

For every $k\geq 1$ let us consider the interval $\big(l^{(k)}_\alpha,l^{(k)}_\alpha+\vep|I^{(k)}_\alpha|\big]$ with
\[\vep:=\left(\frac{|C^+_\alpha|}{2^{\frac{4+4b}{b}}\kappa^{1+b}d\zeta(1+b)p_b(\varphi)}\right)^{1/(1+b)}.\]
As $d,\kappa,\zeta(b+1) \geq 1$, 
  by \eqref{neq:Cpa}, we have $ 16\vep^b \leq 1$.
In view of Proposition~\ref{prop:renormbelow}, for $k$ large enough ($|I^{(k)}|\leq \delta$) and for every
$x\in \big(l^{(k)}_\alpha,l^{(k)}_\alpha+\vep|I^{(k)}_\alpha|\big]$, we have
\begin{equation*}
\big|(x-l^{(k)}_\alpha)^{1+b}(S(k)\varphi)'(x)\big| \geq \frac{|C^+_\alpha|}{2}-2^{2+b}\kappa^{1+b}d\zeta(1+b)p_b(\varphi)
\left(\frac{\vep|I^{(k)}_\alpha|}{|I^{(k)}_\alpha|}\right)^{1+b}\geq \frac{|C^+_\alpha|}{4} >0.
\end{equation*}
In view of Lemma~\ref{lem:belowest}, there exists an interval $\widehat{J}\subset \big(l^{(k)}_\alpha,l^{(k)}_\alpha+\vep|I_\alpha^{(k)}|\big]$
such that
\begin{equation}\label{eq:skbelow}
|\widehat{J}|\geq \frac{\vep|I^{(k)}_\alpha|}{4}\text{ and }\ |(S(k)\varphi)(x)| \geq \frac{|C^+_\alpha|}{4(\vep|I^{(k)}_\alpha|)^{b}}
\geq \frac{|C^+_\alpha|}{|I^{(k)}_\alpha|^{b}}
\text{ for }x\in \widehat{J}. 
\end{equation}
Finally we can choose an interval $J^{(k)}\subset\widehat{J}$ such that
\begin{gather}
\label{eq:Jk1}
|J^{(k)}|\geq \vep|I^{(k)}_\alpha|/(2^4\kappa);\\
\label{eq:Jk2}
\dist(J^{(k)},End(T^{(k)}))\geq \vep|I^{(k)}_\alpha|/2^4;\\
\label{eq:Jk3}
\dist((T^{(k)})^{-1}J^{(k)},End(T^{(k)}))\geq \vep|I^{(k)}_\alpha|/(2^4\kappa).
\end{gather}
Let us consider the set
\begin{align*}
B_k:&=\{T^g_t(x,0):x\in (T^{(k)})^{-1}J^{(k)},0\leq t<(S(k)g)(x)\}\\
&
=\{T^g_t(x,0):x\in J^{(k)},-(S(k)g)((T^{(k)})^{-1}x)\leq t<0\}.
\end{align*}
As $g\geq \underline{g}>0$, by \eqref{eq:skbelow}, \eqref{eq:Jk1}
 and \eqref{eq:qlambda}, we have
\[Leb(B_k)=\int_{(T^{(k)})^{-1}J^{(k)}}S(k)g(x)\,dx\geq \underline{g}|J^{(k)}|\min_{\beta\in\mathcal{A}}Q_\beta(k)\geq
\frac{\delta\vep\underline{g}}{2^4\kappa^2}|I|.\]
For every $(x',r')=T^g_r(x,0)\in B_k$ let
\begin{align*}
\tau_k^0 &=\tau_k^0(x',r'):= (S(k)g)(x)-r\text{ and }\\
 \tau_k^1&=\tau_k^1(x',r'):= (S(k)g)(x)+(S(k)g)(T^{(k)}x)-r.
\end{align*}
Then $T^g_{\tau_k^0}(x',r')=(T^{(k)}x,0)$ and
\begin{align*}
\int_0^{\tau_k^1}f(T^g_t(x',r'))\,dt-\int_0^{\tau_k^0}f(T^g_t(x',r'))\,dt& =\int_0^{(S(k)g)(T^{(k)}x)}f(T^g_t(T^{(k)}x,0))\,dt\\
&=(S(k)\varphi_f)(T^{(k)}x).
\end{align*}
As $x\in (T^{(k)})^{-1}J^{(k)}$, by \eqref{eq:skbelow}, we have $|(S(k)\varphi_f)(T^{(k)}x)|\geq {|C^+_\alpha|}/|I^{(k)}_\alpha|^{b}$.
It follows that
\begin{equation}\label{eq:maxT}
\max\Big\{\Big|\int_0^{\tau_k^1}f(T^g_t(x',r'))\,dt\Big|,\Big|\int_0^{\tau_k^0}f(T^g_t(x',r'))\,dt\Big|\Big\}\geq \frac{|C^+_\alpha|}{2|I^{(k)}_\alpha|^{b}}.
\end{equation}
Choose $\tau_k=\tau_k(x',r')$ among $\tau_k^0$ and $\tau_k^1$ such that
\[
\left|\int_0^{\tau_k}f(T^g_t(x',r'))\,dt\right| = \max\Big\{\Big|\int_0^{\tau_k^1}f(T^g_t(x',r'))\,dt\Big|,\Big|\int_0^{\tau_k^0}f(T^g_t(x',r'))\,dt\Big|\Big\}.
\]
As $x\in (T^{(k)})^{-1}J^{(k)}$, in view of \eqref{eq:Jk2}, \eqref{eq:Jk3} and \eqref{eqn;upperboundvarphi}, we have
\begin{align*}
|(S(k)g)((T^{(k)})^{-1}x)| &\leq \|\mathcal{M}^{(k)}(S(k)g)\| +p_a(S(k)g)O(|I^{(k)}|^{-a})\\
|(S(k)g)(x)| &\leq \|\mathcal{M}^{(k)}(S(k)g)\| +p_a(S(k)g)O(|I^{(k)}|^{-a}).
\end{align*}
Moreover, by \eqref{eqn;Mknorm}, \eqref{nase} and \eqref{eqn;renormpaos2}, we have
\[\|\mathcal{M}^{(k)}(S(k)g)\|\leq \frac{2\kappa}{|I^{(k)}|}\|S(k)g\|_{L^1(I^{(k)})}\leq \frac{2\kappa\|g\|_{L^1(I^{(0)})}}{|I^{(k)}|},\quad p_a(S(k)g)\leq O(p_a(g)).\] 
Therefore,
\begin{align*}
|(S(k)g)((T^{(k)})^{-1}x)| \leq O(|I^{(k)}|^{-1}), \quad
|(S(k)g)(x)| \leq O(|I^{(k)}|^{-1}).
\end{align*}
Hence there exists $C>0$ such that for every $k\geq 1$ and $(x',r')\in B_k$ we have
$\tau_k(x',r')\leq  C |I^{(k)}|^{-1}$.
In view of \eqref{eq:maxT}, it follows that for every $(x',r')\in B_k$ we have
\[\frac{\log|\int_0^{\tau_k}f(T^g_t(x',r'))\,dt|}{\log \tau_k}\geq \frac{\log (|C^+_\alpha||I^{(k)}|^{-b}/2)}{\log (C|I^{(k)}|^{-1})}.\]%
Since $(B_k)_{k\geq 1}$ is a sequence of asymptotically invariant sets (i.e.\ for every $t\in\R$ we have $Leb(B_k\triangle T^g_t B_k)\to 0$ as $k\to\infty$)
and their measures are separated from zero, by the ergodicity of the flow, a.e.\ $(x,r)\in I^g$ belongs to $B_k$ for infinitely many $k$.
It follows that for a.e.\ $(x,r)\in I^g$ we have
\begin{align*}
\limsup_{T\to+\infty}\frac{\log|\int_0^{T}f(T^g_t(x,r))\,dt|}{\log T}&\geq \limsup_{k\to+\infty}\frac{\log|\int_0^{\tau_k}f(T^g_t(x,r))\,dt|}{\log \tau_k}\\
&\geq \lim_{k\to+\infty}\frac{\log (|C^+_\alpha||I^{(k)}|^{-b}/2)}{\log (C|I^{(k)}|^{-1})}=b.%
\end{align*}

Finally, \eqref{eqn;lowerbnc} follows directly from \eqref{res;dev1} and \eqref{eqn;lowerbndb}, since $c_{\sigma,\alpha}(T,x)= \int_0^T \xi_\sigma^\alpha (\psi_t(x)) dt$ and $\xi_\sigma^\alpha\in C_{\sigma, |\alpha|}^m(M)$ with $\partial_\sigma^{\alpha}(\xi_\sigma^\alpha)=1$.
\end{proof}

\section*{Acknowledgements}
M.K. would like to thank the Center of Excellence ``Dynamics, mathematical analysis and artificial intelligence'' at the Nicolaus Copernicus University in Toru\'n for hospitality during his post-doc grant.
Research was partially supported by  the Narodowe Centrum Nauki Grant 2017/27/B/ST1/00078.

\appendix

\section{Proof of Theorem~\ref{thm;FDCRTC}}\label{sec:App1}
We review the natural extension of the Rauzy-Veech induction and prove the full measure of IETs satisfying \ref{FDC}.

\subsection{Extension of the Rauzy-Veech induction}
Let $\mathcal{G} \subset \mathcal{S}^0_{\mathcal{A}}$ be a Rauzy class and set
$\Delta^{\mathcal{A}} := \{\lambda \in \R_{>0}^{\mathcal{A}} : |\lambda|=1 \}.$
Let ${\mathcal{R}}: \GG \times \R_{>0}^{\mathcal{A}} \rightarrow \GG
\times \R_{>0}^{\mathcal{A}}$ be the standard Rauzy-Veech map defined in \S\ref{sec;RVI} by
\[\mathcal{R}(\pi,\lambda)=(\widetilde{\pi},\widetilde{\lambda}),\text{ where }\widetilde{\lambda}=A^{-1}(\pi,\lambda)\text{
and $\widetilde{\pi}$ is given by \eqref{def:pi}.}\]
Then we
define (normalized) Rauzy-Veech renormalization
\[\widetilde{\mathcal{R}}: \GG \times \Delta^{\mathcal{A}} \rightarrow \GG
\times \Delta^{\mathcal{A}}, \quad \widetilde{\mathcal{R}}(\pi,\lambda) = (\tilde \pi,\tilde \lambda / |\tilde \lambda|).\]
By Veech \cite{Ve1}, there exists an $\widetilde{\mathcal{R}}$-invariant ergodic recurrent measure $\mu_{\mathcal{G}}$ which is equivalent to the product of the counting measure on $\mathcal{G}$ and the Lebesgue measure on $\Delta^{\mathcal{A}}$. For every $\pi \in \mathcal{S}^0_{\mathcal{A}}$, let
\[
\Theta_{\pi}:= \Big\{\tau \in \R^\mathcal{A} : \sum_{\pi_0(\alpha) \leq k}\tau_\alpha>0, \sum_{\pi_1(\alpha) \leq k}\tau_\alpha<0 \text{ for } 1 \leq k \leq d\Big\}
\]
and let
\[X(\mathcal{G}) := \bigcup_{\pi \in \mathcal{G}}\left\{(\pi,\lambda,\tau) \in \{\pi\} \times
\Delta^{\mathcal{A}} \times \Theta_{\pi} : \langle \lambda, \Omega_\pi \tau \rangle = 1 \right\}.\]
Then  the natural (invertible) extension of $\widetilde{\mathcal{R}}$ is of the form
\[
\widehat{\mathcal{R}}:X(\mathcal{G})\to X(\mathcal{G}),\quad
\widehat{\mathcal{R}}(\pi,\lambda,\tau) = \left(\tilde\pi, \frac{A^{-1}(\pi,\lambda)\lambda}{|A^{-1}(\pi,\lambda)\lambda|},|A^{-1}(\pi,\lambda)\lambda|A^{-1}(\pi,\lambda)\tau\right).
\]
The natural extension, constructed by  Veech in \cite{Ve1}, of the measure $\mu_{\mathcal{G}}$ on $X(\GG)$ is denoted by $\widehat \mu_{\mathcal{G}}$. Then $\widehat \mu_{\mathcal{G}}$ is $\widehat{\mathcal{R}}$-invariant and
$\widehat{\mathcal{R}}$ is ergodic and recurrent with respect to $\widehat \mu_{\mathcal{G}}$.

We extend the map $A: \mathcal{G} \times \Delta^{\mathcal{A}}  \rightarrow   SL_\mathcal{A}(\Z)$ defined in \S\ref{sec;RVI} to $\widehat{A}: X(\mathcal{G}) \rightarrow SL_\mathcal{A}(\Z)$ given by
$\widehat{A}(\pi,\lambda,\tau) = {A}(\pi,\tau)$.
Let us consider the extended cocycle $\widehat{A}:\Z \times X(\mathcal{G}) \rightarrow SL_\mathcal{A}(\Z)$
\[
\widehat{A}^{(n)}(\pi,\lambda,\tau) =
\begin{cases} \widehat{A}(\pi,\lambda,\tau)\cdot \widehat{A}(\widehat{\mathcal{R}}(\pi,\lambda,\tau)) \cdots  \widehat{A}(\widehat{\mathcal{R}}^{n-1}(\pi,\lambda,\tau))  & \text{ if } n \geq 0\\
\widehat{A}(\widehat{\mathcal{R}}^{-1}(\pi,\lambda,\tau))\cdot \widehat{A}(\widehat{\mathcal{R}}^{-2}(\pi,\lambda,\tau)) \cdots   \widehat{A}(\widehat{\mathcal{R}}^{-n}(\pi,\lambda,\tau)) & \text{ if } n < 0.
\end{cases}
\]
Then
\begin{equation}\label{eq:An>0}
\widehat{A}^{(n)}(\pi,\lambda,\tau) = {A}^{(n)}(\pi,\tau) \text{ if } n \geq 0.
\end{equation}

Let $Y \subset X(\mathcal{G})$  be a subset such that $0 < \widehat \mu_{\mathcal{G}}(Y) < \infty$. For a.e $(\pi,\lambda,\tau) \in Y$, let $r(\pi,\lambda,\tau) \geq 1$ by the first return time of $(\pi,\lambda,\tau)$ for the map $\widehat{\mathcal{R}}$.
Denote by $\widehat{\mathcal{R}}_Y : Y \rightarrow Y$ the induced map and by $\widehat{{A}}_Y : Y \rightarrow SL_\mathcal{A}(\Z)$ the induced cocycle
\[\widehat{\mathcal{R}}_Y(\pi,\lambda,\tau) = \widehat{\mathcal{R}}^{r(\pi,\lambda,\tau)}(\pi,\lambda,\tau), \quad \widehat{{A}}_Y(\pi,\lambda,\tau) = \widehat{{A}}^{(r(\pi,\lambda,\tau))}(\pi,\lambda,\tau) \]
for a.e $(\pi,\lambda,\tau) \in Y$. Let $\widehat{\mu}_Y$ be the restriction of $\widehat{\mu}_{\mathcal{G}}$ to $Y$.

\subsection{Oseledets splitting}
Assume that $\log\|\widehat{A}_Y\|$ and $\log\|\widehat{A}^{-1}_Y\|$ are $\widehat{\mu}_Y$-integrable. By the Oseledets multiplicative theorem, the symplecticity of $\widehat{A}_Y$ (see \cite{Zor}) and
the simplicity of spectrum (see \cite{Av-Vi}), there exist
 Lyapunov exponents $\lambda_1>\ldots>\lambda_g>\lambda_{g+1}=0$ such that for $\widehat{\mu}_Y$-a.e. $(\pi,\lambda,\tau) \in Y$ we have a splitting
\[\R^{\mathcal{A}} = \bigoplus_{1 \leq i \leq g+1} F_i(\pi,\lambda,\tau)\oplus \bigoplus_{1 \leq i \leq g} F_{-i}(\pi,\lambda,\tau),\]
for which
\begin{gather}
\label{eq:F1}
\lim_{n\to \pm \infty}\frac{1}{n}\log\|\widehat{A}^{(n)}_Y(\pi,\lambda,\tau)^t\upharpoonright_{F_i(\pi,\lambda,\tau)}\| = \lambda_i \ \text{ if }
1\leq i \leq g+1\\
\label{eq:F2}
\lim_{n\to \pm \infty}\frac{1}{n}\log\|\widehat{A}^{(n)}_Y(\pi,\lambda,\tau)^t\upharpoonright_{F_{-i}(\pi,\lambda,\tau)}\| = -\lambda_i \ \text{ if } 1\leq i \leq g \\
\label{eq:F3}
\widehat{A}^{(n)}_Y(\pi,\lambda,\tau)^tF_i(\pi,\lambda,\tau)=F_i(\widehat{\mathcal{R}}^n_Y(\pi,\lambda,\tau))
\end{gather}
for all $i\in\{-g,\ldots,-1,1,\ldots,g+1\}$ and $n\in\Z$ and
\[\dim F_{\pm i}(\pi,\lambda,\tau)=1\quad\text{for}\quad i=1,\ldots,g.\]
Moreover, for every partition $\{I_1,I_2\}$ of the set $\{-g,\ldots,-1,1,\ldots,g+1\}$ we have
\begin{equation}\label{eq:F4}
\lim_{n\to \pm \infty}\frac{1}{n}\log\Big|\sin\angle\Big(\bigoplus_{i\in I_1}F_i(\pi,\lambda,\tau),\bigoplus_{i\in I_2}F_i(\pi,\lambda,\tau)\Big)\Big| =0,
\end{equation}
and
\begin{equation}\label{def;Hpi}
H(\pi) := \bigoplus_{i\neq g+1}F_i(\pi,\lambda,\tau).
\end{equation}
For every $1\leq j\leq g+1$ let
\begin{align*}
E_j(\pi,\lambda,\tau)&:=\bigoplus_{j\leq i\leq g+1}F_i(\pi,\lambda,\tau)\oplus \bigoplus_{1\leq i\leq g}F_{-i}(\pi,\lambda,\tau)\\
U_j(\pi,\lambda,\tau)&:=\bigoplus_{1\leq i<j}F_i(\pi,\lambda,\tau)\subset H(\pi).
\end{align*}
Then
\[E_j(\pi,\lambda,\tau)\oplus U_j(\pi,\lambda,\tau)=\R^{\mathcal{A}}.\]
By \eqref{eq:F3}, for every $n\in\Z$ we have
\begin{align*}
\widehat{A}^{(n)}_Y(\pi,\lambda,\tau)^tE_j(\pi,\lambda,\tau)=E_j(\widehat{\mathcal{R}}^n_Y(\pi,\lambda,\tau)),\
 \widehat{A}^{(n)}_Y(\pi,\lambda,\tau)^tU_j(\pi,\lambda,\tau)=U_j(\widehat{\mathcal{R}}^n_Y(\pi,\lambda,\tau)).
\end{align*}
In view of \eqref{eq:F1}, \eqref{eq:F2} and \eqref{eq:F4}, for every $1\leq j\leq g+1$ we have
\begin{gather}
\label{eq:lae}
\lim_{n\to +\infty}\frac{1}{n}\log\|\widehat{A}^{(n)}_Y(\pi,\lambda,\tau)^t\upharpoonright_{E_j(\pi,\lambda,\tau)}\| = \lambda_j\\
\label{eq:lagamma}
\lim_{n\to +\infty}\frac{1}{n}\log\|\widehat{A}^{(-n)}_Y(\pi,\lambda,\tau)^t\upharpoonright_{U_j(\pi,\lambda,\tau)}\| = -\lambda_{j-1}\text{ if } j\geq 2\\
\label{eq:laangle}
\lim_{n\to \pm \infty}\frac{1}{n}\log \left|\sin \angle \left(E_j(\widehat{\mathcal{R}}^n_Y(\pi,\lambda,\tau)), U_j(\widehat{\mathcal{R}}^n_Y(\pi,\lambda,\tau))\right) \right| =0\text{ if }j\geq 2.
\end{gather}

\subsection{Proof of Theorem~\ref{thm;FDCRTC}}
The arguments used in the proof runs similarly to that used to prove Theorem 3.8 in \cite{Fr-Ul2}. We will omit some repetitive arguments.
\begin{proof}[Proof of Theorem~\ref{thm;FDCRTC}]

Let us consider a subset $Y\subset X(\mathcal{G})$ which satisfies
the assumptions below:
\begin{itemize}
\item[$(i)$] the projection $\underline{Y}$ of $Y$ on
$\mathcal{G}\times\Lambda^\mathcal{A}$ is  precompact with respect to the Hilbert metric;
\item[$(ii)$] there exists $0<\delta<1$ such that for every $(\pi,\lambda,\tau)\in Y$ we have
\begin{equation*}
\min \Big\{\Big\{\sum_{\pi_0(\alpha)\leq k}\tau_\alpha:1\leq k<d\Big\}\cup\{(\Omega_\pi(\tau))_\alpha:\alpha\in\mathcal
A \} \Big\}>\delta\max\{(\Omega_\pi(\tau))_\alpha:\alpha\in\mathcal A\};
\end{equation*}
\item[$(iii)$] $\widehat{\mu}_Y$ is finite;
\item[$(iv)$] the functions $\log\|\widehat{A}_Y\|$ and $\log\|\widehat{A}_Y^{-1}\|$ are $\widehat{\mu}_Y$-integrable.
\end{itemize}
\medskip

\noindent\emph{Acceleration.} In view of \eqref{eq:lagamma} and \eqref{eq:lae}, for every $\tau>0$  the maps
\begin{gather*}
Y\ni(\pi,\lambda,\tau)\mapsto\sup_{n\geq 0}e^{-(\lambda_j-\tau)n}
\|\widehat{A}^{(n)}_Y(\pi,\lambda,\tau)^t\upharpoonright_{E_j(\pi,\lambda,\tau)}\|\in \R\text{ for }1\leq j\leq g+1,\\
Y\ni(\pi,\lambda,\tau)\mapsto\sup_{n\geq 0}
e^{(\lambda_{j-1}+\tau)n}\|\widehat{A}^{(-n)}_Y(\pi,\lambda,\tau)^t\upharpoonright_{U_j(\pi,\lambda,\tau)}\|\in
\R\text{ for }2\leq j\leq g+1
\end{gather*}
are a.e.\ defined and measurable.
Therefore, there exists a closed  subset $K \subset Y$ with
$\widehat{\mu}_Y(K)/\widehat{\mu}_Y(Y)>1-\tau/2$ and a constant $C>0$
such that if $(\pi,\lambda,\tau)\in K$ then for every $n\geq 0$  we have
\begin{gather}
\label{eq:ac}
\|\widehat{A}^{(n)}_Y(\pi,\lambda,\tau)^t\upharpoonright_{E_j(\pi,\lambda,\tau)}\|\leq Ce^{(\lambda_j+\tau)n}\text{ for }1\leq j\leq g+1,\\
\label{eq:ac1}
\|(\widehat{A}^{(n)}_Y(\mathcal{R}_Y^{-n}(\pi,\lambda,\tau))^t\upharpoonright_{U_j(\pi,\lambda,\tau)})^{-1}\|\leq Ce^{(-\lambda_{j-1}+\tau)n}\text{ for }2\leq j\leq g+1.
\end{gather}

Let $\widehat{\mathcal{R}}_K:K\to K$ be the induced map  and let $\widehat{A}_K:K\to SL_\mathcal{A}(\Z)$
be the induced cocycle, i.e.
\[\widehat{\mathcal{R}}_K(\pi,\lambda,\tau)=\widehat{\mathcal{R}}^{r_K(\pi,\lambda,\tau)}_Y(\pi,\lambda,\tau),\] where $r_K(\pi,\lambda,\tau)\geq
1$
is the first return time of $(\pi,\lambda,\tau)\in K$ to $K$ for the
map $\widehat{\mathcal{R}}_Y$ and
\[\widehat{A}_K^{(n)}=\widehat{A}_Y^{(r_K^{(n)})}\text{ for every }n\geq 0,\]
where $r_K^{(n)}:=\sum_{0\leq
i<n}r_K\circ \widehat{\mathcal{R}}_K^i$ for every $n\geq 0$.
Then
\begin{equation}\label{eq:retK}
\frac{r_K^{(n)}(\pi,\lambda,\tau)}{n}\to\frac{\widehat{\mu}_Y(Y)}{\widehat{\mu}_Y(K)}\in(1,1+\tau)\text{ for a.e. }(\pi,\lambda,\tau)\in K.
\end{equation}
In view of \eqref{eq:ac} and \eqref{eq:ac1}, for every $(\pi,\lambda,\tau)\in K$,
\begin{gather}
\label{eq:acK}
\|\widehat{A}^{(n)}_K(\pi,\lambda,\tau)^t\upharpoonright_{E_j(\pi,\lambda,\tau)}\|\leq Ce^{(\lambda_j+\tau) r_K^{(n)}(\pi,\lambda,\tau)}\text{ for }
1\leq j\leq g+1,\\
\label{eq:acK1}
\|(\widehat{A}^{(n)}_K(\mathcal{R}_K^{-n}(\pi,\lambda,\tau))^t\upharpoonright_{U_j(\pi,\lambda,\tau)})^{-1}\|\leq Ce^{(-\lambda_{j-1}+\tau) r_K^{(n)}(\mathcal{R}_K^{-n}(\pi,\lambda,\tau))}
\end{gather}
for $2\leq j\leq g+1$.
Moreover, for a.e.\ $(\pi,\lambda,\tau)\in K$,
\begin{equation}\label{eq:maxlap}
\lim_{n\to+\infty}\frac{1}{n}\log\|\widehat{A}^{(n)}_K(\pi,\lambda,\tau)\|=\lambda_1\frac{\widehat{\mu}_Y(Y)}{\widehat{\mu}_Y(K)}\in(\lambda_1 ,\lambda_1
(1+\tau)).
\end{equation}
Since the maps $\log\|\widehat{A}_K\|$ and
$\log\|\widehat{A}_K^{-1}\|$ are $\widehat{\mu}_K$-integrable, for a.e.\
$(\pi,\lambda,\tau)\in K$,
\begin{equation}\label{eq:zzero}
\lim_{n\to+\infty}\frac{1}{n}\log\|\widehat{A}_K(\widehat{\mathcal{R}}_K^n(\pi,\lambda,\tau))\|=0.
\end{equation}

By the ergodicity of $\widehat{\mathcal{R}}:X(\mathcal{G})\to X(\mathcal{G})$, for a.e.\
$(\pi,\lambda,\tau)\in X(\mathcal{G})$
\begin{align}\label{eq:n1}
\begin{split}
&\text{there exists }n_1\geq 0\text{ such that }\widehat{\mathcal{R}}^{n_1}(\pi,\lambda,\tau)\in K\\
&\text{and $\widehat{\mathcal{R}}^{n_1}(\pi,\lambda,\tau)$ satisfies \eqref{eq:laangle}, \eqref{eq:retK}, \eqref{eq:maxlap} and \eqref{eq:zzero}}.
\end{split}
\end{align}
By Fubini argument, there exists a measurable subset $\Xi\subset
\mathcal{G}\times\Lambda^{\mathcal A}$ such that
$\mu_\mathcal{G}(\mathcal{G}\times\Lambda^{\mathcal A}\setminus \Xi)=0$
and for every $(\pi,\lambda)\in \Xi$, there exists $\tau\in\Theta_\pi$
such that $(\pi,\lambda,\tau)\in X(\mathcal{G})$ satisfies
\eqref{eq:n1}.
\medskip

\noindent\emph{Full measure.} We now show that every $(\pi,\lambda)\in \Xi$ satisfies the \ref{FDC}.

Suppose that $(\pi,\lambda)\in \Xi$ and $(\pi,\lambda,\tau)\in
X(\mathcal{G})$ satisfies \eqref{eq:n1}. Then the  accelerating sequence $(n_k)_{k\geq 0}$ required by Definition~\ref{def;FDC} is defined as follows:
\begin{itemize}
\item $n_0=0$;
\item for $k\geq 1$ we take $n_k$ so that
$\widehat{\mathcal{R}}^{n_k}(\pi,\lambda,\tau)=\widehat{\mathcal{R}}_K^{k-1}\widehat{\mathcal{R}}^{n_1(\pi,\lambda,\tau)}(\pi,\lambda,\tau)$.
\end{itemize}
Since $(\pi,\lambda,\tau)$ is Oseledets generic, $(\pi,\lambda)$ is also Oseledets generic with the Oseledets filtration
\begin{align*}
\{0\}=E_{0}(\pi,\lambda)\subset E_{-1}(\pi,\lambda)\subset\ldots\subset E_{-g}(\pi,\lambda)\subset E_{cs}(\pi,\lambda)\\
=E_{g+1}(\pi,\lambda)\subset E_{g}(\pi,\lambda)\subset\ldots\subset E_{1}(\pi,\lambda)=\Gamma
\end{align*}
given by
\[E_{j}(\pi,\lambda):=E_{j}(\pi,\lambda,\tau)\text{ for }j=-1,-2,\ldots, -g, g+1, g,\ldots, 2,1.\]
We can define a complementary filtration $\{0\}=U_1\subset U_2\subset\ldots\subset U_g\subset U_{g+1}$ by
\[U_{j}=U_{j}(\pi,\lambda,\tau)\text{ for }1\leq j\leq g+1.\]
Then for every $k\geq 1$,
\[E_j^{(k)}=E_j(\widehat{\mathcal{R}}_K^{k-1}(\widehat{\mathcal{R}}^{n_1}(\pi,\lambda,\tau)))\text{ and }U_j^{(k)}=U_j(\widehat{\mathcal{R}}_K^{k-1}(\widehat{\mathcal{R}}^{n_1}(\pi,\lambda,\tau))).\]
By the definition of $Q$ and \eqref{eq:An>0},
$Q(k,l)=\widehat{A}_K^{(l-k)}(\widehat{\mathcal{R}}_K^{k-1}(\widehat{\mathcal{R}}^{n_1}(\pi,\lambda,\tau)))^t$ for $1\leq k\leq l$,
so
\begin{align*}
\|Q|_{E_j}(k,l)\|&=\big\|\widehat{A}_K^{(l-k)}(\widehat{\mathcal{R}}_K^{k-1}(\widehat{\mathcal{R}}^{n_1}(\pi,\lambda,\tau)))^t\upharpoonright_{E_j(\widehat{\mathcal{R}}_K^{k-1}(\widehat{\mathcal{R}}^{n_1}(\pi,\lambda,\tau)))}\big\|\\
\|Q|_{U_j}(k,l)^{-1}\|&=\big\|(\widehat{A}_K^{(l-k)}(\widehat{\mathcal{R}}_K^{k-1}(\widehat{\mathcal{R}}^{n_1}(\pi,\lambda,\tau)))^t\upharpoonright_{U_j(\widehat{\mathcal{R}}_K^{k-1}(\widehat{\mathcal{R}}^{n_1}(\pi,\lambda,\tau)))})^{-1}\big\|\\
&=\big\|(\widehat{A}_K^{(l-k)}(\widehat{\mathcal{R}}_K^{-(l-k)}(\widehat{\mathcal{R}}_K^{l-1}\circ\widehat{\mathcal{R}}^{n_1}(\pi,\lambda,\tau)))^t\upharpoonright_{U_j(\widehat{\mathcal{R}}_K^{k-1}(\widehat{\mathcal{R}}^{n_1}(\pi,\lambda,\tau)))})^{-1}\big\|.
\end{align*}
Since
$\widehat{\mathcal{R}}_K^{k-1}\circ\widehat{\mathcal{R}}^{n_1}(\pi,\lambda,\tau),\widehat{\mathcal{R}}_K^{l-1}\circ\widehat{\mathcal{R}}^{n_1}(\pi,\lambda,\tau)\in K$ for every $1\leq k\leq l$, by \eqref{eq:acK} and \eqref{eq:acK1}, we
have
\begin{gather*}
\|Q|_{E_j}(k,l)\|\leq Ce^{(\lambda_j+\tau) r_K^{(l-k)}(\widehat{\mathcal{R}}_K^{k-1}\circ\widehat{\mathcal{R}}^{n_1}(\pi,\lambda,\tau))}
\text{ for }
1\leq j\leq g+1,\\
\|Q|_{U_j}(k,l)^{-1}\|\leq Ce^{(-\lambda_{j-1}+\tau) r_K^{(l-k)}(\widehat{\mathcal{R}}_K^{k-1}\circ\widehat{\mathcal{R}}^{n_1}(\pi,\lambda,\tau))}
\text{ for }
2\leq j\leq g+1.
\end{gather*}
Let us consider a sequence $(r_n)_{n\geq 0}$ given by $r_0=1$ and  for $n\geq 1$,
$$r_n=r_K(\widehat{\mathcal{R}}_K^{n-1}\circ\widehat{\mathcal{R}}^{n_1}(\pi,\lambda,\tau))).$$
 Then for all $1\leq k\leq l$ we have $r(k,l)=r_K^{(l-k)}(\widehat{\mathcal{R}}_K^{k-1}\circ\widehat{\mathcal{R}}^{n_1}(\pi,\lambda,\tau))$, so
\[\|Q|_{E_j}(k,l)\|\leq Ce^{(\lambda_j+\tau) r(k,l)}\text{ and }\|Q|_{U_j}(k,l)^{-1}\|\leq Ce^{(-\lambda_{j-1}+\tau) r(k,l)}.\]
Both inequalities extend to the case where $k = 0$. Then the constant $C$ must be multiplied additionally by $\max\{\|Q(0,1)\|,e^{\lambda_1}\|Q(0,1)^{-1}\|\}$.
This gives \eqref{def;sdc1} and \eqref{def;sdc12}.
\medskip

\noindent\emph{Return time estimate.}
Since $\widehat{\mathcal{R}}^{n_1}(\pi,\lambda,\tau)\in K$ satisfies \eqref{eq:retK}, \eqref{eq:maxlap} and \eqref{eq:zzero}, we also have
\begin{gather}
\label{eq:r0n}
\frac{r(0,n)}{n}=\frac{1+r_K^{(n-1)}(\mathcal{R}^{n_1}(\pi,\lambda,\tau))}{n}\to\frac{\widehat{\mu}_Y(Y)}{\widehat{\mu}_Y(K)}\in(1,1+\tau),\\
\nonumber
\frac{1}{n}\log\|Z(n+1)\|=\frac{1}{n}\log\|\widehat{A}_K(\widehat{\mathcal{R}}_K^{n-1}\circ\mathcal{R}^{n_1}(\pi,\lambda,\tau))\|\to 0,\\
\nonumber
\frac{1}{n}\log\|Q(1,n)\|=\frac{1}{n}\log\|\widehat{A}^{(n-1)}_K(\mathcal{R}^{n_1}(\pi,\lambda,\tau))\|\to
\lambda_1\frac{\widehat{\mu}_Y(Y)}{\widehat{\mu}_Y(K)}\in(\lambda_1 ,\lambda_1 (1+\tau)).
\end{gather}
As $\|Q(0,1)^{-1}\|^{-1}\|Q(1,n)\|\leq \|Q(0,n)\|\leq \|Q(0,1)\|\|Q(1,n)\|$, this gives
\begin{equation}\label{eqn;logqn}
\frac{1}{n}\log\|Q(n)\|\to
\lambda_1\frac{\widehat{\mu}_Y(Y)}{\widehat{\mu}_Y(K)}\in(\lambda_1 ,\lambda_1 (1+\tau)).
\end{equation}
The above convergences lead directly to \eqref{def;sdc0}, \eqref{def;sdc2} and \eqref{def;sdc3}.
\medskip

\noindent\emph{Angle estimate.}
Since,
\begin{align*}
 \frac{\log \left|\sin \angle \left(E_j^{(k)}, U_j^{(k)}\right) \right|}{\log\|Q(k)\|}=&
 \frac{\log \left|\sin \angle \left(E_j(\widehat{\mathcal{R}}_K^{k-1}(\widehat{\mathcal{R}}^{n_1}(\pi,\lambda,\tau))), U_j(\widehat{\mathcal{R}}_K^{k-1}(\widehat{\mathcal{R}}^{n_1}(\pi,\lambda,\tau)))\right) \right|}{r_K^{(k-1)}(\mathcal{R}^{n_1}(\pi,\lambda,\tau))}\\
 &\cdot\frac{r_K^{(k-1)}(\mathcal{R}^{n_1}(\pi,\lambda,\tau))}{k}\frac{k}{\log\|Q(k)\|},
\end{align*}
in view of \eqref{eq:laangle}, \eqref{eq:r0n} and \eqref{eqn;logqn},
\[\lim_{k\to\infty}\frac{\log \left|\sin \angle \left(E_j^{(k)}, U_j^{(k)}\right) \right|}{\log\|Q(k)\|}=\frac{0}{\lambda_1}=0>-\tau.\]
This leads to \eqref{def;sdc5}.
\medskip

\noindent\emph{Rokhlin tower condition.}
Since $\widehat{\mathcal{R}}^{n_k}(\pi,\lambda,\tau)\in Y$ and the set $Y \subset X(\mathcal{G})$ is chosen to satisfy the conditions $(i)$ and $(ii)$, by the proof of Lemma~3.6 in \cite{Fr-Ul2},
 $(n_k)_{k\geq0}$
is a Rokhlin-balanced accelerating sequence. Thus $(n_k)_{k\geq0}$ satisfies \eqref{def;sdc4} and \eqref{def:FDC-g}.
\end{proof}

\end{document}